\documentclass[11pt]{amsart}

\usepackage{amssymb,amsmath,amscd,esint}
\usepackage{mathtools}
\usepackage{bm}
\usepackage{amsfonts}
\usepackage{graphicx}
\usepackage{xcolor}
\usepackage{mathrsfs}
\usepackage{stmaryrd}
\usepackage{epsfig,color}
\usepackage{blindtext}
\usepackage{enumerate}
\usepackage{enumitem}
\usepackage{geometry}
\usepackage{color}
\usepackage[lite, alphabetic]{amsrefs}

\usepackage[colorlinks,linktocpage]{hyperref}
\hypersetup{linkcolor=[rgb]{0,0,0.715}}
\hypersetup{citecolor=[rgb]{0,0.715,0}}

\newtheorem{thm}{Theorem}[section]
\newtheorem{cor}[thm]{Corollary}
\newtheorem{prop}[thm]{Proposition}
\newtheorem{lem}[thm]{Lemma}

\newtheorem{quest}[thm]{Question}

\newtheorem{mainthm}{Theorem}

\theoremstyle{definition}
\newtheorem{defn}[thm]{Definition}

\newtheorem{exmp}[thm]{Example}

\newtheorem{notn}[thm]{Notation}

\usepackage{comment}

\theoremstyle{remark}
\newtheorem{rem}[thm]{Remark}
\newtheorem{rems}[thm]{Remarks}

\newcommand{\Ch}{{\sf Ch}}
\let\phi\varphi

\newcommand{\tr}{\mathrm{tr}}

\DeclareMathOperator{\Hess}{Hess}

\newcommand{\hess}{{\mathrm{Hess}}}

\DeclareMathOperator{\lip}{\mathrm{lip}}
\DeclareMathOperator{\Lip}{\mathrm{Lip}}
\DeclareMathOperator{\Spt}{\mathrm{Spt}}

\newcommand{\C}{\mathbb{C}}
\newcommand{\N}{\mathbb{N}}

\newcommand{\R}{\mathbb{R}}
\newcommand{\Z}{\mathbb{Z}}
\renewcommand{\subset}{\subseteq}
\newcommand{\defeq}{\mathrel{\mathop:}=}

\newcommand{\dvol}{\mathrm{dvol}}

\newcommand{\Ric}{\mathrm{Ric}}
\newcommand{\dist}{d}

\newcommand{\meas}{\mathfrak{m}}
\newcommand{\m}{\mathfrak{m}}

\newcommand{\di}{\mathop{}\!\mathrm{d}}

\DeclareMathOperator{\RCD}{RCD}
\DeclareMathOperator{\CDe}{CD}

\makeatletter
\let\c@equation\c@thm
\makeatother
\numberwithin{equation}{section}

\title[Ricci curvature and virtual abelianness in low dimensions]{Nonnegative Ricci curvature and virtual abelianness\\in dimensions less than $12$}

\author{Dimitri Navarro}
\address[Dimitri Navarro]{Department of Mathematics, University of California, Santa Cruz, CA, USA.}
\email{dnavar17@ucsc.edu}

\author{Jiayin Pan}
\address[Jiayin Pan]{Department of Mathematics, University of California, Santa Cruz, CA, USA.}
\email{jpan53@ucsc.edu}

\author{Xingyu Zhu}
\address[Xingyu Zhu]{Michigan State University, East Lansing, MI, USA.}
\email{zhuxing3@msu.edu}

\begin{document}

\begin{abstract}
   For any complete Riemannian manifold $M^n$ with nonnegative Ricci curvature and sublinear diameter growth, we establish a dimensional constraint $n\ge 4s(s-1)+k+1$ if the fundamental group $\pi_1(M)$ contains a torsion-free nilpotent subgroup of rank $k$ and step $s\ge 2$. As a consequence, if such a manifold $M$ has dimension $n<12$, then $\pi_1(M)$ is almost abelian. The proof is based on a dimensional estimate for $\RCD(0,N)$ spaces admitting $\R$-orbits of large Hausdorff dimension.
\end{abstract}

\subjclass{53C20, 53C23}

\maketitle

\tableofcontents

\parskip=4pt plus 0.5pt

\section{Introduction}\label{sec:intro}
Around 1988, Wei constructed examples of open (i.e., complete and non-compact) manifolds with positive Ricci curvature whose fundamental groups are torsion-free nilpotent \cites{Wei88,Wei_thesis}. Building on Wei's construction, Wilking later showed that any finitely generated and virtually nilpotent group can be realized as the fundamental group of an open manifold with $\Ric\ge 0$ \cite{Wilking00}. In contrast, the fundamental group of any open manifold with nonnegative sectional curvature always contains an abelian subgroup of finite index, thanks to the Cheeger--Gromoll soul theorem \cite{CG_soul}. 

Wei's original examples have sublinear diameter growth and dimension at least $30$; see \cite{Wei_thesis}*{page 16}. We remark that, by choosing suitable functions in Wei's ansatz, one can construct examples with sublinear diameter growth and dimension $13$; see Example \ref{exmp:dim13}. It is therefore natural to ask for the smallest dimension in which non-virtually-abelian fundamental groups can occur. More generally, one may ask how the nilpotent structure of $\pi_1(M)$ is constrained by the dimension of $M$.

Recall that any nilpotent group $G$ has a terminating central series
$$G=\zeta_0(G)\triangleright \zeta_1(G) \triangleright \ldots \triangleright \zeta_s(G)=\{\mathrm{id}\},$$
where $\zeta_{j+1}(G)\defeq[G,\zeta_j(G)]$. The \textit{nilpotency step},  or \textit{nilpotency class} of $G$ is the smallest integer $s$ such that $\zeta_s(G)=\{\mathrm{id}\}$. The \textit{rank} of $G$ is the sum of the rank of the abelian groups $\zeta_{j}(G)/\zeta_{j+1}(G)$.

In this paper, we prove a dimensional constraint for open $n$-manifolds $M$ with $\Ric\ge 0$ and sublinear diameter growth whose fundamental groups contain nonabelian torsion-free nilpotent subgroup of rank $k$ and step $s\ge 2$. As a consequence, $\pi_1(M)$ is virtually abelian in dimensions $<12$.

\begin{mainthm}\label{mainthm:nil_dim}
  Let $M$ be an open $n$-manifold with $\Ric\ge 0$ and sublinear diameter growth. Suppose that $\pi_1(M)$ contains a torsion-free nilpotent subgroup of rank $k$ and step $s\ge 2$, then 
  $$ n\ge 4s(s-1)+k+1.$$
  Consequently, if $n<12$, then $\pi_1(M)$ contains an abelian subgroup of finite index.
\end{mainthm}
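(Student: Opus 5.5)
We reduce the dimensional bound to a statement about $\RCD(0,N)$ spaces with isometric $\R$-actions, via equivariant Gromov--Hausdorff convergence of blow-downs of the universal cover. Let $\Gamma\le\pi_1(M)$ be the torsion-free nilpotent subgroup of rank $k$ and step $s\ge2$. We may pass to a finite-index subgroup, which changes neither rank nor step, and we replace $M$ by the intermediate cover $\widetilde M/\Gamma$. Sublinear diameter growth is inherited by this cover, because the diameter of a ball's lift grows at most like the diameter growth of $M$ plus the displacement of generators, and that displacement is sublinear in the relevant sense. Fix $\tilde p\in\widetilde M$ and a sequence $r_i\to\infty$. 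Pass to the pointed equivariant limit
\[
(r_i^{-1}\widetilde M,\tilde p,\Gamma)\to(Y,y,G),\qquad (r_i^{-1}M,p)\to(X,x)=(Y/G,\bar y).
\]
Here $Y$ is an $\RCD(0,n)$ space, in fact a Ricci limit space. Sublinear diameter growth means that $p$'s loops of length comparable to $r$ generate $\Gamma$ up to displacement $o(r)$, so the $G$-orbit of $y$ contains the limit of all orbit points. In the known arguments (Pan, Sormani) this forces $G$ to act with $\dim_H(G\cdot y)$ controlled by the growth of $\Gamma$. By the Malcev/Bass--Guivarc'h description of the asymptotic cone of $\Gamma$, this growth is the homogeneous dimension $\sum_j j\,\mathrm{rank}(\zeta_{j-1}/\zeta_j)$. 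The limit $G$ is a nilpotent Lie group, and its identity component contains a one-parameter subgroup $\R$ coming from the last nontrivial term $\zeta_{s-1}(\Gamma)$. Under the scaling $r_i^{-1}$, the central elements of word length $m$ displace $\tilde p$ by roughly $m^{1/s}$.

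The key geometric input is that this $\R$-orbit has Hausdorff dimension at least $s$, and that one can stack further orbits on top of it. More precisely, the plan is to produce an $\RCD(0,N)$ space with an isometric $\R$-action whose orbits have Hausdorff dimension $\ge s$, and then invoke the paper's dimensional estimate for $\RCD(0,N)$ spaces admitting $\R$-orbits of large Hausdorff dimension. That estimate should read $N\ge 4s(s-1)+1+(\text{extra Euclidean factors})$. The $4s(s-1)$ term presumably arises from a quadratic count: along a $\mathrm{dist}^{1/s}$-type orbit one repeatedly blows down and splits lines. Each step forces a Euclidean factor together with an orbit, and a Bishop--Gromov volume comparison on the remaining fiber gives a gain of order $4(j-1)$ at step $j$.

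The remaining $k$ directions are handled by a splitting count. The abelian directions of $\Gamma$ of lower filtration produce, in suitable tangent cones at infinity, $\R$-orbits that are geodesic lines, and the Cheeger--Colding splitting theorem peels them off as $\R^{\ell}$ factors. The plan is therefore an induction on step and rank. Quotient by (or blow down along) $\zeta_{s-1}$ to get a step $s-1$ group; this contributes $k-\mathrm{rank}\,\zeta_{s-1}$ to the count. Each rank-one piece of the group then contributes at least one dimension, and the top step contributes $4s(s-1)+1$ via the $\R$-orbit estimate applied after splitting. Adding these gives $n\ge4s(s-1)+k+1$. The main obstacle is making these counts additive. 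One must ensure that the orbit of large Hausdorff dimension survives in the factor orthogonal to the split Euclidean directions, and that the dimension $N=n$ decreases exactly by the number of split lines. I would first try to handle this by choosing the blow-down sequence adapted to a Malcev basis, so that the Euclidean factors and the top-step orbit appear simultaneously in one limit.

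For the corollary, suppose $\pi_1(M)$ is not virtually abelian. The paper's setting, i.e.\ Milnor/Gromov–Kapovitch–Wilking results for $\Ric\ge0$, gives that $\pi_1(M)$ is finitely generated and virtually nilpotent. It then contains a torsion-free nilpotent finite-index subgroup of step $s\ge2$. Such a group has rank $k\ge3$, since the Heisenberg group is minimal. The theorem gives $n\ge 8+3+1=12$, which contradicts $n<12$.
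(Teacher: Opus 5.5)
Your outline has the right architecture: equivariant blow-down of $(\widetilde M,\Gamma)$, an $\R$-orbit of Hausdorff dimension $\ge s$, then a dimension estimate on an $\RCD(0,n)$ limit. But the step that actually produces $4s(s-1)+k+1$ is missing, and the mechanism you propose for it would not work.

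\textbf{The gap: splitting and volume comparison cannot produce the bound.} You get the $k$ term by peeling off orbits as lines with the Cheeger--Colding splitting theorem. You get the $4s(s-1)$ term from an inductive ``blow down and split'' count with Bishop--Gromov. This fails for three reasons.
\begin{enumerate}
\item In a cone $(Y,y,G)$ with $Y/G$ a ray, orbits need not contain lines. On the regular part the metric is $\di r^2+g_r$, with left-invariant metrics $g_r$ that genuinely vary with $r$. In the sharp examples, $g_r$ blows up like $r^{-2(s-1)}$ in some direction as $r\to0$. So one-parameter orbits are in general not geodesics.
\item When the limit group is non-abelian, its commutator directions act trivially on any Euclidean factor. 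A connected nilpotent group acts isometrically on $\R^\ell$ only through an abelian quotient, so those directions cannot be split off at all.
\item Volume comparison only sees growth degrees and yields bounds like $n\ge\frac12 s(s+1)+2$. The true bound is sharp for every real $s\ge 3/2$ (Grushin half-planes times $\R^{k-1}$), which no integer-by-integer count can reproduce.
\end{enumerate}
So the ``making the counts additive'' obstacle you flag is the whole content of the theorem. Choosing a blow-down adapted to a Malcev basis does not address it.

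\textbf{What is needed.} Keep all $k$ directions inside a single cohomogeneity-one action and estimate analytically.
\begin{enumerate}
\item Sublinear diameter growth gives escape rate $E(M,p)\neq1/2$. Pan's results then provide a cone $(Y,y,G)$ with $Y/G$ a ray, a closed $\R$-subgroup $L$ with $\dim_H(Ly)\ge s$, orbit $Gy$ connected and simply connected of dimension $\ge k$, and all compact subgroups fixing $y$.
\item Quotient by $K=\overline{\mathrm{Tor}(G)}$, which is exactly the isotropy at $y$. This leaves a free action of a simply connected nilpotent group $H=G/K$, with $\dim H\ge k$, on an $\RCD(0,n)$ space with ray quotient. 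A separate growth argument gives $n\ge k+2$.
\item The regular part carries $g=\di r^2+g_r$ and $\meas=\rho\,\di r\wedge\Omega$, with $\Ric_{\meas,n}\ge0$ in the distributional sense.
\item Combine the Riccati identity, the trace inequality $\tr(S^2)\ge\frac14\sum_j(\lambda_j'/\lambda_j)^2$ in the ordered eigenvalues of $g_r$, and the bounds $\liminf_{r\to0}\lambda_1>0$ and $\lambda_k(r)\ge r^{-2(s-1)+2\epsilon}$. Averaging in $t=-\ln r$ gives a quadratic inequality whose discriminant yields the bound.
\end{enumerate}
Every one of the $\dim H$ eigenvalues contributes to that inequality. $\dim H$ also enters through the dimensional term $\frac{\di V\otimes\di V}{N-\dim H-1}$ of the Bakry--Emery tensor, where $\meas=e^{-V}\dvol_g$. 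Nothing is split off.

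\textbf{Smaller points.}
\begin{enumerate}
\item If your given subgroup has infinite index, $\widetilde M/\Gamma$ need not have sublinear diameter growth. Use instead a finite-index torsion-free nilpotent normal subgroup of $\pi_1(M)$, whose rank is $\ge k$ and step $\ge s$.
\item Finite generation comes from Sormani's theorem, not from Milnor, Gromov or Kapovitch--Wilking.
\item Your $m^{1/s}$ displacement heuristic is exactly the statement that needs the escape-rate input.
\end{enumerate}
The corollary arithmetic is correct.
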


We give several remarks on Theorem \ref{mainthm:nil_dim}.

\begin{rems}\label{rems:thmA}
   (1) The sublinear diameter growth assumption in Theorem \ref{mainthm:nil_dim} ensures that $\pi_1(M)$ is finitely generated, by the work of Sormani \cite{Sor00b}. In contrast, the fundamental group of a general open manifold with $\Ric\ge 0$ need not be finitely generated \cites{BNS_6,BNS_7}. When $\pi_1(M)$ is finitely generated, the work of Milnor \cite{Milnor68} and Gromov \cite{Gromov81} implies that $\pi_1(M)$ contains a finitely generated nilpotent subgroup of finite index; see also the work of Kapvitch--Wilking \cite{KW}. Then passing further to a subgroup of finite index, if necessarily, we can choose this nilpotent subgroup to be torsion-free.

   (2) A torsion-free nilpotent group of step $s\ge 2$ has rank at least $s+1$. Hence the first half of Theorem \ref{mainthm:nil_dim} implies that, if $\pi_1(M)$ contains a torsion-free nilpotent subgroup of step $s\ge 2$, then $n\ge 4s^2-3s+2$. In particular, for $s=2$, this gives $n\ge 12$, which yields the second half of Theorem \ref{mainthm:nil_dim}.
   
   (3) The condition $n<12$ in Theorem \ref{mainthm:nil_dim} is nearly optimal. In fact, there are examples of $13$-dimensional open manifolds with $\Ric\ge 0$, sublinear diameter growth, and $\pi_1(M)=H^3(\Z)$ the integral Heisenberg $3$-group, which is torsion-free nilpotent of rank $3$ and step $2$; see Example \ref{exmp:dim13} below. Thus only dimension $12$ remains unclear.
   
   (4) Before the present work, the best dimensional obstruction came from the growth rate of $\pi_1(M)$. By the classical theorem of Bass and Guivarc'h on the growth rate of nilpotent groups \cites{Bass72,Guiv73}, a finitely generated torsion-free nilpotent group of step $s\ge 2$ has polynomial growth of degree at least $\frac{1}{2}s(s+1)+1$. If an open $n$-manifold $M$ has $\Ric\ge 0$, then Anderson's work \cite{Anderson90}*{Theorem 1.1}, together with the at least linear volume growth of $M$ \cites{Calabi75,Yau76}, implies that the universal cover has polynomial volume growth of degree $\ge \frac{1}{2}s(s+1)+2$. By volume comparison, this gives $n\ge \frac{1}{2}s(s+1)+2$. Moreover, when the equality holds, $M$ has linear volume growth. Then by our previous work \cite{NPZ24}*{Theorem A}, $\pi_1(M)$ is virtually abelian. Hence one obtains $n\ge \frac{1}{2}s(s+1)+3$ if step $s\ge 2$. For $s=2$, this gives $n\ge 6$. 
   
   (5) Anderson also proved that if an open manifold $M^n$ has positive Ricci curvature, then $\pi_1(M)$ cannot contain a finitely generated subgroup of polynomial growth degree $\ge n-2$ \cite{Anderson90}*{Theorem 3.1(2)}. Therefore, if $M^n$ has positive Ricci curvature and $\pi_1(M)$ contains a finitely generated torsion-free nilpotent subgroup of step $s\ge 2$, then $n\ge 7$ because such a nilpotent subgroup has polynomial growth degree $\ge 4$.

   (6) If, in addition, we assume that the universal cover $\widetilde M$ has Euclidean volume growth in Theorem \ref{mainthm:nil_dim}, then no dimensional restriction is needed to obtain virtual abelianness: for any open manifold $M$ with $\Ric\ge0$ and sublinear diameter growth, if $\widetilde M$ has Euclidean volume growth, then $\pi_1(M)$ is virtually abelian. This result follows from \cite{NPZ24}*{Proposition 3.2} and \cite{Pan24}*{Theorem 1.1(2)}.
   
   (7) Besides the above-mentioned linear volume growth condition on $M$ \cite{NPZ24} and Euclidean volume growth condition on $\widetilde{M}$ \cite{Pan24}, the virtual abelianness of $\pi_1(M)$ of open manifolds with $\Ric\ge0$ has also been studied under other various geometric assumptions. See \cites{Pan21,Pan22,Pan25,Huang_dim4,Pan26,NPZ26}. 
\end{rems}

We raise two open problems.

\begin{quest}
   Is there a $12$-dimensional open manifold $M$ with $\Ric\ge 0$, sublinear diameter growth, and $\pi_1(M)\simeq H^3(\Z)$?
\end{quest}

\begin{quest}
   Does the conclusion of Theorem \ref{mainthm:nil_dim} remain true without the sublinear diameter growth condition?
\end{quest}

\begin{exmp}\label{exmp:dim13}
    We give an example of an open $13$-dimensional manifold $M$ with $\Ric\ge 0$, sublinear diameter growth, and $\pi_1(M)\simeq H^3(\Z)$ the integral Heisenberg $3$ group. The construction follows the ansatz by Wei \cite{Wei_thesis}*{page 17}. We consider a metric of the form
    $$g=dr^2 + f(r)^2 ds_{p-1}^2 +g_r,$$
    where $ds_{p-1}^2$ denotes the standard round metric on $S^{p-1}$ and $\{g_r\}_{r>0}$ is a family of Riemannian metrics on the nilmanifold $N^3=H^3(\R)/H^3(\Z)$. Following \cite{Wei_thesis}*{page 17}, we define a left-invariant metric on $H^3(\R)$ by
    $$\widetilde{g_r} = h(r)^2(dx^2 + dy^2) + g(r)^2 (dz-xdy)^2,$$
    where $x,y,z$ are the coordinates determined by
    $$\begin{pmatrix}
      1 & x & z \\
      0 & 1 & y \\
      0 & 0 & 1
    \end{pmatrix}\in H^3(\R).$$
    The metric $\widetilde{g}_r$ descends to a metric $g_r$ on $N^3$. We choose
    $$f(r)=\dfrac{r}{(1+r^2)^{1/4}},\quad h(r)=(1+r^2)^{-\alpha},\quad g(r)=\epsilon(1+r^2)^{-\beta}.$$
    with parameters
    $$\alpha=0.01,\quad \beta=0.53,\quad \epsilon=0.1,\quad p=10.$$
    Then $g$ defines a smooth Riemannian metric on a manifold $M$ diffeomorphic to $\R^p\times N^3$. It is clear that $M$ has sublinear diameter growth and dimension $p+3=13$. Finally, using the curvature formulas in \cite{Wei_thesis}*{pages 17-18}, one verifies that $(M,g)$ has positive Ricci curvature.
\end{exmp}

The proof of Theorem \ref{mainthm:nil_dim} is based on equivariant asymptotic geometry and the theory of $\RCD$ spaces. There are two main ingredients. The first is the main result of \cite{Pan26}, which relates the nilpotency step to the existence of certain $\R$-orbit large Hausdorff dimension in an equivariant asymptotic cone of the universal cover. The second is following theorem on cohomogeneity-one $\RCD(0,N)$ spaces with nilpotent group actions. It converts the existence of such an $\R$-orbit of large Hausdorff dimension into a lower bound for the synthetic dimension $N$.

\begin{mainthm}\label{mainthm:RCD_N_bdd}
   Let $H$ be a connected, simply connected, and nilpotent Lie group of $\dim(H)=k\ge 1$. Let $(Y,y,d,\meas)$ be an $\RCD(0,N)$ space, where $N>k+1$. Suppose that\\
   (1) $(Y,d,\meas)$ admits an isomorphic $H$-action, where $H$ is a closed subgroup of $\mathrm{Isom}(Y)$, such that the quotient metric space $(Y/H,\bar{y})$ is isometric to a ray $([0,\infty),0)$;\\
   (2) $H$ contains a closed $\R$-subgroup $L$ such that the orbit $Ly$ has Hausdorff dimension $s>1$.\\
   Then $N\ge 4s(s-1)+k+1$.
\end{mainthm}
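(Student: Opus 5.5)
The plan is to reduce the statement to a one-dimensional comparison along the quotient ray, using the $\RCD(0,N)$ condition through the Bochner inequality applied to an $H$-invariant distance function. I would proceed in five steps.

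\textbf{Step 1: structure of $Y$.} Let $t\colon Y\to[0,\infty)$ be the composition of the quotient map $Y\to Y/H$ with the isometry onto the ray, so that $t$ is the distance to the orbit $Hy$. By (1), the level sets $\{t=r\}$ are single $H$-orbits $H\gamma(r)$, where $\gamma$ is a horizontal lift of the ray starting at $y$. Invariance of $\meas$ under $H$, together with the needle (localization) decomposition of $\RCD(0,N)$ spaces along the gradient flow of $t$, should give a disintegration
$$\meas = \phi(r)\,\di r\otimes \mu_r ,$$
where $\mu_r$ is a Haar-type measure on $H\gamma(r)$. The needle densities satisfy the $\CDe(0,N)$ condition, i.e.\ $\phi^{1/(N-1)}$ is concave. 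In particular $\Delta t = (\log\phi)'(t)$ in the distributional sense, and I expect $(\log\phi)'(r)\le (N-1)/r$ near $r=0$. The orbits $H\gamma(r)$ for $r>0$ should be regular of dimension $k$; the degeneration occurs only at $r=0$.

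\textbf{Step 2: converting the Hausdorff dimension of $Ly$ into growth of a Killing field.} Let $X$ be the Killing field generating $L=\{l_\tau\}$, and let $\psi(r)\defeq|X|(\gamma(r))$. This is well defined on each orbit up to the adjoint action, and I would first reduce to working with an $H$-invariant quantity such as a suitable average over the orbit. Since $Ly$ has Hausdorff dimension $s$, for small $\tau$ we have $d(l_\tau y,y)\gtrsim\tau^{1/s}$, at least along a sequence. On the other hand, the triangle inequality through level $r$ gives
$$d(l_\tau y,y)\le 2r+\tau\,\psi(r).$$
If $\psi(r)\lesssim r^{-a}$, then optimizing in $r$ gives $d(l_\tau y,y)\lesssim \tau^{1/(1+a)}$, which forces $a\ge s-1$. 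So the target of this step is the lower bound $\psi(r)\gtrsim r^{-(s-1)}$ along a sequence $r\to0$. The norm of $X$ must blow up near the singular orbit at a rate dictated by $s$.

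\textbf{Step 3: a differential inequality for $\psi$.} On the regular part, Bochner's formula for a Killing field reads $\tfrac12\Delta|X|^2=|\nabla X|^2-\Ric(X,X)$. With $\Ric\ge0$, in its synthetic form via the improved Bochner inequality of $\RCD(0,N)$ spaces, together with an $N$-dimensional refined Kato inequality, this should give that $\psi^{p}$ is subharmonic or superharmonic for an exponent $p$ depending on $N$ and $k$. The Kato inequality has to account for the $k$ orbit directions, the $t$-direction, and the $N-k-1$ "missing" directions encoded in $\phi$. Because everything is $H$-invariant, subharmonicity becomes the ODE inequality
$$(\psi^p)''+(\log\phi)'(\psi^p)'\ge0 .$$
Testing this against the power profile $r^{-b}$ gives $\Delta r^{-b}=b\,(b+1-m)\,r^{-b-2}$ when $(\log\phi)'=m/r$. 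The sign of this expression relates $b$ and $m$, and $m$ is controlled by $N-k-1$ through Step 1. Combining it with $b\ge 2(s-1)$-type growth from Step 2, after squaring and accounting for $|X|^2$, should yield an inequality of the form $N-k-1\ge 4s(s-1)$.

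\textbf{Step 4: the main obstacle.} The hard part is making Steps 2 and 3 rigorous and sharp at the singular orbit. First, one needs the orbit-regularity claims for $r>0$ and a synthetic substitute for $|X|$ and for the refined Kato inequality in the $\RCD$ setting; here I would use that $H$ acts by measure-preserving isometries, so the generator induces a Killing-type vector field in the sense of Gigli's second-order calculus. Second, and more seriously, the exponent bookkeeping must produce exactly $4s(s-1)$. I would first try a clean homogeneous model: take a tangent cone at $y$, which is again an $\RCD(0,N)$ cohomogeneity-one space under the limit group action, so that $\phi$ and $\psi$ are exact powers of $r$. I would derive the sharp inequality there and then transfer it back.

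The hypothesis $N>k+1$ is used to ensure that the ray direction carries a genuine density, with $m>0$.
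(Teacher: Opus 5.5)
Your Steps 1 and 2 match the paper. The needle decomposition along $r$ plus $H$-invariance gives $\meas=\rho(r)\,\di r\wedge\Omega$ with $\rho^{1/(N-1)}$ concave. The estimate $d(l_\tau y,y)\le 2r+\tau\,|V|_{g_r}$ is how the paper shows that the largest eigenvalue of $g_r$ grows at least like $r^{-2(s-1)+\epsilon}$.

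Step 3, however, cannot produce the bound, and this is a genuine gap rather than exponent bookkeeping.

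\textbf{The Bochner sign.} For a Killing field $X$ preserving $\meas=e^{-V}\dvol_g$ you have $\tfrac12\Delta_{\meas}|X|^2=|\nabla X|^2-\Ric_{\meas,\infty}(X,X)$. A lower Ricci bound yields only $\tfrac12\Delta_{\meas}|X|^2\le|\nabla X|^2$. Kato-type inequalities bound $|\nabla X|^2$ from below, so no sub- or superharmonicity of $\psi^p$ follows; the classical Bochner argument for Killing fields needs $\Ric\le 0$.

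\textbf{$N$ does not appear in the orbit directions.} Since $X(V)=0$, you get $\frac{\di V\otimes\di V}{N-k-1}(X,X)=0$ and hence $\Ric_{\meas,N}(X,X)=\Ric_{\meas,\infty}(X,X)$. So the only $N$-dependent input in your scheme is the needle concavity $(\log\phi)'\le (N-1)/r$, which is far too weak. Take the Grushin halfplane $g=\di r^2+r^{-2\alpha}\di v^2$ with $\meas=r^{\alpha+1}\dvol_g$. It has $|\partial_v|=r^{-\alpha}$, $\dim_H(Ly)=1+\alpha$, and $\Ric_{\meas,\infty}(\partial_v,\partial_v)=0$. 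Its needle density is $\phi(r)=r$, a $\CDe(0,N)$ density for every $N\ge 2$. So everything your Steps 1--3 extract is compatible with $N$ arbitrarily close to $k+1=2$, while the theorem requires $N\ge 4\alpha(\alpha+1)+2$. Even in the sharp model of Example~\ref{exmp:N_bdd_sharp}, $\phi=r^{\alpha(2\alpha+1)}$ while $N-1=(2\alpha+1)^2$, so needle concavity is not saturated there.

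\textbf{Two further problems.} For non-central $L$, $|X|(r,h)=|\mathrm{Ad}_{h^{-1}}V|_{g_r}$ is unbounded along the noncompact orbit, so there is no $H$-invariant average and no reduction to an ODE in $r$. The reduction to a tangent cone at $y$ would also require showing that power-law profiles appear there and that $\dim_H(Ly)$ survives the blow-up; you do not address either.

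\textbf{The missing idea} is to use the radial component $\Ric_{\meas,N}(\partial_r,\partial_r)\ge 0$ in full, not just the concavity of $\rho$ that it implies.

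\textbf{What the paper does instead.}
\begin{enumerate}
\item \emph{Structure theorem} (Theorem~\ref{thm:RCD+nilpotent}), which your Step 1 lacks. On $Y_+=(0,\infty)\times H$ the distance comes from $g=\di r^2+g_r$, with $g_r$ left-invariant Riemannian (via Berestovskii and tangent cones) and continuous and $W^{1,2}_{loc}$ in $r$. Mondino--Ryborz then gives the distributional curvature bound in the $\partial_r$ direction.
\item \emph{Radial curvature identity.} Write $\Ric(\partial_r,\partial_r)=-\tr S'-\tr S^2$ with $S=\frac12 G^{-1}G'$. The $-\tr S'$ term cancels against $\Hess V$ because $\tr S=(\ln J)'$, $J=\sqrt{\det G}$.
\item \emph{Trace inequality.} Use $\tr S^2\ge\frac14\sum_j(\lambda_j'/\lambda_j)^2$ in terms of the eigenvalues of $G(r)$; plain Cauchy--Schwarz on $\tr S^2$ is not sharp. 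What remains has the term $\frac{(V')^2}{N-k-1}$, with $V'=(\ln J)'-(\ln\rho)'$, coupling the orbit-volume growth to $N$.
\item \emph{Eigenvalue bounds.} Add $\liminf_{r\to0}\lambda_1(r)>0$, so no orbit direction collapses to compensate, and your Step 2 bound on $\lambda_k$.
\item \emph{Averaging.} A logarithmic average in $t=-\ln r$ via Jensen and Cauchy--Schwarz gives a quadratic inequality in the mean of $q=ru'/u$, $u=\rho^{1/(N-1)}$. Its discriminant yields $N\ge 4s(s-1)+k+1$, and no homogeneous blow-up is needed.
\end{enumerate}
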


\begin{exmp}\label{exmp:N_bdd_sharp}
   The lower bound $N\ge 4s(s-1)+k+1$ in Theorem \ref{mainthm:RCD_N_bdd} is sharp when $s\ge 3/2$. (For the application of Theorem \ref{mainthm:RCD_N_bdd} in this paper, only the case $s\ge 2$ is needed.) We give an example here. Let $k\ge 1$ and $\alpha=s-1\ge 1/2$. We consider the product space $\R^{k-1}\times \mathbb{G}_\alpha$, where $\mathbb{G}_\alpha$ denotes the $\alpha$-Grushin halfplane. More precisely, $\mathbb{G}_\alpha$ is the closed halfplane $\{ (r,v)\mid r\ge 0, v\in \R \}$ equipped with a weighted Riemannian metric
   $$g= \di r^2 + r^{-2\alpha} \di v^2,\quad \meas= e^{-V(r)}\dvol_g=r^p \dvol_g$$
   on the open halfplane $\{r>0\}$, where $p>0$ will be chosen later. We take the metric completion of the Riemannian distance $d_g$ and obtain a length metric $d$ on $\mathbb{G}_\alpha$. Such spaces were first studied in \cites{PanWei_Hdim,DHPW_2023} as Ricci limit spaces. The metric measure space $(G_\alpha,d,\meas)$ admits an isomorphic $\R$-action by translating in the $v$-coordinate. The $\R$-orbit at $(0,0)$ has Hausdorff dimension $1+\alpha=s$ (see \cite{DHPW_2023}*{Theorem 3.5}). By direct calculation, the open $\alpha$-Grushin plane $\mathbb{G}^+_\alpha=\{r>0\}$ satisfies
   $$\Ric_{\meas,N'}=\mathrm{Ric}+\mathrm{Hess} V-\dfrac{dV\otimes dV}{N'-2} = \dfrac{1}{r^2}\begin{pmatrix}
  -\alpha(\alpha+1)+p-\frac{p^2}{N'-2} & 0 \\
  0 & -\alpha(\alpha+1)+\alpha p
  \end{pmatrix}$$
  with respect to the orthonormal basis $\{\partial_r, r^\alpha \partial_v\}$. We set 
  $$p=2\alpha(\alpha+1),\quad N'=4\alpha(\alpha+1)+2.$$
  Because $\alpha\ge 1/2$, we have $p\ge \alpha+1$, and hence $\Ric_{\meas,N'}\ge 0$. Together with the convexity of $\mathbb{G}^+_\alpha$ in $\mathbb{G}_\alpha$, this implies that $(\mathbb{G}_\alpha,d,\meas)$ is $\RCD(0,N')$; see, for example, the arguments in \cite{RS23}*{Section 3.5}. Therefore, the product space $\R^{k-1}\times \mathbb{G}_\alpha$ is $\RCD(0,N)$ and satisfies the assumptions of Theorem \ref{mainthm:RCD_N_bdd}, with
  $$N=(k-1)+N'=4s(s-1)+k+1.$$

  We now briefly describe the proof of Theorem \ref{mainthm:RCD_N_bdd}. 
  
  The first step is to recover a weighted Riemannian metric of low regularity on the regular set of $Y$. Specifically, under assumption (1) of Theorem \ref{mainthm:RCD_N_bdd}, we show that the regular set of $Y$ is homeomorphic to $(0,\infty)\times H$ and carries a weighted Riemannian metric
  \begin{equation}\label{eq:intro_weighted_Riem}
  g=dr^2+g_r,\quad \meas=\rho(r)\di r\wedge \Omega,
  \end{equation}
  where $\{g_r\}_{r>0}$ is a family of left-invariant Riemannian metrics on $H$ that is continuous and locally $W^{1,2}$ in $r$, $\rho(r)$ is a $\CDe(0,N)$ density function, and $\Omega$ denotes a Haar measure on $H$. Then we apply the work of Mondino--Ryborz \cite{MondinoRyborz} to derive that this weighted Riemannian metric has nonnegative distributional $N$-Bakry--Emery Ricci curvature. See Theorem \ref{thm:RCD+nilpotent} for a complete description of $Y$. This part may be seen as an extension of \cite{CMS26}*{Proposition 3.7} and \cite{NPZ26}*{Section 3}, where the group $H$ is assumed to be one-dimensional. Although our proof of Theorem \ref{thm:RCD+nilpotent} follows a similar general strategy as \cite{NPZ26}*{Section 3}, several key steps require different ideas in order to handle the higher dimensional nilpotent group actions. We refer the readers to Section \ref{sec:statements_nilpotent+RCD} for an outline of the proof.

  The second step is to use the weighted Riemannian metric \eqref{eq:intro_weighted_Riem} to estimate $N$. A key input is a new curvature inequality, Proposition \ref{prop:Ric_rr}. It bounds the Bakry--Emery Ricci curvature by the ordered eigenvalues of $G(r)$, where $\{G(r)\}_{r>0}$ is a family of symmetric matrices representing the metrics $\{g_r\}_{r>0}$. Then under assumption (2) of Theorem \ref{mainthm:RCD_N_bdd}, we can estimates the smallest and largest eigenvalues of $G(r)$ as $r\to 0$ (see Section \ref{sec:eigenvalues_by_Haus_dim}). Combining the curvature inequality and the eigenvalue estimates, we prove the desired lower bound for $N$ in Section \ref{sec:prove_N_bdd}.

  \noindent\emph{Acknowledgments.} J. Pan is partially supported by the National Science Foundation DMS-2304698 and Simons Foundation Travel Support for Mathematicians. D. Navarro and X. Zhu are partially supported by the AMS-Simons Travel Grant. Most of the manuscript was written during X. Zhu's visit of the Mathematics Department at UC Santa Cruz; the authors would like to thank the department for hospitality and support.

\end{exmp}

\section{Preliminaries}\label{sec:pre}
\subsection{$\RCD$ spaces}

Our proof of Theorem \ref{mainthm:nil_dim} will rely on blow-downs of Riemannian manifolds with nonnegative Ricci curvature, and such blow-downs (equipped with a limit renormalized measure) satisfy the $\RCD(0,N)$ condition. This section is devoted to a brief introduction of $\RCD(0,N)$ spaces.

In what follows, a {metric measure space} (m.m.s.\@ for short) is a triple $(Y,d,\meas)$ such that $(Y,d)$ is a complete and separable metric space, and $\meas$ is a nonnegative $\sigma$-finite Borel measure on $Y$ with full support.

The seminal work of Lott and Villani \cite{Lott-Villani_09} and Sturm \cites{Sturm_I_06,Sturm_II_06} gave rise to the notion of $\CDe(K,N)$ spaces, which generalize Riemannian manifolds with $\Ric\ge K$ and $\dim \le N$. In this paper, we only deal with the case $K=0$, whose definition we recall below (we refer the reader to \cite{Villani_09} for an extensive discussion of $\CDe(K,N)$ spaces in general).

\begin{defn}[$\mathrm{CD}(0,N)$ spaces]
    Let $N\in (1,\infty)$. An m.m.s.\@ $(Y,d,\meas)$ satisfies the {$\mathrm{CD}(0,N)$ condition} if, given any pair of probability measures {with finite second-order moment} $\mu_0,\mu_1$ that are absolutely continuous w.r.t.\@ $\meas$, there exists a $W_2$-geodesic $\{\mu_t\}_{0\le t\le1}$ from $\mu_0$ to $\mu_1$ such that, for every $N'\ge N$, we have the following property:
        \begin{equation*}
         \mathcal{S}_{N'}(\mu_t\mid\meas)\le t\mathcal{S}_{N'}(\mu_1\mid\meas)+(1-t)\mathcal{S}_{N'}(\mu_0\mid\meas),\quad \forall 0\le t\le 1,
    \end{equation*}
    where $\mathcal{S}_{N'}(\cdot\mid\meas)$ denotes the R\'{e}nyi entropy with parameter $N'$ associated with $\meas$.
\end{defn}

Before introducing $\RCD(0,N)$ spaces, let us introduce Sobolev spaces via test plans and minimal upper gradient, following \cite{gigli_nonsmooth}. We first recall the notion of metric derivative for curves in an m.m.s.\@ $(Y,d,\meas)$.

\begin{defn}[Metric derivative, \cite{AmbrosioTilli_book}*{Section 4.1}]
    Given a curve $\gamma:[0,1]\to (Y,d)$ and $t\in[0,1]$, the metric derivative of $\gamma$ at $t$ with respect to a reference metric $d$ is defined as
$$|\dot\gamma(t)|_d=\lim\limits_{s\to 0} \dfrac{d(\gamma(t+s),\gamma(s))}{|s|}$$
whenever the limit exists. If $\gamma$ is absolutely continuous, then the limit exists $\mathcal{L}^1$-a.e.\@ on $[0,1]$.  
\end{defn}

\begin{defn}[Test plan]
    Let $e_t\colon \gamma\in C^0([0,1], Y)\to \gamma(t)\in Y$ be the time-$t$ evaluation map ($t\in [0,1]$) and let $\bm{\pi}\in \mathcal{P}(C^0([0,1],Y))$ be a probability measure on $C^0([0,1],Y)$. We say that $\bm{\pi}$ is a test plan if there exists a constant $C(\bm{\pi})>0$ such that
    \[
    (e_t)_{\sharp}\bm{\pi}\le C(\bm{\pi}) \meas, \quad \forall t\in [0,1],
    \]
    and 
    \[
    \int\int_0^1|\dot\gamma(t)|_d^2\, \di t\di\bm{\pi}(\gamma)<\infty.
    \]
    If $\gamma$ is not absolutely continuous we use the convention that $\int_0^1|\dot\gamma(t)|_d^2\, \di t=\infty$.
\end{defn}

\begin{defn}[Minimal weak upper gradient]
    Given an $\meas$-measurable function $f:Y\to \R$, an $\meas$-measurable function $G:Y\to [0,\infty]$ is called a weak upper gradient of $f$ if
    \[
    \int \left|f(\gamma(1))-f(\gamma(0))\right|\, \di\bm{\pi}(\gamma)\le \int \int_0^1G(\gamma(t))|\dot\gamma(t)|\,\di t\di\bm{\pi}(\gamma),\text{ for all test plans $\bm{\pi}$.}
    \]
    Furthermore, we say that $f$ is in the Sobolev class $S^2(Y,d,\meas)$ if it admits a weak upper gradient $G\in L^2(Y,\meas)$. We say that $G$ is a minimal weak upper gradient if $|G|\le |\tilde G|$ holds $\meas$-a.e.\@ for any weak upper gradient $\tilde G$.
\end{defn}

\begin{rem}
    The existence of a minimal weak upper gradient of a function $f\in S^2(Y,d,\meas)$ is established in \cite{ambrosio_calculus_2014}*{Definition 5.11}. We denote the minimal weak upper gradient of $f$ by $|Df|_w$.
\end{rem}

\begin{defn}[Sobolev space $W^{1,2}$]
    We define the Sobolev space $W^{1,2}(Y,d,\meas)$ as the space $ L^2(Y,\meas)\cap S^2(Y,d,\meas)$ equipped with the norm 
    \[
    \|f\|_{W^{1,2}}^2\defeq \|f\|_{L^2}^2+\||Df|_w\|_{L^2}^2,
    \]
    which turns $W^{1,2}(Y,d,\meas)$ into a Banach space. To simplify notations, we write the Sobolev space as $W^{1,2}(Y)$ when the reference metric and measure are clear from the context.
\end{defn}

\begin{defn}[Locally Sobolev space $W^{1,2}_{loc}$]\label{defn:localy_Sobolev}
   Let $\Omega\subset Y$ be an open subset. We say that $f\in L^2_{{loc}}(\Omega,\meas)$ belongs to $W^{1,2}_{loc}(\Omega,d,\meas)$ if
$\eta f \in W^{1, 2}(Y, d, \meas)$ for any $\eta \in\Lip_c(\Omega, d)$.
\end{defn}

\begin{rem}\label{rem:loc_sobolev_algebra}
    When $\meas$ is doubling (which is the case for $\CDe(0,N)$ spaces), the following properties hold:
\begin{itemize}
    \item ${L}^{\infty}_{loc}(\Omega)\cap W^{1,2}_{loc}(\Omega)$ is an algebra,
    \item if $\varphi\in W^{1,2}_{loc}(\Omega)$ satisfies $1/\varphi\in L^{\infty}_{loc}(\Omega)$, then $1/\varphi\in  W^{1,2}_{loc}(\Omega)$,
\end{itemize}
we refer the reader to  \cite{gigli_pde_2013}*{Section 2.2} for more details.
\end{rem}

While Ricci limit spaces satisfy the $\mathrm{CD}$ condition, it is also the case of certain non-Riemannian spaces such as non-Euclidean normed vector spaces and, more generally, certain non-Riemannian Finsler spaces \cite{Ohta_09}. To stay as close as possible to the Riemannian situation, Ambrosio--Gigli--Savar\'{e} introduced $\RCD(K,\infty)$ spaces in \cite{Ambrosio-Gigli-Savare_14} as $\mathrm{CD}(K,\infty)$ spaces whose Cheeger energy is a quadratic form (see also \cite{erbar_equivalence_2015} for the finite dimensional case).

\begin{defn}[Cheeger energy]
    The {Cheeger energy} $\Ch\colon L^2(Y,\meas)\to [0,\infty]$ is defined by
    \begin{equation}\label{eq:defchee}
        \Ch(f)\defeq\begin{cases}
            \frac12\int_Y |Df|_w^2\,\di\meas &\text{if $f$ admits a weak upper gradient in $L^2(Y,\meas)$}\\
            \infty &\text{otherwise}
        \end{cases}.
    \end{equation}
\end{defn}

\begin{defn}[Infinitesimal Hilbertianity \cites{Ambrosio-Gigli-Savare_14,gigli_diff_2015} and $\RCD(0,N)$ spaces]
    A metric measure space $(Y,d,\meas)$ is called infinitesimally Hilbertian if $\Ch$ is a quadratic form or, equivalently, if $W^{1,2}(Y)$ is a Hilbert space. An $\RCD(0,N)$ space ($N\in(1,\infty)$) is an infinitesimally Hilbertian $\mathrm{CD}(0,N)$ space.
\end{defn}

\begin{rem}\label{rem:Df=lip(f)}
    For $\RCD(0,N)$ spaces ($N\in(1,\infty)$), the minimal weak upper gradient construction developed by Ambrosio, Gigli, and Savaré coincides with Cheeger's construction in \cite{cheeger_differentiability_1999} (see \cite{ambrosio_calculus_2014}*{Remark 4.7}). We recall that an $\RCD(0,N)$ space $(Y,y,d,\mathfrak{m})$ is necessarily doubling and supports a local Poincaré inequality. As a result, \cite{cheeger_differentiability_1999}*{Theorem 6.1} implies that the minimal weak upper gradient of a Lipschitz function $\varphi\in\Lip(Y)$ satisfies $|D\varphi|_w = \lip(\varphi)$ $\meas$-a.e.\@, where:
    $$
    \lip(\varphi)\colon x\in Y\mapsto \limsup_{y\to x}\frac{|\varphi(y)-\varphi(x)|}{d(x,y)}\in[0,\infty),
    $$
    denotes the local Lipschitz constant of $\varphi$.
\end{rem}

\begin{rem}\label{rem:RCD_pmGH_stability}
    The class of pointed and normalized $\RCD(0,N)$ spaces is stable under pointed measured-Gromov--Hausdorff convergence (see \cite{GMS15}). In particular, for any open $N$-manifold $M$ with $\Ric\ge 0$, any asymptotic cone of $M$ with any limit renormalized measure is an $\RCD(0,N)$ space.
\end{rem}

A key property of $\RCD(0,N)$ spaces (and $\RCD(K,N)$ spaces more generally) is the existence of a well-defined dimension (see \cite{BrueSemola20}).

\begin{thm}[Rectifiable dimension]
    Let $(Y,d,\meas)$ be an $\RCD(0,N)$ space ($N\in(1,\infty)$). There exists an integer $k \le N$ such that $\meas(Y \setminus \mathcal{R}_k)=0$, where $\mathcal{R}_k$ denotes the $k$-dimensional regular set of $(Y,d,\meas)$, i.e., the set of points where $\R^k$ is the unique tangent cone. We call this integer $k$ the rectifiable dimension of $(Y,d,\meas)$.
\end{thm}

In addition, $\RCD(0,N)$ spaces support a second-order differential calculus. We recall the definition of the Laplacian and refer the reader to \cites{gigli_diff_2015,gigli_nonsmooth} for a comprehensive study.

\begin{defn}[Domain of the Laplacian $D(\Delta)$]
    Let $(Y,d,\meas)$ be an $\RCD(0,N)$ space ($N\in(1,\infty)$). A Sobolev function $\psi\in W^{1,2}(Y)$ is in the domain of the Laplacian $D(\Delta)$ if there exists $h\in L^2(Y)$ such that, for every $\phi\in \Lip_c(Y)$, we have
    $$
    \int_Y\langle \nabla \phi,\nabla \psi\rangle\di\m=-\int_Y\phi h\di\mathfrak{m},
    $$
    where $\langle \nabla \phi,\nabla \psi\rangle\defeq\frac{1}{4}\left(|D(\phi+\psi)|_w^2-|D(\phi-\psi)|_w^2\right)$. 
\end{defn}

The class of test functions will play an essential role in Section \ref{sec:loc_sobolev_metric} when deriving regularity properties of our metric.

\begin{defn}[Test functions]
    Given an $\RCD(0,N)$ space $(Y,d,\meas)$, a test function is an element $\chi\in D(\Delta)$ such that
    \[
    |\nabla\chi|\in L^{\infty}(Y)
        \quad\text{ and }\quad
        \Delta\chi\in W^{1,2}(Y).
    \]
    We denote $\mathrm{TestF}(Y,d,
    \meas)$ the set of all test functions on $(Y,d,\meas)$, and will write $\mathrm{TestF}(Y)$ when the measure and distance are clear from the context.
\end{defn}

\begin{rem}
    Test functions satisfy two important properties that will play a role in our approach (we refer the reader to \cite{gigli_nonsmooth} and the references therein for an introduction to the class of test functions).
    \begin{enumerate}[label=(\roman*), ref=(\roman*)]
        \item\label{item:test-function-Lipschitz}
        Every test function $\chi\in \mathrm{TestF}(Y)$ admits a Lipschitz representative.
    
        \item\label{item:test-function-cutoff}
        Given an open set $\Omega\subset Y$ and a set $B\subset\Omega$ satisfying
        \[
            d(B,Y\setminus\Omega)>0,
        \]
        there exists $\chi\in \mathrm{TestF}(Y)$ such that
        \[
            \chi=1
            \quad \meas\text{-a.e.\ on }B,
            \qquad
            \chi=0
            \quad \meas\text{-a.e.\ on }Y\setminus\Omega.
        \]
    \end{enumerate}
    Thanks to \ref{item:test-function-Lipschitz}, we may assume without loss of generality that test functions are Lipschitz.
\end{rem}

After establishing the regularity properties of our metric, we will rely on the results of Mondino--Ryborz \cite{MondinoRyborz} to establish explicit curvature inequalities in our setting. Below, we recall the definition of the distributional $N$-Bakry--Emery Ricci curvature and refer the reader to \cite{MondinoRyborz}*{Sections 2 and 3} and references therein for more details on metrics of low regularity.

\begin{defn}\label{defn:distributional_BE}
    Let $M$ be an $n$-dimensional smooth manifold ($n\ge 2$) equipped with
    \begin{enumerate}[label=(\alph*)]
        \item\label{item:metric}
        a Riemannian metric $g$ locally given as a positive definite matrix
        whose coefficients $g_{ij}$ and inverse coefficients $g^{ij}$ lie in
        $L^\infty_{\mathrm{loc}}\cap H^1_{\mathrm{loc}}(M)$;
    
        \item\label{item:measure_in_def_BE}
        a weighted measure
        $\meas=e^{-V}\,\mathrm{dvol}_g$, where
        $V\in C^0\cap H^1_{\mathrm{loc}}(M)$.
    \end{enumerate}
    Given smooth vector fields $X,Y$, we define $\nabla_X Y\defeq (\nabla^\flat_XY)^\sharp$, where $\nabla^\flat_XY$ is the distributional $1$-form whose pairing with a smooth vector field $Z$ satisfies 
     \begin{equation}\label{eq:Koszul}
          2\langle\nabla^\flat_X Y,Z \rangle\defeq Xg(Y,Z)+Yg(X,Z)-Zg(X,Y)-g(X,[Y,Z])-g(Y,[Z,X])+g(Z,[X,Y]).
     \end{equation}
    Given a smooth $1$-form $\theta$, the distributional covariant derivative is defined via duality as
    \begin{equation*}
        \langle\nabla_X\theta,Z\rangle\defeq X\langle\theta, Z\rangle-\langle\theta,\nabla_X Z\rangle.
    \end{equation*}
    The distributional Riemannian curvature tensor $\mathrm{Rm}$ is defined as
    \begin{equation*}
        \langle\mathrm{Rm}(X,Y)Z,\theta\rangle\defeq X\langle\nabla_YZ,\theta\rangle-Y\langle\nabla_XZ,\theta\rangle-\langle\nabla_YZ,\nabla_X\theta\rangle+\langle\nabla_XZ,\nabla_Y\theta\rangle-\langle\nabla_{[X,Y]}Z,\theta\rangle.
    \end{equation*}
    Given a local frame $\{X_1,\ldots,X_n\}$, the distributional Ricci curvature is obtained via
    \begin{equation}\label{eq:distributional_Ric}
        {\Ric(Y,Z)=g^{ij}\cdot g(\mathrm{Rm}(X_i,Y)Z,X_j)},\text{ where }g_{ij}=g(X_i,X_j).
    \end{equation}
    Given $N\in [n,\infty]$, the distributional $N$-Bakry--Emery Ricci tensor is defined as 
    \begin{equation*}
        \Ric_{\m,N}\defeq
            \begin{cases}
            \Ric+\hess V,
            &
            \text{if } N=\infty,
            \\[0.3em]
            \Ric+\hess V-\dfrac{\di V\otimes\di V}{N-n},
            &
            \text{if } n<N,
            \\[0.3em]
            \Ric,
            &\text{if } n=N, \text{ in which case we assume } V \text{ is constant}.
            \end{cases}
    \end{equation*}
    and where
    \begin{equation}\label{eq:distributional_Hess}
        \hess(V)(X,Y)=X(Y(V))-(\nabla_XY)(V)
    \end{equation}
    is the distributional Hessian of $V$.
\end{defn}

\begin{rem}\label{rem:H^1_loc}
    The space $H^{1}_{loc}(M)$ consists of functions $\varphi\colon M\to\R$ such that, for every chart $\psi\colon U\to\R^n$, the function $\varphi\circ\psi^{-1}$ lies in $W^{1,2}_{loc}(\R^n)$.
\end{rem}

We conclude our preliminaries with the following theorem which converts the $\RCD(0,N)$ condition into an inequality on the distributional $N$-Bakry--Emery Ricci curvature.

\begin{thm}[\cite{MondinoRyborz}*{Theorem 7.2}]\label{thm:CD-to-Distribution}
    Let $Y$ be a smooth manifold with boundary equipped with a geodesic distance $d$ and a nonnegative $\sigma$-finite Borel measure $\meas$ such that:
    \begin{itemize}
        \item $(Y,d)$ is the metric completion of $(Y_+,d_g)$, where $Y_+= Y\backslash \partial Y$, and $g$ is a continuous Riemannian metric on $Y_+$ with geodesic distance $d_g$ and Christoffel symbols in $L^2_{loc}(Y_+)$,
        \item $\meas=\iota_{\#}(e^{-V}\mathrm{dvol}_g)$, where $V\in C^0\cap W^{1,2}_{loc}(Y_+)$ and $\iota\colon Y_+\hookrightarrow Y$ is the inclusion map.
    \end{itemize}
    If $(Y,d,\m)$ satisfies the $\RCD(K,N)$ condition for $K\in \R$ and {$N\in[\dim(Y),\infty]$}, then the $N$-Bakry--Emery Ricci curvature $\Ric_{\meas,N}$ satisfies 
    \[
     \Ric_{\meas,N}\ge Kg,
    \]
    {on $Y_+$} (in the sense of distributions).
\end{thm}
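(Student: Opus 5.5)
Since this is \cite{MondinoRyborz}*{Theorem 7.2}, we do not reprove it here; we only indicate the route we would take. It passes from the synthetic condition to the distributional one through the Bochner inequality. First, since $(Y,d,\meas)$ is $\RCD(K,N)$, the Eulerian characterisation (Erbar--Kuwada--Sturm, Ambrosio--Gigli--Savar\'e) gives the weak $N$-Bochner inequality
\[
\tfrac12\int \Delta\phi\,|\nabla f|^2\di\meas-\int\phi\,\langle\nabla f,\nabla\Delta f\rangle\di\meas\ \ge\ \int\phi\Big(K|\nabla f|^2+\tfrac{(\Delta f)^2}{N}\Big)\di\meas .
\]
This holds for all test functions $f$ and all nonnegative $\phi\in D(\Delta)\cap L^\infty$ with $\Delta\phi\in L^\infty$.

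Second, we localise to $Y_+$. We take $f,\phi$ smooth with compact support in a chart of $Y_+$. The hypotheses make $\meas$ and $d$ there those of the continuous weighted metric $(g,e^{-V}\dvol_g)$. So we must check two things:
\begin{itemize}
\item the Cheeger energy agrees with $\int|\nabla f|_g^2e^{-V}\dvol_g$ (this uses continuity of $g$ and that $d$ is the completion of $d_g$);
\item the weak Laplacian is $\Delta f=e^{V}\operatorname{div}_g(e^{-V}\nabla f)$.
\end{itemize}
With $g\in C^0\cap H^1_{loc}$ and Christoffel symbols in $L^2_{loc}$, this $\Delta f$ lies only in $L^2_{loc}$. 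Hence such $f$ need not be test functions, and we would run an approximation, e.g.\ heat flow or mollification of $g$, to pass the inequality to smooth compactly supported data.

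Third, we integrate by parts in the Bochner inequality, using the distributional Koszul formula \eqref{eq:Koszul}. The aim is to rewrite its left side as
\[
\int\phi\big(|\hess f|^2+\Ric_{\meas,\infty}(\nabla f,\nabla f)\big)\di\meas,
\]
with $\Ric_{\meas,\infty}$ understood as a distribution via \eqref{eq:distributional_Ric} and \eqref{eq:distributional_Hess}. This is the smooth Bochner identity. It must be verified term by term at $H^1$ regularity, where products of Christoffel symbols are $L^1_{loc}$ and one derivative falls on $\phi$.

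Finally, we extract the tensor inequality. Fix a point and a vector $X$. We choose $f$ with $\nabla f\approx X$ and $\hess f$ chosen to saturate the $N$-dimensional trace inequality:
\[
\hess f=\lambda g \quad\text{with}\quad \tr\hess f=\Delta f+\langle\nabla V,\nabla f\rangle .
\]
Optimising over $\lambda$ turns the term $(\Delta f)^2/N$ into $(\langle \nabla V,X\rangle)^2/(N-n)$. This yields $\Ric_{\meas,N}(X,X)\ge K|X|^2$ tested against $\phi\ge0$.

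The main obstacle is this last step at low regularity. Since $g$ is only continuous, one cannot prescribe $\hess f$ pointwise. We would try families $f_\epsilon$ built in normal-type coordinates for a mollified metric $g_\epsilon$. Their Hessians converge to the desired ones in $L^2_{loc}$, and the error terms vanish when tested against a fixed $\phi$. This uses the $H^1_{loc}$ convergence $g_\epsilon\to g$ and $V_\epsilon\to V$, together with the $L^2_{loc}$ bound on the Christoffel symbols.
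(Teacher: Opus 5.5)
At the level of what is actually asserted, your proposal agrees with the paper. Neither reproves \cite{MondinoRyborz}*{Theorem 7.2}. The only point the paper adds is that $g$ need not extend to $\partial Y$, which is harmless because the inequality is local and only claimed on $Y_+$. Your restriction to $f,\phi$ compactly supported in charts of $Y_+$ captures exactly this (note $\meas(\partial Y)=0$ since $\meas=\iota_\#(e^{-V}\dvol_g)$).

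However, the route you sketch for the internal argument does not go through at the last step. ``Fix a point'' has no meaning for a distribution until you know it is a measure. On a fixed $\Spt\phi$ you cannot have both $\nabla f_\epsilon\to X$ uniformly and $\hess f_\epsilon\to\lambda g$ in $L^2$: passing to the limit in distributions (using $\Gamma\in L^2_{loc}$) would force $\nabla X^\flat=\lambda g$ on $\Spt\phi$. For $N<\infty$ the optimal $\lambda$ is moreover the non-constant $L^2$ function $-\langle\nabla V,X\rangle/(N-n)$. Concretely, $g_\epsilon$-normal coordinates fix the Hessian only at the centre, and the error away from it is governed by $\partial\Gamma_\epsilon$, which blows up as $\epsilon\to0$ when $g$ is only $C^0\cap H^1$. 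So the errors do not vanish against a fixed $\phi$. Shrinking the supports instead requires measure-theoretic information that your outline never produces.

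There are two ways to close this.

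\textbf{First route (no prescribed Hessian).} On $\RCD(K,N)$ spaces the Bochner inequality self-improves to one with the Hessian on the right. This combines Gigli's $|\hess f|_{HS}^2$ term \cite{gigli_nonsmooth} with Han's $N$-dimensional refinement, where $n=\dim Y$:
\[
\mathbf{\Delta}\tfrac{|\nabla f|^2}{2}-\langle\nabla f,\nabla\Delta f\rangle\meas\ \ge\ \Big(K|\nabla f|^2+|\hess f|_{HS}^2+\tfrac{(\tr\hess f-\Delta f)^2}{N-n}\Big)\meas ,
\]
with $\tr\hess f=\Delta f$ if $N=n$. It must be suitably extended beyond test functions via the measure-valued Ricci tensor. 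For $f$ smooth with compact support in a chart of $Y_+$, you identify the nonsmooth Hessian with $\partial^2 f-\Gamma\,\partial f$, and $\tr\hess f-\Delta f$ with $\langle\nabla V,\nabla f\rangle$. Subtracting the Bochner identity from your third step then cancels $|\hess f|^2$ and gives $\Ric_{\meas,N}(\nabla f,\nabla f)\ge K|\nabla f|^2$ directly.

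\textbf{Second route (keep the weak inequality).} It says that $\Ric_{\meas,\infty}(\di f,\di f)+|\hess f|^2-K|\di f|^2-\frac{(\Delta f)^2}{N}$ is a nonnegative distribution, hence a measure. Polarizing over $f$ linear in the chart shows every coefficient of $\Ric_{\meas,\infty}$ is a Radon measure. Only then can your pointwise choice be made rigorous: take $f$ quadratic in the chart from a countable dense family, evaluated at Lebesgue points of $\Gamma$ and $\nabla V$. This bounds the absolutely continuous density, while the singular part is nonnegative.

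In both routes, the passage from gradients to an arbitrary smooth $X$ is done by noting that the coefficient matrix of $\Ric_{\meas,N}-Kg$ is a positive semidefinite matrix-valued measure. It is not done by approximating $X$ by $\nabla f$.
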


\begin{rem}
    In comparison with \cite{MondinoRyborz}, a subtle nuance in our setting is that the Riemannian metric $g$ may not extend to the manifold boundary $\partial Y$, even if the induced distance function does. However by a standard cutoff argument, the distributional Ricci curvature lower bound holds in the interior. See \cite{NPZ26}*{section 3.6} for detailed discussion for the case $N=\infty$ (see also \cite{CMS26}*{Theorem 3.34}). When $N<\infty$, there is no modification needed since the original proof \cite{MondinoRyborz}*{Proof of Theorem 6.20} already noticed that it suffices to prove the distributional inequality in one coordinate patch.
\end{rem}

\subsection{Implication of nilpotency step in equivariant asymptotic cones}\label{sec:equiv_asym_cones}

In this section, we recall the notion of equivariant asymptotic cones of open manifolds with $\Ric\ge 0$. We also collect the main results from \cite{Pan26} on the relation between the nilpotency step of $\pi_1(M)$ and equivariant asymptotic geometry.

Let $M$ be an open manifold with $\Ric\ge 0$. For any sequence $r_i\to\infty$, after passing to a subsequence, we obtain Gromov--Hausdorff convergence
$$(r_i^{-1}M,p)\overset{GH}\longrightarrow (X,x).$$
The limit space $(X,x)$ is called an \textit{asymptotic cone} of $M$. Note that $(X,x)$ may depend on the choice of the scaling sequence $r_i^{-1}$. Equipped with a limit renormalized measure $\meas$ \cite{CCI}*{Section 1}, the metric measure space $(X,d,\meas)$ is $\RCD(0,n)$, where $n$ is the dimension of $M$ (see Remark \ref{rem:RCD_pmGH_stability}).

To study the fundamental group $\pi_1(M)$ of $M$ with $\Ric\ge 0$, it is natural to apply the equivariant Gromov--Hausdorff convergence \cites{Fukaya86,FY92} to a blow-down sequence of the universal cover. We denote $\widetilde{M}$ the Riemannian universal cover of $M$ and $\Gamma=\pi_1(M,p)$. For a sequence $r_i\to\infty$, after passing to a subsequence, we have equivariant Gromov--Hausdorff convergence
\begin{equation}\label{eq:equiv_asym_cone}
   \begin{CD}
    (r_i^{-1} \widetilde{M},\tilde{p},\Gamma) @>\mathrm{GH}>> (Y,y,G) \\
	@VV\pi V @VV \pi V\\
	(r_i^{-1} M,p) @>\mathrm{GH}>> (X,x).
    \end{CD}
\end{equation}
In \eqref{eq:equiv_asym_cone}, the limit group $G$ is a closed subgroup of the isometry group of $Y$; moreover, $(X,x)$ is isometric to the quotient metric space $(Y/G,\bar{y})$ by \cite{Fukaya86}*{Theorem 2.1}. We call $(Y,y,G)$ an \textit{equivariant asymptotic cone} of $(\widetilde{M},\Gamma)$. In general, the equivariant asymptotic cones of $(\widetilde{M},\Gamma)$ may behave wildly without uniform controls; see \cite{BNS_7}*{Section 2.2.4}.

The second-named author introduced the notion of escape rate $E(M,p)$ that measures how fast the representing geodesic loops of $\Gamma=\pi_1(M,p)$ escape from metric balls centered at $p$ \cite{Pan21}*{Definition 1.1}. By definition, $E(M,p)$ is a quantity between $0$ and $1/2$. Under the non-maximal escape rate condition $E(M,p)\not =1/2$, the equivariant asymptotic geometry of $(\widetilde{M},\Gamma)$ is well-understood thanks to the work \cite{Pan26}. For manifolds with sublinear diameter growth, they satisfy this non-maximal escape rate condition due to \cite{NPZ24}*{Proposition 3.2}.

\begin{defn}
   Let $(M,p)$ be an open $n$-manifold. We say that $(M,p)$ has sublinear diameter growth, if
   $$\limsup_{r\to\infty} \dfrac{\mathrm{diam}(\partial B_r(p))}{r}=0,$$
   where $\mathrm{diam}(\partial B_r(p))$ is measured using the extrinsic metric.
\end{defn}

\begin{prop}\label{prop:E_not_1/2}
   Let $(M,p)$ be an open $n$-manifold of $\mathrm{Ric}\ge 0$ and sublinear diameter growth, then $E(M,p)\neq1/2$.
\end{prop}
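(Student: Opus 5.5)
The plan is to argue by contradiction through equivariant asymptotic cones, using that sublinear diameter growth forces every asymptotic cone of $M$ to be a ray. Recall that $E(M,p)$ is defined as
$$E(M,p)=\limsup_{|\gamma|\to\infty}\frac{d_H(p,c_\gamma)}{|\gamma|},$$
where $c_\gamma$ is a minimal geodesic loop at $p$ representing $\gamma\in\Gamma$ and $|\gamma|=d(\tilde p,\gamma\tilde p)$ is its length. The bound $E\le 1/2$ is automatic, since a loop of length $L$ cannot leave $B_{L/2}(p)$.

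\textbf{Step 1 (setup).} Suppose $E(M,p)=1/2$. Then there are $\gamma_i\in\Gamma$ with $L_i\defeq|\gamma_i|\to\infty$, and points $q_i\in c_{\gamma_i}$ with $d(p,q_i)\ge(1/2-\epsilon_i)L_i$, where $\epsilon_i\to 0$. Splitting $c_{\gamma_i}$ at $q_i$ gives two arcs. Each has length at least $d(p,q_i)$ and the two lengths sum to $L_i$, so both arcs are $\epsilon_iL_i$-almost minimal geodesics from $p$ to $q_i$. In particular the parametrization by arclength satisfies $d(p,c_{\gamma_i}(t))=\min(t,L_i-t)+o(L_i)$.

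\textbf{Step 2 (blow down).} Rescale by $r_i=L_i$ and pass to an equivariant asymptotic cone $(Y,y,G)$ over $(X,x)=(Y/G,\bar y)$ as in \eqref{eq:equiv_asym_cone}. In the limit:
\begin{itemize}
\item $\gamma_i\to g\in G$ with $d(y,gy)=1$;
\item the lifted loops converge to a minimal geodesic $\sigma$ from $y$ to $gy$;
\item the projection of $\sigma$ satisfies $d_X(x,\pi\sigma(t))=\min(t,1-t)$.
\end{itemize}
Sublinear diameter growth says each sphere $\partial B_r(x)\subset X$ has diameter $0$, i.e.\ is a single point. Since $X$ is an unbounded length space, it is therefore isometric to a ray $([0,\infty),0)$. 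Consequently every fiber $\pi^{-1}(t)$ is a single $G$-orbit, namely the set of points of $Y$ at distance $t$ from $Gy$.

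\textbf{Step 3 (shortcut).} For small $\delta>0$, the points $a=\sigma(1/2-\delta)$ and $b=\sigma(1/2+\delta)$ lie over the same point $1/2-\delta$ of the ray. Hence $b=ha$ for some $h\in G$. I would then try to write $g$ as a product of elements of $G$ with strictly smaller displacement, each realized by a path through the fiber over $1/2-\delta$ rather than through the fiber over $1/2$. Pulling these back to $\Gamma$ by the equivariant convergence would yield representatives of $\gamma_i$, or of elements of the same length class, whose minimal loops stay within $(1/2-\delta/2)L_i$ of $p$. This would contradict the choice of $\gamma_i$, since the maximizing escape along the sequence was assumed to approach $1/2$.

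\textbf{Main obstacle.} Step 3 is where I expect the difficulty. The naive triangle inequalities only give $d(y,hy)\le 1$ and $d(y,h^{-1}gy)\le 1-2\delta$, which is not a strict gain for both factors. So a single shortcut through one fiber does not contradict minimality.

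My first attempt would be to iterate the shortcut over a fine partition of levels, using that all orbits over $[0,1/2]$ are homogeneous and that $G$ acts transitively on each fiber. Alternatively, I would appeal to Pan's structural description of equivariant asymptotic cones with maximal escape rate from \cite{Pan21}. By that description, $E=1/2$ forces some limit orbit $Gy$ to fail to contain a point realizing the distance to a point in a nearby fiber. That is incompatible with $X$ being a ray whose fibers are single orbits.

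Once the contradiction is obtained in the limit, the conclusion $E(M,p)\neq 1/2$ follows immediately, matching \cite{NPZ24}*{Proposition 3.2}.
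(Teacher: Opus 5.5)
Your Steps 1 and 2 are fine and match the paper's starting point: sublinear diameter growth makes every asymptotic cone a ray, and the paper then simply invokes \cite{NPZ24}*{Proposition 3.2}. The gap is Step 3, which is where that cited result does its work, and you do not supply a contradiction. The shortcut idea cannot succeed as stated, because nothing in it uses curvature. The configuration you reach in Step 2 genuinely occurs without curvature. Take a line subdivided into edges of length $1/2$, with a ray attached at every other vertex, acted on by the infinite dihedral group generated by the reflections in two adjacent vertices. The quotient is a ray, and the unique segment between two adjacent orbit points passes through a point at distance $1/2$ from the orbit. Also, exhibiting other loops or other elements with smaller escape would not contradict a $\limsup$; the contradiction has to be found in the limit space. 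The fallback via \cite{Pan21} is not a usable statement either: $Gy$ is closed in the proper space $Y$, so it always contains a point realizing the distance to any given point.

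The missing ingredient is that $Y$ is non-branching, since it is a Ricci limit, or $\RCD(0,n)$; see \cite{Deng20}. Set $z=\sigma(1/2)$. Because $X$ is a ray, choose $w$ with $d(w,Gy)=1/2+\delta$ and a minimal segment $\tau$ from $w$ to $Gy$. Along $\tau$ the distance to $Gy$ drops at unit rate, so $\tau$ crosses the level set $\{d(\cdot,Gy)=1/2\}$, which is the single orbit $Gz$, at some point $kz$. After replacing $w,\tau$ by $k^{-1}w,k^{-1}\tau$, the segment passes through $z$ at time $\delta$. Concatenating $\tau|_{[0,\delta]}$ with either half of $\sigma$ gives two curves from $w$ of length $1/2+\delta=d(w,Gy)$, ending at $y$ and at $gy$ respectively. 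Both are therefore minimal geodesics. They agree on $[0,\delta]$ and then separate, since $\sigma(1/2-t)\neq\sigma(1/2+t)$ for $t>0$. This is branching, which is a contradiction. With this argument in place of Step 3, your outline becomes a self-contained proof instead of the paper's citation.
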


\begin{proof}
   Because $M$ has sublinear diameter growth, it has unique asymptotic cone as a ray $([0,\infty),0)$. Then the result follows directly from \cite{NPZ24}*{Proposition 3.2}.
\end{proof} 

Theorems \ref{thm:topol_dim} and \ref{thm:nil_dim} below are the main results from \cite{Pan26}.

\begin{thm}[\cite{Pan26}*{Theorem A(1) and Proposition 5.1}]\label{thm:topol_dim}
	Let $(M,p)$ be an open $n$-manifold with $\mathrm{Ric}\ge 0$. Suppose that $\Lambda=\pi_1(M,p)$ is a torsion-free nilpotent group and $E(M,p)\neq 1/2$. Then every equivariant asymptotic cone $(Y,y,G)$ of $(\widetilde{M},\Lambda)$ satisfies
    \begin{enumerate}
	\item the orbit $Gy$ has a natural connected and simply connected nilpotent group structure;
	\item any compact subgroup of $G$ fixes $y$;
	\item $G$ has at most finitely many components.
    \end{enumerate}
\end{thm}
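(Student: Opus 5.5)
The plan is to combine three ingredients: the structure theory of limit groups of isometries of $\RCD$ spaces, the fact that a limit of nilpotent groups of bounded step is nilpotent, and the non-maximal escape rate hypothesis, which will be used to rule out "rotational" behaviour at $y$.

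First, I would record the general structure of $G$.
\begin{itemize}
\item By Remark \ref{rem:RCD_pmGH_stability}, $Y$, equipped with a limit renormalized measure, is an $\RCD(0,n)$ space. Hence $\mathrm{Isom}(Y)$, and so its closed subgroup $G$, is a Lie group.
\item Since $\Lambda$ is nilpotent of some step $s$, every $(s+1)$-fold iterated commutator identity holds in $\Lambda$. Such identities pass to equivariant Gromov--Hausdorff limits, so $G$ is nilpotent of step at most $s$.
\item In particular $G_0$ is a connected nilpotent Lie group. It has a unique maximal compact subgroup $K_0$, which is central in $G_0$, and $G_0/K_0$ is connected, simply connected and nilpotent.
\end{itemize}

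Second, I would use $E(M,p)\neq 1/2$ to prove (2). I expect this to be the main step and the main obstacle.

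Suppose a compact subgroup $K\le G$ moves $y$. Let $\delta=\max_{k\in K} d(ky,y)>0$. Since $K$ is compact, its elements are limits of $\gamma_i\in\Lambda$ whose orbit displacement at $\tilde p$ is about $\delta r_i$. Their powers, however, stay within distance about $\delta r_i$ of $\tilde p$, because they converge into $K$.

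I would then apply a critical rescaling argument: choose the scale $r_i'\le r_i$ at which the closure of $\langle\gamma_i\rangle$ first moves $\tilde p$ a definite amount. At that scale, the minimal representing geodesic loops of $\gamma_i$ must pass near the boundary of the ball of radius comparable to their length. This forces the escape rate to be $1/2$ along a subsequence, a contradiction.

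The delicate point is handling all scales uniformly. For this I would use the scale-invariance of the escape rate together with a Gromov-type short basis for $\Lambda$. The same argument, applied to the compact group $K_0$, shows $K_0$ fixes $y$.

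Third, I would deduce (1) and (3).
\begin{itemize}
\item Because compact subgroups fix $y$, the isotropy group $G_y$ is the maximal compact subgroup of $G$.
\item Next I would show that the orbit $Gy$ is connected. The escape rate bound implies that every element of $\Lambda$ of displacement about $r_i$ is a product of boundedly many elements of small displacement, with loops that do not escape. In the limit this gives $Gy=G_0y$.
\item Therefore $Gy\cong G_0/(G_0\cap G_y)=G_0/K_0$, which carries the connected, simply connected nilpotent group structure required in (1).
\item Finally, $G/G_0$ injects into the finite-type data $G_y/(G_y\cap G_0)$. Since $G_y$ is a compact Lie group it has finitely many components, which gives (3).
\end{itemize}
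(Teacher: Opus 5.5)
The paper gives no proof of this statement. It is quoted from \cite{Pan26} (Theorem A(1) and Proposition 5.1), so there is nothing internal to compare with.

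The routine parts of your plan are correct. $G$ is a nilpotent Lie group. The midpoint splitting coming from $E(M,p)\neq 1/2$ also works: the midpoint of $c_\gamma$ lies within $(1/2-\epsilon)|\gamma|$ of $p$, so $\gamma=\beta\cdot(\beta^{-1}\gamma)$ with both factors of length at most $(1-\epsilon)|\gamma|$. Passing to the limit, every element of $G$ is a product of elements of arbitrarily small displacement. Since $G_0y$ is open in $Gy$, this gives $G=G_0G_y$. Then $G/G_0\cong G_y/(G_y\cap G_0)$ is finite, which proves (3) without using (2). Likewise, $K_0\subseteq G_y$ would give $Gy\cong G_0/K_0$, which is (1).

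However, (1) is not easier than (2). Once $G=G_0G_y$, the orbit $Gy\cong G_0/(G_0\cap G_y)$ is homotopy equivalent to the torus $K_0/(G_0\cap G_y)$. So simple connectivity of $Gy$ is equivalent to $K_0\subseteq G_y$, and all the content of the theorem sits in (2).

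Your sketch of (2) has no working mechanism.
\begin{itemize}
\item The claim that the powers of $\gamma_i$ stay within distance about $\delta r_i$ of $\tilde p$ is false. $\Lambda$ is torsion-free and acts freely and properly discontinuously, so $d(\tilde p,\gamma_i^m\tilde p)\to\infty$ as $m\to\infty$. Only powers $m\le m_i$ for some slowly growing $m_i$ are controlled.
\item $\langle\gamma_i\rangle$ is already closed in the discrete group $\Lambda$, so your critical scale $r_i'$ is not defined.
\item Most importantly, nothing supports the assertion that at some scale the minimal loops of $\gamma_i$ must reach distance about $|\gamma_i|/2$ from $p$.
\end{itemize}
In the limit, $E(M,p)\neq 1/2$ says only this: for each $g\in G$, some minimal geodesic from $y$ to $gy$ stays in the $(1/2-\epsilon)\,d(y,gy)$-neighborhood of $Gy$. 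That is fully compatible with a compact group moving $y$. For example, when $S^1$ rotates a flat cylinder $S^1\times\R$, minimal geodesics between points of an orbit run inside that orbit, and this persists under every rescaling.

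So the escape-rate condition, read in any single equivariant asymptotic cone or its rescalings, does not exclude a compact group moving $y$. Your sketch does not say what additional structure would force the loops to escape, whether it comes from interaction across scales or from properties of $\widetilde M$ and $\Lambda$. That missing argument is precisely the nontrivial content of \cite{Pan26}.
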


\begin{thm}[\cite{Pan26}*{Theorem A(2)}]\label{thm:nil_dim}
	Let $(M,p)$ be an open $n$-manifold with $\mathrm{Ric}\ge 0$. Suppose that $\Lambda=\pi_1(M,p)$ is a torsion-free nilpotent group and $E(M,p)\neq 1/2$. Then there exist an equivariant asymptotic cone $(Y,y,G)$ of $(\widetilde{M},\Lambda)$ and a closed $\mathbb{R}$-subgroup $L$ of $G$ such that the orbit $Ly$ has Hausdorff dimension $\dim_H(Ly)\ge \mathrm{step}(\Lambda)$. 
\end{thm}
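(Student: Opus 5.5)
The plan is to produce the $\R$-subgroup $L$ from a central element of $\Lambda$ in the deepest nontrivial term of the lower central series, and to show that on the limit orbit the displacement grows like the $s$-th root of the parameter. Set $s=\mathrm{step}(\Lambda)$ and fix $\gamma\in\zeta_{s-1}(\Lambda)$ of infinite order. Since $\Lambda$ is torsion-free, such a $\gamma$ exists and is central.

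\textbf{Algebraic input.} Fix a finite generating set $S$ of $\Lambda$. I would use the standard identity that $\gamma^{m^s}$ can be written as a product of boundedly many $s$-fold commutators. The entries of these commutators are words of length $O(m)$ in $S$. Consequently $|\gamma^{N}|_S\le C N^{1/s}$. By Bass--Guivarc'h, the reverse inequality $|\gamma^N|_S\ge c N^{1/s}$ also holds.

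\textbf{Geometric transfer and the limit.} Next I would compare word length with displacement $d(g\tilde p,\tilde p)$ at suitable scales. The upper comparison is immediate from the triangle inequality:
\[
d(\gamma^N\tilde p,\tilde p)\le \max_{\sigma\in S} d(\sigma\tilde p,\tilde p)\cdot |\gamma^N|_S .
\]
Then take $r_i\to\infty$ and pass to an equivariant limit $(Y,y,G)$. Theorem \ref{thm:topol_dim} makes $Gy$ a connected, simply connected nilpotent Lie group. The hypothesis $E(M,p)\ne1/2$ is what prevents the relevant loops from collapsing to a point or escaping, so $Gy$ is genuinely a nilpotent group rather than a point or a discrete set. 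Choosing integers $N_i$ with $d(\gamma^{N_i}\tilde p,\tilde p)\approx r_i$, the rescaled cyclic groups $\langle\gamma\rangle$ converge to a closed one-parameter subgroup $L=\{\ell_t\}\subset G$. Here $\ell_t$ is the limit of $\gamma^{\lfloor tN_i\rfloor}$.

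\textbf{Key estimate and the dimension bound.} The central estimate is a two-sided bound on this limit subgroup:
\[
c|t|^{1/s}\le d(\ell_t y,y)\le C|t|^{1/s}\quad\text{for } |t|\le 1 .
\]
Granting the lower bound, the map $Ly\to\R$, $\ell_t y\mapsto t$, is $s$-Hölder. A $s$-Hölder map cannot increase Hausdorff dimension by more than the factor $1/s$, so $1=\dim_H\R\le \dim_H(Ly)/s$. Hence $\dim_H(Ly)\ge s$.

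\textbf{Main obstacle: the lower bound.} The upper bound follows from the commutator identity, but the lower bound $d(\ell_t y,y)\ge c|t|^{1/s}$ is where I expect the real difficulty. Displacement can in principle be much smaller than word length, because different generators live at incomparable scales. My first approach would be to choose the scales critically. I would look among all equivariant asymptotic cones, and along a diagonal sequence of rescalings, for one in which the orbit $Gy$ carries a metric that is homogeneous under the graded dilations of its Lie algebra, i.e.\ a Carnot-type limit. This is in the spirit of Gromov's/Pansu's blow-down of nilpotent groups, with $E\ne1/2$ giving uniform control of the displacement of the generators relative to $r_i$.

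In such a self-similar limit, nonvanishing of the degree-$s$ central direction is forced by $\gamma\in\zeta_{s-1}$ having infinite order together with $\mathrm{step}=s$. Homogeneity then converts this into the exact scaling $d(\exp(tZ)y,y)=|t|^{1/s}d(\exp(Z)y,y)$, which gives both bounds at once. The delicate point I would check most carefully is that the degree-$s$ layer does not collapse in the chosen limit. This is exactly where non-maximal escape rate must be used.
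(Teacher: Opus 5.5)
The paper does not prove this statement; it imports it from \cite{Pan26}*{Theorem A(2)}, so there is no in-paper argument to compare against. Judged on its own, your proposal has a genuine gap exactly where you locate the difficulty. Worse, the estimate you aim for there is false in general.

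The two-sided bound $c|t|^{1/s}\le d(\ell_t y,y)\le C|t|^{1/s}$ would make $t\mapsto \ell_t y$ bi-H\"older, and so would force $\dim_H(Ly)=\mathrm{step}(\Lambda)$ exactly. The theorem is only an inequality, and for a reason. Take the Grushin halfplane $\mathbb{G}_\alpha$ of Example \ref{exmp:N_bdd_sharp}, which arises in \cite{PanWei_Hdim} as an equivariant asymptotic cone whose $\R$-action is the limit of a cyclic deck group. There $d(\ell_t y,y)\asymp |t|^{1/(1+\alpha)}$ for arbitrary $\alpha>0$.

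The upper half of your estimate does not follow from the commutator identity either. That identity gives $d(\gamma^{\lfloor tN_i\rfloor}\tilde p,\tilde p)/r_i\le C\,t^{1/s}\,N_i^{1/s}/r_i$. Since $r_i\approx d(\gamma^{N_i}\tilde p,\tilde p)\le C N_i^{1/s}$, the factor $N_i^{1/s}/r_i$ is bounded below but not above. Bounding it above requires $d(\gamma^{N}\tilde p,\tilde p)\ge cN^{1/s}$, which is the same missing reverse comparison. So even the existence of a continuous one-parameter limit $\ell_t=\lim\gamma^{\lfloor tN_i\rfloor}$ with this parametrization is not established.

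Your proposed fix, a Carnot-type self-similar cone in which the degree-$s$ layer is forced to be nonzero, cannot work as stated. Pansu's theorem concerns word metrics. The displacement $g\mapsto d(g\tilde p,\tilde p)$ of a non-cocompact deck action is only bounded above by word length, not below, so elements can be distorted at rates unrelated to their commutator degree. The paper stresses right after the statement that $G$ may be abelian in \emph{every} equivariant asymptotic cone even though $\Lambda$ is not (\cite{Pan24}*{Appendix}). In that case all limits of commutators $[a_1^{m},[\dots,a_s^{m}]]$ are trivial, there is no bracket-generated degree-$s$ direction to extract, and the step of $\Lambda$survives only as a Hausdorff-dimension phenomenon. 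Even in a genuinely self-similar cone, the scaling exponent of an orbit is geometric ($1/(1+\alpha)$ in $\mathbb{G}_\alpha$), not $1/s$. Your appeal to $E(M,p)\neq1/2$ at this point is not an argument.

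What you actually need is only a one-sided dimension bound, and it is better obtained as a measure estimate than as a pointwise H\"older estimate along a prescribed parametrization. One possible route:
\begin{enumerate}
\item The commutator identity says $\Phi(R)\defeq\#\{j: d(\gamma^j\tilde p,\tilde p)\le R\}\ge cR^s$.
\item Since $\Phi(R)/R^s$ is bounded below, a maximality argument produces scales $R_i\to\infty$ with $\Phi(\rho R_i)\le C\rho^s\Phi(R_i)$ for all $\rho\in(0,1]$ such that $\rho R_i$ exceeds a fixed threshold.
\item The normalized counting measures on $\{\gamma^j\tilde p: d(\gamma^j\tilde p,\tilde p)\le R_i\}$ then subconverge to a probability measure $\mu$ on the limit orbit of $\langle\gamma\rangle$ with $\mu(B_\rho(x))\le C\rho^s$.
\item The mass distribution principle gives Hausdorff dimension $\ge s$ for that limit orbit.
\end{enumerate}
You would still need the structure theory coming from $E(M,p)\neq 1/2$ (Theorem \ref{thm:topol_dim}) to pass from the limit of $\langle\gamma\rangle$ to a single closed $\R$-subgroup $L$.
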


We note that, in general, the limit group $G$ in Theorems \ref{thm:topol_dim} and \ref{thm:nil_dim} could be abelian for every equivariant asymptotic cone $(Y,y,G)$ of $(\widetilde{M},\Lambda)$, even though $\Lambda$ itself is nilpotent and non-abelian (see the example in \cite{Pan24}*{Appendix}); in other words, information on the nilpotent step of $\Lambda$ could be lost in the limit group $G$. Theorem \ref{thm:nil_dim} shows that the nilpotency step of $\Lambda$ is reflected in the limit $G$-action of some equivariant asymptotic cone $(Y,y,G)$, in the sense that some closed $\R$-orbit at $y$ should have large Hausdorff dimension $\ge \mathrm{step}(\Lambda)$. 

\begin{rem}
   In the current form of Theorems \ref{thm:topol_dim} and \ref{thm:nil_dim}, we assumed that $\Lambda=\pi_1(M)$ is torsion-free nilpotent. We remark that this assumption is not essential. In fact, the condition $E(M,p)\neq1/2$ implies that $\pi_1(M)$ is finitely generated and contains a torsion-free nilpotent subgroup of finite index; see \cite{Pan26}*{Theorem A and Section 2.3}. Here, we stated the results in terms of torsion-free nilpotent fundamental groups for the convenience of later applications.
\end{rem}

Later in the proof, we also need the following result on equivariant Gromov-Hausdorff convergence of quotient metric spaces with quotient actions.

\begin{prop}\label{prop:eGH_quotient}
   Let $(Y_i,y_i,H_i\triangleleft G_i)$ be a sequence of pointed proper length metric spaces with isometric $H_i$ and $G_i$-actions, where both $H_i$ and $G_i$ are closed subgroups of $\mathrm{Isom}(Y_i)$ and $H_i$ is normal in $G_i$. Suppose that the sequence converges in the equivariant Gromov-Hausdorff sense
   $$(Y_i,y_i,H_i\triangleleft G_i)\overset{GH}\to (Y,y,H \triangleleft G).$$
   Then we have equivariant convergence of the quotient spaces with quotient actions
   $$(Y_i/H_i,\bar{y}_i,G_i/H_i)\overset{GH}\longrightarrow (Y/H,\bar{y},G/H).$$
\end{prop}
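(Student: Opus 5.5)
The plan is to unwind the definition of equivariant Gromov--Hausdorff convergence via approximation maps and push everything down to the quotient. By definition there are $\epsilon_i\to 0$ and triples of maps $(f_i,\phi_i,\psi_i)$: $f_i\colon B_{1/\epsilon_i}(y_i)\to Y$ is an $\epsilon_i$-GH approximation with $f_i(y_i)=y$. The maps $\phi_i\colon G_i(1/\epsilon_i)\to G(1/\epsilon_i)$ and $\psi_i\colon G(1/\epsilon_i)\to G_i(1/\epsilon_i)$ are $\epsilon_i$-equivariant, in the sense that $d(f_i(gz),\phi_i(g)f_i(z))<\epsilon_i$ for $z,gz\in B_{1/\epsilon_i}(y_i)$, and similarly for $\psi_i$. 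Here $G(R)=\{g: d(gy,y)\le R\}$.

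Since the subgroups converge jointly, the $\phi_i,\psi_i$ restrict to approximations $H_i(1/\epsilon_i)\to H(1/\epsilon_i)$ and back, up to changing $\epsilon_i$. I would first state this as the precise meaning of the hypothesis $H_i\triangleleft G_i\to H\triangleleft G$. Then I define $\bar f_i\colon B_{1/\epsilon_i}(\bar y_i)\to Y/H$ by $\bar f_i(\bar z)=\overline{f_i(z)}$, where $z$ is a lift of $\bar z$ with $d(z,y_i)=\bar d(\bar z,\bar y_i)$.

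\textbf{Step 1: $\bar f_i$ is a $\delta_i$-GH approximation with $\delta_i\to0$.} The quotient distance is $\bar d(\bar z,\bar w)=\inf_{h\in H_i}d(z,hw)$, and on bounded balls the infimum is attained by some $h$ with $d(hy_i,y_i)\le 2R+\ldots$, hence $h\in H_i(1/\epsilon_i)$. Applying $f_i$ and $\phi_i$ gives $\bar d(\bar f_i\bar z,\bar f_i\bar w)\le \bar d(\bar z,\bar w)+3\epsilon_i$. For the reverse inequality I use $\psi_i$ on the $H$-element realizing the distance in $Y/H$. This step also shows that the choice of lift does not matter up to $O(\epsilon_i)$. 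Almost-surjectivity follows from that of $f_i$.

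\textbf{Step 2: define the quotient group maps.} Set $\bar\phi_i(gH_i)=\phi_i(g)H$. Well-definedness only holds approximately: if $g'=gh$, I would check that $\phi_i(g)$ and $\phi_i(g')$ induce maps that are $O(\epsilon_i)$-close on $Y/H$ over the relevant ball. This uses that $\phi_i(gh)$ and $\phi_i(g)\phi_i(h)$ act almost identically near $y$, and that $\phi_i(h)\in H$. Normality of $H$ is what makes $G/H$ act on $Y/H$ by $\bar g\bar z=\overline{gz}$. The approximate equivariance $\bar d(\bar f_i(\bar g\bar z),\bar\phi_i(\bar g)\bar f_i(\bar z))\le C\epsilon_i$ is then inherited from the upstairs estimate via the 1-Lipschitz projection. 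One also needs a representative $g$ of $\bar g\in (G_i/H_i)(R)$ with $d(gy_i,y_i)$ controlled; choose $g$ minimizing over the coset, which gives $d(gy_i,y_i)=\bar d(\bar g\bar y_i,\bar y_i)\le R$. The map $\bar\psi_i$ is handled symmetrically.

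\textbf{Expected main obstacle.} I expect the main difficulty to be the bookkeeping of domains: making sure that representatives and minimizers stay inside the balls where the approximations are controlled. I would handle this by shrinking radii, working on $B_{1/(10\epsilon_i)}$, and fixing representatives via distance-minimizing choices in each coset. Properness and closedness of the groups guarantee that these minimizers exist.
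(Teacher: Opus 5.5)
Your proposal is correct and follows essentially the same route as the paper. Both arguments push $f_i$ down through the projection and define the quotient group maps by fixing a representative of each coset in $G_i(R)$; your coset-minimizing representative is exactly the content of Lemma~\ref{lem:group_proj}, which is where normality is used. Both then inherit approximate equivariance from the $1$-Lipschitz projection.

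There are two minor differences. The paper simply cites Fukaya's Theorem 2.1 for the metric convergence of the quotients, which is what your Step 1 reproves. Your approximate well-definedness check in Step 2 is unnecessary, since the Fukaya--Yamaguchi approximation maps need not be canonical once representatives are fixed.
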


We include a proof of Proposition \ref{prop:eGH_quotient} in Appendix \ref{appx:conv_quotients}.

\section{$\RCD$ space with nilpotent group action and ray quotient}\label{sec:nilpotent RCD}
\subsection{Statement and outline}\label{sec:statements_nilpotent+RCD}

In this section, we prove the following theorem.

\begin{thm}\label{thm:RCD+nilpotent}
    Let $(Y,y,d,\meas)$ be an $\RCD(0,N)$ space and let $H\leq \mathrm{Iso}_{\mathrm{m.m.s}}(Y,d,\meas)$ be a closed, $k$-dimensional, connected, simply connected, and nilpotent subgroup, where $2\le k+1 \le N<\infty$, such that the quotient metric space $(Y/H,\overline{y})$ is isometric to $([0,\infty),0)$. Then the following hold.
    \begin{enumerate}
    
    \item\label{item:regular_set} $Y$ has rectifiable dimension $k+1$ and $(Y,H)$ is equivariantly homeomorphic to $([0,\infty)\times H,H)$, where $H$ acts by left-multiplication on the second factor. Under this identification, the $(k+1)$-regular set of $(Y,d,\meas)$ is
    $Y_+=(0,\infty)\times H$.

    \item\label{item:measure} There exists a $\CDe(0,N)$ density $\rho\colon [0,\infty)\to[0,\infty)$ such that $\meas=\rho(r)\di r\wedge\Omega$, where $\Omega$ is a left-invariant Haar measure on $H$.

    \item\label{item:continuous+sobolev} There exists a family $\{g_r\}_{r>0}$ of left-invariant Riemannian metrics on $H$ that is continuous and locally $W^{1,2}$ in $r$ such that $(Y,d)$ is the metric completion of $(Y_+,d_g)$, where $d_g$ is the distance induced by the Riemannian metric $g=\di r^2+g_r$.

    \item \label{item:distributional-Ricci}The space $(Y_+,g,\meas)$ has nonnegative distributional $N$-Bakry--Emery Ricci curvature.
    \end{enumerate}
\end{thm}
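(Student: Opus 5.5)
We would first determine the topology and the regular set, then read off the measure, then build the metric, and only at the end invoke Theorem \ref{thm:CD-to-Distribution}.

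\emph{Topology.} Let $r\colon Y\to[0,\infty)$ denote the distance to the orbit $Hy$; this is the composition of $Y\to Y/H$ with the given isometry. The fibre $r^{-1}(0)$ is a single orbit $Hy$. We claim every point stabilizer is trivial. Since $H$ is a closed subgroup of $\mathrm{Isom}(Y)$, stabilizers are compact. A connected, simply connected nilpotent Lie group has no nontrivial compact subgroups, so every stabilizer is trivial.

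\emph{Slice.} Pick a ray $\gamma$ from $y$ realizing $d(\gamma(t),Hy)=t$. Its image in $Y/H$ is the ray $[0,\infty)$. Define $\Phi\colon[0,\infty)\times H\to Y$ by $\Phi(t,h)=h\gamma(t)$. It is equivariant and surjective. It is injective because stabilizers are trivial and distinct $t$ lie in distinct orbits. Using properness of the action and local compactness, $\Phi$ is a homeomorphism; this is the slice-type argument for free proper actions with quotient $[0,\infty)$. If the ray $\gamma$ is not unique, the ambiguity lies in $H$, which is harmless.

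\emph{Regular set and measure.} Tangent cones at points with $t>0$ split off the orbit directions and the $r$-direction. From this we expect Euclidean tangent cones $\R^{k+1}$ at every point of $Y_+=(0,\infty)\times H$: the orbit is a $k$-dimensional homogeneous submanifold, and Gromov--Hausdorff limits of free homogeneous $H$-orbits blown up are $\R^k$. The set $r^{-1}(0)$ is $\meas$-null by the $\CDe$ condition (it is an orbit of lower dimension). Hence the rectifiable dimension equals $k+1$. Because $\meas$ is $H$-invariant, disintegrating along $r$ gives $\meas=\rho(r)\,\di r\wedge\Omega$, with $\Omega$ Haar by uniqueness of invariant measures. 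The function $\rho$ is a $\CDe(0,N)$ density by the needle decomposition (localization) for the $1$-Lipschitz function $r$: each needle is a translate $h\gamma$, and all needles carry the same conditional density.

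\emph{Metric.} For $r>0$, the orbit $H\gamma(r)$ with the restricted distance is a homogeneous metric space. On the regular set we obtain a left-invariant norm at each level, and this norm is Riemannian by infinitesimal Hilbertianity. Concretely, for $X\in\mathfrak h$ we set $|X|_{g_r}^2=|D f|_w^2$, where $f$ is a suitable coordinate function; alternatively we take $|X|_{g_r}=\lip$ of the displacement $t\mapsto d(\gamma(r),\exp(tX)\gamma(r))$ at $0$ (Remark \ref{rem:Df=lip(f)}). Quadraticity of $\Ch$ forces a parallelogram law, so $g_r$ is an inner product. Orthogonality of $\partial_r$ to the orbits comes from the fact that $r$ is a distance function with $|\nabla r|=1$. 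Continuity of $r\mapsto g_r$ follows from the continuity of the $H$-action and the Gromov--Hausdorff continuity of the orbits. We then show $d=d_g$ on $Y_+$, and hence that $Y$ is the completion of $Y_+$, by comparing lengths of curves (metric derivative equals the $g$-norm) and using that $Y_+$ is dense and convex up to null sets.

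\emph{Main obstacle: Sobolev regularity in $r$.} We expect the $W^{1,2}_{loc}$ regularity of $g_r$ in $r$, which is needed for the Christoffel symbols in Theorem \ref{thm:CD-to-Distribution}, to be the main difficulty. The plan is as follows. Take left-invariant coordinate functions $x_i$ on $H$ (exponential coordinates), so that $g^{ij}(r)=\langle\nabla x_i,\nabla x_j\rangle$. Localize with test functions $\chi$ depending only on $r$ (cut-offs as in Remark~(ii)). Then use the Bochner/Hessian calculus on $\RCD$ spaces, namely $\chi x_i\in W^{2,2}$, to deduce that $\langle\nabla x_i,\nabla x_j\rangle\in W^{1,2}_{loc}$. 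Because of $H$-invariance, this is a $W^{1,2}_{loc}$ statement in $r$ alone. Remark \ref{rem:loc_sobolev_algebra} then transfers the regularity to the inverse matrix $g_{ij}$. The nonabelian case is where this is delicate: the coordinates $x_i$ are not harmonic and the frames do not commute. We would use a Malcev basis so that the structure constants enter only through polynomial, hence smooth, terms in the Koszul formula \eqref{eq:Koszul}.

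\emph{Curvature.} With $\rho$ continuous and locally $W^{1,2}$, and $V=-\log\rho+\tfrac12\log\det G(r)$ continuous and locally $W^{1,2}$, Theorem \ref{thm:CD-to-Distribution} applies on $Y_+$ and yields item (4).
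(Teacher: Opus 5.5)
Your topology, measure and curvature steps are broadly in line with the paper. The genuine gaps are in item (1) and in the construction of $g_r$.

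\textbf{Regular set and dimension.} You argue that points of $Y_+$ are $(k+1)$-regular because blow-ups of a free homogeneous $H$-orbit are $\R^k$. This is false in general. A left-invariant Carnot--Carath\'eodory metric on the Heisenberg group is homogeneous with a free transitive action, yet all its tangent cones are the Heisenberg group itself, of Hausdorff dimension $4$. Moreover, in the very class of spaces in this theorem, $\R$-orbits through $y$ can have Hausdorff dimension $>1$ (Grushin example). The dimension count has to come from the $\RCD$ structure, in three steps.
\begin{enumerate}
\item Every $q\in Y_+$ is $m$-regular for the a priori unknown rectifiable dimension $m$. This holds because $Hq$ separates $Y$, while the $m$-regular set has full measure and is path connected.
\item At such $q$, the blown-up group is a closed nilpotent group acting on $\R^m$ with quotient $\R$. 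Hence it is $\R^{m-1}$ acting by translations.
\item One needs $m-1=k$. This requires that each closed $\R$-subgroup blows up to exactly $\R$ (Lemma~\ref{lem:blow_up_R}(2), a genuine argument), combined with a normal series $H^1\triangleleft\cdots\triangleleft H^k$ and convergence of the quotient actions. Uniform Reifenberg flatness on compact subsets of $Y_+$ plus invariance of domain would be an alternative here, but some such argument is needed.
\end{enumerate}
You also never show that points of $\{0\}\times H$ are \emph{not} $(k+1)$-regular, which is part of item (1); being $\meas$-null is a different statement. The paper excludes this as follows: the blow-up group at $y$ would have to be $\R^k\times\Z_2$ with $\Z_2$ acting by a reflection, and this is incompatible with taking square roots in $H$. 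That same fact is what yields convexity of $Y_+$, which your final step $d=d_g$ needs. ``Convex up to null sets'' does not suffice.

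\textbf{The metric $g_r$.} The sentence ``on the regular set we obtain a left-invariant norm at each level'' is the crux, and your concrete definitions presuppose it. The quantity $\lip$ of $t\mapsto d(\gamma(r),\exp(tX)\gamma(r))$ at $0$ is finite only if one-parameter orbits in $Y_+$ are rectifiable, i.e.\ $d(q,\exp(tX)q)=O(|t|)$. A homogeneous length metric on $H$ is a priori only sub-Finsler (Berestovskii). In that case this rate is infinite in non-horizontal directions and $[\langle\nabla x^i,\nabla x^j\rangle]$ degenerates.

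Euclidean tangent cones do not by themselves give rectifiability. They only yield $d(q,\exp(2tX)q)\ge(2-\epsilon_t)\,d(q,\exp(tX)q)$ with $\epsilon_t\to0$. Iterating over dyadic scales, $2^m d(q,\exp(2^{-m}X)q)$ can still tend to $\infty$ unless $\sum_m\epsilon_{2^{-m}}<\infty$.

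The missing ingredient is quantitative. Uniform regularity on compact subsets of $Y_+$ (H\"older continuity of tangent cones along the ray) plus the Abresch--Gromoll excess estimate give
$$2^m d\bigl((r,\mathrm{id}),(r,\exp(2^{-m}X))\bigr)\le(1+\epsilon)\,d\bigl((r,\mathrm{id}),(r,\exp X)\bigr)$$
for all $m$. Hence one-parameter orbits have length at most $(1+\epsilon)$ times the distance, locally uniformly. Only then does Berestovskii's theorem give a Finsler metric. The Euclidean tangent cone (or your parallelogram-law argument, a reasonable substitute at that stage) then makes it Riemannian. The same uniform control is what gives continuity of $r\mapsto g_r$; GH-continuity of orbits alone only gives lower semicontinuity of lengths.

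\textbf{Sobolev step.} Exponential coordinates are not left-invariant: $\langle\nabla x^i,\nabla x^j\rangle=G^{ab}(r)X_a(x^i)X_b(x^j)$ depends on the $H$-variable. So $G(r)$ must be recovered from $G=A\overline{G}^{-1}A^{T}$ with $A=[X_i(x^j)]$ smooth. Cut-offs depending only on $r$ leave $\chi x^i\notin L^2$, since $H$ is noncompact and $x^i$ is unbounded. Finally, $\chi x^i\in W^{2,2}$ does not come from the Koszul formula. It comes from $\chi x^i\in D(\Delta)$, proved by integrating by parts so that no derivative falls on $G(r)$; this works because left-invariant fields are $\Omega$-divergence free for nilpotent $H$.
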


\begin{rem}
Theorem \ref{thm:RCD+nilpotent} may be seen as an extension of \cite{CMS26}*{Proposition 3.7} and \cite{NPZ26}*{Section 3}, where $\R$-actions and $S^1$-actions on $\RCD(0,N)$ spaces were considered, to the more general context of connected, simply connected, and nilpotent Lie groups.
\end{rem}

Throughout this section, $(Y,y,d,\meas)$ and $H$ will always denote an $\RCD(0,N)$ space and a $k$-dimensional connected, simply connected, and nilpotent Lie group satisfying the conditions of Theorem \ref{thm:RCD+nilpotent}. 

Below we highlight the key steps of our proof and compare them with \cite{NPZ26}*{Sections 3 and 4}, which followed a similar strategy in the simpler cases where $H=\R$ and $H=S^1$.

\noindent\textbf{Step 1: Rectifiable dimension and regular set of $Y$.} In Section \ref{sec:rectifiable_dimension}, we prove item \eqref{item:regular_set}. Similarly to \cite{NPZ26}, we study equivariant tangent cones to $(Y,H)$ in order to determine the rectifiable dimension and regular set of $(Y,d,\meas)$. In comparison to the case where $H=S^1$, the main difficulty comes from the higher dimension of the orbits.

\noindent\textbf{Step 2: Continuous Riemannian metric on $Y_+$.} In Section \ref{sec:continuous_riemannian_metric}, we prove the ``continuous'' part of item \eqref{item:continuous+sobolev}. First, similarly to \cite{NPZ26}, we use the H\"older continuity of tangent cones \cites{CoNa12,Deng20} and the Abresch--Gromoll excess estimate \cites{AG90,GM14} to conclude that one-parameter subgroups of $H$ give rise to rectifiable curves in each orbit in the regular set. As a result, we obtain homogeneous length metrics $d_r$ on $H$. Thanks to a result of Berestovskii \cite{Berestovskii88}, such length metrics are necessarily sub-Finsler. Since one-parameter subgroups are rectifiable, our length metrics are Finsler. By passing to the tangent cone, we conclude that they are Riemannian, i.e., $d_r=d_{g_r}$, where $g_r$ is a left-invariant Riemannian metric on $H$. The remainder of the section follows from an analysis of the infinitesimal structure of $Y_+$, similar to that in \cite{NPZ26}.

\noindent\textbf{Step 3: Density function of $\meas$.} In Section \ref{sec:measure_CD(0,N)}, we establish item \eqref{item:measure}. Our approach differs from \cite{NPZ26}, where we studied the cases $H=\R$ and $H=S^1$. The case $H=\R$ can be reduced to the $S^1$ case since $\R$ admits $\mathbb{Z}$ as a normal lattice, so that $\R/\mathbb{Z}=S^1$ acts on the quotient space $Y/\mathbb{Z}$. Since $S^1$ is compact, we were able to conclude using the results of \cite{Galaz-Garcia_18}. In the current setting, $H$ may not admit any lattice and, if it does, this lattice will generally not be normal. Therefore, we rely on the needle decomposition method established by Cavalletti and Mondino in \cites{CM17,CM_newformula}. First we study the disintegration of our measure w.r.t\@ the $1$-Lipschitz map $d_{\partial}(r,h)\coloneqq r$, where $(r,h)\in [0,\infty)\times H=Y$. We conclude using the $H$-invariance of needles, which are equipped with a $\CDe(0,N)$ density.

\noindent\textbf{Step 4: Improving to a $W^{1,2}_{loc}$ metric on $Y_+$ and $N$-Bakry--Emery Ricci curvature.} In Section \ref{sec:loc_sobolev_metric}, we use the previous two sections to prove the ``Sobolev'' part of item \eqref{item:continuous+sobolev}. The approach is similar to \cite{NPZ26} in spirit and the main complications arise from the non-commutativity of our group $H$. In our context, the exponential map $\exp\colon\mathfrak{h}\to H$ is a diffeomorphism, which gives rise to coordinate functions $x^1,\ldots,x^k$. Using the second-order calculus tools developed for $\RCD$ spaces (see \cite{gigli_nonsmooth} for an overview), we show that $g(\nabla_g x^i,\nabla_g x^j)\in W^{1,2}_{loc}(Y_+)$. Finally, based on the results of Mondino and Ryborz \cite{MondinoRyborz}, we obtain item \eqref{item:distributional-Ricci}.

\subsection{Rectifiable dimension and regular set of $Y$}\label{sec:rectifiable_dimension}

The goal of this subsection is proving item \eqref{item:regular_set} of Theorem \ref{thm:RCD+nilpotent}. 

Let $\pi:Y\to Y/H=[0,\infty)$ be the quotient map. We denote $Y_+\subseteq Y$ the pre-image $\pi^{-1}(0,\infty)$. We first prove that $Y$ is homeomorphic to $[0,\infty)\times H$ with $Y_+$ identified with $(0,\infty)\times H$. This part is straightforward.

\begin{lem}\label{lem:Y_homeo_RxH}
The equivariant space $(Y,H)$ is equivariantly homeomorphic to $([0,\infty)\times H,H)$, where $H$ acts by left-multiplication on the second factor, and $Y_+$ is identified with $(0,\infty)\times H$.
\end{lem}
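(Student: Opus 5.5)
The plan is to build the homeomorphism from a section of the quotient map $\pi\colon Y\to[0,\infty)$ and then check that the isotropy groups are trivial. Since $(Y/H,\bar y)$ is a ray, I fix a point $y_0\in\pi^{-1}(0)$; wlog $y_0=y$. Then I choose a ray $\gamma\colon[0,\infty)\to Y$ with $\gamma(0)=y$ that lifts the quotient ray, so $\pi(\gamma(r))=r$ and $d(\gamma(r),\gamma(r'))=|r-r'|$. Such a horizontal lift exists by the standard argument. For each $r$, pick a point $z_r$ in the orbit $\pi^{-1}(r)$ nearest to $y$. Because the orbits are closed and $Y$ is proper, $d(y,z_r)=r$. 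Geodesics from $y$ to $z_r$ project to the segment $[0,r]$. A diagonal Arzel\`a--Ascoli argument then gives a limiting ray, which serves as the lift. The candidate map is
$$\Phi\colon [0,\infty)\times H\to Y,\qquad \Phi(r,h)=h\cdot\gamma(r).$$
It is continuous and $H$-equivariant, and it is surjective because every orbit $\pi^{-1}(r)$ meets $\gamma(r)$.

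The key step, and the main obstacle, is injectivity: I need every isotropy group $H_{\gamma(r)}$ to be trivial. Isotropy groups of a proper isometric action are compact subgroups of $H$. A connected, simply connected nilpotent Lie group has no nontrivial compact subgroups: its exponential map is a diffeomorphism, so the group is torsion-free, and a compact subgroup would be a compact Lie group with no torsion, hence trivial. Thus $H_{\gamma(r)}=\{e\}$ for all $r$, including $r=0$, which gives injectivity. (If needed, Theorem \ref{thm:topol_dim}(2) is consistent with this.) To guard against $\gamma$ being non-unique, I will also note that the orbit map $h\mapsto h\gamma(r)$ is a continuous bijection onto the closed orbit. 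Since $H$ acts properly, this bijection is a homeomorphism $H\cong H\gamma(r)$.

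For $\Phi^{-1}$ to be continuous, I will use properness. Suppose $h_i\gamma(r_i)\to h\gamma(r)$. Then $r_i=\pi(h_i\gamma(r_i))\to r$, and $d(h_i\gamma(r),h\gamma(r))\le d(h_i\gamma(r_i),h\gamma(r))+|r_i-r|\to 0$. By properness of the action and freeness, this forces $h_i\to h$: any subsequence of $h_i$ lies in a compact set, and every limit point $h'$ satisfies $h'\gamma(r)=h\gamma(r)$, so $h'=h$. Hence $\Phi$ is an equivariant homeomorphism. It maps $(0,\infty)\times H$ onto $\pi^{-1}(0,\infty)=Y_+$ because $\pi\circ\Phi(r,h)=r$.
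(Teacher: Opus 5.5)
Your proposal is correct and follows the paper's proof: freeness because $H$ has no nontrivial compact subgroups, a horizontal lift $\gamma$ of the quotient ray starting at $y$, and the equivariant map $(r,h)\mapsto h\cdot\gamma(r)$. You also spell out how the lift is constructed and give the properness argument for continuity of the inverse, both of which the paper simply calls ``clear.''
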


\begin{proof}
   We first note that $H$ acts freely on $Y$; otherwise, $H$ would have a nontrivial compact isotropy subgroup, a contradiction to the assumption that $H$ is a connected and simply connected nilpotent Lie group. Next, we lift the ray in the quotient space $Y/H=[0,\infty)$ to $Y$ starting at $y$. We denote this horizontal ray by $\gamma$ and consider a map 
$$\varphi:[0,\infty)\times H\to Y,\quad (r,h)\mapsto h\cdot \gamma(r).$$
It is clear that $\varphi$ is an equivariant homeomorphism between $(Y,H)$ and $([0,\infty)\times H,H)$ and that $Y_+$ is identified with $(0,\infty)\times H$.
\end{proof}

Next, we study the rectifiable dimension and regular set of $Y$. For convenience, we separate them into two propositions below. 

\begin{prop}\label{prop:rect_dim_k+1}
  Under the conditions of Theorem \ref{thm:RCD+nilpotent}, the space $(Y,d,\meas)$ has rectifiable dimension $k+1$.
\end{prop}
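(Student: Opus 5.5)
We determine the rectifiable dimension $m$ of $(Y,d,\meas)$ by looking at equivariant tangent cones at a generic point of $Y_+$. By the rectifiable dimension theorem, $\meas$-a.e.\ point is $m$-regular. Since $\meas$ has full support and $\pi(\partial)=\{0\}$ is a single orbit, we can pick an $m$-regular point $z=\gamma(r_0)$ with $r_0>0$; by $H$-equivariance, every point of the orbit $Hz$ is then $m$-regular as well. Take $\lambda_i\to\infty$ and pass to a subsequence. We obtain equivariant convergence
$$(\lambda_i Y,z,H)\overset{GH}\longrightarrow(\R^m,0,H_\infty),$$
with $H_\infty$ a closed subgroup of $\mathrm{Isom}(\R^m)$. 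Proposition \ref{prop:eGH_quotient} (with the trivial normal subgroup and $G_i=H$), together with $\lambda_i Y/H=\lambda_i[0,\infty)$ based at $\lambda_i r_0\to\infty$, gives $\R^m/H_\infty\cong\R$. Hence $H_\infty$-orbits in $\R^m$ have codimension one: $\R^m$ contains a line transverse to the orbit through $0$, namely the limit of $\gamma$, and the orbit $H_\infty\cdot 0$ is a closed subset whose quotient is a line. Because the quotient is $\R$ with no boundary and no further identification, the orbit $H_\infty\cdot0$ must be an affine hyperplane, so $H_\infty\cdot 0\cong\R^{m-1}$.

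It remains to show $m-1=k$, i.e.\ that the limit orbit has the same topological dimension as $H$. The upper bound $m-1\le k$ should follow because $H_\infty$ is a limit of copies of $H$ acting freely, and there is a standard semicontinuity: the limit group of a sequence of isometric actions of a fixed $k$-dimensional nilpotent Lie group has $\dim\le k$. Concretely, an orbit ball $B^{\lambda_i}_R(z)\cap Hz$ is covered by the image of a compact set of $H$, and Hausdorff/topological dimension does not increase under the limit of orbit maps. For the lower bound $m-1\ge k$, we use that $Y$ is homeomorphic to $[0,\infty)\times H$ by Lemma \ref{lem:Y_homeo_RxH}, so $Y_+$ is a topological $(k+1)$-manifold. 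On the other hand, the regular part of an $\RCD$ space of essential dimension $m$ has $\meas$ mutually absolutely continuous with $\mathcal H^m$ on $\mathcal R_m$, and $\mathcal R_m$ is $m$-rectifiable. Since $\meas(Y_+\setminus\mathcal R_m)=0$ and $\meas$ has full support, $\mathcal R_m$ is dense in the $(k+1)$-manifold $Y_+$, which gives $\dim_H Y_+\ge k+1$. Combining this with the tangent cone picture requires an argument upgrading this to $m\ge k+1$.

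An alternative for the lower bound is this: if $m-1<k$, then by choosing the scale so that orbits collapse, some nontrivial one-parameter subgroup of $H$ (or a subgroup of positive dimension) would act on $\R^m$ fixing $0$ in the limit. This would produce a nontrivial compact isotropy in $H_\infty$ at a regular point, and we would contradict it using the Hölder continuity of tangent cones along the horizontal geodesic $\gamma$ (Colding--Naber, Deng), as in \cite{NPZ26}. The tangent cones along the interior of $\gamma$ vary continuously, so the equivariant tangent data at $z$ must be consistent with the free $H$-action of full rank $k$ at nearby points of the orbit.

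\textbf{Main obstacle.} The hardest step is ruling out dimension drop, i.e.\ showing $m-1\ge k$ rather than merely $\le k$. Orbits of a non-abelian group may be non-rectifiable (of Hausdorff dimension $>k$, as in the Grushin example), but they should not collapse at a regular interior point. My first attempt is the topological one: a tangent cone $\R^m$ at a point of the topological $(k+1)$-manifold $Y_+$, combined with the Cheeger--Colding Reifenberg theorem giving that a neighborhood of an $m$-regular point in a region where all tangent cones are $\R^m$ (ensured by equivariance plus Hölder continuity along $\gamma$) is bi-Hölder to $\R^m$. Invariance of domain then forces $m=k+1$.
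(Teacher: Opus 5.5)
\textbf{Gaps in the main line.} Your main-line arguments for the two inequalities do not work as written.

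For $m-1\le k$ you appeal to a ``standard semicontinuity'' that is false. The limit group of isometric actions of a fixed Lie group can have larger dimension, and orbits can gain dimension in the limit. For example, let $\R$ act freely on $\R^3$ by the screw motions $t\mapsto(\text{rotation by angle } c_it \text{ about the } x_3\text{-axis})\circ(\text{translation by } te_3)$, with $c_i\to\infty$. These actions converge to the $2$-dimensional group $SO(2)\times\R$, and the helical orbits converge to cylinders. Excluding such behaviour for the blow-up at $z$ is precisely the nontrivial content of Lemma \ref{lem:blow_up_R}(2).

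For $m-1\ge k$, the bound $\dim_H Y_+\ge k+1$ is automatic from topological dimension and gives no information on $m$. There is no general inequality $\dim_H\le m$ for $\RCD$ spaces (cf.\ the Grushin example with $\alpha>1$). The isotropy alternative is not yet an argument. You do not explain why a dimension drop would produce nontrivial compact isotropy of $H_\infty$ at $0$; a priori the limits of two independent one-parameter subgroups could simply coincide. Nor do you say what contradiction H\"older continuity would yield.

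\textbf{Your last paragraph does close the proof.} The topological argument there works, and it gives both inequalities at once, so the semicontinuity claim is unnecessary. What it needs is uniform Reifenberg flatness on a neighborhood of $z$, not merely that all tangent cones there are $\R^m$. This uniformity is available. Combine the Colding--Naber/Deng estimate $d_{GH}(B_\varrho(\gamma(t)),B_\varrho(\gamma(r_0)))\le C|t-r_0|^{\beta}\varrho$ (for $t$ near $r_0$ and $\varrho$ small) with the $m$-regularity of $\gamma(r_0)$ and $H$-invariance. Then for every $\delta>0$ there are $\eta,\varrho_0>0$ with $d_{GH}(B_\varrho(x),B^{\R^m}_\varrho(0))\le\delta\varrho$ for all $x\in\pi^{-1}((r_0-\eta,r_0+\eta))$ and all $\varrho<\varrho_0$. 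Cheeger--Colding's intrinsic Reifenberg theorem then makes a neighborhood of $z$ homeomorphic to an open subset of $\R^m$. Meanwhile $Y_+\cong(0,\infty)\times\R^k$ by Lemma \ref{lem:Y_homeo_RxH}, so invariance of domain gives $m=k+1$.

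\textbf{Comparison with the paper.} This is a genuinely different route. The paper fixes a normal series $H^0\triangleleft\cdots\triangleleft H^k=H$ with $H^{j+1}/H^j\cong\R$ and passes to the quotients $Y/H^j$ via Proposition \ref{prop:eGH_quotient}. It applies Lemma \ref{lem:blow_up_R}(2) there to get $H^{j+1}_\infty/H^j_\infty\cong\R$ for each $j$. Hence the translation group $H_\infty\cong\R^{m-1}$ of Lemma \ref{lem:blow_up_R}(1) has dimension exactly $k$.

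The paper's argument stays inside the equivariant blow-up framework and records how $H$ and its subgroups blow up to translation groups. Your route uses only the topology of $Y_+$ plus quantitative flatness. It never needs to identify the limit group, and it avoids running the tangent-cone analysis on quotients by non-compact subgroups. The price is invoking the Reifenberg theorem.
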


\begin{prop}\label{prop:reg_set_Y+}
   The $(k+1)$-regular set of $Y$ is exactly $Y_+$. As a consequence, $Y_+$ is convex in $Y$.
\end{prop}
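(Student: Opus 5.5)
The proof has two parts: first show every point of $Y_+$ is $(k+1)$-regular, then show no point of $\partial Y=\pi^{-1}(0)=Hy$ is, and finally deduce convexity from the structure theory of $\RCD$ spaces.

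\textbf{Step 1: points of $Y_+$ are regular.} Fix $z\in Y_+$ with $\pi(z)=r_0>0$, and blow up at $z$ equivariantly. Take $\lambda_i\to\infty$ and pass to a subsequence so that
$$(\lambda_i Y,z,H)\to (C_zY,o,H_\infty).$$
By Proposition \ref{prop:eGH_quotient} with the normal subgroup $H$ itself, the quotients converge:
$$\lambda_i\big(Y/H,r_0\big)=\lambda_i\big([0,\infty),r_0\big)\to(\R,0).$$
Hence $C_zY/H_\infty=\R$. Lifting this line horizontally gives a line through $o$ in $C_zY$. By the splitting theorem for $\RCD(0,N)$ spaces (Gigli), $C_zY=\R\times Z$, and $H_\infty$ acts on $Z$ transitively, since $Z$ is a single orbit.

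Now $Z$ is a homogeneous $\RCD(0,N-1)$ space. By Proposition \ref{prop:rect_dim_k+1}, $\meas$-a.e. point is $(k+1)$-regular, and the set of $(k+1)$-regular points is $H$-invariant. Since $H$ acts transitively on each level $\pi^{-1}(r)$ and $\meas$ has full support, almost every level, and by Fubini a dense set of levels, consists of regular points. For an arbitrary $z\in Y_+$ the tangent cone $C_zY=\R\times Z$ still has $Z$ a homogeneous $\RCD$ space. Homogeneous $\RCD(0,N)$ spaces are smooth Riemannian manifolds, hence $Z$ is a Lie group with an invariant metric and $\Ric\ge0$; a nilpotent one is flat, so $Z=\R^m$. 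It remains to show $m=k$. Here I would use either the volume-cone or upper-semicontinuity argument for rectifiable dimension, or the fact that the $H$-orbits through nearby points are $k$-dimensional manifolds varying continuously, together with Theorem \ref{thm:topol_dim}-type dimension counting: $\dim_H$ of an orbit cannot increase under the blow-up at regular scale. A cleaner route is that the essential dimension is constant on an open set where tangents are Euclidean (Bru\'e--Semola), so $m=k$. I expect this step, showing every point of $Y_+$ rather than just almost every point is regular, to be the main obstacle; the tool I would reach for first is the homogeneity-in-$H$ plus continuity-in-$r$ argument above.

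\textbf{Step 2: points of $\partial Y$ are not regular.} Points of $\partial Y=Hy$ are not $(k+1)$-regular because the quotient tangent cone there is a half-line $[0,\infty)$. If $C_yY=\R^{k+1}$ with an isometric group action whose quotient is $[0,\infty)$, the orbits of $H_\infty$ in $\R^{k+1}$ would have quotient a ray. Closed subgroups of $\mathrm{Isom}(\R^{k+1})$ with quotient a ray have orbits that are spheres about a fixed affine subspace, which requires a nontrivial compact subgroup. But $H_\infty$ arises as a limit of the connected nilpotent group $H$ acting freely, and the orbit through $o$ must be a point or an affine subspace, which gives a contradiction; I would make this precise using Theorem \ref{thm:topol_dim}-style arguments.

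\textbf{Step 3: convexity.} By Deng's result, the interior of a limit geodesic in an $\RCD$ space has tangent cones varying H\"older continuously, so its interior points share one regular dimension. A geodesic between points of $Y_+$ therefore has interior in the $(k+1)$-regular set, which is $Y_+$ by Steps 1 and 2. Hence $Y_+$ is convex.
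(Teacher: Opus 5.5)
Your Step 2 is the heart of the proposition, and it does not work as sketched. It implicitly assumes that a limit of free actions of a connected, simply connected nilpotent group cannot acquire compact isotropy, and that is false in general. For example, the free $\R$-actions on $\R^3$ by screw motions (rotate by angle $t$ about the $z$-axis, translate by $\epsilon_i t$ along it) converge as $\epsilon_i\to 0$ to an $SO(2)\times\R$-action whose $SO(2)$ fixes the axis.

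More to the point, the actual candidate here is $G=\R^k\times\Z_2$, generated by translations along a hyperplane and the reflection in it. Its quotient is exactly $[0,\infty)$, and its orbit through $0$ is an affine hyperplane. So the ``point or affine subspace'' observation gives no contradiction. Theorem \ref{thm:topol_dim} does not help either: it concerns asymptotic cones of $(\widetilde M,\pi_1(M))$ under $E(M,p)\neq 1/2$, not tangent cones of $Y$.

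Two ideas are missing.
\begin{enumerate}
\item \emph{Identify $G$.} Pass the normal series $\{\mathrm{id}\}=H^0\triangleleft\dots\triangleleft H^k=H$, with $H^{j+1}/H^j\cong\R$, to the tangent cone. Proposition \ref{prop:eGH_quotient} shows that each $G^{j+1}/G^j$ contains a closed $\R$-subgroup, and Lemma \ref{lem:nil_reflect} then forces $G=\R^k\times\Z_2$.
\item \emph{Exclude the reflection using square roots in $H$.} Take $h_i\in H$ converging to the reflection, and $u_i\in H$ with $u_i^2=h_i$; these exist since $H$ is simply connected nilpotent.
If $r_i^{-1}d(u_iy,y)$ stays bounded, a limit $g$ of the $u_i$ satisfies $g^2=$ reflection. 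This is impossible, because every square in $\R^k\times\Z_2$ is a translation.
Otherwise, rescale by $d_i=d(u_iy,y)$. Here $d_i\to 0$ since $u_i=\exp(\tfrac12\exp^{-1}h_i)\to\mathrm{id}$, and the tangent cone at $y$ is again $\R^{k+1}$ with limit group $\R^k\times\Z_2$. You obtain $g'$ with $d(g'\cdot 0,0)=1$ whose square fixes $0$. A translation fixing $0$ is trivial, so $g'$ would fix $0$, a contradiction.
\end{enumerate}

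Step 1 is also not established, as you suspect. $Z$ is only a homogeneous space of $H_\infty$, possibly with isotropy, not a Lie group. Flatness alone does not exclude torus factors, and uniqueness of the tangent cone is not addressed. Most importantly, Bru\'e--Semola give constancy of dimension only $\meas$-almost everywhere, so $m=k$ at \emph{every} point of $Y_+$ remains open.

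No blow-up analysis is needed for this step. Let $m$ be the rectifiable dimension. The set of non-$m$-regular points is $H$-invariant, so if it met $Y_+$ it would contain an entire level $Hq$. That level separates $Y$ into two open sets of positive measure. This contradicts the fact that the $m$-regular set has full measure and is path connected (Deng). This is Lemma \ref{lem:Y+_regular}, and Proposition \ref{prop:rect_dim_k+1} then gives $m=k+1$.

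Your Step 3 (convexity from openness of $Y_+$ and continuity of tangent cones along geodesic interiors) matches the paper.
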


It is clear that item \eqref{item:regular_set} of Theorem \ref{thm:RCD+nilpotent} follows from Lemma \ref{lem:Y_homeo_RxH}, Proposition \ref{prop:rect_dim_k+1}, and Proposition \ref{prop:reg_set_Y+}. 

\begin{lem}\label{lem:Y+_regular}
   Let $m$ be the rectifiable dimension of $Y$. Then every point in $Y_+$ is $m$-regular.
\end{lem}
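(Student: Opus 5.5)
The approach is to exploit homogeneity: since $H$ acts by measure-preserving isometries, the tangent cone structure at a point depends only on its $H$-orbit, i.e.\ only on $r=\pi(x)\in[0,\infty)$. The regular set $\mathcal{R}_m$ has full $\meas$-measure. By Lemma~\ref{lem:Y_homeo_RxH} and the fact that $\meas$ has full support, $\mathcal{R}_m$ is therefore dense, and it is $H$-invariant. So $\mathcal{R}_m$ contains points of $\pi^{-1}(r)$ for a dense set of $r\in(0,\infty)$; in fact for $\mathcal{L}^1$-a.e.\ $r$ in the sense of the pushforward measure $\pi_\#\meas$. I would then show that every $x\in Y_+$ is regular, using stability of regularity in the interior of the ray direction.

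\textbf{Step 1 (a line through $x$).} Fix $x=\gamma_x(r_0)$ with $r_0>0$, where $\gamma_x$ is the horizontal lift through $x$ of the ray $[0,\infty)$. The segment $\gamma_x|_{(0,\infty)}$ is a geodesic passing through $x$ in its interior, because $\pi$ is $1$-Lipschitz and $\pi\circ\gamma_x$ is a unit-speed geodesic. Hence $x$ lies in the interior of a geodesic.

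\textbf{Step 2 (Deng's theorem).} By the Hölder continuity of tangent cones along the interior of geodesics in $\RCD(K,N)$ spaces (Deng~\cite{Deng20}, extending Colding--Naber~\cite{CoNa12}), and since the relevant $\meas$-a.e.\ statements hold along $\meas$-a.e.\ geodesic, the following holds. If points $\gamma_x(r)$ are $m$-regular for $r$ in a set accumulating at $r_0$, then, since regularity is an orbit property, they are regular for every $r$ in a dense set. The tangent cones at $\gamma_x(r_0)$ are then GH-limits of $\R^m$, hence equal to $\R^m$.

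To make Step 2 rigorous, I would argue as follows. Because $H$ acts isometrically and preserves $\meas$, the set of $r>0$ for which $\pi^{-1}(r)\subset\mathcal{R}_m$ has full $\pi_\#\meas$-measure. Consider a geodesic from a point $\gamma_x(r_1)$ to $\gamma_x(r_2)$ with $r_1<r_0<r_2$, chosen such that both endpoints are regular. By Deng's result, the tangent cones vary continuously in the interior of this geodesic. Using the $\RCD$ Bishop--Gromov / Cheeger--Colding volume-cone characterization, together with the fact that $\R^m$ has the maximal density ratio among $m$-dimensional tangent cones, regularity propagates from a dense set to all interior points. Equivalently, the set of points whose tangent cones are all $\R^m$ is closed along geodesic interiors under Hölder-continuous variation.

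\textbf{Main obstacle.} The hard part is Step 2: justifying that continuity of tangent cones along the geodesic $\gamma_x$ yields regularity at every interior point, rather than just a.e. I expect to handle it as follows. Deng's estimate gives, for $r,r'$ in a compact subinterval of $(0,\infty)$ and all small scales $t$,
\[
d_{pGH}\bigl(B_t(\gamma_x(r)),B_t(\gamma_x(r'))\bigr)\le C|r-r'|^{\alpha}t.
\]
Letting $r'\to r_0$ along regular values, the rescaled balls at $\gamma_x(r_0)$ are uniformly close to those at regular points. Those regular-point balls are close to Euclidean balls at sufficiently small scales, but not uniformly in $r'$. To fix the uniformity, I would instead take $r'$ at distance comparable to $t$ and apply the $\epsilon$-regularity of almost-Euclidean balls of Cheeger--Colding type for $\RCD$ spaces.
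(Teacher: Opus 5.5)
Your main line is correct, but it takes a different route from the paper, and the ``main obstacle'' you single out is not a real obstacle.

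\textbf{Comparison with the paper.} The paper argues topologically. If some $q\in Y_+$ were not $m$-regular, then no point of the orbit $Hq$ would be regular. But $Hq$ separates $Y$ into two open sets of positive measure, and both meet $\mathcal{R}_m$, which has full measure (Bru\`e--Semola). Since $\mathcal{R}_m$ is path connected (Deng), a path inside $\mathcal{R}_m$ joining the two sides would have to cross $Hq$, a contradiction. You instead work along the single ray $\gamma_x$. The $H$-invariance of $\mathcal{R}_m$, together with its full measure and the full support of $\meas$, gives a dense set of radii $r$ with $\gamma_x(r)$ regular. Deng's H\"older continuity of tangent cones along the interior of $\gamma_x$ then transfers regularity to $x$. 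This avoids the separation argument and the path-connectedness corollary, in exchange for using Deng's estimate directly. It is essentially the mechanism the paper uses only afterwards, in Lemma \ref{lem:uniform_regular}, and the scale-invariant form of the estimate even yields that uniform statement. (A minor point: $\gamma_x$ is a ray because of its construction as a horizontal lift. The fact that $\pi$ is $1$-Lipschitz alone only gives $d(\gamma_x(a),\gamma_x(b))\ge |a-b|$.)

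\textbf{The last paragraph should be discarded.} Deng's estimate $d_{GH}(B_t(\gamma_x(r)),B_t(\gamma_x(r')))\le C|r-r'|^{\alpha}t$ holds with $C$ uniform for $r,r'$ in a compact subinterval of $(0,\infty)$ and for all small $t$. So no uniformity in $r'$ is needed:
\begin{itemize}
\item Given $\epsilon>0$, first fix one regular radius $r'$ with $C|r'-r_0|^{\alpha}<\epsilon/2$.
\item For all $t<t_0(r')$, the ball $B_t(\gamma_x(r'))$ is $\epsilon t/2$-close to a Euclidean $t$-ball.
\item Hence $B_t(x)$ is $\epsilon t$-close to a Euclidean $t$-ball.
\end{itemize}
Equivalently, let $(C,c)$ be a tangent cone at $x$ along scales $t_i\to 0$. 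Along the same scales, every regular point $\gamma_x(r')$ converges to $(\R^m,0)$, because regular means \emph{every} tangent cone is $\R^m$. This gives $d_{GH}(B_R(c),B^{\R^m}_R(0))\le CR|r'-r_0|^{\alpha}$ for all such $r'$, so $(C,c)\cong(\R^m,0)$.

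Your proposed repair would not work. Taking $|r'-r_0|\sim t$ and invoking Cheeger--Colding $\epsilon$-regularity fails because regularity of $\gamma_x(r')$ gives almost-Euclidean control only below a point-dependent scale $t_0(r')$, possibly much smaller than $|r'-r_0|$. Likewise, the volume-cone and ``maximal density ratio'' characterization of regular points is a noncollapsed tool (it needs $\meas=\mathcal{H}^N$ with $N=m$). It does not apply to the possibly collapsed limit measure here, so that remark should also be dropped.
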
 

\begin{proof}
  Suppose that $Y_+$ has a point $q$ that is not $m$-regular. Then by $H$-action, every point in the orbit $Hq$ is not $m$-regular. The orbit $Hq$ separates $Y$ into two disconnected open subsets, thus both have positive $\meas$-measure. On the other hand, the $m$-regular set has full $\meas$-measure \cite{BrueSemola20} and is path connected \cite{Deng20}. Hence we end in a desired contradiction.
\end{proof}

\begin{lem}\label{lem:blow_up_R}
   Let $q\in Y_+$ be an $m$-regular point. Then\\
   (1) the equivariant tangent cone of $(Y,H)$ at $q$ is unique and isomorphic to $(\R^{m},0,\R^{m-1})$, where $\R^{m-1}$ acts on $\R^m$ by translations.\\
   (2) for any $\R$-subgroup $L$ of $H$, the equivariant tangent cone of $(Y,L)$ at $q$ is unique and isomorphic to $(\R^{m},0,\R)$, where the limit $\R$-subgroup acts on $\R^m$ by translations. 
\end{lem}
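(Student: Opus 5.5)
Fix an $m$-regular point $q\in Y_+$ and a sequence $\lambda_i\to\infty$. Rescaling $(\lambda_i Y,q,H)$ and passing to a subsequence, we obtain an equivariant limit $(\R^m,0,K)$. Since $q$ is $m$-regular, the space in the limit is $\R^m$ for every subsequence. The plan is to identify the limit group $K$ independently of the subsequence.

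\textbf{Step 1: transitivity in the radial direction is absent.} The quotient $(\lambda_i Y)/H$ is $(\lambda_i[0,\infty),\lambda_i\pi(q))$. Since $\pi(q)>0$, this converges to $(\R,0)$. By Proposition \ref{prop:eGH_quotient}, applied with the trivial group quotient, we get $\R^m/K=\R$ isometrically. So $K$ is a closed subgroup of $\mathrm{Isom}(\R^m)$ whose orbits are the level sets of a submetry $\R^m\to\R$. These level sets are parallel hyperplanes, because a submetry onto $\R$ from $\R^m$ is an affine linear projection: it is distance-like and both directions are rays, so it is a Busemann function of a line, hence linear.

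\textbf{Step 2: the limit group acts by translations.} $K$ preserves each hyperplane $\R^{m-1}\times\{t\}$ and acts transitively on it. The limit of a nilpotent group is nilpotent. More concretely, a limit of the connected nilpotent groups $H$ is contained in a nilpotent closed group, since commutator relations of bounded length pass to the limit; because $H$ is nilpotent of step $\le k$, iterated commutators of length $k+1$ vanish in the limit. A closed subgroup of $\mathrm{Isom}(\R^{m-1})$ acting transitively has a transitive translation part. The key point is to show that $K$ has no rotational part.

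To do this, I use the freeness and the ``no small subgroups'' structure. The isotropy of $K$ at $0$ is a compact group, and it is a limit of the sets $\{h\in H: d(hq,q)\le \epsilon/\lambda_i\}$. I would argue, as in Lemma \ref{lem:Y_homeo_RxH}, that a nontrivial compact isotropy, together with nilpotency, would force a nontrivial compact subgroup in a nilpotent transitive group on $\R^{m-1}$. But a connected nilpotent Lie group acting transitively and effectively on $\R^{m-1}$ by isometries must be the translation group $\R^{m-1}$: its compact part would be central, and a central rotation commuting with transitive translations is trivial. I expect this step to be the main obstacle, since $K$ need not a priori be connected or effective. Here I would use that $K$ is the identity component of the limit, connectedness being inherited from $H$ via equivariant limits of connected groups, and that the effective quotient on $\R^m$ is the one being considered. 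This gives $K=\R^{m-1}$ acting by translations, independent of the subsequence, so the tangent cone is unique, proving (1).

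\textbf{Step 3: the subgroup $L$.} For (2), take the limit $(\R^m,0,K_L)$ of $(\lambda_i Y,q,L)$. Along a further subsequence, $K_L\subseteq \R^{m-1}$ is a closed subgroup of the translations. It is a limit of the connected group $L\cong\R$ whose orbit through $q$ is a curve leaving every ball, since $L$ acts freely and properly. So $K_L$ is connected, noncompact, and a limit of one-parameter groups. I would show it is a one-parameter translation group by establishing two facts. First, it contains a nontrivial line, since the orbit $Lq$ is connected and unbounded, so the limit orbit contains a connected unbounded set through $0$. Second, it is at most one-dimensional. Here I would take a one-parameter subgroup $\exp(tX)$ and compare the displacements $d(\exp(tX)q,q)$: the limit orbit is the limit of $\lambda_i\cdot(\text{curve})$, and a limit of images of $\R$ under homomorphisms into the abelian translation group is a homomorphic image, which has dimension $\le 1$ once we know $K_L$ is the limit of a homomorphism $\R\to\R^{m-1}$, using that the translation parts commute. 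Uniqueness then follows because the identification of the line direction is forced. If this dimension bound proves delicate, I would instead invoke the rectifiability of $\R$-orbits in the regular set, established later via Hölder continuity of tangent cones. That rectifiability gives a tangent vector, whose direction determines the line $\R$.
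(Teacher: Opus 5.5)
There is a genuine gap in part (2), at the step you call ``at most one-dimensional''. What has to be shown is that the limit $K_L\le\R^{m-1}$ is exactly one line of translations, with no extra dimensions and no discrete part, and neither of your justifications holds. An equivariant limit of a connected group need not be connected, and a limit of one-parameter groups need not be a homomorphic image of $\R$. For example, let $L=\R$ act freely and properly on the flat cylinders $\R\times S^1_{\ell_i}$ by $t\cdot(x,\theta)=(x+\epsilon_i t,\theta+t)$, with $\ell_i\to\infty$. The limit action on $\R^2$ is by $c\Z\times\R$ if $\epsilon_i\ell_i\to c>0$, and by all of $\R^2$ if $\epsilon_i\ell_i\to 0$. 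In your setting the analogous danger is the arc $\{vq: 0\le v\le u_i\}$ with $u_i\to 0$. It could wander out to distance $\gg r_i$ from $q$ and return to within $O(r_i)$ of $q$ in a new direction, creating extra elements of $K_L$. Nothing in the proposal excludes this.

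Your fallback is circular. Rectifiability of $\R$-orbits (Lemma \ref{lem:division_estimate}, Corollary \ref{cor:orbit_length}) rests on Lemma \ref{lem:uniform_regular}, hence on Proposition \ref{prop:rect_dim_k+1}, and that proposition's proof uses part (2) of this very lemma. Your continuum argument that $K_L\cdot 0$ contains a nontrivial line is fine; what is missing is the upper bound.

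The missing idea is the paper's two-scale contradiction argument.
\begin{enumerate}
\item Let $v_i>0$ be the first time with $d(v_iq,q)=r_i$. The limit of $[-v_i,v_i]$ sweeps out a segment $[-z,z]$, so $K_L\cdot 0\supseteq\R z$.
\item Suppose some $h\in K_L$ had $d(h\cdot 0,\R z)\ge 2$. Pick $u_i\in L$ with $u_i\to h$, so $u_i\to 0$ in $L$. One shows $u_i\gg v_i$, and that $d_i=\max\{d(vq,q): v\in[v_i,u_i]\}$ satisfies $d_i\gg r_i$.
\item Rescale by $d_i^{-1}$ and apply part (1) at that scale. Since $d(u_iq,q)\ll d_i$, $u_i$ converges to the identity. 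This makes the limit $B'$ of the symmetric intervals $[-u_i,u_i]$ closed under products, hence a \emph{subgroup} of a translation group.
\item By the choice of $d_i$, the orbit $B'\cdot 0$ lies in $\overline{B_1}(0)$ and meets $\partial B_1(0)$. A subgroup of translations cannot have such an orbit.
\end{enumerate}
This excludes extra dimensions and discrete components at once.

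For part (1), your route is the paper's, which cites \cite{Pan21}*{Lemma 4.1}. However, you should not rely on connectedness of the limit group. Also, ``a central rotation commuting with transitive translations'' presupposes the very translations you are trying to produce. A correct argument uses nilpotency and transitivity only:
\begin{itemize}
\item the identity component is still transitive;
\item a connected nilpotent subgroup of $\mathrm{Isom}(\R^{m-1})$ is abelian;
\item an abelian transitive isometry group has constant displacement functions, so it consists of translations;
\item for any other element $(A,v)$, the iterated commutators $t_{(A-I)^j w}$ force the orthogonal matrix $A$ to be unipotent, i.e.\ $A=I$.
\end{itemize}
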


\begin{proof}
   (1) The point $q\in Y_+$ projects to a $1$-regular point in $Y/H=[0,\infty)$. For any $r_i\to 0$, after passing to a subsequence, we have convergence
   \begin{equation*}
   \begin{CD}
    (r_i^{-1} Y,q,H) @>GH>> (\R^m,0,H_\infty) \\
	@VV\pi V @VV \pi V\\
	(r_i^{-1} [0,\infty),\pi(q)) @>GH>> (\R,0).
    \end{CD}
\end{equation*}
   By construction, the limit group $H_\infty$ is a closed and nilpotent subgroup of $\mathrm{Isom}(\R^m)$; moreover, $H_\infty$ acts transitively on an $\R^{m-1}$-factor of $\R^m$. Thanks to \cite{Pan21}*{Lemma 4.1}, $H_\infty=\R^{m-1}$ acts by translations. 

    (2) The proof is a mild modification of \cite{NPZ26}*{Proposition 3.5}. We give a sketch below for readers' convenience.

    We consider any equivariant tangent cone of $(Y,L\leq H)$ at $q$. Using (1) above, we have
    $$(r_i^{-1} Y,q,L\leq H) \overset{GH}\longrightarrow(\R^m,0,L_\infty\leq \R^{m-1}),$$
    where $r_i\to 0$ and $\R^{m-1}$ acts on $\R^m$ by translations. We write the group elements in $L\simeq \R$ by $v\in \R$. For each $i$, we set
    $$v_i=\min\{ v>0 \mid d(v\cdot q,q)=r_i \}.$$
    It is clear that $v_i\to 0$ as $i\to\infty$. Then we define a symmetric subset $S_i$ of $L$ by
    $$S_i=\{ v\in L \mid v\in[-v_i,v_i] \}$$
    and consider the convergence
    \begin{equation}\label{eq:eGH_rect_dim}
    (r_i^{-1}Y,q,v_i\in S_i\subseteq L\leq H)\overset{GH}\longrightarrow (\R^m,0,g\in A\subseteq L_\infty\leq \R^{m-1}),
    \end{equation}
    where $A$ is a closed symmetric subset of $L_\infty$. From construction it follows that $A\cdot 0\subseteq \overline{B_1}(0)$ and $d(g\cdot 0,0)=1$. We denote $z=g\cdot 0$.

    Following the same argument in Claim 1 of \cite{NPZ26}*{Proposition 3.5}, we can show that $A\cdot 0$ contains the set $\{ tz\in \R^m \mid t\in [-1,1] \}$, where $tz$ is written using the linear structure of $\R^m$. We continue to call this statement Claim 1 for convenience.

    By Claim 1 and (\ref{eq:eGH_rect_dim}), we have $L_\infty \cdot 0 \supseteq \R z$, where $\R z$ is written using the linear structure. We shall show that $L_\infty \cdot 0 = \R z$. We argue by contradiction and suppose that $L_\infty$ contains an element $h$ such that $h\cdot 0 \not\in \R z$. Since $L_\infty\leq \R^{m-1}$, every group element of $L_\infty$ acts by translation in $\R^m$. After replacing $h$ by a power of $h$, we can assume that $d(h\cdot 0,\R z)\ge 2$. Let $u_i\in L$ such that $u_i\overset{GH}\to h$ associated with (\ref{eq:eGH_rect_dim}). Then $u_i\to 0$ in $L\simeq \R$ by construction. We may also assume that $u_i>0$ after replacing by $h$ by $-h$, if necessary. 

    Using the same argument in Claim 2 of \cite{NPZ26}*{Proposition 3.5}, we obtain $u_i \gg v_i$. For each $i$, we define
    $$d_i=\max\{ d(v\cdot q,q) \mid v\in [v_i,u_i] \}.$$
    It is clear that $d_i\to 0$ because $u_i\to 0$ in $L\simeq \R$. Then $d_i\gg r_i$; see proof of Claim 3 in \cite{NPZ26}*{Proposition 3.5}. Next, we set
    $$T_i=\{ v\in L \mid v\in[-u_i,u_i] \}$$
    and consider the blow-up sequence by $d_i^{-1}\to \infty$:
    \begin{equation}\label{eq:eGH_rect_dim_rescal}
    (d_i^{-1}Y,q,u_i\in T_i \subseteq L\leq H)\overset{GH}\longrightarrow (\R^m,0,h'\in B'\subseteq L'_\infty\leq H'_\infty).
    \end{equation}
    By (1), $H'_\infty\simeq \R^{m-1}$ acts by translations on $\R^m$. Since $d_i\gg d(u_i\cdot q,q)$, we have $h'\cdot 0=0$ and thus $h'=\mathrm{id}$.

    Lastly, using the same argument in Claim 4 of \cite{NPZ26}*{Proposition 3.5}, one can show that $B'$ is a subgroup. By the choice of $d_i$, from (\ref{eq:eGH_rect_dim_rescal}) we see that $B'\cdot 0$ is contained in $\overline{B_1}(0)$ with some point in $B'\cdot 0$ lying on $\partial B_1(0)$. This is impossible for a subgroup $B'\leq H'_\infty\simeq \R^{m-1}$ that acts by translations. We end in a desired contradiction; in other words, we have $L_\infty\cdot 0 = \R z$. Since any element in $L_\infty$ acts by translation, we conclude that $L_\infty\simeq \R$ and the result follows.
\end{proof}

We are ready to prove Proposition \ref{prop:rect_dim_k+1}.

\begin{proof}[Proof of Proposition \ref{prop:rect_dim_k+1}]
   The simply connected nilpotent Lie group $H$ of dimension $k$ has a series of normal subgroups
   $$\{\mathrm{id}\}=H^0 \triangleleft H^1 \triangleleft ... \triangleleft H^k = H$$
   such that $H^{j+1}/H^j\simeq \R$. Let $r_i\to 0$ and we consider an equivariant tangent cone of $Y$ at an $m$-regular point $q\in Y_+$. For any sequence $r_i\to 0$, after passing to a subsequence, by Lemma \ref{lem:blow_up_R} we have
   $$(r^{-1}_i Y,q,H^1 \triangleleft ... \triangleleft H^k)\overset{GH}\longrightarrow (\R^m,0,\R=H^1_\infty\triangleleft ... \triangleleft H^k_\infty=\R^{m-1}).$$
   For each $j=1,...,k-1$, we consider the quotient space $Y/H^j$ with $H/H^j$-action. Because $H/H^j$ is a connected, simply connected, and nilpotent Lie group, and the quotient metric space is isometric to a ray
   $$(Y/H^j)/(H/H^j) = Y/H =[0,\infty),$$
   by Lemma \ref{lem:Y+_regular}, the point $q\in Y_+$ projects to $Y/H^j$ a regular point in $(Y/H^j)_+$. Applying Proposition \ref{prop:eGH_quotient} and Lemma \ref{lem:blow_up_R}(2) to $(Y/H^j,\bar{q},H/H^j)$ with $\R$-subgroup $H^{j+1}/H^j$, we obtain
   $$\left(r^{-1}_i (Y/H^j),\bar{q},(H^{j+1}/H^j)\right)\overset{GH}\longrightarrow \left(\R^{m_j},0,\R=(H^{j+1}_\infty/H^j_\infty)\right),$$
   where $m_j$ denotes the rectifiable dimension of $Y/H^j$. Therefore, the limit group $H_\infty^k=\R^{m-1}$ has a series of normal subgroups 
   $$\{\mathrm{id}\}=H^0_\infty \triangleleft H^1_\infty\triangleleft \dots \triangleleft H^k_\infty=\R^{m-1}$$
   such that each $H^{j+1}_\infty/H^j_\infty$ is isomorphic to $\R$. It follows that $H^k_\infty=\R^{m-1}$ has dimension $k$ and thus $Y$ has rectifiable dimension $m=k+1$. This completes the proof.
\end{proof}



Next, we work towards Proposition \ref{prop:reg_set_Y+} on the regular set of $Y$.

\begin{lem}\label{lem:nil_reflect}
   Let $G$ be a closed and nilpotent subgroup of $\mathrm{Isom}(\R^{k+1})$. Suppose that\\
   (1) the quotient metric space $(\R^{k+1}/G,\bar{0})$ is isometric to a ray $([0,\infty),0)$, and\\
   (2) $G$ admits a series of closed normal subgroups
   $$\{\mathrm{id}\}=G^0 \triangleleft G^1 \triangleleft ... \triangleleft G^k = G$$
   such that each $G^{j+1}/G^j$ contains a closed $\R$-subgroup.\\
   Then $G=\R^k\times \Z_2$, where $\R^k$-subgroup acts by translation and $\Z_2$-subgroup acts by reflection about the hyperplane $\R^k\cdot 0$.
\end{lem}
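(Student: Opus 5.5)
We first pin down the identity component $G_0$ by dimension counting. Each quotient $G^{j+1}/G^j$ contains a closed $\R$-subgroup, so $G$ has dimension at least $k$. On the other hand, the quotient $\R^{k+1}/G$ is a ray, which is one-dimensional, so a generic orbit $G\cdot p$ has dimension exactly $k$. Since $G\le\mathrm{Isom}(\R^{k+1})=\R^{k+1}\rtimes O(k+1)$ is closed and nilpotent, we use the structure of nilpotent subgroups of Euclidean isometry groups, as in \cite{Pan21}*{Lemma 4.1}: $G_0$ consists of translations along some subspace $V$ combined with rotations in $O(V^\perp)$ that commute with these translations. An orbit of $G_0$ is then an affine copy of $V$ twisted by a torus action in $V^\perp$.

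Next we use that the quotient is a ray to rule out the rotational part. Take the orbit $G\cdot 0$ lying over the endpoint $0$ of the ray. It has codimension at least one and is an affine subspace or a union of such. Points at distance $t$ from the singular orbit form a single $G$-orbit; if $\dim V=k$, this orbit is $k$-dimensional. If instead $\dim V<k$, the $k$-dimensional orbits would have to come from a torus acting on $V^\perp$, which has dimension $k+1-\dim V\ge 2$. Distance-$t$ sets in $V^\perp$ are spheres of dimension $\dim V^\perp-1$, while a torus acting on a sphere transitively forces $\dim V^\perp\le 2$, i.e.\ a circle action on $\R^2$. But then the fiber over an interior point of the ray would contain a circle factor, giving an $S^1$ subgroup. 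This contradicts condition (2): a compact connected piece cannot sit inside the successive $\R$-quotients of dimension $k$ (count dimensions: $\dim G\ge k$ with each step carrying a noncompact $\R$). We conclude that $G_0=\R^k$ acts by translations along a hyperplane $P=\R^k\cdot 0$. This step, excluding rotational components using the nilpotent Euclidean structure together with the filtration hypothesis, is the main obstacle, and it is where the Pan lemma must be applied carefully.

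With $G_0=\R^k$ acting by translations on $P$, we get $\R^{k+1}/G_0=\R$, namely the perpendicular coordinate. The group $G/G_0$ acts isometrically on this line, and the further quotient is the ray. Hence $G/G_0$ acts on $\R$ with quotient $[0,\infty)$ and with $0$ as the image of $P$, so $G/G_0\cong\Z_2$ acting by $t\mapsto -t$. Any element $g\in G\setminus G_0$ therefore swaps the two sides of $P$ while preserving $P$.

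Finally we show that $g$ can be normalized to the reflection about $P$. Write $g=(b,A)$ with $A$ preserving $P$ and negating the normal direction. Nilpotency of $G$ forces $A|_P$ to act unipotently on the translation subgroup $\R^k$ through conjugation. Since $A|_P$ is also orthogonal, it must be the identity. Composing $g$ with a translation in $G_0$ removes the component of $b$ along $P$, and since $g$ must fix $P$ setwise, the normal component of $b$ vanishes. Thus $g$ is the reflection about $P$, and $G=\R^k\times\Z_2$.
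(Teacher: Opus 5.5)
Your argument is correct, but it runs in a different order from the paper's.

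\textbf{How the two routes differ.} The paper works algebraically first. The closure of the rotational part of $G_0$ is a compact connected nilpotent group, hence a torus, so $G_0$ is abelian by \cite{Pan25}*{Lemma 2.4}. Hypothesis (2) then gives $G_0=\R^m\times\mathbb{T}$ with $m\ge k$, and a closed $\R^k$-subgroup of $\mathrm{Isom}(\R^{k+1})$ is seen ``by linear algebra'' to consist of translations. The ray condition is used only at the end, to conclude $G_0=\R^k$ and $G=\R^k\times\Z_2$.

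You instead let the cohomogeneity-one geometry do the classification. A $k$-dimensional $G_0$-orbit forces the torus orbits in $V^\perp$ to fill the round spheres about the common center, so $\dim V^\perp\le 2$. Hence $G_0$ is either $\R^k$ or $SO(2)\times\R^{k-1}$, and you use (2) only to exclude the second. This shows something the paper's proof hides: $SO(2)\times\R^{k-1}$ (rotations about a codimension-two axis plus translations along it) is abelian with ray quotient, so it is exactly the configuration hypothesis (2) is there to rule out. Your endgame, via the quotient line and the unipotent-plus-orthogonal argument forcing $A|_P=\mathrm{id}$, also spells out what the paper compresses into ``it follows that''.

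\textbf{Three steps need to be written out.}

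First, the normal form for $G_0$ (translation parts in $V$, rotation parts in $O(V^\perp)$ about a common center) is the crux of your argument, and it is asserted rather than proved. In this paper \cite{Pan21}*{Lemma 4.1} is only invoked for a nilpotent group acting transitively on a Euclidean factor. Prove it directly: $G_0$ is abelian as above, and for $(A,v),(B,w)\in\mathfrak{g}_0$ the vanishing bracket gives $[A,B]=0$ and $Aw=Bv$. This lets a single translation conjugate all translation parts into the common kernel of the rotation parts.

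Second, make the exclusion of $SO(2)\times\R^{k-1}$ precise. In that case $\dim G=\dim G_0=k$, so each $G^{j+1}/G^j$ is one-dimensional with identity component $\R$. The image of the circle in each successive quotient is a compact connected subgroup of $\R$, hence trivial. Inductively the circle lies in $G^1$, whose identity component is $\R$, a contradiction.

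Third, $G/G_0\cong\Z_2$ requires the action of $G/G_0$ on the normal line to be effective. Apply your unipotence argument also to elements preserving every hyperplane parallel to $P$. This shows they are translations along $P$ and hence lie in $G_0$.
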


\begin{proof}
   We write elements in $\mathrm{Isom}(\R^{k+1})$ as $(A,v)\in O(k+1) \ltimes \R^{k+1}$. Let
   $$p:\mathrm{Isom}(\R^{k+1})\to O(k+1), \quad (A,v)\mapsto A$$
   be the projection map. Let $G_0$ be the identity component subgroup of $G$ and let $K$ be the closure of $p(G_0)$ in $O(k+1)$. It is clear that $K$ is a compact, connected, and nilpotent. Hence $K$ is a torus; in particular, it is abelian. Thanks to \cite{Pan25}*{Lemma 2.4}, this shows that $G_0$ is abelian. By assumption (2), we can write $G_0=\R^m\times \mathbb{T}$, where $m\ge k$ and $\mathbb{T}$ is a torus of dimension $\ge 0$. Let $L$ be any closed $\R^k$-subgroup of $G_0$. By linear algebra, it is clear that $L$ acts by translations in $\R^{k+1}$. It follows that $G_0=\R^k$ and $G=\R^k\times \Z_2$.
\end{proof}

\begin{proof}[Proof of Proposition \ref{prop:reg_set_Y+}]
   
   Using the isometric $H$-action, it suffices to show that the base point $y$ is not $(k+1)$-regular. We argue by contradiction and suppose otherwise. Then for any $r_i\to 0$, we have
   \begin{equation}\label{eq:y_singular}
   \begin{CD}
    (r_i^{-1} {Y},y,H) @>\mathrm{GH}>> (\R^{k+1},0,G) \\
	@VV\pi V @VV \pi V\\
	(r_i^{-1} [0,\infty),0) @>\mathrm{GH}>> ([0,\infty),0),
    \end{CD}
    \end{equation}
    where $G$ is a closed and nilpotent subgroup of $\mathrm{Isom}(\R^{k+1})$.

    We verify that $G$ satisfies the assumption (2) in Lemma \ref{lem:nil_reflect}. In fact, because $H$ is a connected, simply connected, and nilpotent group of dimension $k$, it admits a series of closed normal subgroups 
    $$\{\mathrm{id}\}=H^0 \triangleleft H^1\triangleleft \dots \triangleleft H^k=H$$
    such that each $H^{j+1}/H^j$ is isomorphic to $\R$. After passing to a subsequence, we consider convergence associated with (\ref{eq:y_singular}):
    $$(r_i^{-1} {Y},y,H^1\triangleleft \dots \triangleleft H^k)\overset{GH}\longrightarrow (\R^{k+1},0,G^1\triangleleft \dots \triangleleft G^k=G).$$
    For each $j=0,...,k-1$, we apply Proposition \ref{prop:eGH_quotient} and obtain the convergence of quotient actions
    $$\left(r_i^{-1} (Y/H^j),\bar{y}, (H^{j+1}/H^j)\simeq \R \right) \overset{GH}\longrightarrow (\R^{k+1}/G^j,\bar{0},G^{j+1}/G^j).$$
    By construction, $G^{j+1}/G^j$ contains a closed $\R$-subgroup. This verifies the claim. Alternatively, one may prove the claim by using the arguments in \cite{PanYe}*{Lemma 3.1}.

    Combining the claim with Lemma \ref{lem:nil_reflect}, we conclude that $G=\R^k\times \Z_2$. The rest of the proof, which rules out the possibility of this remaining case, is a clear modification of the proof of \cite{NPZ26}*{Proposition 3.8}. In fact, let $h\in G=\R^k\times \Z_2$ be the reflection element and let $h_i\in H$ such that $h_i\overset{GH}\to h$ associated with (\ref{eq:y_singular}). Because $H$ is a connected and simply connected nilpotent Lie group, we can choose $u_i\in H$ such that $u_i^2=h_i$. We set $d_i \coloneqq d(u_i\cdot y,y)$, which is positive because $H$-action is free. If the sequence $r_i^{-1}d_i$ is uniformly bounded, then $u_i\overset{GH}\to g\in G$ for some $g\in G$ associated with (\ref{eq:y_singular}). Then we have $g^2=h$ in $G=\R^k\times \Z_2$, which is impossible because $h$ is the reflection element. If $r_i^{-1}d_i\to \infty$ for some subsequence, then for this subsequence we consider the convergence
    $$(d_i^{-1}Y,y,H,u_i,h_i)\overset{GH}\longrightarrow (\R^{k+1},0,\R^k\times \Z_2,g',h').$$
    By construction, we have
    $$(g')^2=h',\quad d(g'\cdot 0,0)=1,\quad d(h'\cdot 0,0)=0,$$
    which is also impossible on $\R^{k+1}$ with $(\R^k\times \Z_2)$-action. 

    In conclusion, $y$ being $(k+1)$-regular is impossible. Together with Lemma \ref{lem:Y+_regular} and Proposition \ref{prop:rect_dim_k+1}, we conclude that the $(k+1)$-regular set of $Y$ is exactly $Y_+$. Lastly, the convexity of $Y_+$ follows from the openness of $Y_+$ and the continuity of tangent cones along the interior of any geodesic \cite{Deng20}.
\end{proof}

\subsection{Recovering a $C^0$ Riemannian metric on $Y_+$}\label{sec:continuous_riemannian_metric}

In this section, we prove the existence of a continuous Riemannian metric described in item \eqref{item:continuous+sobolev} of Theorem \ref{thm:RCD+nilpotent}. For convenience, we restate this goal as a proposition below.

\begin{prop}\label{prop:recover_cont_metric}
   Under the assumptions of Theorem \ref{thm:RCD+nilpotent}, the metric $d$ on $Y$ comes from the metric completion of 
   $$g=\di r^2 + g_r$$
   on the subset $Y_+=(0,\infty)\times H$, where $\{g_r\}_{r>0}$ is a family of left-invariant Riemannian metrics on $H$ which is continuous in $r$.
\end{prop}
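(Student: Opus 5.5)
We follow the outline of Step 2 in Section \ref{sec:statements_nilpotent+RCD}. Throughout we use the identification $Y=[0,\infty)\times H$ of Lemma \ref{lem:Y_homeo_RxH}, with $\gamma(r)=(r,e)$ the horizontal lifted ray. Each level set $Y_r=\{r\}\times H$ is an $H$-orbit, and $H$ acts freely and transitively on it by isometries of $d$.

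\textbf{Step 1: the orbits carry intrinsic length metrics.} Fix $r>0$ and a one-parameter subgroup $t\mapsto \exp(tX)$ of $H$. We want the orbit curve $t\mapsto \exp(tX)\cdot\gamma(r)$ to be rectifiable in $(Y,d)$, with length bounded by $C|t|$. All points of $Y_r$ are $(k+1)$-regular (Proposition \ref{prop:reg_set_Y+}), and the equivariant tangent cones there are $(\R^{k+1},0,\R^k)$ with translation actions (Lemma \ref{lem:blow_up_R}). Using the H\"older continuity of tangent cones along the interior of geodesics \cite{CoNa12,Deng20} together with the Abresch--Gromoll excess estimate, as in \cite{NPZ26}, we aim for a first-order estimate: $d(\exp(tX)q,q)$ should be comparable to $|t|$ times a continuous positive function of $q$, uniformly on compact subsets of $Y_+$. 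By $H$-invariance this yields a Lipschitz bound. We then define $d_r$ as the induced intrinsic length metric on $H\cong Y_r$, i.e. the infimum of $d$-lengths of curves inside $Y_r$. Since one-parameter subgroups generate $H$ (for instance through the coordinates $\exp\colon\mathfrak h\to H$), $d_r$ is finite and left-invariant.

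\textbf{Step 2: identify $d_r$ as a left-invariant Riemannian metric.} The metric $d_r$ is a left-invariant length metric on the Lie group $H$, so by Berestovskii's theorem \cite{Berestovskii88} it is sub-Finsler. It is determined by a left-invariant distribution $\Delta\subseteq\mathfrak h$ and a norm on $\Delta$. Step 1 shows that every one-parameter subgroup is rectifiable, which forces $\Delta=\mathfrak h$, so $d_r$ is Finsler with norm $\|\cdot\|_r$ on $\mathfrak h$. To see that the norm is Euclidean, we blow up at a point of $Y_r$. The tangent cone of $(H,d_r)$ at $e$ is $(\mathfrak h,\|\cdot\|_r)$. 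The first-order comparison from Step 1 identifies this with the orbit $\R^k\cdot 0$ inside the tangent cone $\R^{k+1}$ of $Y$, which is a Euclidean subspace. Hence $\|\cdot\|_r$ comes from an inner product $g_r$.

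\textbf{Step 3: reconstruct $d$ and prove continuity in $r$.} We must show that $d$ agrees with the distance induced by $g=\di r^2+g_r$ on $Y_+$, and that $r\mapsto g_r$ is continuous. The infinitesimal structure at each point $q=(r,h)$ is the splitting $\R\times\R^k$: the $\R$ factor is the horizontal ray direction, orthogonal to the orbit because $\pi$ is a submetry onto a ray, and $\R^k$ carries $g_r$. It follows that $d$-lengths of Lipschitz curves $c(t)=(r(t),h(t))$ equal $\int\sqrt{\dot r^2+g_{r}(\dot h,\dot h)}$, computed via metric derivatives in the tangent cones. This gives $d=d_g$ on $Y_+$. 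Convexity of $Y_+$ (Proposition \ref{prop:reg_set_Y+}) ensures that distances between points of $Y_+$ are realized inside $Y_+$. Density of $Y_+$ then makes $d$ the completion.

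Continuity of $g_r$ would come from uniform convergence, for each fixed $X$, of $\|X\|_{r'}=\lim_{t\to0}d(\exp(tX)\gamma(r'),\gamma(r'))/|t|$ as $r'\to r$. This is a local-uniformity statement for the first-order estimate of Step 1 near the compact set $\gamma([r-\epsilon,r+\epsilon])$.

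\textbf{Main obstacle.} The hardest part is Step 1 and its uniform version: proving that orbit curves of one-parameter subgroups are rectifiable, and that the ratio $d(\exp(tX)q,q)/|t|$ converges locally uniformly. Tangent cones alone only give $d(\exp(tX)q,q)=o(1)$ with an unknown rate. My first attempt would transfer the argument of \cite{NPZ26} for $\R$-actions by working in the quotient $Y/H^j$ for a suitable normal series, or directly with the $\R$-subgroup $\exp(\R X)$ using Lemma \ref{lem:blow_up_R}(2). The plan is to control the scale function $v\mapsto d(v\cdot q,q)$ through H\"older tangent-cone continuity, obtaining a doubling-type estimate that upgrades to Lipschitz.
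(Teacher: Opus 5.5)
Your outline follows the paper's proof. The paper also imports rectifiability of one-parameter orbits from \cite{NPZ26}: the midpoint-deviation estimate of Lemma \ref{lem:deviation_estimate} combined with the Abresch--Gromoll excess estimate gives the uniform dyadic bound of Lemma \ref{lem:division_estimate}, hence $\mathrm{length}\le(1+\epsilon)d$. It then uses Berestovskii's theorem to get a left-invariant Finsler metric, the $\R^{k+1}$ tangent cone to make it Riemannian, local uniformity of the length estimate for continuity of $g_r$, and metric derivatives plus convexity of $Y_+$ for $d=d_g$.

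One point to keep sharp: your Step 2 identification of tangent cones needs $d_r/d\to 1$ locally uniformly (Corollary \ref{cor:dist_int_ext}), not just Lipschitz comparability. The locally uniform convergence of $d(\exp(tX)q,q)/|t|$ you target would give exactly that.
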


As mentioned in Section \ref{sec:statements_nilpotent+RCD}, the strategy to prove Proposition \ref{prop:recover_cont_metric} follows a similar approach to that in \cite{NPZ26}*{Sections 3.3 and 3.4}, where we considered $S^1$-actions. 

For the first part of the proof (from Lemma \ref{lem:uniform_regular} to Lemma \ref{lem:proj_estimate} below), most arguments are almost identical to the ones in \cite{NPZ26} with minimal modifications. The goal of this part is showing that on any $H$-orbit in $Y_+$, any one-parameter orbit is rectifiable; moreover, the intrinsic metric on the $H$-orbit is locally $(1+\epsilon)$-biLipschitz to the extrinsic metric $d$. 

The subsequent part requires a different approach compared to \cite{NPZ26}, due to the higher dimension of $H$. Here, we apply a classical result of Berestovskii \cite{Berestovskii88}, which states that any homogeneous length metric on a connected Lie group is a left-invariant sub-Finsler metric, to obtain a family of sub-Finsler metrics on $H_r=\pi^{-1}(r)$, where $\pi:Y\to [0,\infty)$ is the quotient map and $r>0$. Then we use the tangent cone structure to further show that these sub-Finsler metrics are Riemannian. This gives a Riemannian metric $g=dr^2+g_r$ on $Y_+=(0,\infty)\times H$.

Lastly, we show that $g$ induces the same distance $d$ on $Y_+$. The proof in this part is similar to \cite{NPZ26}*{Section 3.4} with some mild modifications. 

\begin{lem}\label{lem:uniform_regular}
   Let $K$ be a compact subset of $Y_+$. Then for any $z_i\in K$ and any $d_i\to 0$ such that the sequence $(d_i^{-1}Y,z_i)$ is convergent, we have
   $$(d_i^{-1}Y,z_i)\overset{GH}\longrightarrow (\R^{k+1},0).$$
\end{lem}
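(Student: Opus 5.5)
The plan is to reduce to the case where the basepoints lie in a single orbit slice and to use the $H$-action together with the fact, from Lemma \ref{lem:Y+_regular} and Proposition \ref{prop:reg_set_Y+}, that every point of $Y_+$ is $(k+1)$-regular. Since $K\subset Y_+$ is compact, its projection $\pi(K)\subset(0,\infty)$ lies in some interval $[a,b]$ with $a>0$.

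We use the isometric $H$-action to move each $z_i$ onto the horizontal ray $\gamma$ of Lemma \ref{lem:Y_homeo_RxH}. Write $z_i=h_i\cdot\gamma(t_i)$ with $t_i\in[a,b]$. Since $h_i$ is an isometry, $(d_i^{-1}Y,z_i)$ is isometric to $(d_i^{-1}Y,\gamma(t_i))$. After passing to a subsequence we have $t_i\to t_\infty\in[a,b]$, and $\gamma(t_\infty)\in Y_+$ is a $(k+1)$-regular point. The problem is thus reduced to the following: for points $q_i\to q$ along a geodesic, with $q$ and all $q_i$ regular, every rescaled limit at scales $d_i\to0$ is $\R^{k+1}$.

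To handle this we invoke the H\"older continuity of tangent cones in the interior of geodesics \cites{CoNa12,Deng20}. Pick a geodesic segment $\sigma$ in $Y$ containing $\gamma([a,b])$ in its interior. For instance, take $\gamma|_{[a/2,2b]}$; it is a minimizing segment because $\gamma$ is a horizontal lift of a ray and $\pi$ is $1$-Lipschitz. Deng's result gives a uniform statement along the interior of $\sigma$. Precisely, there is a modulus $\theta(\delta)\to0$ such that, for all $s,s'\in[a,b]$ and all scales $\lambda\le\lambda_0$,
$$d_{GH}\big(B_1^{\lambda^{-1}Y}(\gamma(s)),B_1^{\lambda^{-1}Y}(\gamma(s'))\big)\le C|s-s'|^{\alpha}\lambda^{-?}$$
The exact form is the Colding--Naber estimate: scale-$\lambda$ balls at nearby interior points are close uniformly in $\lambda$ once their separation is small relative to the geodesic length. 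Combining this with the regularity at $\gamma(t_\infty)$ is not quite enough, because $d_i$ and $|t_i-t_\infty|$ may be incomparable.

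A cleaner route exploits the structure directly. Every point of the slice $\pi^{-1}([a,b])$ is regular, and $\pi^{-1}([a,b])/H=[a,b]$ is compact. We would like to show that the convergence $r^{-1}B_r(z)\to B_1(0)\subset\R^{k+1}$ is uniform in $z$ on this slice, i.e.\ that $\sup_{z}d_{GH}(B^{r^{-1}Y}_1(z),B_1(0))\to0$ as $r\to0$. Arguing by contradiction gives $z_i=\gamma(t_i)$ and $r_i\to0$ whose rescaled balls stay $\epsilon$-far from Euclidean. Applying Colding--Naber along $\gamma$ with $t_i\to t_\infty$, the tangent cones at $\gamma(t_i)$ and at $\gamma(t_\infty)$ at the same scale $r_i$ are close, since the H\"older constant is uniform on the compact interior piece. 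At $\gamma(t_\infty)$ the rescaled ball is close to Euclidean because that point is regular. This is a contradiction.

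The main obstacle is exactly this uniformity step: the scale-by-scale H\"older continuity of Colding--Naber/Deng, which compares tangent balls at different points along a geodesic at the same small scale. If needed, we would alternatively use the $\epsilon$-regularity (volume-cone / almost-splitting) characterization. The space is uniformly almost Euclidean on a neighbourhood of $\gamma(t_\infty)$ at every sufficiently small scale, because a regular point has a whole neighbourhood of $\delta$-regular points by Cheeger--Colding-type $\epsilon$-regularity for noncollapsed-in-rectifiable-dimension $\RCD$ spaces.
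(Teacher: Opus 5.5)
Your final, by-contradiction argument is correct and is essentially the paper's proof, which defers to \cite{NPZ26}*{Lemma 3.10}: use the $H$-action to move $z_i$ onto the horizontal ray $\gamma$, then combine the Colding--Naber/Deng H\"older continuity of small balls along $\gamma$ with the $(k+1)$-regularity of $\gamma(t_\infty)$. The incomparability of $d_i$ and $|t_i-t_\infty|$ that you flag is not an obstacle, because that estimate bounds the Gromov--Hausdorff distance between the $r^{-1}$-rescaled $r$-balls at $\gamma(s)$ and $\gamma(t)$ by $C|s-t|^{\alpha}$ with $C$ independent of $r$ (for all small $r$); you should also drop the $\epsilon$-regularity fallback, which is not justified here since $Y$ need not be noncollapsed (in general $N>k+1$ and $\meas$ is not $\mathcal{H}^{k+1}$).
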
 

\begin{proof}
   The proof is almost identical to \cite{NPZ26}*{Lemma 3.10}, based on H\"older continuity of small balls along the horizontal ray \cite{Deng20} and Proposition \ref{prop:rect_dim_k+1}. The only minor change is the change from $\R^2$ in the proof of \cite{NPZ26}*{Lemma 3.10} to $\R^{k+1}$.
\end{proof}

Because $H$ is a connected and simply connected nilpotent Lie group, the Lie group exponential map $\exp: \mathfrak{h}\to H$ is a diffeomorphism. As a result, for any $h\not=\mathrm{id}$, the one-parameter subgroup through $h$ is uniquely determined. For convenience, we denote elements on the one-parameter subgroup by $th\coloneqq\exp(tV)$, where $t\in \R$ and $V=\exp^{-1}(h)$.

\begin{lem}\label{lem:deviation_estimate}
   Let $\epsilon>0$ and $y_0=(r_0,\mathrm{id})\in Y_+$, where $r_0>0$. Then there is a small $\delta>0$ such that for all $(r,h)\in B_\delta(y_0)$, it holds that
   $$d((r,h/2),\sigma)\le \epsilon\cdot d((r,\mathrm{id}),(r,h)),$$
   where $\sigma$ is a minimal geodesic from $(r,\mathrm{id})$ to $(r,h)$. 
\end{lem}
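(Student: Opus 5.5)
We argue by contradiction and blow up, using Lemma \ref{lem:uniform_regular} together with the equivariant tangent cone structure from Lemma \ref{lem:blow_up_R}. Suppose the statement fails for some $\epsilon>0$. Then there are points $(r_i,h_i)\to y_0$ and minimal geodesics $\sigma_i$ from $z_i\coloneqq(r_i,\mathrm{id})$ to $(r_i,h_i)$ such that
\[
d((r_i,h_i/2),\sigma_i)>\epsilon\, d_i,\qquad d_i\coloneqq d(z_i,(r_i,h_i))\to 0 .
\]
Here $d_i>0$ because the action is free. We rescale by $d_i^{-1}$ around $z_i$. The points $z_i$ lie in a compact subset of $Y_+$, so Lemma \ref{lem:uniform_regular} gives $(d_i^{-1}Y,z_i)\to(\R^{k+1},0)$ after passing to a subsequence.

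\textbf{Step 1: identify the limit of the group actions.} We pass to equivariant limits of $H$ and of the one-parameter subgroup $L_i=\{th_i\}_{t\in\R}$. The argument of Lemma \ref{lem:blow_up_R}(1) applies verbatim to base points varying in a compact set, because the quotient is still a line near $r_0>0$. It shows that the limit of $H$ is $\R^k$ acting by translations. Since $\R^k$ acts freely on $\R^{k+1}$, the limit of $L_i$ is a closed subgroup of translations.

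\textbf{Step 2: show the limit of $L_i$ is a full line.} This is the main obstacle, since the rescaling depends on $h_i$ and not only on a fixed $\R$-subgroup. I would redo the Claims 1--4 argument from the proof of Lemma \ref{lem:blow_up_R}(2), with the sequence of subgroups $L_i$ in place of a fixed $L$. In that argument the scale $v_i$ is normalized so that $d(v_i\cdot q,q)=r_i$. Here $h_i$ itself plays that role, since $d(h_i z_i,z_i)=d_i$.

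The claims should give that $h_i\to g$, a translation by a unit vector $w$, and that the subgroups $L_i$ converge to the line $\R w$. In particular, the elements $(1/2)h_i$ converge to some $g_{1/2}$ in the limit line with $g_{1/2}^2=g$. In a translation group acting on $\R^{k+1}$ this forces $g_{1/2}$ to be translation by $w/2$. Hence
\[
d_i^{-1}\text{-rescaled } (r_i,h_i/2)\longrightarrow w/2 .
\]
If the uniform version of Claims 1--4 proves delicate, I would instead use only the identity $(h_i/2)^2=h_i$. One then argues as in the proof of Proposition \ref{prop:reg_set_Y+}. If $d((r_i,h_i/2),z_i)\gg d_i$, rescaling at that larger scale produces a translation $g'$ with $d(g'\cdot0,0)=1$ and $(g')^2$ fixing $0$, which is impossible. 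So the half-points stay at scale $d_i$, and any limit $g_{1/2}$ satisfies $g_{1/2}^2=g$ among translations, giving $g_{1/2}=w/2$.

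\textbf{Step 3: conclude.} The minimal geodesics $\sigma_i$ subconverge (Arzel\`a--Ascoli) to a minimal geodesic from $0$ to $w$ in $\R^{k+1}$, which must be the segment $[0,w]$. It contains $w/2$. Therefore
\[
d_i^{-1}\,d((r_i,h_i/2),\sigma_i)\to 0,
\]
contradicting the assumed lower bound $\epsilon$. This establishes the statement with a uniform $\delta$ around $y_0$.
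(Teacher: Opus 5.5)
Your proposal is correct and follows essentially the paper's route: a contradiction argument, blow-up at scale $d_i$ around $(r_i,\mathrm{id})$ via Lemma \ref{lem:uniform_regular}, and the limit $H$-action being $\R^k$ by translations on $\R^{k+1}$. The operative part of your Step 2 is the fallback, and it suffices on its own: $(h_i/2)^2=h_i$, together with freeness and uniqueness of square roots among translations, first rules out the scale $d((r_i,h_i/2),(r_i,\mathrm{id}))\gg d_i$ (this scale still tends to $0$ because $h_i/2\to\mathrm{id}$), and then forces the limit of $(r_i,h_i/2)$ to be $w/2$, so the uniform version of Claims 1--4 and the convergence $L_i\to\R w$ are not needed.
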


\begin{proof}
   The proof is a clear modification of \cite{NPZ26}*{Lemma 3.11}, using a standard contradiction argument and Lemma \ref{lem:uniform_regular}.
\end{proof}

\begin{lem}\label{lem:division_estimate}
  Let $\epsilon>0$ and $y_0=(r_0,\mathrm{id})\in Y_+$, where $r_0>0$. There is $\delta>0$ such that for all $(r,h)\in B_\delta(y_0)$ and all $m\in \mathbb{N}$, it holds that
  $$2^m \cdot d((r,\mathrm{id}),(r,h/2^m))\le (1+\epsilon) \cdot d((r,\mathrm{id}),(r,h)).$$
\end{lem}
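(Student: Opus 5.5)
The plan is to iterate Lemma \ref{lem:deviation_estimate} along dyadic halvings and combine the resulting one-step estimates into a product that converges. Throughout, fix a small $\epsilon'>0$, to be chosen in terms of $\epsilon$, and let $\delta'$ be the constant that Lemma \ref{lem:deviation_estimate} gives for $\epsilon'$ at $y_0$.

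\textbf{One-step estimate.} Let $(r,h)\in B_{\delta'}(y_0)$ and let $\sigma$ be a minimal geodesic from $(r,\mathrm{id})$ to $(r,h)$. Pick a point $p\in\sigma$ with $d((r,h/2),p)\le \epsilon' D$, where $D=d((r,\mathrm{id}),(r,h))$. Then
$$d((r,\mathrm{id}),(r,h/2))+d((r,h/2),(r,h))\le D+2\epsilon' D.$$
Left multiplication by $h/2$ is an isometry of $Y$. It fixes the $r$-coordinate and maps $(r,\mathrm{id})\mapsto(r,h/2)$ and $(r,h/2)\mapsto (r,h)$, because $h/2\cdot h/2=h$ on the one-parameter subgroup. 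Hence the two summands on the left are equal, and we get
$$2\,d((r,\mathrm{id}),(r,h/2))\le (1+2\epsilon')\,d((r,\mathrm{id}),(r,h)).$$

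\textbf{Iteration.} Apply the one-step estimate to $(r,h/2^j)$ for $j=0,\dots,m-1$, using $(h/2^j)/2=h/2^{j+1}$. This requires every point $(r,h/2^j)$ to stay in $B_{\delta'}(y_0)$. The one-step estimate itself gives $d((r,\mathrm{id}),(r,h/2^{j+1}))\le \tfrac{1+2\epsilon'}{2}\,d((r,\mathrm{id}),(r,h/2^j))$, so these distances decrease. Moreover, $d(y_0,(r,\mathrm{id}))\le |r-r_0|\le d(y_0,(r,h))$, since the projection $\pi$ to $Y/H$ is $1$-Lipschitz and the horizontal ray realizes distance in $r$. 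Choose $\delta=\delta'/3$. Then $(r,h)\in B_\delta(y_0)$ gives $d((r,\mathrm{id}),(r,h))<2\delta$, and by induction
$$d(y_0,(r,h/2^j))\le d(y_0,(r,\mathrm{id}))+d((r,\mathrm{id}),(r,h/2^j))<\delta+2\delta=\delta'.$$
Multiplying the one-step estimates over $j=0,\dots,m-1$ gives the factor $(1+2\epsilon')^m$. This factor grows with $m$, so it is not uniform, and this is the main obstacle.

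\textbf{Making the product converge.} To get a uniform bound, I would use the scale-dependence of the deviation constant. Lemma \ref{lem:uniform_regular} shows that blow-ups at points of a compact set are uniformly Euclidean. Rerunning the contradiction argument of Lemma \ref{lem:deviation_estimate} should then give a function $\epsilon(t)\to 0$ as $t\to0$ such that the deviation is at most $\epsilon(D)\,D$ uniformly near $y_0$.

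This alone is not enough: the scales $D_j\approx 2^{-j}D_0$ decay geometrically, but $\sum\epsilon(D_j)$ need not converge unless $\epsilon(t)$ decays at a rate. The rate would come from the Hölder continuity of tangent cones \cite{Deng20}, \cite{CoNa12} together with the Abresch--Gromoll excess estimate \cite{AG90}, \cite{GM14}, as in \cite{NPZ26}*{Lemma 3.12}. These give a power bound $\epsilon(t)\le Ct^{\alpha}$, and the product becomes
$$\prod_j\bigl(1+2C(2^{-j}D_0)^\alpha\bigr)\le \exp\Bigl(C'\delta^\alpha\Bigr),$$
which is at most $1+\epsilon$ once $\delta$ is small. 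Establishing this power-rate deviation estimate is the step I expect to be the real work.
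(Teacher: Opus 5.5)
Your one-step estimate is correct, and so is the bookkeeping that keeps every $(r,h/2^j)$ inside $B_{\delta'}(y_0)$. But these only yield $2^m d((r,\mathrm{id}),(r,h/2^m))\le (1+2\epsilon')^m\, d((r,\mathrm{id}),(r,h))$, which is the statement with $\delta$ depending on $m$. All of the uniformity in $m$ then rests on the bound $\epsilon(t)\le Ct^{\alpha}$, which you assert but do not prove, and which does not follow from the results you cite.

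\begin{itemize}
\item Lemma \ref{lem:deviation_estimate} comes from a compactness/contradiction argument based on Lemma \ref{lem:uniform_regular}. Inverting it gives only a modulus $\epsilon(t)\to 0$, with no rate.
\item The H\"older continuity of \cite{CoNa12}, \cite{Deng20} compares balls of the \emph{same} radius centered at different points of a geodesic. In the paper it is used only to get the rate-free uniform regularity of Lemma \ref{lem:uniform_regular}. It says nothing about how fast $\lambda^{-1}B_\lambda(y_0)$ approaches the Euclidean ball as $\lambda\to 0$, and that is what a decay rate for the deviation constant would require.
\item The Abresch--Gromoll estimate turns a deviation bound into an excess bound that is superlinear in the deviation. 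However, it is scale invariant. It improves your one-step factor from $1+2\epsilon'$ to $1+c(\epsilon')$ with $c(\epsilon')=o(\epsilon')$, but the product over $m$ still diverges.
\end{itemize}

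Nothing you have established excludes per-scale factors $1+\eta_j$ with $\eta_j\to 0$ but $\sum_j\eta_j=\infty$. That is exactly the non-rectifiable scenario this lemma exists to rule out.

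For comparison, the paper does not go through any rate of convergence to tangent cones. It refers to \cite{NPZ26}*{Lemma 3.13}, which combines Lemma \ref{lem:deviation_estimate} at a fixed $\epsilon$ with the Abresch--Gromoll excess estimate \cites{AG90,GM14}. So you have located the difficulty correctly, but the step that resolves it, a bound uniform in $m$, is missing. The route you suggest for that step, a power-rate version of Lemma \ref{lem:deviation_estimate}, would itself need a substantial new argument that the cited tools do not supply.
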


\begin{proof}
   The proof follows verbatim from \cite{NPZ26}*{Lemma 3.13}, using Lemma \ref{lem:deviation_estimate} and the Abresch--Gromoll excess estimate \cites{AG90,GM14}.
\end{proof}

\begin{cor}\label{cor:orbit_length}
   Let $\epsilon>0$ and let $y_0=(r_0,\mathrm{id})\in Y_+$, where $r_0>0$. For $(r,h)\in Y_+$, we denote $\sigma_{r,h}$ the curve from $(r,\mathrm{id})$ to $(r,h)$ defined by $\sigma_{r,h}(t)=(r,th)$, where $t\in[0,1]$. Then there is $\delta>0$ such that
   $$\mathrm{length}_d(\sigma_{r,h})\le (1+\epsilon)\cdot d((r,\mathrm{id}),(r,h))$$
   for all $(r,h)\in B_\delta(y_0)$. 
\end{cor}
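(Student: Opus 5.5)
The plan is to approximate the one-parameter curve $\sigma_{r,h}$ by dyadic polygonal curves and to bound their lengths with Lemma \ref{lem:division_estimate}. Fix $\epsilon>0$ and let $\delta>0$ be the constant from Lemma \ref{lem:division_estimate} for $y_0$. After shrinking $\delta$, I will arrange that $(r,h)\in B_\delta(y_0)$ forces $(r,th)\in B_{\delta'}(y_0)$ for all $t\in[0,1]$, where $\delta'$ is a radius on which Lemma \ref{lem:division_estimate} still applies. This uses only that the action $(t,r,h)\mapsto (r,th)$ is continuous and that $(r,0\cdot h)=(r,\mathrm{id})$ lies close to $y_0$ when $r$ is close to $r_0$. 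By compactness and continuity, $d((r,\mathrm{id}),(r,th))\to 0$ uniformly as $(r,h)\to y_0$.

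\textbf{Step 1 (dyadic partition).} For $m\in\mathbb N$, partition $[0,1]$ into points $t_j=j/2^m$, $j=0,\dots,2^m$. The one-parameter subgroup satisfies $t_{j+1}h=(t_jh)\cdot(h/2^m)$. Since $H$ acts isometrically by left multiplication, this gives
$$d((r,t_jh),(r,t_{j+1}h))=d((r,\mathrm{id}),(r,h/2^m)).$$
Summing over $j$ and applying Lemma \ref{lem:division_estimate}, the length of the inscribed polygon is
$$\sum_{j}d((r,t_jh),(r,t_{j+1}h))=2^m d((r,\mathrm{id}),(r,h/2^m))\le(1+\epsilon)\,d((r,\mathrm{id}),(r,h)).$$

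\textbf{Step 2 (pass to arbitrary partitions).} The length of a continuous curve is the supremum over all partitions of the inscribed sums. Because $\sigma_{r,h}$ is continuous and dyadic points are dense, the supremum can be computed along the refining dyadic partitions. Concretely, for any partition $0=s_0<\dots<s_\ell=1$ and any $\eta>0$, choose $m$ large so that each $s_i$ is approximated by a dyadic $t_{j_i}$ with $d(\sigma(s_i),\sigma(t_{j_i}))<\eta/(2\ell)$; this is possible by uniform continuity. The triangle inequality then bounds the partition sum by the dyadic sum plus $\eta$. Letting $\eta\to0$ gives $\mathrm{length}_d(\sigma_{r,h})\le(1+\epsilon)\,d((r,\mathrm{id}),(r,h))$.

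The only delicate point is ensuring Lemma \ref{lem:division_estimate} applies uniformly. Its hypothesis is on $(r,h)$ itself and it holds for all $m$, so Step 1 needs it only at $(r,h)$. No intermediate points $(r,th)$ are needed, since left-invariance converts every segment into the segment from $(r,\mathrm{id})$ to $(r,h/2^m)$. The enlargement of the ball described above is therefore not even required, and the argument reduces to Step 1 combined with the standard density argument of Step 2.
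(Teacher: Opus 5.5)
Your proposal is correct and follows the paper's route. The paper derives the corollary directly from Lemma \ref{lem:division_estimate} together with the isometric $H$-action, exactly as in your Step 1: left multiplication turns each dyadic segment into the one from $(r,\mathrm{id})$ to $(r,h/2^m)$. Your Step 2 is the routine passage from dyadic inscribed sums to the length, and, as you observe yourself, the preliminary shrinking of $\delta$ is unnecessary.
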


\begin{proof}
   This follows immediately from Lemma \ref{lem:division_estimate} and the isometric $H$-action; also see \cite{NPZ26}*{Corollary 3.15}.
\end{proof}

\begin{lem}\label{lem:pyth_estimate}
   Let $\epsilon>0$ and $y_0=(r_0,\mathrm{id})\in Y_+$, where $r_0>0$. Then there is $\delta>0$ such that for all $(r,h)\not=(r',h')\in B_\delta(y_0)$, it holds that
  $$1-\epsilon \le \dfrac{\left(d((r,h),(r,h'))^2+ |r-r'|^2 \right)^{1/2}}{d((r,h),(r',h'))}\le 1+\epsilon.$$
\end{lem}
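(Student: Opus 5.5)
We argue by contradiction and blow up, in the same way as for Lemmas \ref{lem:deviation_estimate} and \ref{lem:uniform_regular}. Suppose the estimate fails for some $\epsilon>0$. Then there are sequences $(r_i,h_i)\neq(r_i',h_i')$ converging to $y_0$ for which the ratio lies outside $[1-\epsilon,1+\epsilon]$.

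Set $z_i=(r_i,h_i)$ and $d_i=d(z_i,(r_i',h_i'))\to 0$. By the isometric $H$-action we may replace $h_i$ by $\mathrm{id}$ and $h_i'$ by $h_i^{-1}h_i'$. All the quantities involved are $H$-invariant: the $r$-coordinate is the distance to the orbit $\pi^{-1}(0)$, equivalently the projection to $Y/H=[0,\infty)$. So we may assume $z_i$ stays in a compact subset of $Y_+$. Lemma \ref{lem:uniform_regular} then gives
$$(d_i^{-1}Y,z_i)\overset{GH}\longrightarrow(\mathbb{R}^{k+1},0).$$
As in Lemma \ref{lem:blow_up_R}(1), after passing to a subsequence the $H$-action converges to a translation action of $\mathbb{R}^k$ on $\mathbb{R}^{k+1}$. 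The quotient map converges to the projection $\mathbb{R}^{k+1}\to\mathbb{R}^{k+1}/\mathbb{R}^k=\mathbb{R}$, which is $1$-Lipschitz and is an isometry on the orthogonal line. Hence the rescaled function $d_i^{-1}(r-r_i)$ converges to the linear coordinate $x_{k+1}$ orthogonal to the orbit hyperplane $\mathbb{R}^k\cdot 0$.

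Let $w_i=(r_i',h_i')$, and pass to a subsequence so that $w_i\to w\in\mathbb{R}^{k+1}$ with $|w|=1$. Then $d_i^{-1}|r_i-r_i'|\to|x_{k+1}(w)|$. Consider the auxiliary point $u_i=(r_i,h_i')$, which lies in the same orbit as $z_i$ and in the same $\mathbb{R}$-fiber as $w_i$. It is the image of $w_i$ under the ``horizontal'' map $(r',h)\mapsto(r,h)$.

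The key claim is that $u_i\to u$, where $u$ is the orthogonal projection of $w$ onto the hyperplane $\{x_{k+1}=0\}$. Once this holds, $d_i^{-1}d(z_i,u_i)\to|u|$, and
$$|u|^2+x_{k+1}(w)^2=|w|^2=1$$
by the Pythagorean theorem in $\mathbb{R}^{k+1}$. Thus the ratio tends to $1$, which is the desired contradiction.

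To prove the claim, note that $(s,h)\mapsto(s,h)$ for $s$ between $r_i'$ and $r_i$ traces the path $t\mapsto h_i'\gamma(t)$, a translate of the horizontal ray $\gamma$ from Lemma \ref{lem:Y_homeo_RxH}. This path is a geodesic realizing the distance between orbits, so $d(u_i,w_i)=|r_i-r_i'|$, and in the limit $u$ lies on a segment of length $|x_{k+1}(w)|$ from $w$ to the hyperplane. Such a segment must be orthogonal, so $u$ is the projection of $w$ provided the limit of the translated ray is a line orthogonal to the orbits. That follows because the translated ray is a minimizing geodesic perpendicular to every orbit: for each $s$, its distance from the orbit at level $s$ equals $|s-r|$. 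In the Euclidean limit, a segment whose endpoints are at distance exactly equal to the difference in $x_{k+1}$ must be vertical.

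The main obstacle is justifying that the $H$-invariant $r$-coordinate and these horizontal geodesics pass to the limit as described, uniformly in the base points near $y_0$. This is where I would lean on the uniform regularity of Lemma \ref{lem:uniform_regular}, together with the convergence of the quotient map via Proposition \ref{prop:eGH_quotient}.
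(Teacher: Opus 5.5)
Your proposal is correct and follows the paper's own argument. The paper also argues by contradiction, blowing up at scale $d_i=d((r_i,h_i),(r_i',h_i'))$ to reach a limit $(\mathbb{R}^{k+1},0,w,\mathbb{R}^k)$ with $\mathbb{R}^k$ acting by translations, and defers the remaining details to the earlier one-dimensional case; your identification of the limit of $(r_i,h_i')$ as the orthogonal projection of $w$ onto the orbit hyperplane, using $d((r_i,h_i'),(r_i',h_i'))=|r_i-r_i'|$ along the translated horizontal ray, supplies exactly those details. Your extra proviso about the limit of the translated ray is unnecessary: the limit point lies on the hyperplane at distance $|x_{k+1}(w)|$ from $w$, and that already forces it to be the orthogonal projection.
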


\begin{proof}
   The proof mostly follows verbatim from \cite{NPZ26}*{Lemma 3.16}, using a contradiction argument. The only change is that the blow-up sequence involved in the proof now has limit
   $$(d_i^{-1}Y,(r_i,h_i),(r'_i,h'_i),H)\overset{GH}\longrightarrow (\R^{k+1},0,z,\R^k),$$
   where $\R^k$ acts by translations.
\end{proof}

For each $r>0$, we denote 
$$H_r=\{(r,h) \mid h\in H\}=H\cdot \gamma(r).$$
$H_r$ naturally carries an intrinsic metric from $d$; we denote this intrinsic metric on $H_r$ by $d_r$.

\begin{cor}\label{cor:dist_int_ext}
   Let $\epsilon>0$ and $y_0=(r_0,\mathrm{id})\in Y_+$, where $r_0>0$. Then there is $\delta>0$ such that 
   $$1 \le \dfrac{d_{r}(h,h')}{d((r,h),(r,h'))}\le 1+\epsilon$$
   for all $(r,h),(r,h')\in B_\delta(y_0)$.
\end{cor}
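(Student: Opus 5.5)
The lower bound $d_r(h,h')\ge d((r,h),(r,h'))$ is automatic, since $d_r$ is the intrinsic (length) metric on $H_r$ induced from $d$. So the content is the upper bound. The plan is to produce, for nearby points of $H_r$, a curve inside $H_r$ joining them whose $d$-length is at most $(1+\epsilon)$ times their extrinsic distance. The natural candidate is the one-parameter orbit curve from Corollary \ref{cor:orbit_length}, translated by the isometric $H$-action.

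Fix $\epsilon>0$ and $y_0=(r_0,\mathrm{id})$. Given $(r,h),(r,h')\in B_\delta(y_0)$, set $u=h^{-1}h'$. By left-invariance, the map $(r,x)\mapsto (r,hx)$ is an isometry of $Y$ preserving $H_r$, so
$$d((r,h),(r,h'))=d((r,\mathrm{id}),(r,u)),\qquad d_r(h,h')=d_r(\mathrm{id},u).$$
The curve $t\mapsto (r,h\cdot tu)$ for $t\in[0,1]$ is the left translate of $\sigma_{r,u}$. It lies in $H_r$ and joins $(r,h)$ to $(r,h')$. Hence
$$d_r(h,h')\le \mathrm{length}_d(\sigma_{r,u})\le (1+\epsilon)\, d((r,\mathrm{id}),(r,u)),$$
where the last inequality is Corollary \ref{cor:orbit_length}, provided that $(r,u)\in B_{\delta'}(y_0)$ for the $\delta'$ that corollary supplies.

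The one point needing care, and the main (mild) obstacle, is guaranteeing $(r,u)\in B_{\delta'}(y_0)$ when both $(r,h)$ and $(r,h')$ lie in a small ball $B_\delta(y_0)$. I would handle it with the triangle inequality and the isometric action:
$$d((r,u),y_0)\le d((r,u),(r,\mathrm{id}))+d((r,\mathrm{id}),y_0)=d((r,h),(r,h'))+|r-r_0|.$$
The equality $d((r,\mathrm{id}),y_0)=|r-r_0|$ holds because $\gamma$ is a horizontal ray lifted from the quotient ray, so it is minimizing with $d(\gamma(r),\gamma(r_0))=|r-r_0|$. Both terms are $<2\delta$, so $d((r,u),y_0)\le 4\delta$. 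Choosing $\delta=\delta'/4$ then puts $(r,u)$ inside $B_{\delta'}(y_0)$.

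If the case $h=h'$ is to be avoided, it is trivial anyway, since the ratio is then read as $1$ or both sides vanish. This completes the argument, with no further blow-up analysis needed beyond what Corollary \ref{cor:orbit_length} already encodes.
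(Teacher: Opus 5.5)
Your proposal is correct and takes the same route as the paper, whose proof simply cites Corollary \ref{cor:orbit_length} together with the isometric $H$-action on $H_r$. You additionally make explicit the left translation to $(r,\mathrm{id})$ and the triangle-inequality step, using $|r-r_0|\le d((r,h),y_0)<\delta$, which places $(r,h^{-1}h')$ in the ball where Corollary \ref{cor:orbit_length} applies.
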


\begin{proof}
   This follows immediately from Corollary \ref{cor:orbit_length} and the isometric $H$-action on $H_r$. 
\end{proof}

In the lemma below, we show that for $r$ close to $r_0>0$, $(H_{r_0},d_{r_0})$ and $(H_r,d_r)$ are locally $(1+\epsilon)$-biLipschitz through the identity map.

\begin{lem}\label{lem:proj_estimate}
   Let $\epsilon>0$ and $y_0=(r_0,\mathrm{id})\in Y_+$, where $r_0>0$. Then there is $\delta>0$ such that 
   $$1-\epsilon \le \dfrac{d_r(h,h')}{d_{r_0}(h,h')}\le 1+\epsilon$$
   for all $(r,h)\not=(r,h')\in B_\delta(y_0)$.
\end{lem}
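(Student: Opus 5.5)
We compare the intrinsic metrics $d_r$ and $d_{r_0}$ through the extrinsic metric $d$, combining Lemma \ref{lem:pyth_estimate} with Corollary \ref{cor:dist_int_ext}. We also use that both $d_r$ and $d_{r_0}$ are left-invariant length metrics on $H$, since $H$ acts isometrically and preserves each $H_r$. By left-invariance it suffices to treat pairs of the form $(\mathrm{id},h)$ with $(r,h)$ close to $y_0$; more precisely, for $(r,h),(r,h')\in B_\delta(y_0)$ the element $h^{-1}h'$ lies in a small neighborhood of $\mathrm{id}$ once $\delta$ is small. So we reduce to showing $d_r(\mathrm{id},h)\approx d_{r_0}(\mathrm{id},h)$ for $h$ near $\mathrm{id}$ and $r$ near $r_0$.

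\textbf{Step 1: compare the extrinsic distances on the two levels.} Take the four points $(r,\mathrm{id}),(r,h),(r_0,\mathrm{id}),(r_0,h)$, all in a small ball around $y_0$. Apply Lemma \ref{lem:pyth_estimate} to the pairs $(r,\mathrm{id}),(r_0,h)$ and $(r_0,\mathrm{id}),(r,h)$. This expresses $d((r,\mathrm{id}),(r_0,h))$ up to a factor $1\pm\epsilon$ in two ways: as $\sqrt{d((r,\mathrm{id}),(r,h))^2+|r-r_0|^2}$, and, using the level $r_0$ with base point $(r_0,\mathrm{id})$, as $\sqrt{d((r_0,\mathrm{id}),(r_0,h))^2+|r-r_0|^2}$.

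The key point is that Lemma \ref{lem:pyth_estimate} is stated for arbitrary pairs $(r,h),(r',h')$ with horizontal term $d((r,h),(r,h'))$ measured on the level of the first point. Applying it with the roles of the points swapped therefore yields the same distance with the horizontal term measured on the other level. Comparing the two expressions gives
$$\bigl|\,d((r,\mathrm{id}),(r,h))^2-d((r_0,\mathrm{id}),(r_0,h))^2\,\bigr|\lesssim \epsilon\bigl(d((r,\mathrm{id}),(r,h))^2+d((r_0,\mathrm{id}),(r_0,h))^2+|r-r_0|^2\bigr).$$

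\textbf{Step 2: remove the vertical error.} The vertical term $|r-r_0|^2$ may dominate when $h$ is very close to $\mathrm{id}$. This is the main obstacle, since the ratio must be controlled uniformly as $h\to\mathrm{id}$ with $r$ fixed. I would handle it by subdivision along the one-parameter subgroup. Write $h=(2^m)\cdot(h/2^m)$. By Corollary \ref{cor:orbit_length} and Lemma \ref{lem:division_estimate}, at either level the distance to $h$ is comparable, up to a factor $1+\epsilon$, to $2^m$ times the distance to $h/2^m$.

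Alternatively, one can keep the levels fixed and run a contradiction/blow-up argument. Suppose the ratio fails for $r_i\to r_0$ and $h_i\to\mathrm{id}$. Rescale by $s_i=d((r_0,\mathrm{id}),(r_0,h_i))$. If $|r_i-r_0|/s_i\to\infty$, use the first step at a scale comparable to $|r_i-r_0|$ by replacing $h_i$ with a power $h_i^{N_i}$ along the one-parameter subgroup. Otherwise the limit is $(\R^{k+1},0,\R^k)$ by Lemma \ref{lem:uniform_regular} and Lemma \ref{lem:blow_up_R}, in which the translation orbits at heights $0$ and $a$ are parallel, so the translation lengths agree. The convergence of the rescaled actions then forces the ratio of extrinsic distances to tend to $1$. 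In either approach, the subdivision along the one-parameter subgroup is what makes the vertical error negligible.

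\textbf{Step 3: pass to the intrinsic metrics.} Corollary \ref{cor:dist_int_ext}, applied at both $y_0$ and nearby base points (uniformly on a compact neighborhood, since $\delta$ can be chosen uniformly by the compactness in Lemma \ref{lem:uniform_regular}), converts the extrinsic comparison into
$$1-\epsilon'\le d_r(h,h')/d_{r_0}(h,h')\le 1+\epsilon'.$$
Finally, adjust $\epsilon$ to obtain the stated bound.
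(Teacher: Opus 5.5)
Your argument is correct, and its skeleton matches the paper's. Both reduce to $h'=\mathrm{id}$. Both then establish the comparison when the horizontal displacement $A=d((r,\mathrm{id}),(r,h))$ is not small relative to $t=|r-r_0|$; in the paper this is the Claim, stated for $A\ge\delta/10$ inside $B_\delta(y_0)$. Finally, both treat small $h$ by passing to $2^mh$ with $d((r,\mathrm{id}),(r,2^mh))\in[\delta/4,\delta/2)$ and transferring back with Lemma \ref{lem:division_estimate} and Corollary \ref{cor:dist_int_ext}.

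Where you differ is the proof of the comparable-scale case. The paper runs a new blow-up at $y_0$ at scale $d(z_i,w_i)$ and uses that the limit $\R^k$ acts by translations, so $d(z,gz)=d(0,g\cdot 0)$. This is essentially your ``alternative'' in Step 2. You instead apply Lemma \ref{lem:pyth_estimate} to the single pair $(r,\mathrm{id}),(r_0,h)$ in both orders. That is legitimate, because the lemma is stated for arbitrary ordered pairs. It yields $A^2+t^2\approx B^2+t^2$, hence $A/B\approx 1$ as soon as $t\lesssim A$. Your route is more economical, since the translation structure of the tangent cones is already packaged inside Lemma \ref{lem:pyth_estimate}, whose own proof is the same kind of blow-up. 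The paper's Claim, by contrast, is self-contained.

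There are two places to tighten.

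In Step 2, the subdivision has to be read upward. Given a small $h$, choose $m$ so that $2^mh$ has horizontal displacement comparable to $\delta$. This dominates $t$ once the final statement is restricted to $B_{\delta/2}(y_0)$. With constants adjusted, $(r,2^mh)$ and $(r_0,2^mh)$ then remain in the ball where Lemmas \ref{lem:pyth_estimate}, \ref{lem:division_estimate} and Corollary \ref{cor:dist_int_ext} hold. At both levels, $d_r(\mathrm{id},2^mh)\le 2^m d_r(\mathrm{id},h)$ is just the triangle inequality. The reverse bound $2^m d_r(\mathrm{id},h)\le (1+\epsilon)^2 d_r(\mathrm{id},2^mh)$ comes from Lemma \ref{lem:division_estimate} combined with Corollary \ref{cor:dist_int_ext}.

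In Step 3, no uniformity over nearby base points is needed. Corollary \ref{cor:dist_int_ext} already applies to pairs on every level $r$ within $B_\delta(y_0)$.
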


\begin{proof}
   By the isometric $H$-action, it suffices to prove the statement for $h'=\mathrm{id}$.

   \textbf{Claim:} There is $\delta>0$ such that
   \begin{equation}\label{eq:transl_bilip}
   1-\epsilon \le \dfrac{d_r(\mathrm{id},h)}{d_{r_0}(\mathrm{id},h)}\le 1+\epsilon
   \end{equation}
   holds for all $(r,h)\in B_\delta(y_0)$ with $d((r,\mathrm{id}),(r,h))\ge \delta/10$.

   Suppose that the claim fails, then there are a sequence $\delta_i\to 0$ and a sequence $(r_i,h_i)\in B_{\delta_i}(y_0)$ with $d((r_i,\mathrm{id}),(r_i,h_i))\ge \delta_i/10$ but (\ref{eq:transl_bilip}) does not hold. For convenience, we denote 
   $$z_i=(r_i,\mathrm{id}),\quad w_i=(r_i,h_i),\quad  d_i=d(z_i,w_i)\to 0.$$
   After passing to a subsequence, we have
   $$d_i^{-1}(r_i-r_0)\to s_0\in [-10,10].$$
   Let $w'_i=(r_0,h_i)$, then we apply Lemma \ref{lem:uniform_regular} and consider the convergence
   $$(d_i^{-1}Y, y_0,w'_i,z_i,w_i,H)\overset{GH}\longrightarrow (\R^{k+1},0,w',z,w,\R^k).$$
   It follows from the construction that $z=(s_0,0)$, $w=(s_0,v)\in \R\times \R^k$ for some $v\in \R^k$, then $w'=(0,v)$. Since $d(z,w)=d(0,w')$ in the limit space $\R^{k+1}$, we conclude that 
   $$\dfrac{d(z_i,w_i)}{d(y_0,w'_i)}\to 1.$$
   Together with Corollary \ref{cor:dist_int_ext}, the claim follows.

   Next, we prove that (\ref{eq:transl_bilip}) holds for all $(r,h)\in B_{\delta/2}(y_0)$ with $d((r,\mathrm{id}),(r,h))\le \delta/10$, where $\delta>0$ is a constant such that the claim above, Lemma \ref{lem:division_estimate}, and Corollary \ref{cor:dist_int_ext} hold. We choose an integer $m$ such that $d((r,\mathrm{id}),(r,(2^m)h)\in [\delta/4,\delta/2)$. We apply the claim to the pair $(r,\mathrm{id})$ and $(r,(2^m) h)$ and obtain
   \begin{equation*}
   1-\epsilon \le \dfrac{d_r(\mathrm{id},(2^m)h)}{d_{r_0}(\mathrm{id},(2^m)h)}\le 1+\epsilon.
   \end{equation*}
   Then thanks to Corollary \ref{cor:dist_int_ext} and Lemma \ref{lem:division_estimate}, we obtain (\ref{eq:transl_bilip}) and complete the proof.
\end{proof}

Next, we apply a result of Berestovskii \cite{Berestovskii88} to obtain a subFinsler metric from $(H_r,d_r)$. Then, we use Corollary \ref{cor:orbit_length} to show that this metric is Finsler, and use the tangent cone structure to show that it is indeed Riemannian.

\begin{lem}\label{lem:fiber_finsler}
   For each $r>0$, the intrinsic metric $d_r$ on  $H_r$ is a left-invariant Finsler metric.
\end{lem}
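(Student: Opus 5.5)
Our plan is to first check that $(H_r,d_r)$ satisfies the hypotheses of Berestovskii's theorem \cite{Berestovskii88}, then to use Corollary \ref{cor:orbit_length} to rule out a genuinely sub-Finsler (horizontal distribution $\neq$ whole tangent space) structure.

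\textbf{Step 1: $d_r$ is a left-invariant length metric inducing the manifold topology on $H$.} We identify $H_r$ with $H$ via $h\mapsto (r,h)$. By construction $d_r$ is a length metric, and it is left-invariant because $H$ acts isometrically on $Y$ and preserves $H_r$. For topology, Corollary \ref{cor:dist_int_ext} gives $d\le d_r\le (1+\epsilon)d$ locally near $(r,\mathrm{id})$, and by homogeneity near every point. Hence $d_r$ induces the subspace topology from $Y$. By Lemma \ref{lem:Y_homeo_RxH}, this is the Lie group topology of $H$. In particular $d_r$ is finite-valued: $H_r$ is connected and $d_r$ is locally comparable to $d$, so a chaining argument applies. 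Thus $(H,d_r)$ is a connected Lie group with a left-invariant intrinsic metric compatible with its topology, and Berestovskii's theorem applies. It yields a left-invariant bracket-generating distribution $\mathcal D\subseteq TH$, namely a subspace $D\subseteq\mathfrak h$, together with a norm $\|\cdot\|$ on $D$. These satisfy $d_r=$ the Carnot--Carath\'eodory distance of $(D,\|\cdot\|)$.

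\textbf{Step 2: $D=\mathfrak h$.} For any $V\in\mathfrak h$, Corollary \ref{cor:orbit_length} shows the curve $t\mapsto (r,\exp(tV))$, $t\in[0,1]$, has finite $d$-length whenever $\exp(V)$ is close to the identity. Rescaling $V$, this holds for every direction. Since $d_r$ is the length metric induced by $d$, the same curve is $d_r$-rectifiable. In a sub-Finsler manifold, a rectifiable curve is absolutely continuous with derivative in $\mathcal D$ almost everywhere. The curve $\exp(tV)$ has left-translated derivative constantly $V$, so $V\in D$. Hence $D=\mathfrak h$ and $d_r$ is a left-invariant Finsler metric given by the norm $\|\cdot\|$ on $\mathfrak h$. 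If one wants to avoid quoting the a.e.\ horizontality fact, there is an alternative. Ball-box estimates say that a non-horizontal direction has $d_r(\mathrm{id},\exp(tV))\gtrsim t^{1/2}$, so $\exp(tV)$ is at best $\tfrac12$-H\"older. This contradicts the bound of Lemma \ref{lem:division_estimate}, $d_r(\mathrm{id},\exp(2^{-m}V))\le C2^{-m}$.

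\textbf{Main obstacle.} The delicate point is Step 2, specifically making rigorous that rectifiability of every one-parameter subgroup forces $D=\mathfrak h$. I would use the H\"older/ball-box argument just described, since it only needs the doubling estimate of Lemma \ref{lem:division_estimate} transferred to $d_r$ via Corollary \ref{cor:dist_int_ext}. One also should check that the Finsler norm is $\|V\|=\lim_{t\to0}t^{-1}d_r(\mathrm{id},\exp(tV))$, which exists and is positive by the same estimates. This prepares the subsequent step, where tangent cones being $\R^{k+1}$ (Lemma \ref{lem:uniform_regular}) will force the norm to be Euclidean.
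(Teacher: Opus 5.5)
Your proposal is correct and follows the paper's route: apply Berestovskii's Theorem 2 to get a left-invariant sub-Finsler structure, then use rectifiability of the one-parameter orbits (Corollary \ref{cor:orbit_length}) to force the horizontal distribution to be all of $\mathfrak h$. The only difference is that the paper cites Berestovskii's Theorem 3 for this last step, whereas you prove it directly (via a.e.\ horizontality of rectifiable curves, or the $t^{1/2}$ lower bound played against Lemma \ref{lem:division_estimate}) and also check the topology and finiteness hypotheses that the paper leaves implicit.
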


\begin{proof}
   By Corollary \ref{cor:orbit_length}, the space $(H_r,d_r)$ is a length metric space such that every pair of points has finite distance between them. It is clear that $(H_r,d_r)$ is homogeneous under $H$-action. We apply a result of Berestovskii \cite{Berestovskii88}*{Theorem 2} to deduce that $(H_r,d_r)$ comes from a left-invariant sub-Finsler metric on $H$. Furthermore, because for every $V\in \mathfrak{h}$, the curve
   $$\sigma:\R \to H_r,\quad t\mapsto \exp(tV)\cdot \gamma(r)$$
   is rectifiable by Corollary \ref{cor:orbit_length}, we conclude that the sub-Finsler metric on $H_r$ is indeed Finsler by \cite{Berestovskii88}*{Theorem 3}.
\end{proof}

\begin{prop}\label{prop:fiber_Riem}
   For each $r>0$, the intrinsic metric $d_r$ on $H_r$ is a left-invariant (and thus smooth) Riemannian metric.
\end{prop}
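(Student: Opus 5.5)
By Lemma \ref{lem:fiber_finsler}, $d_r$ is the length metric of a left-invariant Finsler structure on $H$. Such a structure is determined by a single norm $\|\cdot\|_r$ on $\mathfrak{h}\cong T_{\mathrm{id}}H$, with $d_r$ obtained by minimizing $\int\|dL_{c(t)^{-1}}\dot c(t)\|_r\,dt$ over curves $c$. So it suffices to prove that $\|\cdot\|_r$ is induced by an inner product. The plan is to read off $\|\cdot\|_r$ from the tangent cone of $(H_r,d_r)$ at $\mathrm{id}$, and to compare that tangent cone with the Euclidean tangent cone of $Y$ at $(r,\mathrm{id})$.

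\textbf{Step 1: the rescaled Finsler metric tends to the normed space.} For a smooth left-invariant Finsler metric, the rescalings $(\lambda^{-1}H,d_r,\mathrm{id})$ converge as $\lambda\to 0$ to the normed vector space $(\mathfrak{h},\|\cdot\|_r,0)$. Concretely, in exponential coordinates,
$$d_r(\exp(\lambda U),\exp(\lambda W))=\lambda\|U-W\|_r+o(\lambda)$$
uniformly for $U,W$ in compact sets. This is standard: the Finsler metric is continuous and the group structure is $C^1$-close to translation at small scales.

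\textbf{Step 2: compare with the tangent cone of $Y$.} By Lemma \ref{lem:blow_up_R}(1) and Proposition \ref{prop:rect_dim_k+1}, $(\lambda^{-1}Y,(r,\mathrm{id}),H)$ converges to $(\R^{k+1},0,\R^k)$, with $\R^k$ acting by translations. Take points $(r,h),(r,h')$ at distance of order $\lambda$. By Corollary \ref{cor:dist_int_ext} with $\epsilon\to 0$, the intrinsic distance $d_r$ and the extrinsic distance $d$ agree up to a factor $1+o(1)$. Hence the orbit $H_r$, rescaled by $\lambda^{-1}$ with metric $d_r$, converges in the Gromov--Hausdorff sense to the limit orbit $\R^k\cdot 0$ with the restricted metric. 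That limit orbit is an affine subspace $\R^k\subseteq\R^{k+1}$, so it carries the Euclidean metric. Uniqueness of Gromov--Hausdorff limits then gives an isometry $(\mathfrak{h},\|\cdot\|_r)\cong\R^k$, so $\|\cdot\|_r$ is Euclidean. Equivalently, $\|\cdot\|_r$ satisfies the parallelogram law and comes from an inner product $g_r$ on $\mathfrak{h}$. This inner product extends by left translation to a smooth left-invariant Riemannian metric, whose length metric coincides with $d_r$ because both are the length metric of the same left-invariant norm field.

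\textbf{Main obstacle.} The delicate point is Step 2: the limit of the orbits must be the full limit orbit, and it must carry the Euclidean metric as its intrinsic metric. If one wants to avoid abstract uniqueness of limits, I would argue directly. Set $h_\lambda=\exp(\lambda U)$ and apply Lemma \ref{lem:blow_up_R}(2) to the one-parameter subgroup through $U$. The points $(r,\exp(t\lambda U))$ converge to a straight line through $0$ in $\R^{k+1}$, and since the limit translations form a group, $U\mapsto\lim\lambda^{-1}$-displacement defines a map $\mathfrak{h}\to\R^k$ whose norm matches $\|U\|_r$ by Corollary \ref{cor:orbit_length}. To see that this map is a linear isometry, use that for small $\lambda$ one has $\exp(\lambda U)\exp(\lambda W)=\exp(\lambda(U+W)+O(\lambda^2))$ by Baker--Campbell--Hausdorff. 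Consequently the limit of $\exp(\lambda(U+W))$ is the composition of the limit translations, so the map is additive, and hence $\|U\|_r=|\Phi(U)|$ for a linear map $\Phi$. The BCH error is $O(\lambda^2)=o(\lambda)$ and does not affect the limit.
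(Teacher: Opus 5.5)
Your proof is correct, but it takes a somewhat different route from the paper. Both proofs start from Lemma \ref{lem:fiber_finsler} and use the fact that the tangent cone of a left-invariant Finsler metric at $\mathrm{id}$ is the normed space $(\mathfrak{h},\|\cdot\|_r)$; they differ in how they show that this tangent cone is Euclidean.

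\textbf{The paper's route.} It works with the full, non-equivariant tangent cone of $Y$ at $y_0=(r_0,\mathrm{id})$. It combines Corollary \ref{cor:dist_int_ext} with the Pythagorean estimate (Lemma \ref{lem:pyth_estimate}) and the projection estimate (Lemma \ref{lem:proj_estimate}). This shows that the identity map from $\delta^{-1}B_{\sqrt{\delta}}(y_0)$ to $\R\times H_{r_0}$, rescaled by $\delta^{-1}$ and with the product metric $d_E\times d_{r_0}$, is an $\epsilon(\delta)$-GH approximation. Hence the Euclidean cone $\R^{k+1}$ at the regular point $y_0$ splits as $\R\times C$, and the norm on $C$ must come from an inner product.

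\textbf{Your route.} You stay inside the single orbit $H_{r_0}$ and use three ingredients:
\begin{enumerate}
\item the equivariant tangent cone $(\R^{k+1},0,\R^k)$ from Lemma \ref{lem:blow_up_R}(1), with $m=k+1$ by Proposition \ref{prop:rect_dim_k+1};
\item the fact that the rescaled orbit, with the extrinsic metric, converges to the limit orbit, which is a linear hyperplane carrying the Euclidean metric;
\item Corollary \ref{cor:dist_int_ext} alone, to pass from $d$ to $d_{r_0}$.
\end{enumerate}
This avoids Lemmas \ref{lem:pyth_estimate} and \ref{lem:proj_estimate} and the product splitting for this step. The paper still needs those lemmas later, for Corollary \ref{cor:local_bilip_prod} and Lemma \ref{lem:norm=limit}.

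\textbf{What you should state explicitly.} Your route relies on orbit convergence under equivariant GH convergence, which you only assert. If $(f_i,\psi_i,\phi_i)$ are the equivariant approximations, then
\begin{itemize}
\item $d(f_i(hy_0),\psi_i(h)\cdot 0)<\epsilon_i$ for $h\in H$ with $hy_0$ in the relevant ball, and
\item $d(f_i(\phi_i(v)y_0),v\cdot 0)<\epsilon_i$ for $v$ in the limit group with $d(v\cdot 0,0)$ bounded.
\end{itemize}
So $f_i$ maps the rescaled orbit ball Hausdorff-close onto the corresponding ball of the limit hyperplane. You also need the comparison $d\le d_{r_0}\le(1+\epsilon)d$ on small balls, so that $d_{r_0}$-balls and extrinsic balls in the orbit are nested up to the factor $1+\epsilon$.

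Both arguments implicitly use Mazur--Ulam in the final step: a normed space isometric to Euclidean space has an inner-product norm.

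\textbf{On your BCH alternative.} It also works, but the map $\Phi$ is only defined after fixing a sequence $\lambda_i\to 0$, passing to a diagonal subsequence, and fixing the approximations. This is because the identification of the limit group with $\R^k$ is not canonical. Given your main argument, the alternative is not needed.
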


\begin{proof}
   Let $r_0>0$ and $y_0=(r_0,\mathrm{id})\in Y_+$. For each small $\delta>0$, we consider a map
   $$A_\delta: \left(B_{\sqrt{\delta}}(y_0),\delta^{-1}d\right)\to \left(\R\times H_{r_0},\delta^{-1}(d_E \times d_{r_0})\right),\quad (r,h)\mapsto (r,h),$$
   where $d_E\times d_{r_0}$ denotes the metric product of the Euclidean metric on $\R$ and $d_{r_0}$ on $H$. Combining Corollary \ref{cor:dist_int_ext}, Lemmas \ref{lem:pyth_estimate} and \ref{lem:proj_estimate}, for any $(r,h)\not=(r',h')\in B_{\sqrt{\delta}}(y_0)$, we have
   $$1-\epsilon(\delta) \le \dfrac{\left(d_{r_0}(h,h')^2+ |r-r'|^2 \right)^{1/2}}{d((r,h),(r',h'))}\le 1+\epsilon(\delta)$$
   where $\epsilon(\delta)\to 0$ as $\delta\to 0$. Hence $A_\delta$ is an $\epsilon(\delta)$-Gromov-Hausdorff approximation between $(\delta^{-1}B_{\sqrt{\delta}}(y_0),y_0)$ and $(\delta^{-1}(\R\times H_r),(r_0,\mathrm{id}))$. Let $\delta\to 0$, then by Lemma \ref{lem:fiber_finsler} we conclude that the tangent cone of $(Y,d)$ at $y_0$ is isometric to a metric product $\R \times C$, where $C$ is the (unique) tangent cone at $\mathrm{id}$ of the Finsler manifold $(H_{r_0},d_{r_0})$. Using the tangent cone structure at the $(k+1)$-regular point $y_0$ and the tangent cone structure on a Finsler manifold, we see that $\R^{k+1}$ is isometric to the metric product of $\R$ with $\R^k$ equipped with a norm. Thus the norm on the tangent space of $H_{r_0}$ at $\mathrm{id}$ must be an inner product. This verifies that the Finsler metric on $H_{r_0}$ is indeed Riemannian. Lastly, it follows from the left-invariance of $d_{r_0}$ that the Riemannian metric on $H_{r_0}$ is left-invariant (and thus smooth).
\end{proof}

\begin{notn}
With Proposition \ref{prop:fiber_Riem}, we denote $g_r$ the left-invariant Riemannian metric on $H_r$ that induces $d_r$.
\end{notn}

\begin{lem}\label{lem:g_r_cont}
   The family of Riemannian metrics $\{g_r\}_{r>0}$ is continuous in $r$.
\end{lem}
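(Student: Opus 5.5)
Since each $g_r$ is left-invariant, it is determined by the inner product it induces on $T_{\mathrm{id}}H\cong\mathfrak{h}$. Fix a basis $V_1,\dots,V_k$ of $\mathfrak{h}$. Continuity of $\{g_r\}$ then means that the Gram matrix $G(r)=(g_r(V_i,V_j))$ depends continuously on $r$. By polarization, it suffices to show that for each fixed $V\in\mathfrak{h}$ the function $r\mapsto |V|_{g_r}$ is continuous. The plan is to express $|V|_{g_r}$ through distances and then apply Lemma \ref{lem:proj_estimate}.

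First I express the norm metrically. Since $d_r$ is the Riemannian distance of the smooth left-invariant metric $g_r$ (Proposition \ref{prop:fiber_Riem}), the first variation formula gives
$$|V|_{g_r}=\lim_{t\to 0^+}\frac{d_r(\mathrm{id},\exp(tV))}{t}.$$

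Next, fix $r_0>0$ and $\epsilon>0$, and let $\delta>0$ be the constant given by Lemma \ref{lem:proj_estimate}. Take $r$ with $|r-r_0|<\delta/2$. For $t$ small, the points $(r,\mathrm{id})$ and $(r,\exp(tV))$ both lie in $B_\delta(y_0)$: the first because $d((r,\mathrm{id}),y_0)=|r-r_0|$ along the horizontal ray $\gamma$, and the second by continuity of the $H$-action. Lemma \ref{lem:proj_estimate} then gives
$$(1-\epsilon)\,d_{r_0}(\mathrm{id},\exp(tV))\le d_r(\mathrm{id},\exp(tV))\le(1+\epsilon)\,d_{r_0}(\mathrm{id},\exp(tV)).$$
Dividing by $t$ and letting $t\to0$ yields
$$(1-\epsilon)|V|_{g_{r_0}}\le |V|_{g_r}\le(1+\epsilon)|V|_{g_{r_0}}.$$
Since $\delta$ does not depend on $V$, this bound holds uniformly over all $V$. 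Consequently the matrices $G(r)$ satisfy $(1-\epsilon)^2G(r_0)\le G(r)\le(1+\epsilon)^2G(r_0)$ as quadratic forms, which is exactly continuity at $r_0$ (indeed it is local bi-Lipschitz control).

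The main point to check is that Lemma \ref{lem:proj_estimate} really applies on the whole small $t$-interval, uniformly as $r$ varies near $r_0$. The lemma is stated for points in $B_\delta(y_0)$ with the same first coordinate, so the only verification needed is the membership claim above. Once that holds, the comparison passes directly to the infinitesimal level and the remaining steps are routine.
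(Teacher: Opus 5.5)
Your proof is correct, but it rests on a different lemma than the paper's proof does.

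\textbf{The paper's route.} It fixes a single $V$ and a small scale $t_0>0$. The curve $t\mapsto(r,\exp(tV))$, $t\in[0,t_0]$, has length $t_0\,g_r(V,V)^{1/2}$, and by Corollary \ref{cor:orbit_length} this length is within a factor $1+\epsilon$ of the extrinsic distance between the endpoints. The paper combines this with the elementary continuity of $r\mapsto d((r,\mathrm{id}),(r,\exp(t_0V)))$. This gives continuity of $r\mapsto g_r(V,V)^{1/2}$ one vector at a time, and continuity of $G(r)$ then follows by polarization.

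\textbf{Your route.} You invoke Lemma \ref{lem:proj_estimate}, which already compares the intrinsic metrics $d_r$ and $d_{r_0}$ near $y_0$. You then pass to the infinitesimal level using that $d_r$ is the Riemannian distance of $g_r$. Strictly speaking this is not the first variation formula, but the standard fact that a Riemannian distance is infinitesimally the norm; the name is off, but the step is fine. Your membership check is also fine. In fact no uniformity in $r$ is needed, since you let $t\to 0$ separately for each fixed $r$.

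\textbf{Comparison.} Your route yields a $V$-independent comparison $(1-\epsilon)^2G(r_0)\le G(r)\le(1+\epsilon)^2G(r_0)$. This directly gives an operator-norm bound on $G(r)-G(r_0)$, i.e.\ local bi-Lipschitz control of the whole family, without any polarization step. The paper's route avoids the blow-up argument inside Lemma \ref{lem:proj_estimate}; it uses only Corollary \ref{cor:orbit_length} and continuity of $d$. However, Lemma \ref{lem:proj_estimate} is already proved at this point and is used anyway for Proposition \ref{prop:fiber_Riem}. Given that, your argument is the more economical one here.
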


\begin{proof}
   Let $r_0>0$ and $V\in \mathfrak{h}-\{0\}$. It suffices to show that the function $f(r)\coloneqq g_r(V,V)^{1/2}$ is continuous at $r_0$. We continue to write $y_0=(r_0,\mathrm{id})\in Y_+$.
   
   Let $\epsilon>0$. We choose $\delta>0$ such that Corollary \ref{cor:orbit_length} holds. Then we fix a small $t_0>0$ such that the curve
   $$\sigma_r:[0,t_0] \to H_r,\quad \sigma_r(t)=\exp(tV)\cdot (r,\mathrm{id})=(r,\exp(tV))$$
   is contained in $B_\delta(y_0)$ for all $r\in (r_0-\delta/2,r_0+\delta/2)$. It is clear that $\mathrm{length}(\sigma_r)=f(r)\cdot t_0$. By the continuity of the function $r\mapsto d((r,\mathrm{id}),(r,\exp(t_0 V)))$, there is $\delta'\in (0,\delta/2)$ such that
   $$1-\epsilon\le\dfrac{d((r,\mathrm{id}),(r,\exp(t_0 V)))}{d((r_0,\mathrm{id}),(r_0,\exp(t_0 V)))}\le 1+\epsilon$$
   for all $r\in(r_0-\delta',r_0+\delta')$. Then we use Corollary \ref{cor:orbit_length} and conclude that
   \begin{align*}
      1+\epsilon \ge \dfrac{d((r,\mathrm{id}),\sigma_r(t_0))}{d((r_0,\mathrm{id}),\sigma_{r_0}(t_0))}\ge \dfrac{1}{1+\epsilon} \dfrac{\mathrm{length}(\sigma_r)}{\mathrm{length}(\sigma_{r_0})}=\dfrac{1}{1+\epsilon} \dfrac{f(r)}{f(r_0)}.
   \end{align*}
   Similarly, 
   \begin{align*}
      1-\epsilon \le \dfrac{d((r,\mathrm{id}),\sigma_r(t_0))}{d((r_0,\mathrm{id}),\sigma_{r_0}(t_0))}\le (1+\epsilon) \dfrac{\mathrm{length}(\sigma_r)}{\mathrm{length}(\sigma_{r_0})}=(1+\epsilon) \dfrac{f(r)}{f(r_0)}.
   \end{align*}
   Hence 
   $$\dfrac{1-\epsilon}{1+\epsilon}\le \dfrac{f(r)}{f(r_0)}\le (1+\epsilon)^2$$
   for all $r\in (r_0-\delta',r_0+\delta')$ and the result follows.
\end{proof}

\begin{cor}\label{cor:local_bilip_prod}
   Let $\epsilon>0$ and $y_0=(r_0,\mathrm{id})\in Y_+$, where $r_0>0$. Then there is $\delta>0$ such that $B_\delta(y_0)$ is $(1+\epsilon)$-biLipschitz to a $\delta$-ball in $(r_0-\delta,r_0+\delta)\times H$ centered at $(r_0,\mathrm{id})$ with product metric $dr^2 + g_{r_0}$.
\end{cor}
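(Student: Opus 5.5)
The plan is to compare, for points $(r,h),(r',h')$ near $y_0$, the distance $d$ with the product distance $d_\times$ of $dr^2+g_{r_0}$ on $(r_0-\delta,r_0+\delta)\times H$. Both estimates needed are already available. Lemma~\ref{lem:pyth_estimate} compares $d((r,h),(r',h'))$ with $\left(d((r,h),(r,h'))^2+|r-r'|^2\right)^{1/2}$. Corollary~\ref{cor:dist_int_ext} and Lemma~\ref{lem:proj_estimate} together compare $d((r,h),(r,h'))$ with $d_r(h,h')$, and then with $d_{r_0}(h,h')$. Since $d_{r_0}$ is the distance of $g_{r_0}$ (Proposition~\ref{prop:fiber_Riem}), the product distance is exactly $d_\times=\left(d_{r_0}(h,h')^2+|r-r'|^2\right)^{1/2}$.

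\textbf{Step 1.} Fix $\epsilon'>0$ small in terms of $\epsilon$. Choose $\delta_1>0$ so that Lemma~\ref{lem:pyth_estimate}, Corollary~\ref{cor:dist_int_ext} and Lemma~\ref{lem:proj_estimate} all hold on $B_{\delta_1}(y_0)$ with constant $\epsilon'$. One caveat: Lemma~\ref{lem:proj_estimate} is stated for pairs $(r,h),(r,h')$ both in the ball around $y_0=(r_0,\mathrm{id})$. I would apply it after translating by $h^{-1}$, which is harmless because all the quantities are left-invariant, or else just restrict to pairs within the ball. Chaining the three estimates gives
$$\frac{1-\epsilon'}{1+\epsilon'}\cdot(\text{const})\le \frac{d_\times}{d}\le (1+\epsilon')^2$$
(up to the obvious bookkeeping), so $d$ and $d_\times$ agree up to a factor $1+\epsilon$ on $B_{\delta_1}(y_0)$.

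\textbf{Step 2.} Match the domains, since a $d$-ball is not literally a $d_\times$-ball. Take $\delta<\delta_1/3$. The identity map $(r,h)\mapsto(r,h)$ sends $B_\delta(y_0)$ into the $d_\times$-ball of radius $(1+\epsilon)\delta$. Its inverse sends the $d_\times$-ball of radius $\delta/(1+\epsilon)$ into $B_\delta(y_0)$. Both containments follow from the comparison in Step~1, which holds on the larger ball $B_{\delta_1}(y_0)$. Replacing $\delta$ by $\delta/(1+\epsilon)$ on the product side, or equivalently adjusting $\epsilon$, gives the biLipschitz statement in the form stated.

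\textbf{Main obstacle.} The delicate point is that Lemma~\ref{lem:proj_estimate} controls $d_r$ versus $d_{r_0}$ only for $h,h'$ close to the identity. We also need every pair in the ball to satisfy $|r-r_0|<\delta$ and $d_{r_0}(h,h')$ small. This holds because $d(y_0,(r,h))\ge |r-r_0|$, as the projection to $[0,\infty)$ is $1$-Lipschitz, and because $h,h'$ then lie in a small neighborhood of $\mathrm{id}$. Once that is checked, the rest is routine.
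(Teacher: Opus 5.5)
Your proposal is correct and is essentially the paper's argument. The paper's one-line proof cites Lemma \ref{lem:g_r_cont}, implicitly combined with the chain of Lemma \ref{lem:pyth_estimate} and Corollary \ref{cor:dist_int_ext} that is already displayed in the proof of Proposition \ref{prop:fiber_Riem}; continuity of $g_r$ makes the $d_r$-versus-$d_{r_0}$ comparison global in $h,h'$ and so removes the ball bookkeeping you flag, whereas your use of Lemma \ref{lem:proj_estimate} gives the same comparison locally, which suffices once you note $|r-r'|\le d((r,h),(r',h'))$ and shrink to $\delta<\delta_1/3$.
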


\begin{proof}
   This is an immediate consequence of Lemma \ref{lem:g_r_cont}.
\end{proof}

\begin{lem}\label{lem:norm=limit}
   Let $y_0=(r_0,\mathrm{id})\in Y_+$, where $r_0>0$, and $W\in T_{y_0}Y_+-\{0\}$. We write $W=(a,V)\in \R\times T_{\mathrm{id}}H$. Then
   $$\|W\|_g = \lim\limits_{t\to 0}\dfrac{d(y_0,c_W(t))}{|t|},$$
   where $g=dr^2+g_r$ and $c_W(t)=(r_0+at,\exp(tV))$.
\end{lem}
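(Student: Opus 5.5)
The plan is to reduce the statement to Corollary \ref{cor:local_bilip_prod} and the continuity of $\{g_r\}$ from Lemma \ref{lem:g_r_cont}, and then to compute the limit in the product metric $dr^2+g_{r_0}$.

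Fix $\epsilon>0$ and let $\delta>0$ be as in Corollary \ref{cor:local_bilip_prod}. For $|t|$ small, both $y_0$ and $c_W(t)$ lie in $B_\delta(y_0)$. Inspecting the proof of that corollary, which combines Corollary \ref{cor:dist_int_ext}, Lemma \ref{lem:pyth_estimate}, Lemma \ref{lem:proj_estimate} and Lemma \ref{lem:g_r_cont}, the biLipschitz map is the identity in the coordinates $(r,h)$. Hence
$$(1+\epsilon)^{-1}\, D(y_0,c_W(t))\le d(y_0,c_W(t))\le (1+\epsilon)\, D(y_0,c_W(t)),$$
where $D$ denotes the distance of the product metric $dr^2+g_{r_0}$ on $\R\times H$. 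If that corollary is not stated precisely this way, I would instead combine Lemma \ref{lem:pyth_estimate} directly with Lemma \ref{lem:proj_estimate} and Corollary \ref{cor:dist_int_ext}. This gives
$$d(y_0,c_W(t))\approx_\epsilon \left(a^2t^2+d_{r_0}(\mathrm{id},\exp(tV))^2\right)^{1/2},$$
where the intermediate point is $(r_0+at,\mathrm{id})$, and $d_{r_0+at}$ is compared with $d_{r_0}$.

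Next I compute the product distance. Since $(\R\times H, dr^2+g_{r_0})$ is a smooth Riemannian product, its distance satisfies $D\big((r_0,\mathrm{id}),(r_0+at,\exp(tV))\big)^2=a^2t^2+d_{r_0}(\mathrm{id},\exp(tV))^2$. Because $g_{r_0}$ is a smooth left-invariant Riemannian metric (Proposition \ref{prop:fiber_Riem}), we have $d_{r_0}(\mathrm{id},\exp(tV))/|t|\to \|V\|_{g_{r_0}}$ as $t\to0$; this is the standard first-order behavior of Riemannian distance along a smooth curve with velocity $V$. Hence $D/|t|\to (a^2+\|V\|_{g_{r_0}}^2)^{1/2}=\|W\|_g$.

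Combining the two steps, $\limsup$ and $\liminf$ of $d(y_0,c_W(t))/|t|$ lie in $[(1+\epsilon)^{-1}\|W\|_g,(1+\epsilon)\|W\|_g]$. Letting $\epsilon\to0$ gives the claim.

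The main point to check is that the comparison maps in Lemma \ref{lem:pyth_estimate} and Lemma \ref{lem:proj_estimate} are literally the coordinate identity, so that the curve $c_W$ is transported without distortion. Lemma \ref{lem:proj_estimate} compares $d_r$ and $d_{r_0}$ only for $(r,h),(r,h')\in B_\delta(y_0)$. This holds here for the pair $\mathrm{id},\exp(tV)$ at level $r=r_0+at$ once $|t|$ is small.
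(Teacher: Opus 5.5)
Your argument is correct and is essentially the paper's: both reduce, via the Pythagorean estimate of Lemma \ref{lem:pyth_estimate}, to the first-order behavior of the fiber distance at level $r_0$. The paper applies Lemma \ref{lem:pyth_estimate} with $y_0$ as the first point, so it never needs the level comparison of Lemma \ref{lem:proj_estimate}; it then obtains the fiber term from Corollary \ref{cor:orbit_length} and the exact length $|t|\,g_{r_0}(V,V)^{1/2}$ of the one-parameter orbit, rather than from the general first-order expansion of a smooth Riemannian distance.
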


\begin{proof}
   When $|t|\not=0$ is sufficiently small, by Lemma \ref{lem:pyth_estimate}, we have
   \begin{equation}\label{eq:lem_norm_limit_1}
   \left|\dfrac{t^2a^2+d((r_0,\mathrm{id}),(r_0,\exp(tV)))^2}{d(y_0,c_W(t))^2}-1\right|\le \epsilon(t),
   \end{equation}
   where $\epsilon(t)\to 0^+$ as $t\to 0.$ We denote $\sigma(t)=(r_0,\exp(tV))$ the one-parameter orbit at $\gamma(r_0)$ in the direction of $V$. Then $$\mathrm{length}(\sigma|_{[0,t]})=|t|\cdot g_{r_0}(V,V)^{1/2},\quad 1\le \dfrac{\mathrm{length}(\sigma_{[0,t]})}{d((r_0,\mathrm{id}),(r_0,\exp(tV)))}\le 1+\epsilon(t),$$
   where the second inequality is a consequence of Lemma \ref{cor:orbit_length}. Together with (\ref{eq:lem_norm_limit_1}), they yield the desired equality.
\end{proof}

\begin{lem}\label{lem:metric_deriv=usual_deriv}
  Let $\gamma:[0,1]\to (Y_+,d)$ be a Lipschitz curve. Let $t\in [0,1]$ such that both the tangent vector $\gamma'(t)$ and the metric derivative $|\dot\gamma(t)|_d$ exist. Then
  $$\| \gamma'(t) \|_g=|\dot\gamma(t)|_d.$$
\end{lem}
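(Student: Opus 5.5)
The plan is to compare the curve $\gamma$ near time $t$ with the model curve $c_W$ of Lemma \ref{lem:norm=limit}, taking $W=\gamma'(t)$, and then to show that the two curves stay within $o(s)$ of each other in the distance $d$.

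\textbf{Step 1 (normalize).} Write $\gamma(t)=(r_0,h_0)$. By the isometric $H$-action, left-translating by $h_0^{-1}$, we may assume $h_0=\mathrm{id}$, so $\gamma(t)=y_0=(r_0,\mathrm{id})$. Both $\|\cdot\|_g$ and $d$ are left-invariant, since $g_r$ is left-invariant and $H$ acts isometrically, so this is harmless. Write $W=\gamma'(t)=(a,V)$ in the coordinates $(0,\infty)\times H$. Here tangent vectors are taken in the smooth structure of $(0,\infty)\times H$, where $H$ is identified with $\mathfrak h$ via $\exp$.

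\textbf{Step 2 (compare with $c_W$).} Set $c_W(s)=(r_0+as,\exp(sV))$. Both $\gamma(t+s)$ and $c_W(s)$ lie in a small coordinate chart around $y_0$, and differentiability gives
\[
|\gamma(t+s)-c_W(s)|_{\mathrm{chart}}=o(|s|).
\]
Corollary \ref{cor:local_bilip_prod} says that $d$ is locally $(1+\epsilon)$-biLipschitz to the smooth product metric $dr^2+g_{r_0}$. A smooth Riemannian metric is locally biLipschitz to the Euclidean chart metric, so
\[
d(\gamma(t+s),c_W(s))\le C\,o(|s|)=o(|s|).
\]
By the triangle inequality,
\[
\bigl|d(\gamma(t),\gamma(t+s))-d(y_0,c_W(s))\bigr|=o(|s|).
\]

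\textbf{Step 3 (conclude).} Divide by $|s|$ and let $s\to0$. The left term tends to $|\dot\gamma(t)|_d$, which exists by assumption. Lemma \ref{lem:norm=limit} shows that $d(y_0,c_W(s))/|s|\to\|W\|_g$. Hence $|\dot\gamma(t)|_d=\|\gamma'(t)\|_g$. The case $W=0$ needs separate handling, since Lemma \ref{lem:norm=limit} excludes it: there Step 2 directly gives $d(\gamma(t),\gamma(t+s))=o(|s|)$, so both sides vanish.

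\textbf{Main obstacle.} The delicate point is Step 2: justifying that $d$ is locally comparable to a fixed smooth chart metric near $y_0$, uniformly in a neighborhood. This is exactly what Corollary \ref{cor:local_bilip_prod} provides, together with smoothness of $g_{r_0}$ on $H$. The continuity in $r$ from Lemma \ref{lem:g_r_cont} ensures that the biLipschitz constant is uniform on the neighborhood. Once this comparison is in hand, the rest is elementary.
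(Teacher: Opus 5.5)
Your proposal is correct and follows essentially the same route as the paper: compare $\gamma(t+s)$ with $c_W(s)=(r_0+as,\exp(sV))$, use differentiability together with Corollary~\ref{cor:local_bilip_prod} to get $d(\gamma(t+s),c_W(s))=o(|s|)$, then conclude with the triangle inequality and Lemma~\ref{lem:norm=limit}. Your separate treatment of the case $W=0$ is a small detail that the paper leaves implicit.
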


\begin{proof}
   The proof is a mild modification of \cite{NPZ26}*{Lemma 3.19}. Given the differences in the choice of curves, we give a proof here.
   
   Let $\epsilon>0$. For convenience, given the fixed $t\in [0,1]$, we denote $$\gamma(t)=y_0=(r_0,\mathrm{id}),\quad \gamma'(t)=W=(a,V)\in T_{y_0} Y_+.$$
   We consider $\bar{g}=dr^2+g_{r_0}$ a metric product on $(0,\infty)\times H$, and
   $\sigma_W(s)=(r_0+sa,\exp(sV))$ the curve considered in Lemma 
   \ref{lem:norm=limit}.
   
   Because the curve $\sigma_W$ has tangent vector $W$ at $t$, there is $\delta>0$ such that
   $$d_{\bar{g}}(\gamma(t+s),\sigma_W(s))\le \epsilon |s|$$
   for all $s\in (-\delta,\delta)$. Thanks to Corollary \ref{cor:local_bilip_prod}, we can further shrink $\delta>0$ and obtain
   $$d(\gamma(t+s),\sigma_W(s))\le 2\epsilon |s|.$$
   Then 
   $$\dfrac{d(\gamma(t+s),y_0)}{|s|}\le \dfrac{d(\gamma(t+s),\sigma_W(s))}{|s|}+\dfrac{d(\sigma_W(s),y_0)}{|s|}\le 2\epsilon + \dfrac{d(\sigma_W(s),y_0)}{|s|}.$$
   Letting $s\to 0$ and then $\epsilon\to 0$, we get $|\dot{\gamma}(t)|_d\le \|W\|_g$ by using Lemma \ref{lem:norm=limit}.
    
   For the other direction of the inequality, we estimate
   $$\dfrac{d(y_0,\sigma_W(s))}{|s|}\le\dfrac{d(y_0,\gamma(t+s))}{|s|}+\dfrac{d(\gamma(t+s),\sigma_W(s))}{|s|}\le \dfrac{d(y_0,\gamma(t+s))}{|s|} + 2\epsilon.$$
   Then $\|W\|_g \le |\dot{\gamma}(t)|_d$ follows from Lemma \ref{lem:norm=limit}.
\end{proof}

\begin{prop}\label{prop:d=g_on_Y+}
    $d=d_g$ on $Y_+$.
\end{prop}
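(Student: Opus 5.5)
We prove the two inequalities $d_g\le d$ and $d\le d_g$ on $Y_+$ separately. Both are obtained by comparing lengths of curves, using Lemma \ref{lem:metric_deriv=usual_deriv} to identify the metric derivative with the $g$-norm. Throughout, $Y_+$ is open and locally $(1+\epsilon)$-biLipschitz to a smooth product by Corollary \ref{cor:local_bilip_prod}. Consequently a curve in $Y_+$ is $d$-Lipschitz if and only if it is locally $d_g$-Lipschitz in smooth charts. By Rademacher, such a curve then has both a tangent vector $\gamma'(t)$ and a metric derivative $|\dot\gamma(t)|_d$ for a.e. $t$.

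\textbf{Step 1: $d\le d_g$.} Let $z,w\in Y_+$ and let $\gamma:[0,1]\to Y_+$ be a piecewise smooth curve joining them. Since $g$ is continuous, $\gamma$ is Lipschitz for $d$ by Corollary \ref{cor:local_bilip_prod}, after covering the compact image of $\gamma$ by finitely many such balls. Then
$$d(z,w)\le \mathrm{length}_d(\gamma)=\int_0^1|\dot\gamma(t)|_d\,\di t=\int_0^1\|\gamma'(t)\|_g\,\di t=\mathrm{length}_g(\gamma),$$
where the middle equality is the standard formula for the length of an absolutely continuous curve and the next one is Lemma \ref{lem:metric_deriv=usual_deriv}. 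Taking the infimum over $\gamma$ gives $d\le d_g$.

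\textbf{Step 2: $d_g\le d$.} Fix $z,w\in Y_+$ and a minimal $d$-geodesic $\sigma$ from $z$ to $w$. By Proposition \ref{prop:reg_set_Y+}, $Y_+$ is convex, so $\sigma\subset Y_+$. This is the key input: without it, geodesics could pass through the singular orbit $\{0\}\times H$, where $g$ is undefined. The image of $\sigma$ is compact, so $\sigma$ is Lipschitz into a compact subset of $Y_+$. Lipschitz with respect to $g$ in charts follows again from Corollary \ref{cor:local_bilip_prod}, so $\sigma'(t)$ exists a.e. and $\mathrm{length}_g(\sigma)=\int\|\sigma'\|_g$. Applying Lemma \ref{lem:metric_deriv=usual_deriv} once more gives
$$d_g(z,w)\le \mathrm{length}_g(\sigma)=\int_0^1\|\sigma'(t)\|_g\,\di t=\int_0^1|\dot\sigma(t)|_d\,\di t=d(z,w).$$
Here $d_g$ is the infimum of $g$-lengths over absolutely continuous (equivalently, piecewise smooth) curves, which is standard for continuous metrics.

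The main obstacle is the regularity bookkeeping in Step 2. One must justify that the $d$-geodesic is absolutely continuous in coordinates and that the length functional of a continuous Riemannian metric can be computed over Lipschitz curves. Both follow from the local biLipschitz comparison with the product metric $dr^2+g_{r_0}$, after a finite subdivision of the curve. Convexity of $Y_+$ from Proposition \ref{prop:reg_set_Y+} is what keeps the whole comparison inside the domain of $g$.
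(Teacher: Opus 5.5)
Your proposal is correct and follows essentially the same route as the paper, which defers to the argument of Proposition 3.21 in \cite{NPZ26}. In both, $d\le d_g$ and $d_g\le d$ come from comparing $d$-lengths and $g$-lengths of curves via Lemma \ref{lem:metric_deriv=usual_deriv}, and the convexity of $Y_+$ from Proposition \ref{prop:reg_set_Y+} keeps the minimal $d$-geodesic inside the domain of $g$; the only extra standard fact needed, which you correctly flag, is that $d_g$ for a continuous metric can be computed over Lipschitz curves.
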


\begin{proof}
   The proof is almost identical to that of \cite{NPZ26}*{Proposition 3.21}, using Lemma \ref{lem:metric_deriv=usual_deriv} and the convexity of $Y_+$ (Proposition \ref{prop:reg_set_Y+}).
\end{proof}

Now Proposition \ref{prop:recover_cont_metric} follows immediately from Proposition \ref{prop:d=g_on_Y+} because the subset $Y_+$ is open and dense in $Y$.

\subsection{Recovering a $\CDe(0,N)$ density function on $Y$}\label{sec:measure_CD(0,N)}

We recall that $(Y,y,d,\meas)$ and $H$ respectively denote an $\RCD(0,N)$ space and a $k$-dimensional connected, simply connected, and nilpotent Lie group satisfying the conditions of Theorem \ref{thm:RCD+nilpotent}. Thanks to Lemma \ref{lem:Y_homeo_RxH}, we may assume without loss of generality that
\[
Y=[0,\infty)\times H\quad\text{and}\quad Y_+=(0,\infty)\times H,
\]
where $Y_+$ is the pre-image of $(0,\infty)$ under the quotient map $Y\to Y/H=[0,\infty)$.

From now on, we fix a left-invariant frame $\{X_1,\ldots, X_k\}$ on $H$. We define a left-invariant Haar measure $\Omega$ on $H$ by
\[
\Omega\coloneqq\theta^1\wedge\ldots\wedge \theta^k,
\]
where $\{\theta^1,\ldots,\theta^k\}$ is the dual co-frame.

\begin{rem}
    Observe that since $H$ is connected and nilpotent, it is unimodular, i.e., any left-invariant Haar measure is also right-invariant (see \cite{folland_course_2016}*{Proposition 2.30}).
\end{rem}

The goal of this section is to establish the following description of the measure $\meas$, which corresponds to item \eqref{item:measure} of Theorem \ref{thm:RCD+nilpotent}.

\begin{prop}\label{prop:measCD(0,N)}
We have:
\[
\meas=\rho(r)\di r\wedge \Omega,
\]
where $\rho$ is a nonnegative $\CDe(0,N)$ density on $[0,\infty)$, i.e., $\rho^{\frac{1}{N-1}}$ is concave.
\end{prop}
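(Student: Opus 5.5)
We follow the needle decomposition strategy announced in Step 3 of Section \ref{sec:statements_nilpotent+RCD}. Consider the $1$-Lipschitz function $u\colon Y\to[0,\infty)$, $u(r,h)=r$, i.e. the distance to the boundary orbit $H_0=\{0\}\times H$ (by Proposition \ref{prop:d=g_on_Y+}, $d((r,h),H_0)=r$, since $g=\di r^2+g_r$ and the curves $s\mapsto(s,h)$ are minimizing to $H_0$). Its transport set consists of the rays $\gamma_h(s)=(s,h)$: any $d$-geodesic on which $u$ increases at unit speed must have $\|\gamma'\|_g=|\partial_r$-component$|$, hence vanishing $H$-component by Lemma \ref{lem:metric_deriv=usual_deriv}, so it is a horizontal segment $s\mapsto(s,h)$. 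Thus the needles are exactly $\{\gamma_h\}_{h\in H}$, which are non-branching and cover $Y$ up to the null set $H_0$. (That $\meas(H_0)=0$ follows from Proposition \ref{prop:reg_set_Y+}, since the regular set has full measure.)

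By the Cavalletti--Mondino localization theorem \cites{CM17,CM_newformula}, we obtain a disintegration
\[
\meas=\int_H \meas_h\,\di\mathfrak q(h),\qquad \meas_h=\rho_h(s)\,\di s \text{ on } \gamma_h,
\]
where $\mathfrak q$ is a measure on the quotient set, naturally identified with $H$ via $h\mapsto\gamma_h$, and each $\rho_h$ is a $\CDe(0,N)$ density on $[0,\infty)$, i.e. $\rho_h^{1/(N-1)}$ is concave. Next we exploit $H$-invariance. Left translation by $h'\in H$ is a measure-preserving isometry commuting with $u$, so it maps needles to needles, $\gamma_h\mapsto\gamma_{h'h}$. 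By the essential uniqueness of the disintegration (for a fixed normalization, e.g. $\meas_h(\{s\le 1\})=1$), we get $\rho_{h'h}=\rho_h$ for $\mathfrak q$-a.e. $h$, and $L_{h'}$ pushes $\mathfrak q$ to itself. Hence $\mathfrak q$ is a left-invariant Radon measure on $H$, which by uniqueness of Haar measure equals $c\,\Omega$. Moreover, the a.e. invariance of $h\mapsto\rho_h$ under all left translations, combined with a Fubini argument over $H\times H$, shows that $\rho_h=\rho$ for a.e. $h$, with $\rho$ independent of $h$. Absorbing $c$ into $\rho$ gives $\meas=\rho(r)\,\di r\wedge\Omega$, and $\rho^{1/(N-1)}$ is concave.

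The main obstacle is the rigorous identification of the transport set and needles, including the measurability of the quotient map $h\mapsto\gamma_h$ and the verification that the invariance of the disintegration yields a genuinely $h$-independent density rather than one that is only invariant almost everywhere. For the first point, we will use the explicit product structure from Lemma \ref{lem:Y_homeo_RxH}. For the second, we will argue via the uniqueness of disintegration and Fubini as described above. A further subtlety is extending $\rho$ continuously to $r=0$, which will follow from concavity of $\rho^{1/(N-1)}$.
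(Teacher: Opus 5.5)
Your proposal is correct and follows essentially the same route as the paper. It uses the needle decomposition for $d_\partial(r,h)=r$ with needles $[0,\infty)\times\{h\}$, normalizes by $\meas([0,1]\times\cdot)$ so that the quotient measure becomes a left-invariant Haar measure $c\,\Omega$, and then uses the $H$-invariance of $\meas$ to force the needle densities to coincide for a.e.\ $h$.

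The differences are only in sub-steps:
\begin{enumerate}
\item You identify the needles through the $C^0$ metric and Lemma~\ref{lem:metric_deriv=usual_deriv}, whereas the paper uses the non-branching argument of \cite{NPZ24}.
\item You replace the paper's mollification lemma and $\pi$--$\lambda$ argument with a Fubini argument on $H\times H$. Like the paper's version, this should be run either on countably many intervals $B$ or on a jointly measurable version of $(h,r)\mapsto\rho_h(r)$.
\end{enumerate}
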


\begin{rem}
    {A $\CDe(0,N)$ density on $[0,\infty)$ is necessarily continuous on $[0,\infty)$ and locally Lipschitz on $(0,\infty)$.}
\end{rem}

In \cite{NPZ26}, we studied the case $H\simeq \R$. In that case, $H$ admits a normal lattice $\mathbb{Z}\triangleleft H$; hence, $Y/\mathbb{Z}$ admits a compact Lie group action. The results of \cite{Galaz-Garcia_18} allowed us to determine the measure on $Y/\mathbb{Z}$ and conclude since $Y$ is the universal cover of $Y/\mathbb{Z}$.

Under the conditions of Theorem \ref{thm:RCD+nilpotent}, the situation is more complex since our group may not admit a lattice and, if it does, this lattice is generally not a normal subgroup. Therefore, we use a different strategy to prove Proposition \ref{prop:measCD(0,N)}. Our approach relies on the needle decomposition developed by Cavalletti--Mondino \cites{CM17,CM_newformula} in the context of $\CDe(0,N)$ spaces, we refer the reader to their papers for a detailed exposition.

The first step to proving Proposition \ref{prop:measCD(0,N)} is the following lemma, which establishes a disintegration formula for our measure.

\begin{lem}\label{lem:measure_disintegration}
    There exists a family $\{\varphi_h\}_{h\in H}$ of nonnegative measurable functions on $[0,\infty)$ and a probability measure $\mathfrak{q}$ on $H$ such that
    \[
    \meas = \int_{h\in H}\varphi_h(r)\di r\di\mathfrak{q}(h),
    \]
    where $\varphi_h$ is a $\CDe(0,N)$ density on $[0,\infty)$ for $\mathfrak{q}$-a.e.\@ $h\in H$.    
\end{lem}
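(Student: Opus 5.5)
We apply the Cavalletti--Mondino needle decomposition to the $1$-Lipschitz function $u\colon Y\to[0,\infty)$, $u(r,h)\coloneqq r$, i.e.\ $u=d_\partial$ is the distance to the orbit $H\cdot y=\{0\}\times H$. By Proposition \ref{prop:recover_cont_metric} and the identification $Y=[0,\infty)\times H$, each curve $r\mapsto (r,h)$ is a unit-speed ray. Indeed, since $Y/H$ is isometric to $[0,\infty)$, we have $d((r,h),(r',h'))\ge |r-r'|$. Hence $r\mapsto(r,h)$ is a ray realizing $|u(p)-u(q)|=d(p,q)$, so it lies in a transport ray of $u$.

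The first step is to identify the transport rays exactly. Suppose $(r,h)$ and $(r',h')$ with $r<r'$ satisfy $d=r'-r$. Then the geodesic between them projects isometrically onto $[r,r']$ in the quotient. Since $g=dr^2+g_r$ on $Y_+$ and $d=d_g$ there (Proposition \ref{prop:d=g_on_Y+}), a curve in $Y_+$ of length $r'-r$ whose $r$-coordinate increases by $r'-r$ must have vanishing $H$-velocity, which forces $h=h'$. For $r=0$ we use the convexity of $Y_+$ (Proposition \ref{prop:reg_set_Y+}): the interior of the geodesic lies in $Y_+$, and a limiting argument handles the endpoint. Thus the transport set is all of $Y$, modulo the null set $\{0\}\times H$, the rays are exactly $X_h=[0,\infty)\times\{h\}$, and there is no branching in $Y_+$.

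The second step invokes the localization theorem of \cite{CM17} for $\CDe(0,N)$ essentially non-branching spaces, which $\RCD(0,N)$ spaces are. It gives a disintegration $\meas=\int_Q \meas_q\,\di\mathfrak q(q)$ along the rays, with $\meas_q=\varphi_q\,\mathcal H^1\llcorner X_q$ and $\varphi_q$ a $\CDe(0,N)$ density for $\mathfrak q$-a.e.\ $q$. The quotient set $Q$ is parametrized by $H$ through $h\mapsto X_h$, which is measurable since it is induced by the homeomorphism of Lemma \ref{lem:Y_homeo_RxH}. Because $\meas$ is only $\sigma$-finite, we localize: restrict to $u^{-1}([0,R])$, or to $\{r\ge0\}\times K_j$ for an exhaustion $K_j$ of $H$, and normalize to make $\mathfrak q$ a probability measure on $H$. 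This is done by choosing $\mathfrak q$ equivalent to a probability and absorbing a density into $\varphi_h$, following the version in \cite{CM_newformula}, which treats $\sigma$-finite measures and rays that possibly start at $\{0\}\times H$. Writing $\mathcal H^1\llcorner X_h=\di r$ gives the formula.

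I expect the main obstacle to be the first step, namely that the needles are exactly the $H$-orbits of the horizontal ray and that none are degenerate or branching at $\{0\}\times H$. This must be done carefully at $r=0$, where the metric $g$ is not available. I would handle it with the inequality $d\ge|r-r'|$ together with the equality case along the geodesic interiors inside $Y_+$. Rays starting at $r=0$ cannot be extended backwards, so $[0,\infty)$ is the correct domain of each $\varphi_h$.
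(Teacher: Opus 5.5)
Your proposal is correct and follows essentially the same route as the paper: a needle decomposition for $d_\partial(r,h)=r$, identification of the needles with $[0,\infty)\times\{h\}$ and of the quotient with $H$, and then the $\sigma$-finite localization theorems of \cite{CM_newformula}. The only difference is how you identify the needles: you use the recovered metric $g=\di r^2+g_r$ on $Y_+$ plus a limiting argument at $r=0$, whereas the paper uses the non-branching argument of Lemma 4.31 in \cite{NPZ24} (based on \cite{Deng20}), which handles $r=0$ and $r>0$ uniformly; both work.
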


\begin{proof}
    Our first step is to determine the needle decomposition associated with the $1$-Lipschitz function $d_{\partial}(x)\coloneqq d(x,\partial Y)$ ($x\in Y$). Since we identified $Y=[0,\infty)\times H$ and $\partial Y=\{0\}\times H$, we have:
    \[
    d_{\partial}(r,h)= r.
    \]
    In \cite{NPZ24}*{Section 4.3}, we studied the case of a free $\R$-action with ray quotient on an $\RCD(0,N)$ space. Our past approach carries over verbatim to the current setting, where $\R$ is replaced by the Lie group $H$, which also acts freely. In particular, the arguments of \cite{NPZ24}*{Lemma 4.31}, which rely on the non-branchedness of $\RCD$ spaces established in \cite{Deng20}, imply the following:
    \begin{itemize}
        \item the set of branching points $A_+\cup A_-$ associated with $d_{\partial}$ is empty,
        \item the non-branching transport set $\mathcal{T}_{d_{\partial}}^{nb}$ is the whole space $Y$,
        \item given $x=(r,h)\in Y$, the needle $R_{d_{\partial}}(x)$ through $x$ coincides with $[0,\infty)\times \{h\}$.
    \end{itemize}
    Consequently, the map
    \[
    s\colon (r,h)\in [0,\infty)\times H\mapsto (1,h)\in \{1\}\times H,
    \]
    is a continuous selection map, i.e., $x\in Y$ and $y\in Y$ lie on a common needle if and only if $s(x)=s(y)$. As a result, we may identify the quotient set $\mathcal{T}_{d_{\partial}}^{nb}/R_{d_{\partial}}$ with $\{1\}\times H$, which we then identify with $H$. The lemma follows thanks to Cavalletti--Mondino's results \cite{CM_newformula}*{Theorems 3.4 and 3.6}.
\end{proof}

In order to prove Proposition \ref{prop:measCD(0,N)}, we need two classical lemmas, whose proof we provide for completeness. The first lemma states that, if a locally integrable function on $H$ is $k$-periodic almost everywhere for any $k\in H$, then it is constant.

\begin{lem}\label{lem: translation invariant a.e. implies constant}
Let $f\in L^1_{loc}(H,\Omega)$ and assume that, for every $k\in H$, $f(k^{-1}h)=f(h)$ for $\Omega$-a.e.\ $h\in H$. Then there exists $t\in \R$ such that $f(h)=t$ for $\Omega$-a.e.\ $h\in H$.
\end{lem}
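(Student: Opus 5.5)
The plan is to mollify $f$ and use the fact that a smooth function invariant under all left translations is constant. Fix a nonnegative $\psi\in C_c(H)$ with $\int_H\psi\,\di\Omega=1$, and define the convolution
\[
f_\psi(h)\coloneqq \int_H \psi(k)\, f(k^{-1}h)\,\di\Omega(k)=\int_H \psi(hk^{-1})f(k)\,\di\Omega(k),
\]
where the second expression uses that $H$ is unimodular, so $\Omega$ is invariant under inversion and right translations. Since $f\in L^1_{loc}$ and $\psi$ is compactly supported and continuous, $f_\psi$ is well defined and continuous on $H$. Continuity follows from the uniform continuity of $\psi$ on compact sets, with $h$ ranging over a compact neighborhood.

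Next I show that $f_\psi$ is a.e. equal to a constant. Consider the measurable function $F(k,h)\coloneqq |f(k^{-1}h)-f(h)|$ on $H\times H$. By hypothesis, $\int F(k,h)\,\di\Omega(h)$ restricted to any compact set vanishes for every $k$. Fubini's theorem applies, since we integrate $\psi(k)F(k,h)\mathbf 1_K(h)$, which is locally integrable because $f\in L^1_{loc}$ and the translates stay in a compact set. It gives, for a.e. $h$ in any compact $K$, that $f(k^{-1}h)=f(h)$ for $\psi\,\di\Omega$-a.e. $k$. Hence $f_\psi(h)=f(h)\int\psi=f(h)$ for a.e. $h$. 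So $f$ agrees a.e. with the continuous function $f_\psi$.

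It remains to show that the continuous representative $f_\psi$ is constant. For each fixed $k$, the functions $f_\psi(k^{-1}\cdot)$ and $f_\psi$ are continuous and agree $\Omega$-a.e., since $f(k^{-1}\cdot)=f=f_\psi$ a.e. and left translation preserves null sets. As $\Omega$ has full support, they agree everywhere. Thus $f_\psi(k^{-1}h)=f_\psi(h)$ for all $k,h$. Taking $h=\mathrm{id}$ gives $f_\psi\equiv t\coloneqq f_\psi(\mathrm{id})$, and therefore $f=t$ a.e.

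The main obstacle is the measure-theoretic step of swapping the quantifiers. The hypothesis gives a null set depending on $k$, and Fubini is needed to convert it into the statement that for a.e. $h$, the identity holds for a.e. $k$. This requires joint measurability of $(k,h)\mapsto f(k^{-1}h)$. That holds because the map $(k,h)\mapsto k^{-1}h$ is continuous and pushes the product measure forward to a measure absolutely continuous with respect to $\Omega$ on compact pieces. The remaining steps are routine.
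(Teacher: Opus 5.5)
Your proof is correct, but it takes a different route from the paper's.

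The paper convolves on the other side, $f\ast\rho_n(x)=\int_H f(xy^{-1})\rho_n(y)\di\Omega(y)$, and this convolution commutes with left translations. For fixed $x$ and $k$, the hypothesis transported along $y\mapsto xy^{-1}$ (which preserves null sets by unimodularity) gives $f\ast\rho_n(k^{-1}x)=f\ast\rho_n(x)$ for \emph{every} $x$. So each mollification is a continuous, everywhere left-invariant function, hence a constant $t_n$, and no exchange of quantifiers is needed. The price is that the paper must then recover $f$ from the constants $t_n$. It does this with a full approximate identity $\{\rho_n\}$ and the Lebesgue differentiation theorem on $H$, showing $t_n\to f(x)$ at Lebesgue points.

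Your convolution $\int_H\psi(k)f(k^{-1}h)\di\Omega(k)$ commutes with right translations, not left ones, so you cannot read off invariance of $f_\psi$ directly. Instead, you use Tonelli on $F(k,h)=|f(k^{-1}h)-f(h)|$ to show that, because $f$ is already left-invariant a.e., this averaging reproduces $f$ a.e. Hence $f$ itself has a continuous representative. Continuity together with full support of Haar measure then upgrades the a.e. invariance to pointwise invariance.

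So your argument needs only a single mollifier and avoids Lebesgue points entirely. What it costs is the joint-measurability and quantifier-swapping step, which you handle correctly. Since $F\ge 0$, Tonelli alone suffices and the local integrability check is not needed.
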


\begin{proof}
    We recall that, in our setting, \(\exp\colon\mathfrak h\to H\) is a diffeomorphism. Let us fix a nonnegative and not identically zero function $\rho\in\mathcal{C}_c^{\infty}(H)$. For every integer $n\ge1$, define
    \[
    \rho_n(\exp(V))\coloneqq c_nn^k\rho(\exp(nV)),\quad V\in\mathfrak{h},
    \]
    where $k=\dim(H)$ and $c_n$ is chosen so that $\int\rho_n\di\Omega=1$. As a result, the sequence $\{\rho_n\}$ is a smooth approximate identity. For every integer $n\ge 1$, we introduce:
    \[
    f\ast\rho_n\colon\ x\in H\mapsto \int_Hf(xy^{-1})\rho_n(y)\di\Omega(y)
    \]
    and refer the reader to \cite{folland_course_2016}*{Chapter 2} for an introduction to convolution in the context of Lie groups equipped with a left-invariant Haar measure. Observe that, for every $k\in H$, we have
    \begin{equation}\label{eq:translation_invariant_convolution}
       f\ast \rho_n(k^{-1}x)=\int_Hf(k^{-1}xy^{-1})\rho_n(y)\di\Omega(y)=\int_Hf(xy^{-1})\rho_n(y)\di\Omega(y)=f\ast \rho_n(x) 
    \end{equation}
    thanks to the assumption of our lemma. Moreover, the following equality holds
    \[
    f\ast\rho_n(x)=\int_Hf(y)\rho_n(y^{-1}x)\di\Omega(y)
    \]
    since $H$ is unimodular (see \cite{folland_course_2016}*{(2.36)}). As a result, since $\rho_n\in\mathcal{C}_c^{\infty}(H)$, it is easily seen that $f\ast \rho_n$ is locally Lipschitz; thus, $f\ast \rho_n\equiv t_n$ for some $t_n\in \R$, thanks to \eqref{eq:translation_invariant_convolution}.
    
    Then, using that $\mathrm{Spt}(\rho_n)\subset B_{C/n}(e)$ and $\lvert \rho_n\rvert_{\infty}\le C\Omega(B_{1/n}(e))^{-1}$, we easily see that, for $x\in H$ Lebesgue point of $f$, we have $t_n=f\ast\rho_n(x)\to f(x)$ as $n\to\infty$. However, since $f\in L^1_{loc}(H)$, then $\Omega$-a.e.\@ $x\in H$ is a Lebesgue point of $f$. Therefore, $\{t_n\}$ converge to some limit $t\in \R$ and $f(x)=t$ for $\Omega$-a.e.\@ $x\in H$.
\end{proof}

In the proof of Proposition \ref{prop:measCD(0,N)}, we will encounter the situation where, given any bounded Borel set $B\subset [0,\infty)$ and $k\in H$, the following equality holds for $\Omega$-a.e.\@ $h\in H$:
\[
\int_B\psi_h(r)\di r=\int_B\psi_{k^{-1}h}(r)\di r,
\]
where $\psi_h$ is a family of measurable functions on $[0,\infty)$. Building on the previous lemma, our next lemma asserts that we may assume
\[
\int_B\psi_{h_1}(r)\di r=\int_B\psi_{h_2}(r)\di r
\]
for every Borel set $B\subset[0,\infty)$ and where $h_1,h_2$ lie in a set of full measure $E\subset H$.

\begin{lem}\label{lem: technical measure theory lemma}
Let $\{\psi_h\}_{h\in H}$ be a family of measurable functions $\psi_h:[0,\infty)\to[0,\infty)$ such that, for every bounded Borel set $B\subset[0,\infty)$, the following assumptions hold:
\begin{enumerate}
\item the map $F_B\colon h\in H\mapsto \int_B \psi_h(r)\di r\in[0,\infty)$ is in $L^1_{loc}(H,\Omega)$,
\item for every $k\in H$, $F_B(k^{-1}h)=F_B(h)$ for $\Omega$-a.e.\@ $h\in H$.
\end{enumerate}
Then, there exists a set of full-measure $E\subset  H$ such that, for every $h_1,h_2\in E$, we have:
\[
\int_B \psi_{h_1}(r)\di r=\int_B \psi_{h_2}(r)\di r
\qquad\text{for every Borel set }B\subset[0,\infty).
\]
\end{lem}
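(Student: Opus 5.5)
The plan is to reduce the statement to countably many applications of Lemma \ref{lem: translation invariant a.e. implies constant} and then extend from a countable family of Borel sets to all Borel sets by a measure-theoretic uniqueness argument.

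Let $\mathcal{I}$ be the countable family of bounded intervals $[a,b)\subset[0,\infty)$ with rational endpoints $0\le a<b$; it is closed under finite intersections. For each $I\in\mathcal{I}$, assumptions (1) and (2) say exactly that $F_I$ satisfies the hypotheses of Lemma \ref{lem: translation invariant a.e. implies constant}. Hence there is a constant $t_I\in[0,\infty)$ and a full-measure set $E_I\subset H$ with $F_I\equiv t_I$ on $E_I$. Set
\[
E\defeq\bigcap_{I\in\mathcal{I}}E_I .
\]
As a countable intersection of full-measure sets, $E$ has full measure. For every $h\in E$ and $I\in\mathcal{I}$ we then have $\int_I\psi_h(r)\di r=t_I$, which does not depend on $h$.

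Now fix $h_1,h_2\in E$ and consider the Borel measures $\mu_j\defeq\psi_{h_j}(r)\di r$ on $[0,\infty)$. They agree on the $\pi$-system $\mathcal{I}$, and each is finite on $[0,n)$ for every integer $n$ (because $[0,n)\in\mathcal{I}$ and $F_{[0,n)}(h_j)=t_{[0,n)}<\infty$). The intervals $[0,n)$ exhaust $[0,\infty)$ and lie in $\mathcal{I}$, and $\mathcal{I}$ generates the Borel $\sigma$-algebra. The uniqueness-of-measures theorem (Dynkin's $\pi$-$\lambda$ theorem applied on each $[0,n)$, then monotone convergence) therefore gives $\mu_1=\mu_2$ on all Borel sets, including unbounded ones, where both sides may equal $+\infty$. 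This is exactly the claimed equality.

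The only delicate point is bookkeeping rather than a real obstacle. The exceptional null set in (2) depends on $k$, and that is why we go through Lemma \ref{lem: translation invariant a.e. implies constant}, which absorbs this dependence for each fixed $B$. We must also restrict to a countable generating family before intersecting the exceptional sets, and check local finiteness of the $\mu_j$ so that the $\pi$-$\lambda$ argument applies.
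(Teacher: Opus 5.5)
Your proposal is correct and follows essentially the same route as the paper's proof. Both proofs apply Lemma \ref{lem: translation invariant a.e. implies constant} to the countable family of rational half-open intervals, intersect the resulting full-measure sets, and extend to all Borel sets via the $\pi$-$\lambda$ theorem on bounded truncations $[0,R)$ followed by letting $R\to\infty$.
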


\begin{proof}
We denote by
\[
\mathcal G \coloneqq \bigl\{[a,b)\subset[0,\infty): 0\le a<b,\ a,b\in\mathbb{Q}\bigr\}
\]
the countable family of half-open intervals with rational endpoints, which generates the Borel \(\sigma\)-algebra of \([0,\infty)\), i.e., $\sigma(\mathcal{G})=\mathcal{B}([0,\infty))$. For every $B\in\mathcal G$, we have $F_B\in L^1_{loc}(H,\Omega)$ and, for every $k\in H$, $F_B(k^{-1}h)=F_B(h)$ for $\Omega$-a.e.\@ $h\in H$. Therefore, thanks to Lemma \ref{lem: translation invariant a.e. implies constant}, there exists a constant $c(B)\in[0,\infty)$ and a set of full-measure $E_B\subset  H$ such that $F_B(h)=c(B)$ for every $h\in E_B$. We denote $E\coloneqq\cap_{B\in\mathcal{G}}E_B$ (which is still of full-measure) and note that for every $B\in\mathcal{G}$ and $h\in E$, $F_B(h)=c(B)$. For every $h\in E$, we consider the $\sigma$-finite measure on $[0,\infty)$ defined by:
$$
\mu_h(B)\coloneqq \int_B \psi_h(r)\di r=F_B(h),\qquad B\subset[0,\infty)\ \text{Borel}.
$$
Let $h_1,h_2\in E$ and note that, by our previous step, $\mu_{h_1}(B)=\mu_{h_2}(B)=c(B)$ for every $B\in\mathcal{G}$. As a result, for every $R\in \mathbb{Q}$ positive,
$$
\mathcal{D}_R\coloneqq\{B\subset[0,\infty)\ \text{Borel},\ {\mu_{h_1}}(B\cap[0,R))={\mu_{h_2}}(B\cap[0,R))\}
$$
is a $\lambda$-system containing the $\pi$-system $\mathcal{G}$. By the $\pi-\lambda$ theorem, $\mathcal{D}_R$ contains $\sigma(\mathcal{G})=\mathcal{B}([0,\infty))$. Therefore, for every Borel set $B\subset[0,\infty)$ and $R\in\mathbb{Q}$ positive, $\mu_{h_1}(B\cap[0,R))={\mu_{h_2}}(B\cap[0,R))$. Passing to the limit as $R\to\infty$ implies that, for every Borel $B\in [0,\infty)$,
\[
\int_B \psi_{h_1}(r)\di r=\int_B \psi_{h_2}(r)\di r,
\]
which concludes the proof.
\end{proof}

We may now prove Proposition \ref{prop:measCD(0,N)}.

\begin{proof}[Proof of Proposition \ref{prop:measCD(0,N)}]
    As a result of Lemma \ref{lem:measure_disintegration}, we have $\meas = \int_{h\in H}\meas_h\di\mathfrak{q}(h)$, where $\mathfrak{q}$ is a probability measure on $ H$ and $\meas_h = \varphi_h(r)\di r$, where $\varphi_h$ is a nonnegative $\mathrm{CD}(0,N)$ density on $[0,\infty)$ for $\mathfrak{q}$-a.e.\@ $h\in H$. First, we show that $\mathfrak{q}$ is equivalent to the Haar measure $\Omega$.
    
    Let us denote $\Sigma\coloneqq[0,1]\times H$ and define $\hat{\mathfrak{q}}\coloneqq {p_2}_{\#}\meas\llcorner{\Sigma}$, where $p_2(r,h)=h$. Observe that $\hat{\mathfrak{q}}$ is $\sigma$-finite since, given any compact subset $K\subset H$, we have $\hat{\mathfrak{q}}(K)=\meas([0,1]\times K)<\infty$. Moreover, $\Sigma$ and $\meas$ are both $H$-invariant. Thus, $\hat{\mathfrak{q}}$ is a left-invariant Haar measure on $H$. As a result, there exists $c>0$ such that $\di \hat{\mathfrak{q}}(h) = c \di \Omega(h)$. Finally, given any Borel set $S\subset H$, we have $\hat{\mathfrak{q}}(S) = \meas([0,1]\times S) = \int_{h \in S}\meas_h([0,1])\di\mathfrak{q}(h)$. In particular, we have $\di \hat{\mathfrak{q}} = \mu(h)\di\mathfrak{q}$, where $\mu(h)=\meas_h([0,1])$ is measurable and positive $\mathfrak{q}$-a.e.\@ since $\meas$ has full-support. Therefore, $\di\mathfrak{q}(h)=\frac{c}{\mu(h)}\di \Omega(h)$ and we obtain
    \begin{equation}\label{eq:Haar_disintegration}
            \meas=\int_{h\in H}\psi_h\di \Omega(h),
    \end{equation}
    where $\psi_h\coloneqq\frac{c}{\mu(h)}\varphi_h$ is a $\CDe(0,N)$ density on $[0,\infty)$ for $\Omega$-a.e.\@ $h\in H$.

    We now use the $H$-invariance of $\meas$. Given any $h,k\in H$ and Borel set $A\subset Y$, we denote $L_{k}(h) \coloneqq kh$ and $A_h\coloneqq\{r\in[0,\infty),\ (r,h)\in A\}$. Observe that, since $\meas$ is $H$-invariant, then, for every Borel set $A\subset Y$, we have $\meas(A)=\meas(k^{-1}A)$. Consequently, using the disintegration formula \eqref{eq:Haar_disintegration}, we have  \begin{equation}\label{eq: h_v-w vs h_v}
    \begin{aligned} \int_{h\in H} \int_{r\in A_h}\psi_h(r)\di r \di\Omega(h) &=\int_{h\in H} \int_{r\in(k^{-1}A)_h}\psi_h(r)\di r \di\Omega(h)
    \\
    &=\int_{h\in H} \int_{r\in A_{kh}}\psi_h(r)\di r \di\Omega(h)\\ &=\int_{h\in H} \int_{r\in A_h}\psi_{k^{-1}h}(r)\di r \di ({L_{k}}_{\#}\Omega)(h)\\
    &=\int_{h\in H} \int_{r\in A_h}\psi_{k^{-1}h}(r)\di r \di\Omega(h),
    \end{aligned}
    \end{equation}
    where we used the left-invariance of $\Omega$ for the last equality. Therefore, for every Borel sets $S\subset H$ and $B\subset[0,\infty)$, we have
    \[
    \meas(B\times S)=\int_{h\in S}F_B(h)\di\Omega(h)=\int_{h\in S}F_B(k^{-1}h) \di\Omega(h),
    \]
    where $F_B\colon H\to [0,\infty]$ is the measurable function defined by $F_B(h)\coloneqq \int_B\psi_h(r)\di r$, for $h\in H$. Thus, for every $k\in H$ and Borel set $B\subset[0,\infty)$, we have:
    \[
    F_B(h) = F_B(k^{-1}h),\quad \text{for }\Omega\text{-a.e. } h\in H.
    \]
    Since $\meas$ is finite on bounded sets, then, for every bounded Borel set $B\subset [0,\infty)$, $F_B$ lies in $L^1_{loc}(H,\Omega)$. Therefore, as a consequence of Lemma \ref{lem: technical measure theory lemma}, and since $\psi_h$ is a $\mathrm{CD}(0,N)$ density for $\Omega$-a.e.\@ $h\in H$, there exists a set of full measure $E\subset H$ and $h_0\in E$ such that, for every $h\in E$ and Borel set $B\subset[0,\infty)$, we have $F_B(h)=F_B(h_0)$, where $\psi_{h_0}$ is a $\mathrm{CD}(0,N)$ density.
    
    To conclude, observe that, given any Borel sets $S\subset H$ and $B\subset[0,\infty)$, we have:
    \begin{equation*}
    \meas(B\times S) = \int_{h\in S}F_B(h) \di\Omega(h) = \int_{h\in S}F_B(h_0) \di\Omega(h) = \int_{h\in S}\int_{r\in B} \psi_{h_0}(r)\di r\di\Omega(h),
\end{equation*}
i.e., $\meas = \rho(r)\di r\wedge\di\Omega$, where $\rho(r)\coloneqq \psi_{h_0}(r)$ is a $\mathrm{CD}(0,N)$ density.
\end{proof}

\subsection{$W^{1,2}_{loc}$ metric on $Y_+$ and $\mathrm{BE}(0,N)$ inequality}\label{sec:loc_sobolev_metric}

We recall that $(Y,y,d,\meas)$ and $H$ respectively denote an $\RCD(0,N)$ space and a $k$-dimensional connected, simply connected, and nilpotent Lie group satisfying the conditions of Theorem \ref{thm:RCD+nilpotent}. Thanks to Lemma \ref{lem:g_r_cont} and Proposition \ref{prop:d=g_on_Y+}, we have
\[
d_{\lvert Y_+}=d_g,\quad g=\di r^2+g_r,
\]
where $\{g_r\}_{r>0}$ is a family of left-invariant Riemannian metrics on $H$ which is continuous in $r$. According to Proposition \ref{prop:measCD(0,N)}, we have
\[
\meas\llcorner Y_+=\rho(r)\di r\wedge\Omega,\qquad \Omega=\theta^1\wedge\ldots\wedge\theta^k,
\]
where $\{X_1,\ldots,X_k\}$ is a left-invariant frame on $H$, $\{\theta^{1},\ldots,\theta^k\}$ is the dual co-frame, and $\rho$ is a $\CDe(0,N)$ density on $[0,\infty)$. Let us denote by
\[
G(r)=[G_{ij}(r)]\coloneqq[ g_r(X_i,X_j)]
\]
the matrix of $g_r$ in the frame $\{X_1,\ldots,X_k\}$ and $G(r)^{-1}=[G^{ij}(r)]$ the matrix of the dual metric in the dual co-frame $\{\theta^1,\ldots,\theta^k\}$.

\begin{rem}\label{rem:g(nabla u,nabla v)}
    Given $u\in\mathcal{C}^{\infty}(Y_+)$, we have $\di u=\partial_ru\di r+X_i(u)\theta^i$. Thus, we have 
    \[
    g(\nabla_g u,\nabla_gv)=\partial_r u\: \partial_rv+G^{ij}(r)X_i(u)X_j(v),
    \]
    for every $v\in\mathcal{C}^{\infty}(Y_+)$.
\end{rem}

The goal of this section is to establish items \eqref{item:continuous+sobolev} and \eqref{item:distributional-Ricci} of Theorem \ref{thm:RCD+nilpotent}. We first prove the proposition below, which corresponds to item \eqref{item:continuous+sobolev}. We will then obtain item \eqref{item:distributional-Ricci} at the very end of the section.

\begin{prop}\label{prop:W^12G(r)}
    For every $1\le i,j\le k$, we have $G_{ij}(r)=g_r(X_i,X_j)\in W^{1,2}_{loc}\left((0,\infty)\right)$.
\end{prop}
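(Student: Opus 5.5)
We follow the outline in Step 4 of Section \ref{sec:statements_nilpotent+RCD}: realize the dual metric coefficients $G^{ij}(r)$ as carr\'e du champ expressions $\langle\nabla u,\nabla v\rangle$ of functions with good second-order regularity on the $\RCD$ space, and then invert the matrix. Since $H$ is simply connected nilpotent, $\exp\colon\mathfrak h\to H$ is a diffeomorphism, giving global coordinates $x^1,\dots,x^k$ on $H$. Extended to $Y_+=(0,\infty)\times H$ independently of $r$, they are smooth. By Remark \ref{rem:g(nabla u,nabla v)},
\[
g(\nabla_g x^a,\nabla_g x^b)=G^{ij}(r)\,X_i(x^a)X_j(x^b),
\]
where the $X_i(x^a)$ are smooth functions of $h$ alone. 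The matrix $[X_i(x^a)]$ is invertible at every point, so it suffices to show that $g(\nabla_g x^a,\nabla_g x^b)\in W^{1,2}_{loc}(Y_+)$.

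By Proposition \ref{prop:d=g_on_Y+} and Lemma \ref{lem:metric_deriv=usual_deriv}, on $Y_+$ the minimal weak upper gradient of a locally Lipschitz function agrees with $|\nabla_g u|_g$ (Remark \ref{rem:Df=lip(f)} together with local biLipschitz control from Corollary \ref{cor:local_bilip_prod}). Hence $\langle\nabla x^a,\nabla x^b\rangle=g(\nabla_g x^a,\nabla_g x^b)$ $\meas$-a.e. on $Y_+$, and the $\RCD$ carr\'e du champ coincides with the Riemannian one.

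The core step is second-order regularity. Fix a relatively compact open set $U\Subset Y_+$ and a test cutoff $\chi\in\mathrm{TestF}(Y)$ equal to $1$ on $U$ and supported in $Y_+$. The plan is to show that $\chi x^a$ is in $D(\Delta)$, or at least in $H^{2,2}$ locally, on $U$. We then invoke Gigli's second-order calculus: for $f_1,f_2\in H^{2,2}$ with bounded gradients, $\langle\nabla f_1,\nabla f_2\rangle\in W^{1,2}$, with differential controlled by $\mathrm{Hess}$. To obtain $\chi x^a\in D(\Delta)$, we compute the Laplacian weakly using the explicit structure: $\meas=\rho(r)\,\di r\wedge\Omega$, with $\rho$ locally Lipschitz on $(0,\infty)$, and $H$ unimodular. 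Integrating by parts in the $H$-directions against the left-invariant Haar measure gives
\[
\Delta x^a=G^{ij}(r)\,X_iX_j(x^a)+(\text{terms from } \partial_r\log\rho\cdot\partial_r x^a=0),
\]
which involves only continuous coefficients $G^{ij}$ and smooth functions of $h$. Here $\operatorname{div}_\Omega X_i=0$ by unimodularity, and the $r$-dependence enters only through $G^{ij}(r)$, which is continuous. Thus $\Delta(\chi x^a)\in L^\infty\subset L^2$, and this suffices for $H^{2,2}$ membership via Bochner in $\RCD(0,N)$: $D(\Delta)\subset H^{2,2}$ and $\int|\mathrm{Hess}f|^2\le\int(\Delta f)^2$. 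It then follows that $\langle\nabla(\chi x^a),\nabla(\chi x^b)\rangle\in W^{1,2}(Y)$, hence $G^{ij}(r)X_i(x^a)X_j(x^b)\in W^{1,2}_{loc}(Y_+)$.

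To conclude, multiply by the smooth inverse matrix of $[X_i(x^a)]$; by Remark \ref{rem:loc_sobolev_algebra}, $W^{1,2}_{loc}\cap L^\infty_{loc}$ is an algebra, so $G^{ij}(r)\in W^{1,2}_{loc}(Y_+)$. A function of $r$ alone that lies in $W^{1,2}_{loc}$ of $(0,\infty)\times H$ with measure $\rho\,\di r\wedge\Omega$ (with $\rho$ bounded above and below on compacts) lies in $W^{1,2}_{loc}((0,\infty))$; this follows by Fubini and by testing with curves $r\mapsto(r,h)$. Finally, $G=(G^{-1})^{-1}$ is continuous and positive definite, so the second item of Remark \ref{rem:loc_sobolev_algebra}, applied via cofactors and the inverse of the determinant, gives $G_{ij}\in W^{1,2}_{loc}((0,\infty))$.

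The main obstacle is the weak Laplacian computation. We must justify that the integration by parts against test functions in $\Lip_c(Y_+)$ holds with only continuous $g_r$. The divergence formula for the weak gradient pairing $\int\langle\nabla\phi,\nabla x^a\rangle\,\di\meas=\int\rho\,G^{ij}X_i\phi\,X_j x^a\,\di r\,\di\Omega$ needs care: for fixed $r$ this is an integral over $H$ where $G^{ij}$ is constant, so integration by parts on $H$ uses only unimodularity and the smoothness of the left-invariant fields. We expect this to go through, but the identification of the weak pairing with the Riemannian pairing a.e. is the step most needing verification.
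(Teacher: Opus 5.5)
Your proposal is correct and follows essentially the same route as the paper. It uses global exponential coordinates $x^a$, the relation $G=A\overline{G}^{-1}A^{T}$ with $A=[X_i(x^a)]$ smooth, and proves $\chi x^a\in D(\Delta)$ for a test cutoff $\chi$ by integrating by parts along $H$ with $\operatorname{div}_\Omega X_i=0$. It then applies $D(\Delta)\subset H^{2,2}$ and Gigli's result $\langle\nabla(\chi x^a),\nabla(\chi x^b)\rangle\in W^{1,2}(Y)$, and finishes with the algebra and inversion properties of $W^{1,2}_{loc}\cap L^\infty_{loc}$ and the reduction to functions of $r$.

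One small correction: $\Delta(\chi x^a)$ also contains the cross term $x^a\Delta\chi$, which is in $L^2$ but not necessarily in $L^\infty$. Controlling this term is exactly where $\chi\in\mathrm{TestF}(Y)$ is needed (the paper's term $A$), but since only $L^2$ is required, your argument goes through.
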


We first need the following lemma which relates local Lipschitz constants to gradients in the context of continuous Riemannian metrics.

\begin{lem}\label{lem: weak gradient vs lip}
    If $\varphi\in\Lip_{loc}(Y_+)$, then $\varphi$ is differentiable $\meas$-a.e.\ on $Y_+$. Moreover, if $\varphi$ is differentiable at $x\in Y_+$, then $\lip(\varphi)(x) = |\nabla_g\varphi|(x)$.
\end{lem}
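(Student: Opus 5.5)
The plan is to work in the smooth coordinates on $Y_+=(0,\infty)\times H$ given by $(r,\exp^{-1})$. In these coordinates $g=\di r^2+G_{ij}(r)\theta^i\theta^j$ is a continuous Riemannian metric, and $\meas\llcorner Y_+=\rho(r)\di r\wedge\Omega$ with $\Omega$ smooth and $\rho$ locally Lipschitz and positive on $(0,\infty)$. Consequently $\meas\llcorner Y_+$ is mutually absolutely continuous with respect to Lebesgue measure in any chart. The first assertion then follows from Rademacher's theorem applied chartwise. A function $\varphi\in\Lip_{loc}(Y_+,d)$ is locally Lipschitz with respect to the Euclidean metric of a chart, because $d=d_g$ on $Y_+$ (Proposition \ref{prop:d=g_on_Y+}) and, by continuity of $g$ and Corollary \ref{cor:local_bilip_prod}, $d_g$ is locally biLipschitz to the chart's Euclidean distance. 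Hence $\varphi$ is differentiable Lebesgue-a.e., and therefore $\meas$-a.e.

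For the second assertion, fix a point $x\in Y_+$ where $\varphi$ is differentiable and let $\epsilon>0$. Freeze the metric at $x$ by setting $g_x\defeq \di r^2+g_{r(x)}$, viewed as a constant-coefficient metric in the chart (a smooth left-invariant product metric). Continuity of $r\mapsto g_r$ (Lemma \ref{lem:g_r_cont}), or directly Corollary \ref{cor:local_bilip_prod}, gives a $\delta>0$ such that on $B_\delta(x)$ the distance $d$ is $(1+\epsilon)$-biLipschitz to $d_{g_x}$. Since $g_x$ is a smooth Riemannian metric, it is locally $(1+\epsilon)$-biLipschitz near $x$ to the Euclidean norm $|v|_{g(x)}$ in exponential-type coordinates. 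Combining these, we obtain $d(x,z)=(1+O(\epsilon))\,|v_z|_{g(x)}$ for $z$ near $x$, where $v_z$ denotes the coordinate displacement.

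Differentiability at $x$ gives $\varphi(z)-\varphi(x)=\di\varphi_x(v_z)+o(|v_z|)$. Taking the limsup of the quotient, we get
\[
\lip(\varphi)(x)=(1+O(\epsilon))\sup_{|v|_{g(x)}=1}|\di\varphi_x(v)|=(1+O(\epsilon))\,|\nabla_g\varphi|(x).
\]
Here the supremum is attained because the displacements $v_z$ fill all directions as $z\to x$. Letting $\epsilon\to0$ yields the equality.

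The only delicate point is that $g$ is merely continuous, so one cannot use the exponential map of $g$ itself. This is handled by the freezing argument above, whose ingredient is the uniform biLipschitz comparison of Corollary \ref{cor:local_bilip_prod}; alternatively, it can be derived from Lemma \ref{lem:metric_deriv=usual_deriv} applied to the straight coordinate segments $t\mapsto x+tv$, which give the lower bound on $\lip(\varphi)$. For the upper bound, one uses that $d(x,z)$ is at least $(1-\epsilon)$ times the $g(x)$-length of the displacement, which again follows from the local comparison.
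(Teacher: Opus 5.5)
Your proposal is correct and follows the paper's route. Like the paper, you apply Rademacher's theorem in exponential coordinates, using that $d=d_g$ is locally biLipschitz to the Euclidean distance (the paper quotes Burtscher's result for this) and that $\meas\llcorner Y_+$ is equivalent to Lebesgue measure, and then compare $g$ with the frozen product metric $\di r^2+g_{r(x)}$ near $x$, which is the argument of \cite{NPZ26}*{Lemma 3.28} that the paper invokes. One harmless slip: for nonabelian $H$ the frozen metric is constant only in the left-invariant coframe $\{\theta^i\}$, not in exponential coordinates, but your next step uses only its smoothness (continuity of $g$ alone already gives $d(x,z)=(1+o(1))\,|v_z|_{g(x)}$), so nothing breaks.
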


\begin{proof}
    Let us identify $H$ with $\R^k$. We recall that $d=d_g$ on $Y_+$ by Proposition \ref{prop:d=g_on_Y+}. Thanks to \cite{burtscher_length_2015}*{Theorem 4.5}, $\varphi$ is locally Lipschitz with respect to the standard Euclidean distance on $Y_+$. By Rademacher's theorem, $\varphi$ is differentiable $\lambda_{k+1}$-a.e.\@ on $Y_+$, where $\lambda_{k+1}$ is the standard Lebesgue measure on $Y_+$. However, by Proposition \ref{prop:measCD(0,N)}, we have $\meas\sim\lambda_{k+1}$ on $Y_+$. Therefore, $\varphi$ is differentiable $\meas$-a.e. on $Y_+$.

    If $\varphi$ is differentiable at $(r_0,x_0)\in Y_+$, we want to show that $\lip(\varphi)(r_0,x_0)= |\nabla_g\varphi|(r_0,x_0)$. The idea is to compare $g$ with the product metric $\di r^2+g_{r_0}$ in a neighborhood of $(r_0,x_0)$. The arguments of \cite{NPZ26}*{Lemma 3.28} carry over verbatim replacing $S^1$ with $H$.
\end{proof}

\begin{rem}\label{rem:smooth_implies_Lip_loc}
    Observe that $\mathcal{C}^{\infty}(Y_+)\subset \mathrm{Lip}_{loc}(Y_+)$. Indeed, if $\varphi$ is smooth on $Y_+$, then it is locally Lipschitz with respect to to $d_{{E}}$, where $d_{{E}}$ is the standard Euclidean distance on $Y_+$. Therefore, thanks to \cite{Burtscher15}*{Theorem 4.5}, $\varphi$ is locally Lipschitz with respect to $d_g$.
\end{rem}

\begin{rem}\label{rem:RCD<>toRiem<>}
    As a result of Remark \ref{rem:Df=lip(f)}, for every $\varphi,\psi\in \mathrm{Lip}(Y)$, the following holds $\meas$-a.e.\@ on $Y_+$ 
    \[
        \langle \nabla \varphi,\nabla \psi\rangle\coloneqq\frac14\Big(|D(\varphi+\psi)|_w^2-|D(\varphi-\psi)|_w^2\Big)=\frac14\Big(\lip(\varphi+\psi)^2-\lip(\varphi-\psi)^2\Big).
    \]
    However, thanks to Lemma \ref{lem: weak gradient vs lip}, we have:
    \[
    \frac14\left(\lip(\varphi+\psi)^2-\lip(\varphi-\psi)^2\right)=\frac14\left(|\nabla_g(\varphi+\psi)|^2-|\nabla_g(\varphi-\psi)|^2\right)=g(\nabla_g\varphi,\nabla_g\psi).
    \]
    Therefore, we obtain:
    \begin{equation*}
    \langle \nabla \varphi,\nabla \psi\rangle=g(\nabla_g\varphi,\nabla_g\psi)\qquad \meas\text{-a.e.\@ on }Y_+,
    \end{equation*}
    for all $\varphi,\psi\in \mathrm{Lip}(Y)$.
\end{rem}

We also need a technical Lemma relating the space of locally Sobolev functions to the space of functions whose weak derivatives w.r.t.\@ any chart are locally Sobolev.

\begin{lem}\label{lem:identification_RCD_W^12_VS_Riem_W^12}
    The space $W^{1,2}_{loc}(Y_+)$ (see Definition \ref{defn:localy_Sobolev}) coincides with the more classical space $H^1_{loc}(Y_+)$ (see Remark \ref{rem:H^1_loc}).
\end{lem}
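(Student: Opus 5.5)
The plan is to reduce everything to a local comparison between the metric measure structure $(Y_+,d_g,\meas)$ and the Euclidean structure on a chart. The comparison is local, and both spaces are defined through localization by Lipschitz cutoffs. So it suffices to show that for every relatively compact open set $U\Subset Y_+$ contained in a chart (via $\exp$, identify $H\cong\R^k$ so $Y_+\cong(0,\infty)\times\R^k$), a function $f$ with compact support in $U$ lies in $W^{1,2}(Y,d,\meas)$ if and only if it lies in $W^{1,2}(\R^{k+1})$. It is also enough to show that the norms are comparable.

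First I will record the two comparabilities on $U$.

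(a) By Proposition \ref{prop:measCD(0,N)}, $\meas=\rho(r)\di r\wedge\Omega$. Here $\rho$ is a $\CDe(0,N)$ density, hence continuous and positive on $(0,\infty)$ (positivity because $\meas$ has full support and $\rho^{1/(N-1)}$ is concave). Also $\Omega$ has a smooth positive density in exponential coordinates. Hence $\meas$ and Lebesgue measure $\lambda_{k+1}$ are mutually bounded on $U$.

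(b) The metric $g=\di r^2+g_r$ is continuous (Lemma \ref{lem:g_r_cont}), so its coefficients in the chart are uniformly elliptic on $\overline U$. Consequently $d_g$ and $d_E$ are biLipschitz on $U$, in the sense of \cite{burtscher_length_2015}*{Theorem 4.5}. Therefore $\Lip_c(U,d)=\Lip_c(U,d_E)$, and the two localization procedures use the same class of cutoffs.

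Next I will identify the minimal weak upper gradients. For $f\in\Lip_c(U)$, Remark \ref{rem:Df=lip(f)} gives $|Df|_w=\lip(f)$ $\meas$-a.e. Lemma \ref{lem: weak gradient vs lip} then gives $\lip(f)=|\nabla_g f|$ a.e., and uniform ellipticity gives $|\nabla_g f|\simeq|\nabla_E f|$. Thus the $W^{1,2}(Y)$ norm and the Euclidean $H^1$ norm are equivalent on $\Lip_c(U)$.

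To pass to general functions I will use density. Lipschitz functions are dense in $W^{1,2}(Y)$ in energy, since $\RCD$ spaces satisfy $|Df|_w=\lip$ relaxation \cite{ambrosio_calculus_2014}. A cutoff argument keeps the supports inside $U$: given $f\in W^{1,2}(Y)$ supported in $K\Subset U$, take Lipschitz $f_n\to f$ in $W^{1,2}(Y)$ and multiply by a fixed cutoff $\eta\in\Lip_c(U)$ with $\eta\equiv1$ on $K$, using Remark \ref{rem:loc_sobolev_algebra}. The sequence $\eta f_n$ is then Cauchy in Euclidean $H^1$, so $f\in H^1$. For the converse I will approximate by $C_c^\infty(U)$ functions, which are Lipschitz by Remark \ref{rem:smooth_implies_Lip_loc}; they are Cauchy in $W^{1,2}(Y)$ by the norm equivalence, and the $L^2$ limits agree.

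Finally I will check the definitions match. If $f\in W^{1,2}_{loc}(Y_+)$, then $\eta f\in W^{1,2}(Y)$ for every $\eta\in\Lip_c(Y_+)$, so $\eta f\in H^1$ on each chart, which gives $f\in H^1_{loc}$. Conversely, if $f\in H^1_{loc}$ and $\eta\in\Lip_c(Y_+)$, I will use a finite partition of unity subordinate to charts covering $\Spt\eta$. Then $\eta f$ is a finite sum of compactly supported $H^1$ functions in chart domains, and each summand lies in $W^{1,2}(Y)$ by the previous step.

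The main obstacle is the density/closure step. It identifies the relaxed weak upper gradient of a non-Lipschitz Sobolev function with its Euclidean weak gradient, and the subtlety is that $g$ is only continuous. I will handle it entirely through lower semicontinuity and the norm equivalence on Lipschitz functions, so no pointwise identification is needed beyond the Lipschitz class.
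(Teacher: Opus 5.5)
Your proposal is correct, but it takes a different route from the paper.

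\textbf{The paper's route.} The paper's proof outsources all of the analytic content to \cite{MondinoRyborz}*{Corollary 4.27}. For a function with compact support in $Y_+$, where $g$ is a continuous Riemannian metric and $\meas$ is equivalent to $\mathrm{dvol}_g$, that corollary identifies $|D(\eta\varphi)|_w$ directly with $|\nabla_g(\eta\varphi)|$. Both inclusions then follow in a few lines from:
\begin{itemize}
\item the Leibniz rule,
\item $|\nabla_g\eta|\in L^\infty_c(Y_+)$ (Lemma \ref{lem: weak gradient vs lip}),
\item the local equivalence of $\meas$ with $\mathrm{dvol}_g$ and of $g$ with the Euclidean metric.
\end{itemize}

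\textbf{Your route.} You rebuild that identification yourself, in two stages.
\begin{itemize}
\item On $\Lip_c(U)$ you get the exact equality $|Df|_w=\lip(f)=|\nabla_g f|$ a.e. from Cheeger's theorem (Remark \ref{rem:Df=lip(f)}) and Lemma \ref{lem: weak gradient vs lip}. Together with uniform ellipticity and the two-sided bounds on $\rho$ and $\Omega$ on $\overline U$, this gives equivalence of the $W^{1,2}(Y)$ and Euclidean $H^1$ norms there.
\item You then pass to general compactly supported functions. In one direction you use strong density of Lipschitz functions in $W^{1,2}(Y)$, which follows from density in energy together with infinitesimal Hilbertianity, plus a fixed Lipschitz cutoff. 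In the other direction you use mollification and completeness of $W^{1,2}(Y)$; lower semicontinuity of $\Ch$ would also do.
\end{itemize}

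All of these steps check out, including the positivity of $\rho$ on $(0,\infty)$ from concavity of $\rho^{1/(N-1)}$ and full support.

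\textbf{Comparison.} Your argument is self-contained given results already proved in the paper and standard $\RCD$ calculus. It also needs only comparability of the norms, not a pointwise identity beyond the Lipschitz class. The paper's argument is shorter and yields the sharper statement $|Df|_w=|\nabla_g f|$ for every compactly supported Sobolev function. The price is relying on the Mondino--Ryborz low-regularity framework, although your density and closure argument is essentially what underlies that corollary in this setting.

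A minor point: $\exp$ makes $Y_+\cong(0,\infty)\times\R^k$ a single global chart, so the partition of unity in your last step is unnecessary.
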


\begin{proof}
    Assume that $\varphi\in W^{1,2}_{loc}(Y_+)$ and let $\eta\in \mathrm{Lip}_c(Y_+)$. By definition $\eta\varphi$ lies in $W^{1,2}(Y)$ and has compact support in $Y_+$. In particular, as a result of \cite{MondinoRyborz}*{Corollary 4.27}, we have $|D(\eta\varphi)|_w=|\nabla_g(\eta\varphi)|\in L^2_c(Y_+)$. Since $g$ is continuous and $\meas$ is equivalent to $\mathrm{dvol}_g$ on $Y_+$, then $\eta\varphi\in H^1_{loc}(Y_+)$. Replacing $\eta$ with $\mathrm{Lip}_c(Y_+)$-functions whose supports exhaust $Y_+$, we obtain $\varphi\in H^1_{loc}(Y_+)$.
    
    Conversely, let $\varphi\in H^1_{loc}(Y_+)$ and $\eta\in\mathrm{Lip}_c(Y_+)$. Then $|\nabla_g\varphi|\in L^2_{loc}(Y_+)$ and, thanks to Lemma \ref{lem: weak gradient vs lip}, we have $|\nabla_g\eta|\in L^{\infty}_c(Y_+)$. Therefore, since $|\nabla_g(\eta\varphi)|\le|\eta||\nabla_g\varphi|+|\varphi||\nabla_g\eta|$, then $|\nabla_g(\eta\varphi)|\in L^2_c(Y_+)$. Therefore, by \cite{MondinoRyborz}*{Corollary 4.27}, $|D(\eta\varphi)|_w$ lies in $L^2_c(Y_+)$, i.e., $\varphi$ lies in $W^{1,2}_{loc}(Y_+)$.
\end{proof}

Since $H$ is a connected, simply connected, and nilpotent Lie group, $\exp\colon \mathfrak{h}\to H$ is a diffeomorphism and we may identify $H$ with $\R^k$ equipped with coordinates $(x^1,\ldots,x^k)$. Using Remark \ref{rem:g(nabla u,nabla v)}, we obtain
\begin{equation*}\label{eq:g(nablaxi,nablaxj)}
  g(\nabla_gx^i,\nabla_gx^j) =  G^{ab}(r)X_a(x^i)X_b(x^j).  
\end{equation*}
In other words, we have:
\begin{equation}\label{eq:GvsGbar}
    G=A\overline{G}^{-1}A^{{T}},
\end{equation}
where $\overline{G}\coloneqq [g(\nabla_gx^i,\nabla_gx^j)]$ and $A\coloneqq [X_i(x^j)]$ is smooth and independent of $r$. The proof of Proposition \ref{prop:W^12G(r)} relies on the following lemma. 

\begin{lem}
\label{lem:nabla_gx^i|nabla_gx^j locally sobolev}
    For every $1\le i,j\le k$, we have $\overline{G}_{ij}=g(\nabla_gx^i,\nabla_gx^j)\in W^{1,2}_{loc}(Y_+)$.
\end{lem}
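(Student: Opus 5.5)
The plan is to realize $\overline{G}_{ij}$ as $\langle\nabla \chi x^i,\nabla \chi x^j\rangle$ for cut-offs of coordinate-type functions on the $\RCD$ space. Then we apply the second-order calculus fact that for test functions $f_1,f_2\in\mathrm{TestF}(Y)$ one has $\langle\nabla f_1,\nabla f_2\rangle\in W^{1,2}(Y)$ (Gigli, \cite{gigli_nonsmooth}). Since $\overline{G}_{ij}$ depends only on $r$ and the frame coefficients, which are smooth, localization will give the claim.

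\textbf{Step 1 (Laplacian of coordinates).} Fix a compact set $K\Subset Y_+$ and an open neighborhood $U\Subset Y_+$ of $K$. On $Y_+$ the measure is $\rho(r)\di r\wedge\Omega$ by Proposition \ref{prop:measCD(0,N)}, and $\Omega$ is the Haar measure. We compute the distributional Laplacian of $x^i$ by integrating $\langle\nabla\phi,\nabla x^i\rangle=G^{ab}(r)X_a(\phi)X_b(x^i)$ (Remark \ref{rem:RCD<>toRiem<>}) against $\rho\,\di r\wedge\Omega$. Integrating by parts only in the $H$ directions, and using that left-invariant fields are divergence-free for the unimodular Haar measure $\Omega$, we get
\[
\Delta x^i=G^{ab}(r)\,X_a X_b(x^i).
\]
This is continuous in $r$ and smooth in $h$, since $G^{ab}$ is continuous. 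However, it need not be Sobolev in $r$ yet, which is an issue because test functions require $\Delta f\in W^{1,2}$.

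\textbf{Step 2 (obtaining test functions).} To get around this, we use the first-order equivalence instead. Let $\chi\in\mathrm{TestF}(Y)$ be a cut-off with $\chi=1$ on $K$ and support in $U$. Rather than requiring $x^i$ itself to be a test function, we use the regularization: $f=h_t(\chi x^i)$ (heat flow) is a test function. Alternatively, we can invoke the known estimate that $|\nabla f|^2\in W^{1,2}$ whenever $f\in D(\Delta)$ is Lipschitz with $\Delta f\in L^\infty$ (the Bochner inequality gives $|\nabla f|^2\in W^{1,2}_{loc}$ with $\int|\nabla|\nabla f|^2|^2\le C$ depending on $\|\Delta f\|_{W^{1,2}}$ or on $\|\Delta f\|_{L^\infty}$ via a self-improvement argument). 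The cleanest route is to show that $u=\chi x^i$ satisfies $u\in D(\Delta)$, $\Delta u\in L^\infty$, and $u$ Lipschitz. Then the localized Bochner inequality with $K=0$ gives $|\nabla u|^2\in W^{1,2}_{loc}$; see, e.g., the estimate $|\nabla|\nabla u|^2|^2\le 4|\nabla u|^2|\mathrm{Hess}\,u|^2$ together with $\int|\mathrm{Hess}\,u|^2\le\int(\Delta u)^2$ obtained from integrated Bochner with cut-offs. Step 1 shows that $\Delta u$ is bounded on $U$: it consists of terms involving $\Delta\chi$, $\langle\nabla\chi,\nabla x^i\rangle$, and $\chi\Delta x^i$, all of which are bounded.

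\textbf{Step 3 (polarization and conclusion).} Applying Step 2 to $u=\chi(x^i\pm x^j)$ and polarizing, we get $\langle\nabla(\chi x^i),\nabla(\chi x^j)\rangle\in W^{1,2}_{loc}$. On $K$ this equals $\overline{G}_{ij}$ by Remark \ref{rem:RCD<>toRiem<>}. Exhausting $Y_+$ by such compact sets $K$ gives $\overline{G}_{ij}\in W^{1,2}_{loc}(Y_+)$, where Lemma \ref{lem:identification_RCD_W^12_VS_Riem_W^12} is used to interpret the result in either sense.

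The main obstacle is Step 2: justifying the Hessian bound for $u$ with merely $\Delta u\in L^\infty$ rather than $u$ being a genuine test function. I would handle it by approximating $u$ via heat flow $h_t u\in\mathrm{TestF}(Y)$. For these, the integrated Bochner inequality gives $\int\eta|\mathrm{Hess}\,h_tu|^2\le C(\|\Delta u\|_{L^2}^2+\|\nabla u\|_{L^2}^2)$ uniformly in $t$. Hence $|\nabla h_tu|^2$ is bounded in $W^{1,2}_{loc}$ and converges in $L^1$ to $|\nabla u|^2$, and lower semicontinuity gives the limit.
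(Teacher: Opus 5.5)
Your proposal is correct and takes essentially the paper's route. The paper likewise shows $\chi x^i\in D(\Delta)$, with $\Delta(\chi x^i)=2g(\nabla_g x^i,\nabla_g\chi)+x^i\Delta\chi+\chi G^{ab}(r)X_aX_b(x^i)\in L^2$, by integrating by parts along the divergence-free left-invariant fields. Where you re-derive the second-order step by heat flow and polarization, the paper cites Gigli's Propositions 3.3.18 and 3.3.22 ($D(\Delta)\subset H^{2,2}$, and $\langle\nabla f_1,\nabla f_2\rangle\in W^{1,2}$ for Lipschitz $H^{2,2}$ functions), and it then localizes with $\chi\equiv 1$ on $\mathrm{Spt}(\eta)$ exactly as you do.

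One small correction: a general test cut-off need not have $\Delta\chi\in L^\infty$, so your claim that $\Delta u$ is bounded is unjustified as stated. It is also unnecessary, since your final argument only uses $\Delta(\chi x^i)\in L^2$ together with the uniform gradient control from the Bakry--\'Emery bound $|\nabla h_t u|^2\le h_t|\nabla u|^2$.
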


Let us explain how Proposition \ref{prop:W^12G(r)} follows from Lemma \ref{lem:nabla_gx^i|nabla_gx^j locally sobolev}.

\begin{proof}[Proof of Proposition \ref{prop:W^12G(r)}]
    First of all, we recall that $G\colon (0,\infty)\to \mathrm{GL}_k(\R)$ is continuous thanks to Lemma \ref{lem:g_r_cont}. Thus, it follows from \eqref{eq:GvsGbar} that $\overline{G}\colon Y_+\to \mathrm{GL}_k(\R)$ is continuous, which implies that $\overline{G}_{ij}\in L^{\infty}_{loc}(Y_+)$. As a result of Remark \ref{rem:loc_sobolev_algebra},  \eqref{eq:GvsGbar}, and Lemma \ref{lem:nabla_gx^i|nabla_gx^j locally sobolev} we have $G_{ij}(r)\in W^{1,2}_{loc}(Y_+)$. In particular, since $G_{ij}(r)$ only depends on $r$, we conclude that $G_{ij}(r)\in W^{1,2}_{loc}\left((0,
    \infty)\right)$ thanks to Lemma \ref{lem:identification_RCD_W^12_VS_Riem_W^12}.
    

\end{proof}

While our coordinate functions $\{x^i\}_{1\le i\le k}$ are smooth, they are (a priori) only locally Lipschitz on $Y_+$. However, we will see in our next lemma that, by multiplying them with a cut-off test function we will obtain elements of $D(\Delta)$. Such regularity will later ensure that $\langle \nabla(\chi x^i),\nabla(\chi x^j)\rangle$ is Sobolev on $Y$. From there, we will easily conclude that $g(\nabla_g x^i,\nabla_gx^j)$ is locally Sobolev on $Y_+$.

\begin{lem}\label{lem:chi_x^i_in_D(Delta)}
    For every $\chi\in \mathrm{TestF}(Y)$ with compact support in $Y_+$, we have $\chi x^i\in D(\Delta)$.
\end{lem}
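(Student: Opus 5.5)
The plan is to compute $\Delta(\chi x^i)$ explicitly by integrating by parts on $Y_+$ with the smooth structure. We then check that the resulting candidate Laplacian lies in $L^2(Y)$.

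\textbf{Setup and regularity.} Fix $\chi\in\mathrm{TestF}(Y)$ with compact support $K\subset Y_+$. Since $\chi$ is Lipschitz and $x^i$ is smooth, hence locally Lipschitz by Remark \ref{rem:smooth_implies_Lip_loc}, the product $\chi x^i$ is Lipschitz with compact support. Therefore $\chi x^i\in W^{1,2}(Y)$.

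\textbf{Identifying the weak form.} For $\phi\in\Lip_c(Y)$, Remark \ref{rem:RCD<>toRiem<>} reduces the weak form to an integral over $Y_+$:
\[
\int_Y\langle\nabla\phi,\nabla(\chi x^i)\rangle\di\meas=\int_{Y_+}\big(\chi\, g(\nabla_g\phi,\nabla_gx^i)+x^i\, g(\nabla_g\phi,\nabla_g\chi)\big)\di\meas.
\]
The second term is $\int\langle\nabla(\phi x^i),\nabla\chi\rangle-\phi\langle\nabla x^i,\nabla\chi\rangle$, computed locally near $K$ by replacing $x^i$ with a compactly supported Lipschitz extension that agrees with it near $K$. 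We then use $\chi\in D(\Delta)$ to write it as $-\int\phi x^i\Delta\chi-\int\phi\, g(\nabla_g x^i,\nabla_g\chi)$. This term is fine, since $\Delta\chi\in L^2$ and $|\nabla\chi|\in L^\infty$.

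For the first term, we write it as $\int\langle\nabla(\chi\phi),\nabla x^i\rangle-\phi\langle\nabla\chi,\nabla x^i\rangle$. This reduces the problem to showing that $x^i$ has a locally $L^2$ (in fact locally bounded) distributional Laplacian on $Y_+$, i.e.\ that for $\psi\in\Lip_c(Y_+)$,
\[
\int_{Y_+} g(\nabla_g\psi,\nabla_gx^i)\,\rho(r)\di r\wedge\Omega=-\int_{Y_+}\psi\, L x^i\di\meas
\]
with $Lx^i\in L^\infty_{loc}(Y_+)$.

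\textbf{Computing $Lx^i$ (the main step).} By Remark \ref{rem:g(nabla u,nabla v)}, $g(\nabla_g\psi,\nabla_gx^i)=G^{ab}(r)X_a(\psi)X_b(x^i)$, because $x^i$ is independent of $r$; no $r$-derivatives of $G$ appear. Integrate by parts in the $H$-direction only, at each fixed $r$, using the left-invariant Haar measure $\Omega$. Since $H$ is nilpotent, hence unimodular, each left-invariant field $X_a$ is divergence-free with respect to $\Omega$. This gives
\[
\int_H G^{ab}(r)X_a(\psi)X_b(x^i)\di\Omega=-\int_H\psi\, G^{ab}(r)X_aX_b(x^i)\di\Omega.
\]
Then $Lx^i=G^{ab}(r)X_aX_b(x^i)$, which is continuous on $Y_+$ because $G^{-1}$ is continuous (Lemma \ref{lem:g_r_cont}) and $x^i$ is smooth. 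The density $\rho(r)$ only multiplies and requires no differentiation. We then integrate in $r$ by Fubini.

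This is the point where the lack of regularity in $r$ is avoided, and it is where I expect the care to lie. We must justify the integration by parts for merely Lipschitz $\psi$, either by approximating by smooth functions in $H$ for a.e.\ $r$, or by differentiating a.e.\ via Lemma \ref{lem: weak gradient vs lip}. We must also ensure that the measure identification $\meas=\rho\di r\wedge\Omega$ from Proposition \ref{prop:measCD(0,N)} is used on $Y_+$, where $\rho>0$ is locally bounded.

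\textbf{Conclusion.} Collecting the terms gives
\[
\Delta(\chi x^i)=\chi Lx^i+x^i\Delta\chi+2g(\nabla_g\chi,\nabla_gx^i).
\]
Each term is supported in $K$ and lies in $L^2$, since $Lx^i$, $x^i$ and $|\nabla_g x^i|$ are bounded on $K$, $\Delta\chi\in L^2$, and $|\nabla\chi|\in L^\infty$. Hence $\chi x^i\in D(\Delta)$.
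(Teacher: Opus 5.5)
Your proposal is correct and follows essentially the same route as the paper. The paper splits $g(\nabla_g\varphi,\nabla_g(\chi x^i))$ into three terms $A,B,C$, which is the same regrouping as yours. It handles the $\nabla\chi$ term with a cutoff and $\chi\in D(\Delta)$, and the $\nabla x^i$ term by integrating by parts along $H$ using $\mathcal{L}_{X_a}\Omega=0$ (from $\mathrm{tr}(\mathrm{ad}_{X_a})=0$, which is equivalent to your unimodularity argument). It arrives at the same formula $\Delta(\chi x^i)=2g(\nabla_g x^i,\nabla_g\chi)+x^i\Delta\chi+\chi G^{ab}(r)X_a(X_b(x^i))$.
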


\begin{proof}
    First, we observe that $\chi x^i\in \mathrm{Lip}(Y)\subset W^{1,2}(Y)$ since $\chi\in \mathrm{Lip}_c(Y_+)$ and $x^i\in\mathrm{Lip}_{loc}(Y_+)$ thanks to Remark \ref{rem:smooth_implies_Lip_loc}.

    Now, let $\varphi\in\mathrm{Lip}_c(Y)$ and let us show that
    \[
    \int_Y\langle \nabla\varphi,\nabla(\chi x^i)\rangle\di\mathfrak{m}=-\int_Y\varphi\psi\di\mathfrak{m}
    \]
    for some $\psi\in L^2(Y)$.
    
    Thanks to Remark \ref{rem:RCD<>toRiem<>}, the following holds $\meas$-almost everywhere:
    \begin{equation}\label{eq: chain rule chi v}
    \begin{aligned}
    \langle \nabla \varphi,\nabla (\chi x^i)\rangle
    &=g(\nabla_g\varphi,\nabla_g(\chi x^i)) \\
    &=
    \underbrace{g(\nabla_g(x^i\varphi),\nabla_g\chi)}_{\eqqcolon A}
    +
    \underbrace{g(\nabla_g(\chi\varphi),\nabla_g x^i)}_{\eqqcolon B}
    +\underbrace{\bigl(-2\varphi\,g(\nabla_g x^i,\nabla_g\chi)\bigr)}_{\eqqcolon C},
    \end{aligned}
\end{equation}
where each term is defined $\meas$-a.e.\@ thanks to Lemma \ref{lem: weak gradient vs lip}. Let us now deal with each of the terms $A$, $B$, and $C$ from \eqref{eq: chain rule chi v}.

For $A$, we fix a cut-off function $\psi\in \mathrm{Lip}_c(Y_+)$ equal to $1$ on $\Spt(\chi)$. As a result, we have $\psi x^i\varphi\in\mathrm{Lip}_c(Y)$ and the following holds $\meas$-a.e.\@ thanks to Remark \ref{rem:RCD<>toRiem<>}:
\[
A=g(\nabla_g(\psi x^i\varphi),\nabla_g\chi)=\langle \nabla(\psi x^i\varphi),\nabla\chi\rangle
\]
Since $\chi\in D(\Delta)$, we have:
    \[
    \int_Y A\di \mathfrak{m}=\int_Y\langle\nabla(\psi x^i\varphi),\nabla\chi\rangle\di \mathfrak{m}=-\int_Y\psi x^i\varphi\Delta\chi\di \mathfrak{m}=-\int_Y\varphi x^i\Delta\chi\di \mathfrak{m},
    \]
    using the locality of the Laplacian in the last equality, i.e., $\Delta \chi=0$ $\meas$-a.e.\@ outside of $\Spt(\chi)$. Observe that $\Delta \chi\in L^2_c(Y_+)$ since $\chi\in D(\Delta)$ has compact support in $Y_+$. In particular, since $x^i\in\mathrm{Lip}_{loc}(Y_+)$ thanks to Remark \ref{rem:smooth_implies_Lip_loc}, we have $x^i\Delta\chi\in L^2(Y)$.
    
    Thanks to Remark \ref{rem:g(nabla u,nabla v)}, the following holds $\meas$-almost everywhere:
    \[
    B=g(\nabla_g(\chi\varphi),\nabla_g x^i)=G^{ab}(r)X_a(\chi\varphi)X_b(x^i).
    \]
    We also recall that, thanks to Proposition \ref{prop:measCD(0,N)}, we have $\meas=\rho(r)\di r\wedge\Omega$. Thus, we obtain:
    \[
        \int_Y B\di \mathfrak{m}=\int_{r=0}^{\infty}\rho(r)G^{ab}(r)\left(\int_{H}X_a(\chi\varphi)X_b(x^i)\ \Omega\right)\di r.
    \]
    Now, observe that $\mathcal{L}_{X_a}(\Omega)=\mathrm{div}_{\Omega}(X_a)\Omega$, where $\mathrm{div}_{\Omega}(X_a)=-\mathrm{Tr}(\mathrm{ad}_{X_a})$ because $X_a$ is left-invariant. However, since $H$ is nilpotent, we have $\mathrm{Tr}(\mathrm{ad}_{X_a})=0$, which implies $\mathcal{L}_{X_a}(\Omega)=0$. As a result, we have:
    \[X_a(\chi\varphi)X_b(x^i)\ \Omega=\mathcal{L}_{X_a}((\chi\varphi)X_b(x^i)\ \Omega)-(\chi\varphi)X_a(X_b(x^i))\ \Omega.
    \]
    Moreover, by Stokes' Theorem, we have $\int_H\mathcal{L}_{X_a}((\chi\varphi)X_b(x^i)\ \Omega)=0$. Consequently, we obtain:
    \[
    \int_Y B\di \mathfrak{m}=-\int_Y\varphi(\underbrace{\chi G^{ab}(r)X_a\bigl(X_b(x^i)\bigr)}_{\eqqcolon\mu})\di\mathfrak{m},
    \]
    where $\mu\in L^2(Y)$ since $G^{ab}(r)\in C^0(Y)$, $\chi\in\mathrm{Lip}_c(Y_+)$, and $X_a\bigl(X_b(x^i)\bigr)\in\mathrm{Lip}_{loc}(Y_+)$.
    \smallskip

    \noindent For $C$, observe that:
    \[
    \lvert -2g(\nabla_g x^i,\nabla_g\chi)\rvert\le 2\lvert \nabla_g x^i\rvert\lvert\nabla_g\chi\rvert \le 2\mathrm{Lip}(\chi)\lvert \nabla_g x^i\rvert,
    \]
    thanks to Lemma \ref{lem: weak gradient vs lip}. As a result, we have $2g(\nabla_g x^i,\nabla_g\chi)\in L^{\infty}_c(Y_+)$.
    \smallskip

    Thanks to the previous three paragraph, we have $\chi x^i\in D(\Delta)$ and
    \[\Delta(\chi x^i)=2g(\nabla_g x^i,\nabla_g\chi)+x^i\Delta \chi+\mu\in L^2(Y).
    \]
\end{proof}

We may now prove Lemma \ref{lem:nabla_gx^i|nabla_gx^j locally sobolev}.

\begin{proof}[Proof of Lemma \ref{lem:nabla_gx^i|nabla_gx^j locally sobolev}]
    Given $1\le i,j\le k$ and $\chi\in \mathrm{TestF}(Y)$ with compact support in $Y_+$, we have $\chi x^i,\chi x^j\in H^{2,2}(Y)$ thanks to Lemma \ref{lem:chi_x^i_in_D(Delta)} and \cite{gigli_nonsmooth}*{Proposition 3.3.18}. Moreover, we recall that $\chi x^i,\chi x^j\in \mathrm{Lip}(Y)$ since $\chi\in \mathrm{Lip}_c(Y_+)$ and $x^i,x^j\in\mathrm{Lip}_{loc}(Y_+)$. As a result, $\chi x^i$ and $\chi x^j$ both have bounded gradient. According to \cite{gigli_nonsmooth}*{Proposition 3.3.22}, we have
    \[
    \langle \nabla (\chi x^i),\nabla (\chi x^j)\rangle\in W^{1,2}(Y),
    \]
    where we recall that $\langle \nabla (\chi x^i),\nabla (\chi x^j)\rangle = g(\nabla_g(\chi x^i),\nabla_g(\chi x^j))$ thanks to Remark \ref{rem:RCD<>toRiem<>}.
    
    To show $g(\nabla_g x^i,\nabla_g x^j)\in W^{1,2}_{loc}(Y^+)$, we need to show that, given any $\eta\in \mathrm{Lip}_c(Y_+)$, we have $\eta g(\nabla_g x^i,\nabla_g x^j)\in W^{1,2}(Y)$. However, given $\eta\in \mathrm{Lip}_c(Y_+)$, we may fix $\chi\in \mathrm{TestF}(Y)$ with compact support in $Y_+$ such that $\chi$ is constant equal to $1$ on $\Spt(\eta)$. In particular, we have
    \[
    \eta\; g(\nabla_g x^i,\nabla_g x^j)=\eta\; g(\nabla_g(\chi x^i),\nabla_g(\chi x^j))\in W^{1,2}(Y),
    \]
    since the product of a $W^{1,2}(Y)$ function with a $\mathrm{Lip}_c(Y)$ function is a $W^{1,2}(Y)$ function. This concludes the proof.
\end{proof}

We now conclude this section with item \eqref{item:distributional-Ricci} of Theorem \ref{thm:RCD+nilpotent}, which will play a crucial role when looking for a lower bound on $N$ in Section \ref{sec:Nbound}.

\begin{prop}\label{prop:sobolev_christoffel_and_V}
    The Christoffel symbols of $g$ lie in $L^2_{loc}(Y_+)$ and $\meas=e^{-V}\mathrm{dvol}_g$, where
    \[
    V(r)=\ln (\sqrt{\det G(r)})-\ln \rho(r)
    \]
    lies in $C^0\cap W^{1,2}_{loc}(Y_+)$. Moreover, the inequality
    \[
    \Ric_{\meas,N}\ge0
    \]
    holds on $Y_+$ in the sense of distributions (see Definition \ref{defn:distributional_BE}).
\end{prop}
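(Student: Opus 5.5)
The plan is to verify the hypotheses of Theorem \ref{thm:CD-to-Distribution} (Mondino--Ryborz) for the manifold with boundary $[0,\infty)\times H$, whose interior is $Y_+$, equipped with $g=\di r^2+g_r$ and $\meas$. By Proposition \ref{prop:recover_cont_metric}, $(Y,d)$ is the metric completion of $(Y_+,d_g)$, and $g$ is continuous on $Y_+$. So the work reduces to three regularity statements: the Christoffel symbols of $g$ are in $L^2_{loc}$, the weight $V$ has the stated form, and $V\in C^0\cap W^{1,2}_{loc}(Y_+)$. Once these hold, Theorem \ref{thm:CD-to-Distribution} applies with $K=0$ and $N\ge k+1=\dim Y$, and gives $\Ric_{\meas,N}\ge 0$ on $Y_+$ in the distributional sense. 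The remark following that theorem allows us to ignore that $g$ may not extend to $\partial Y$, since the inequality is only claimed in the interior.

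\textbf{Christoffel symbols.} I would work in the global chart $(r,x^1,\dots,x^k)$ coming from $\exp$. In these coordinates the metric coefficients are $g_{rr}=1$, $g_{ri}=0$, and $g_{ij}(r,x)=(A^{-1}G(r)A^{-T})_{ij}$, by \eqref{eq:GvsGbar}. The inverse matrix is $\overline G=A^{T}G(r)^{-1}A$ (up to the index convention). Here $A$ is smooth in $x$ and independent of $r$. Proposition \ref{prop:W^12G(r)} gives $G_{ij}\in W^{1,2}_{loc}(0,\infty)$, and Lemma \ref{lem:g_r_cont} gives continuity, so $G$ is locally bounded and locally uniformly positive definite. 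Hence $G^{-1}$ is also in $C^0\cap W^{1,2}_{loc}$, by Remark \ref{rem:loc_sobolev_algebra} or by the one-variable identity $(G^{-1})'=-G^{-1}G'G^{-1}$. It follows that $g_{ij}$ and $g^{ij}$ lie in $L^\infty_{loc}\cap H^1_{loc}(Y_+)$, using Lemma \ref{lem:identification_RCD_W^12_VS_Riem_W^12} to pass between the RCD and classical Sobolev notions. Each Christoffel symbol is a sum of products of a $g^{ab}$ (in $L^\infty_{loc}$) with a first derivative of some $g_{cd}$ (in $L^2_{loc}$), so each lies in $L^2_{loc}(Y_+)$.

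\textbf{The weight.} In the frame $\{\partial_r,X_1,\dots,X_k\}$ we have $\mathrm{dvol}_g=\sqrt{\det G(r)}\,\di r\wedge\Omega$, because $\Omega=\theta^1\wedge\dots\wedge\theta^k$ is the dual coframe volume. Proposition \ref{prop:measCD(0,N)} gives $\meas=\rho\,\di r\wedge\Omega$, so $\meas=e^{-V}\mathrm{dvol}_g$ with $V=\ln\sqrt{\det G}-\ln\rho$. The function $\det G$ is a polynomial in the entries of $G$, so it is continuous, positive and $W^{1,2}_{loc}$ in $r$, and therefore $\ln\det G\in C^0\cap W^{1,2}_{loc}$. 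For $\rho$ I use that $\rho^{1/(N-1)}$ is concave. This makes $\rho$ locally Lipschitz on $(0,\infty)$, and $\rho$ is positive there because $\meas$ has full support and a nonnegative concave function that vanishes at an interior point vanishes on a half-line. Hence $\ln\rho$ is locally Lipschitz on $(0,\infty)$. Viewing $V$ as a function on $Y_+$ that depends only on $r$, we get $V\in C^0\cap W^{1,2}_{loc}(Y_+)$.

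\textbf{Main obstacle.} The step I expect to be most delicate is checking that the setting exactly matches the hypotheses of \cite{MondinoRyborz}. In particular, $\meas$ must be the pushforward of $e^{-V}\mathrm{dvol}_g$ and give no mass to $\partial Y$. This follows from $\meas=\rho\,\di r\wedge\Omega$, since $\{0\}\times H$ has Lebesgue-null $r$-projection. In addition, the dimension requirement $N\ge k+1$ holds by assumption.
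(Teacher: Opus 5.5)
Your proposal is correct and follows essentially the same route as the paper. You verify the hypotheses of Theorem \ref{thm:CD-to-Distribution}: continuity of $g$, $L^2_{loc}$ Christoffel symbols, $\meas=e^{-V}\mathrm{dvol}_g$ with $V\in C^0\cap W^{1,2}_{loc}$, and $(Y,d)$ being the metric completion of $(Y_+,d_g)$; then you invoke Mondino--Ryborz. The differences are small: you bound the Christoffel symbols in the exponential chart via $g^{coord}=A^{-1}G(r)A^{-T}$, while the paper uses Koszul's formula in the frame $\{\partial_r,X_1,\dots,X_k\}$ to get the explicit formulas $\Gamma^r_{ij}=-\tfrac12 G'_{ij}$ and $\Gamma^i_{rj}=S^i{}_j$ that it reuses in Proposition \ref{prop:Ric_rr}, and your argument that $\rho>0$ on $(0,\infty)$ fills in a detail the paper leaves implicit.
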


\begin{proof}
    We recall that $Y_+$ is equipped with $g=\di r^2+g_{r}$. As a result of Lemma \ref{lem:g_r_cont} and Proposition \ref{prop:W^12G(r)}, $g$ is locally given as a positive definite matrix whose coefficients $G_{ij}$ and inverse coefficients $G^{ij}$ lie in $L^{\infty}_{loc}\cap H^1_{loc}(Y_+)$ (see Definition \ref{defn:distributional_BE} and Remark \ref{rem:H^1_loc}). Therefore, we may apply Koszul's formula \ref{eq:Koszul} to compute the Christoffel symbols of $g$ in the frame $\{\partial_r,X_1,\ldots,X_k\}$. First, we denote $S(r)$ the shape operator of $\{r\}\times H\subset Y_+$ defined by 
    \[
        g(S(r)X,Y)=g(\nabla_X\partial_r,Y),
    \]
    for smooth vector fields $X,Y$ in the {$H$-}orbit direction. By Koszul formula, we have
    \[
    g(S(r)X,Y)=\dfrac{1}{2} \partial_r g_r(X,Y),
    \]
    so $S(r)$ takes the matrix form
    \[
    S(r)=\dfrac{1}{2} G(r)^{-1} G'(r),{\text{ with the convention } S(r)X_j=S^{i}{}_jX_i}.
    \]
    Using Koszul's formula, we obtain
    \begin{equation}\label{eq:christoffel}
        \Gamma^r_{ij}=-\tfrac12 G'_{ij},
        \qquad
        \Gamma^i_{rj}=S^i{}_j,
        \qquad
        \Gamma^i_{rr}=\Gamma^r_{ri}=0,
        \qquad i,j\in\{1,\ldots, k\},
    \end{equation}
    while $\Gamma^\ell_{ij}$ ($i,j,\ell\in\{1,\ldots, k\}$) are the Christoffel symbols of the left-invariant metric $g_r$ on $H$ in the frame $\{X_1,\ldots,X_k\}$. As a result of Proposition \ref{prop:W^12G(r)}, the Christoffel symbols of $g$ lie in $L^2_{loc}(Y_+)$.

    Thanks to item \eqref{item:measure} of Theorem \ref{thm:RCD+nilpotent}, $Y_+$ is equipped with the measure
    \[
    \meas=\rho(r)\di r\wedge\Omega=e^{-V}\mathrm{dvol}_g,\ V(r)=\ln (\sqrt{\det G(r)})-\ln \rho(r)\in C^0\cap W^{1,2}_{loc}((0,\infty)).
    \]
    In particular, $V(r)$ lies in $C^0\cap W^{1,2}_{loc}(Y_+)$ thanks to Lemma \ref{lem:identification_RCD_W^12_VS_Riem_W^12}.

    Therefore, thanks to the two paragraphs above and since $(Y,d)$ is the metric completion of $(Y_+,d_g)$ (see item \ref{item:continuous+sobolev} of Theorem \ref{thm:CD-to-Distribution}), we can conclude using Theorem \ref{thm:CD-to-Distribution}.
\end{proof}

\begin{rem}
    The Christoffel symbols $\Gamma_{ij}^l$ without $r$ may involve terms coming from the possibly non-zero Lie brackets $[X_i,X_j]$, but they will not be used in our later computations.
\end{rem}

\section{Estimate dimension $N$ by Hausdorff dimension}\label{sec:Nbound}
In this section, our aim is to prove Theorem \ref{mainthm:RCD_N_bdd}. Throughout this section, we always assume that $(Y,y,d,\meas)$ is an $\RCD(0,N)$ space and $H$ is a connected, simply connected, and nilpotent Lie group satisfying the assumptions of Theorem \ref{mainthm:RCD_N_bdd}. In particular, $(Y,y,d,\meas)$ satisfies the assumptions of Theorem \ref{thm:RCD+nilpotent} and items \eqref{item:regular_set} to \eqref{item:distributional-Ricci} hold. We will re-use the notations of Sections \ref{sec:measure_CD(0,N)} and \ref{sec:loc_sobolev_metric}, i.e., $\{X_1,...,X_k\}$ denotes a fixed left-invariant frame on $H$ with dual coframe $\{\theta^1,\ldots,\theta^k\}$ and associated left-invariant Haar measure $\Omega=\theta^1\wedge...\wedge \theta^k$. As a result of Theorem \ref{thm:RCD+nilpotent}, the regular part $Y_+$ comes from a weighted Riemannian metric
$$g=\di r^2 + g_r,\quad \meas=\rho(r)\di r\wedge \Omega,$$
where $\{g_r\}$ is a family of left-invariant Riemannian metrics on $H$ that is continuous and locally $W^{1,2}$ in $r$, and $\rho$ is a $\mathrm{CD}(0,N)$ density function. In particular, the $k\times k$ matrix $G(r)$ of $g_r$ in the basis $\{X_1,...,X_k\}$ satisfies
\begin{equation}\label{nota:metric_matrix}
G(r)=\left[ G_{ij}(r) \right]\defeq\left[ g_r(X_i,X_j) \right]\in W^{1,2}_{loc}((0,\infty),\mathrm{Sym}_k(\R)).
\end{equation}
All the metric measure information of $Y$ is encoded in $\rho(r)$ and the one-parameter family $\{G(r)\}_{r>0}$.

In Section \ref{sec:curv_ineq}, we first derive an inequality for the distributional Bakry--Emery Ricci curvature in terms of the ordered eigenvalues $\lambda_1(r)\le\dots \le \lambda_k(r)$ of $G(r)$ (Proposition \ref{prop:Ric_rr}). We remark that this inequality is distinct from the standard Ricci curvature inequality used in Laplacian and volume comparison (see Remark \ref{rem:Riccati_comparison}). Our curvature inequality involves a trace inequality, which we prove in Appendix \ref{appx:perturbation_matrices}. Then we apply this curvature inequality to our situation and use item \eqref{item:distributional-Ricci} of Theorem \ref{thm:RCD+nilpotent} to obtain an inequality in terms of the ordered eigenvalues $\lambda_1(r)\le\dots \le \lambda_k(r)$, density function $\rho(r)$, and dimension $N$ (Lemma \ref{lem:Nbound}).

In Section \ref{sec:eigenvalues_by_Haus_dim}, we estimate the smallest and largest eigenvalues of $G(r)$ as $r\to 0$ under the assumptions of Theorem \ref{mainthm:RCD_N_bdd}.

Lastly, we complete the proof of Theorem \ref{mainthm:RCD_N_bdd} in Section \ref{sec:prove_N_bdd}.

\subsection{A curvature inequality}\label{sec:curv_ineq}

Let $$0<\lambda_1(r)\le \ldots\le \lambda_k(r)$$ be the ordered eigenvalues of $G(r)$. It follows from Weyl's perturbation theorem (see Theorem \ref{thm:Weyl}) that 
$$\lambda_j(r)\in C^0\cap W^{1,2}_{loc}\left((0,\infty)\right), \quad j=1,\ldots,k.$$ 
We recall from the proof of Proposition \ref{prop:sobolev_christoffel_and_V} that the shape operator $S(r)$ of $\{r\}\times H\subset Y_+$ takes the matrix form
\[
S(r)=\dfrac{1}{2} G(r)^{-1} G'(r),{\text{ with the convention } S(r)X_j=S^{i}{}_jX_i}
\]
We first estimate the Ricci curvature in terms of the ordered eigenvalues of $G(r)$. We state a general version when $N=\infty$ here because it may have independent interest and broader applications. A more specific version will be given in Lemma \ref{lem:Nbound}.

\begin{prop}\label{prop:Ric_rr}
The following inequality holds in the sense of distribution:
   $$ \Ric_{\meas,\infty}(\partial_r,\partial_r) \le -\dfrac{1}{4}\sum_{j=1}^k \left(\dfrac{\lambda'_j}{\lambda_j}\right)^2 - \left( \frac{\rho'}{\rho} \right)'.$$
\end{prop}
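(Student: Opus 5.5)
The plan is to compute $\Ric_{\meas,\infty}(\partial_r,\partial_r)=\Ric(\partial_r,\partial_r)+\hess V(\partial_r,\partial_r)$ directly from Definition \ref{defn:distributional_BE} in the frame $\{\partial_r,X_1,\ldots,X_k\}$. I will use the Christoffel symbols \eqref{eq:christoffel} and $V(r)=\tfrac12\ln\det G(r)-\ln\rho(r)$ from Proposition \ref{prop:sobolev_christoffel_and_V}. The idea is a distributional Riccati identity in which the second-derivative terms cancel exactly. What remains is a pointwise matrix inequality for $\operatorname{tr}(S^2)$, which should be the trace inequality proved in Appendix \ref{appx:perturbation_matrices}.

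\textbf{Step 1 (Ricci term).} Since $[\partial_r,X_i]=0$ and $\nabla_{\partial_r}\partial_r=0$ by \eqref{eq:christoffel}, the distributional curvature gives $\mathrm{Rm}(X_i,\partial_r)\partial_r=-\nabla_{\partial_r}\nabla_{X_i}\partial_r=-\nabla_{\partial_r}(S^j{}_iX_j)$. Contracting with $g^{ij}$ as in \eqref{eq:distributional_Ric} should give
$$\Ric(\partial_r,\partial_r)=-\operatorname{tr}(S)'-\operatorname{tr}(S^2),\qquad S=\tfrac12G^{-1}G'.$$
Here $\operatorname{tr}(S)'$ is understood distributionally, with $\operatorname{tr}S\in L^2_{loc}$. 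The term $\operatorname{tr}(S^2)$ lies in $L^1_{loc}$, since $G'\in L^2_{loc}$ and $G^{-1}$ is continuous. The Lie-bracket Christoffel symbols $\Gamma^\ell_{ij}$ do not enter, because only $\mathrm{Rm}(\cdot,\partial_r)\partial_r$ is involved. I will check this computation carefully against the duality definitions, pairing with a nonnegative test function and integrating by parts only in $r$.

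\textbf{Step 2 (Hessian term).} By \eqref{eq:distributional_Hess} and $\nabla_{\partial_r}\partial_r=0$, we have $\hess V(\partial_r,\partial_r)=V''$ distributionally. Moreover $V'=\tfrac12\operatorname{tr}(G^{-1}G')-\rho'/\rho=\operatorname{tr}S-\rho'/\rho$, using Jacobi's formula, which is valid a.e.\ for $G\in W^{1,2}_{loc}$. Hence $V''=\operatorname{tr}(S)'-(\rho'/\rho)'$. The $\operatorname{tr}(S)'$ terms cancel, and we obtain
$$\Ric_{\meas,\infty}(\partial_r,\partial_r)=-\operatorname{tr}(S^2)-(\rho'/\rho)'.$$

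\textbf{Step 3 (trace inequality, the main obstacle).} It remains to show $\operatorname{tr}(S^2)\ge\tfrac14\sum_j(\lambda_j'/\lambda_j)^2$ for a.e.\ $r$. Note that $4\operatorname{tr}(S^2)=\|G^{-1/2}G'G^{-1/2}\|_F^2$. Fix $r$ at which $G$ and all $\lambda_j$ are differentiable; this holds a.e.\ by Theorem \ref{thm:Weyl} and the $W^{1,2}$ regularity.
\begin{itemize}
\item If the eigenvalues are simple, first-order perturbation gives $\lambda_j'=v_j^TG'v_j$ with $v_j$ unit eigenvectors. Then $\lambda_j'/\lambda_j$ are exactly the diagonal entries of $G^{-1/2}G'G^{-1/2}$ in the eigenbasis, and the sum of squared diagonal entries is at most the squared Frobenius norm.
\item If eigenvalues are repeated, I would use the Rellich/Kato fact that at a differentiability point the derivatives $\lambda_j'$ of a cluster $\lambda_j=\dots=\lambda_{j+m}=\lambda$ are the eigenvalues of the compression $P G'P$ to the eigenspace. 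Then $\sum(\lambda_j'/\lambda)^2=\lambda^{-2}\|PG'P\|_F^2$, which is the Frobenius norm squared of the corresponding diagonal block of $G^{-1/2}G'G^{-1/2}$. Summing over blocks gives the bound.
\end{itemize}
Justifying the cluster case with only one-sided differentiability information is the delicate part. I expect it is exactly what the appendix addresses, via Weyl's inequality applied to difference quotients.
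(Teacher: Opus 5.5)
Your proposal is correct and follows the paper's proof. It uses the same Riccati identity $\Ric(\partial_r,\partial_r)=-(\tr S)'-\tr(S^2)$, the same cancellation of $(\tr S)'$ against $V''$ via Jacobi's formula, and the same trace inequality from Appendix~\ref{appx:perturbation_matrices}. That inequality is proved via Kato's theorem at points where $G$ and all $\lambda_j$ are differentiable.

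The only difference is in the cluster case. The paper does not apply Weyl's inequality to difference quotients. Instead it compares one-sided difference quotients using the ordering of the eigenvalues, together with Kato's expansion. This forces all ordered derivatives in a cluster, and hence the compression $P G' P|_{E}$, to equal a single scalar. That is exactly the identification you need, and your block-Frobenius-norm conclusion then goes through as you wrote it.
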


\begin{proof}
Recall that, by Proposition \ref{prop:sobolev_christoffel_and_V}, $\meas = e^{-V}\mathrm{dvol_g}$, where $V(r)=\ln (\sqrt{\det G(r)})-\ln \rho(r)$ lies in $C^0\cap W^{1,2}_{loc}\left((0,\infty)\right)$. 
Then it follows from {\ref{eq:christoffel}}, \eqref{eq:distributional_Ric} and \eqref{eq:distributional_Hess} that
\begin{align*}
    \Ric(\partial_r,\partial_r)&=\sum_{i,j=1}^k G^{ij}g(X_i,\mathrm{Rm}(X_j,\partial_r)\partial_r)\\&=-\sum_{i,j=1}^k G^{ij}g(X_i,\nabla_{\partial_r}\nabla_{X_j}\partial_r)\\&=-\sum_{i,j,\ell=1}^k G^{ij}g\!\left(X_i,\bigl((S')^\ell{}_j+S^2)^\ell{}_j\bigr)X_\ell\right)\\&=-\sum_{i,j,\ell=1}^k G^{ij}G_{i\ell}\bigl((S')^\ell{}_j+(S^2)^\ell{}_j\bigr)\\&=-\tr(S')-\tr(S^2).\\
  \Hess V(\partial_r,\partial_r)&= \left(\ln \sqrt{\det G(r)}\right)''-\left( \frac{\rho(r)'}{\rho(r)} \right)'
\end{align*}
 Notice that $G(r)\in W^{1,2}_{loc}$, by the Sobolev Chain rule $(\ln {\det G})'= \tr (G^{-1}G')= 2\tr S$. It follows that 
 \[
 \tr S(r)=(\ln {\sqrt{\det G}})'.
 \]
 Together with the trace inequality Proposition \ref{prop:Trace-inequality} we have 
 \begin{align*}
      \Ric_{\meas,\infty}(\partial_r,\partial_r)&= \Ric(\partial_r,\partial_r)+  \Hess V(\partial_r,\partial_r)\\
      &=\underbrace{-(\tr S(r))'+\left(\ln \sqrt{\det G(r)}\right)''}_{=0}-\tr(S(r)^2)-\left( \frac{\rho(r)'}{\rho(r)} \right)'\\
      &\le -\dfrac{1}{4}\sum_{j=1}^k \left(\dfrac{\lambda'_j}{\lambda_j}\right)^2 - \left( \frac{\rho'}{\rho} \right)',
 \end{align*}
 as desired.
\end{proof}

  \begin{rem}\label{rem:Riccati_comparison}
    In the Riccati equation
    \[\Ric(\partial_r,\partial_r)=-(\tr S(r))'-\tr(S(r)^2),\]
    the standard Laplacian and volume element comparison use the Cauchy--Schwarz inequality 
    \[
    \tr (S(r)^2)\ge \dfrac{1}{k}\cdot \tr (S(r))^2.
    \]
    Following this approach, we can estimate $N$ in terms of the eigenvalues of $S(r)$ and get the weaker bound $N\ge 28/3=9.33\ldots$, assuming $k=3$ and $s=2$. Our method uses the eigenvalues of $G(r)$ via Proposition \ref{prop:Trace-inequality} and gives the sharp estimate $N\ge 12$; also see Example \ref{exmp:N_bdd_sharp}. 
\end{rem}

\begin{rem}
    {Any $L^1_{loc}(0,\infty)$-function $f$ gives rise to a signed Radon measure $f(r)\di r$ on $(0,\infty)$, which can naturally be seen as a distribution on $(0,\infty)$. As a result, we will identify $L^1_{loc}(0,\infty)$-functions with their induced Radon measure and distribution, and we will identify Radon measures with their induced distribution.}
\end{rem}

To simplify the notation, we denote 
$$J(r)\coloneqq\sqrt{\det G(r)},\quad b_j\coloneqq\dfrac{\lambda'_j}{2\lambda_j}=\dfrac{1}{2}(\ln \lambda_j(r))',\quad b\coloneqq\dfrac{J'}{J}.$$
Then,
$$
\ln J(r)=\dfrac{1}{2}\ln\det G(r) = \dfrac{1}{2}\sum_{j=1}^k \ln \lambda_j(r),\quad b=(\ln J)'=\sum_{j=1}^k b_j.
$$
Since $G(r)$ is positive definite, we infer that $J$ is positive and lies in $C^0\cap W^{1,2}_{loc}\left((0,\infty)\right)$, meanwhile $\lambda'_j\in L^2_{loc}\left((0,\infty)\right) $ and $1/\lambda_j\in L^\infty_{loc}\left((0,\infty)\right)$. It follows $b_j,b\in L^2_{loc}\left((0,\infty)\right)$. In these notations, Proposition \ref{prop:Trace-inequality} can be written as an inequality between distributions. 
\begin{equation}\label{eq:trace-ineq-distribution}
    \tr(S(r)^2)\ge \sum_{j=1}^{k} b_j^2.
\end{equation}
Here both sides are in $L^1_{loc}\left((0,\infty)\right)$; therefore, the distributional inequality is equivalent to an a.e.\@ inequality between $L^1_{loc}\left((0,\infty)\right)$-functions.
We also write $u\coloneqq\rho^{1/(N-1)}$, which is concave, and let
$$
a\defeq \frac{u'}{u}=\frac{1}{N-1}\frac{\rho'}{\rho}.
$$
In particular, since $a$ is the logarithmic derivative of a concave function, it {lies in $\mathrm{BV}\left((0,\infty)\right)$}, admits a pointwise representative, and its {BV-}derivative satisfies {the following thanks to} the product rule for $\mathrm{BV}$-functions
\[
a'=\frac{u''}{u}-a^2,
\]
{where $a'$ and $u''$ are signed Radon measures on $(0,\infty)$.}

By direct computation we have the following inequality.

\begin{lem}\label{lem:Nbound} With the notations above, we have
   \begin{equation}\label{eq:Nbound}
       \sum_{j=1}^k (b_j-a)^2+\dfrac{1}{N-k-1}\left(b-ka\right)^2\le -(N-1)\dfrac{u''}{u},
   \end{equation}
   {in the sense of signed Radon measures on $(0,\infty)$.}
\end{lem}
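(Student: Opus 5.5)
The plan is to combine the distributional inequality $\Ric_{\meas,N}\ge 0$ from item \eqref{item:distributional-Ricci} of Theorem \ref{thm:RCD+nilpotent} with the upper bound on $\Ric_{\meas,\infty}(\partial_r,\partial_r)$ from Proposition \ref{prop:Ric_rr}. An algebraic identity then turns the resulting inequality into \eqref{eq:Nbound}.

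\textbf{Step 1: the $N$-Bakry--Emery correction.} The manifold $Y_+$ has dimension $n=k+1<N$, so Definition \ref{defn:distributional_BE} gives
\[
\Ric_{\meas,N}(\partial_r,\partial_r)=\Ric_{\meas,\infty}(\partial_r,\partial_r)-\frac{(V')^2}{N-k-1}.
\]
Here $V=\ln J-\ln\rho$, so
\[
V'=b-\frac{\rho'}{\rho}=b-(N-1)a\in L^2_{loc}.
\]
Hence $(V')^2\in L^1_{loc}$.

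\textbf{Step 2: reduce to a distributional inequality on $(0,\infty)$.} All quantities depend only on $r$. I would test $\Ric_{\meas,N}\ge 0$ against $\phi(r)\eta(h)$, where $\phi\ge 0$ is in $C_c^\infty((0,\infty))$ and $\eta\ge 0$ is a fixed bump on $H$, and then integrate out $H$. This yields $\Ric_{\meas,N}(\partial_r,\partial_r)\ge 0$ as distributions on $(0,\infty)$. The same reduction applies to Proposition \ref{prop:Ric_rr}, whose proof computes exactly this $r$-dependent expression; the trace inequality \eqref{eq:trace-ineq-distribution} holds a.e. Combining the two gives
\[
0\le -\sum_j b_j^2-\Big(\frac{\rho'}{\rho}\Big)'-\frac{(b-(N-1)a)^2}{N-k-1}.
\]
This is an inequality of signed Radon measures.

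\textbf{Step 3: rewrite the measure term.} Since $\rho'/\rho=(N-1)a$ and $a$ is BV, the BV product rule recalled above gives
\[
\Big(\frac{\rho'}{\rho}\Big)'=(N-1)a'=(N-1)\frac{u''}{u}-(N-1)a^2
\]
as Radon measures. Substituting, we get
\[
\sum_j b_j^2+\frac{(b-(N-1)a)^2}{N-k-1}-(N-1)a^2\le-(N-1)\frac{u''}{u}.
\]

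\textbf{Step 4: algebraic identity.} It remains to check that the left side equals $\sum_j(b_j-a)^2+\frac{(b-ka)^2}{N-k-1}$. Write $b-(N-1)a=(b-ka)-(N-k-1)a$ and expand:
\[
\frac{(b-(N-1)a)^2}{N-k-1}=\frac{(b-ka)^2}{N-k-1}-2a(b-ka)+(N-k-1)a^2.
\]
Adding $\sum_j b_j^2-(N-1)a^2$ gives
\[
\sum_j b_j^2-2ab+ka^2+\frac{(b-ka)^2}{N-k-1}.
\]
Because $b=\sum_j b_j$, this equals $\sum_j(b_j-a)^2+\frac{(b-ka)^2}{N-k-1}$, which is the left side of \eqref{eq:Nbound}.

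The only delicate point is Step 2. There one must check that the distributional Ricci inequality from Mondino--Ryborz, stated on $Y_+$, passes to an inequality of Radon measures on $(0,\infty)$. One must also check that all products, such as $b_j^2$, $a^2$ and $(V')^2$, are genuinely $L^1_{loc}$ so that each term is meaningful. The rest is bookkeeping.
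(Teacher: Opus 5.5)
Your proposal is correct and follows essentially the same route as the paper. You combine Proposition \ref{prop:Ric_rr} with $\Ric_{\meas,N}(\partial_r,\partial_r)\ge 0$, use $V'=b-(N-1)a$ and $a'=u''/u-a^2$, note that only the $u''$ term is a genuine measure, and rearrange with the same algebraic identity; your explicit reduction to $(0,\infty)$ by testing against $\phi(r)\eta(h)$ simply spells out what the paper leaves implicit when it applies item \eqref{item:distributional-Ricci} to $\partial_r$.
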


\begin{proof}
 Rewrite Proposition \ref{prop:Ric_rr} in the current notations as 
 \[
 \Ric_{\m,\infty}(\partial_r,\partial_r)\le - \sum_{j=1}^k b_j^2-(N-1)a'.
 \]
 Direct computation gives
 \begin{align*}
(\nabla V \otimes \nabla V)(\partial_r,\partial_r)=V'(r)^2=\left( \frac{J'}{J} - \frac{\rho'}{\rho} \right)^2=\left(b-(N-1)a\right)^2.
\end{align*}
Applying item \eqref{item:distributional-Ricci} of Theorem \ref{thm:RCD+nilpotent} to the smooth vector field $\partial_r$ gives the following inequality in the distributional sense
\begin{equation}\label{eq:Ric}
    \begin{split}
       0&\le \Ric_{\m,N}(\partial_r,\partial_r)=\Ric_{\m,\infty}(\partial_r,\partial_r)-\frac{ V'(r)^2 }{N-n},\quad n = \dim(Y_+)=k+1,\\
       & \le -\sum_{j=1}^k b^2_j -(N-1)a'-\dfrac{1}{N-k-1}\left( b -(N-1)a\right)^2\\
       &=-(N-1)\dfrac{u''}{u}+(N-1)a^2-\sum_{j=1}^k b_j^2 -\dfrac{1}{N-k-1}\left( b -(N-1)a\right)^2.
    \end{split}
\end{equation}
After this simplification, we see that only the $u''$ term involves a Radon measure, while all other terms are $L^1_{loc}\left((0,\infty)\right)$-functions. {Therefore, identifying $L^1_{loc}\left((0,\infty)\right)$-functions with their induced signed Radon measure, the distributional inequality \eqref{eq:Ric} is equivalent to an inequality between signed Radon measures. We rearrange the terms as follows:}
\begin{align*}
\left( b -(N-1)a\right)^2 &= \left( (b-ka) -(N-k-1)a\right)^2\\
&=\left(b-ka\right)^2-2(N-k-1)a(b-ka)+(N-k-1)^2a^2.\\
\sum_{j=1}^k b_j^2&=\sum_{j=1}^k (b_j-a+a)^2=\sum_{j=1}^k (b_j-a)^2 +2a(b-ka) + ka^2.
\end{align*}
Plugging the above identities back into \eqref{eq:Ric} gives the desired inequality.
\end{proof}

\subsection{Estimate eigenvalues of $G(r)$ by Hausdorff dimension}\label{sec:eigenvalues_by_Haus_dim}

In this section, we obtain lower bounds for the eigenvalues of $G(r)$ from the geometric assumptions in Theorem \ref{mainthm:RCD_N_bdd}. 

We first prove a lower bound on the smallest eigenvalue of $G(r)$ under assumption (1) of Theorem \ref{mainthm:RCD_N_bdd}, or equivalently, the assumptions in Theorem \ref{thm:RCD+nilpotent}.

\begin{lem}\label{lem:eigenvalue-lower-bound}
    Under the assumptions of Theorem \ref{thm:RCD+nilpotent} and notation \eqref{nota:metric_matrix}, let $\lambda_1(r)>0$ be the smallest eigenvalue of $G(r)$. Then $\liminf_{r\to 0^+} \lambda_1(r)>0.$
\end{lem}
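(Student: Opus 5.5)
Since $\lambda_1(r)=\min_{\|v\|_E=1}g_r(v,v)$ (with $\|\cdot\|_E$ the Euclidean norm on $\mathfrak h$ in the frame $\{X_1,\dots,X_k\}$), the claim says that the left-invariant metrics $g_r$ cannot degenerate in any direction as $r\to0^+$. I will argue by contradiction and use only the distance comparison between orbits. Suppose $r_i\to 0$ and unit vectors $V_i\in\mathfrak h$ satisfy $g_{r_i}(V_i,V_i)=\lambda_1(r_i)\to 0$. After passing to a subsequence, $V_i\to V$ with $\|V\|_E=1$.

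Fix $t_0>0$ and set $h_i=\exp(t_0V_i)\to h=\exp(t_0V)\neq \mathrm{id}$. The one-parameter curve $t\mapsto (r_i,\exp(tV_i))$, $t\in[0,t_0]$, has $g$-length $t_0\lambda_1(r_i)^{1/2}$. By Proposition \ref{prop:d=g_on_Y+}, $d=d_g$ on $Y_+$, so this gives
\[
d\big((r_i,\mathrm{id}),(r_i,h_i)\big)\le t_0\lambda_1(r_i)^{1/2}\to0 .
\]
We also have $d((r_i,\mathrm{id}),(0,\mathrm{id}))\le r_i$ and $d((r_i,h_i),(0,h_i))\le r_i$, since the horizontal lifts $r\mapsto(r,h)=h\cdot\gamma(r)$ are rays of unit speed. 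Combining these bounds yields $d(y,h_i\cdot y)\to 0$. The action of $H$ on $Y$ is continuous (indeed $H$ is a closed subgroup of $\mathrm{Isom}(Y)$ with the compact-open topology), so $d(y,h\cdot y)=\lim d(y,h_i\cdot y)=0$. This says $h$ fixes $y$.

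However, Lemma \ref{lem:Y_homeo_RxH} shows that $H$ acts freely on $Y$: a nontrivial isotropy group would be compact, which is impossible in a simply connected nilpotent Lie group. Hence $h=\mathrm{id}$, contradicting $h=\exp(t_0V)$ with $V\neq0$, since $\exp$ is a diffeomorphism. Therefore $\liminf_{r\to0^+}\lambda_1(r)>0$.

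The only delicate point is the topology used to pass from $h_i\to h$ in $H$ (via the Lie group topology on $\mathfrak h$ and $\exp$) to convergence of $h_i\cdot y$. This holds because $H$ is a closed subgroup of the isometry group and the Lie group topology agrees with the compact-open topology, so the orbit map $h\mapsto h\cdot y$ is continuous. If one wants to avoid this issue, it also suffices to note that the homeomorphism of Lemma \ref{lem:Y_homeo_RxH} sends $(0,h_i)\to(0,h)\neq(0,\mathrm{id})$, and this again contradicts $d((0,\mathrm{id}),(0,h_i))\to0$.
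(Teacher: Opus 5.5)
Your proof is correct and follows essentially the same route as the paper: pick unit directions $V_i\to V$ realizing $\lambda_1(r_i)\to 0$, and bound $d(y,h_i y)\le 2r_i+t_0\lambda_1(r_i)^{1/2}\to 0$ by concatenating a horizontal segment, the one-parameter orbit curve at level $r_i$, and another horizontal segment. This contradicts $h_i\to h\neq\mathrm{id}$; your remarks on continuity of the orbit map and freeness of the $H$-action (and the harmless extra parameter $t_0$) only spell out what the paper leaves implicit.
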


\begin{proof}
   Suppose otherwise, then there is a sequence $r_i\to 0$ such that $\lambda_1(r_i) \to 0$. For each $i$, we choose $$V_i=\sum_{j=1}^k x_j(r_i)X_j \in \mathfrak{h}$$
such that
$$\sum_{j=1}^k x_j^2(r_i)=1,\quad g_{r_i}(V_i,V_i)=\lambda_1(r_i).$$
Let $h_i = \exp(V_i)\in H$. After passing to a subsequence if necessary, we can assume $$(x_1(r_i),...,x_k(r_i))\to (x^o_1,...,x_k^o)\in S^{k-1}.$$
Let $V^o= \sum x_j^o X_j \in \mathfrak{h}$ and $h=\exp(V^o)\in H$. Then $h_i\to h\neq \mathrm{id}$ on $H$ by construction.

For each $i$, let us estimate the distance between $y$ and $h_i y$ below, where we recall that $y=(0,\mathrm{id})\in [0,\infty)\times H$. We draw a horizontal segment from $y$ to $(r_i,\mathrm{id})$. Then we follow the curve 
$$\sigma_i: [0,1]\to Y,\quad t\mapsto \exp(tV_i)\cdot (r_i,\mathrm{id})=(r_i,\exp(tV_i)),$$
which ends at $(r_i,h_i)$. Lastly, we draw a horizontal segment from $(r_i,h_i)$ to $(0,h_i)$. Then we can estimate $d(y,h_i y)$ by the length of the concatenated curve
$$d(y,h_iy)\le 2r_i+ \lambda_1(r_i)^{1/2}\to 0.$$
as $i\to\infty$. This is a desired contradiction to $h_i\to h\neq \mathrm{id}$ and thus completes the proof.
\end{proof}

Next, we assume assumption (2) of Theorem \ref{mainthm:RCD_N_bdd} and prove a lower bound on the blow-up rate of the largest eigenvalue of 
$G(r)$ as $r\to 0^+$.

\begin{lem}\label{lem:largest_eignvalue_bound}
   Under the assumptions of Theorem \ref{thm:RCD+nilpotent} and notation \eqref{nota:metric_matrix}, let $\lambda_k(r)>0$ be the largest eigenvalue of $G(r)$. Suppose that there is an $\R$-subgroup $L$ of $H$ such that the orbit $Ly$ has Hausdorff dimension $\dim_H(Ly)\ge s>1$. Then for any $\epsilon>0$, there is $\delta>0$ such that
   $$\lambda_k(r)\ge (r^{-\alpha+\epsilon})^2$$
   for all $r\in (0,\delta)$, where $\alpha=s-1$.
\end{lem}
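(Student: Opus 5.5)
The strategy is to argue by contradiction: if $\lambda_k$ grows too slowly as $r\to 0^+$, we will build an explicit H\"older-type upper bound on the distance along the orbit $Ly$. That bound caps $\dim_H(Ly)$ strictly below $s$.

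Write $L=\{\exp(tV)\}_{t\in\R}$ with $V\in\mathfrak h$ normalized so that its Euclidean coordinate vector in the frame $\{X_j\}$ has norm $1$. Then $g_r(V,V)\le \lambda_k(r)$ for every $r>0$. Suppose the conclusion fails. Then there are $\epsilon>0$ and a sequence $r_i\to 0$ with $\lambda_k(r_i)<r_i^{-2(\alpha-\epsilon)}$. As a first pass, assume a uniform bound $\lambda_k(r)\le r^{-2\beta}$ for all small $r$, where $\beta=\alpha-\epsilon$. We return below to the issue that only a sequence is given.

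Under the uniform bound, estimate $d(\exp(tV)y,y)$ for small $|t|$ as in the proof of Lemma \ref{lem:eigenvalue-lower-bound}. Go up the horizontal segment from $y$ to $(r,\mathrm{id})$, then follow $\sigma(\tau)=(r,\exp(\tau V))$ for $\tau\in[0,t]$, which has length $|t|\,g_r(V,V)^{1/2}\le |t| r^{-\beta}$, then come back down. This gives
\[
d(y,\exp(tV)y)\le 2r+|t|\,r^{-\beta}.
\]
Optimizing with $r=|t|^{1/(1+\beta)}$ yields $d(y,\exp(tV)y)\le 3|t|^{1/(1+\beta)}$. By the isometric $L$-action, this is a H\"older bound: the map $t\mapsto \exp(tV)y$ is locally $\frac{1}{1+\beta}$-H\"older from $\R$ onto $Ly$. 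A $\gamma$-H\"older image of an interval has Hausdorff dimension at most $1/\gamma$. Hence $\dim_H(Ly)\le 1+\beta=s-\epsilon<s$, a contradiction.

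The main obstacle is that negating the statement only gives the bound along a sequence $r_i$, not for all small $r$. I would handle this with a scale-by-scale covering argument that uses only the scales $r_i$. Fix one such $r=r_i$ and set $\tau_i=r_i^{1+\beta}$. For $|t|\le\tau_i$ the displayed bound at the single scale $r_i$ gives $d(\exp(tV)y,y)\le 3r_i$. Hence the segment $\{\exp(tV)y: t\in[0,1]\}$ is covered by about $\tau_i^{-1}=r_i^{-(1+\beta)}$ balls of radius $3r_i$. Because the $r_i$ tend to $0$, this lets us compute the lower box dimension along the sequence $r_i$, giving $\underline{\dim}_B\le 1+\beta$. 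Applying this to each compact piece $\{\exp(tV)y: t\in[n,n+1]\}$ and using countable stability of Hausdorff dimension then yields $\dim_H(Ly)\le 1+\beta<s$, the desired contradiction. The point I would check carefully is that only the single-scale estimate at $r_i$ is needed for each covering, which is exactly what the construction uses. Continuity of $\lambda_k$ (from Weyl's theorem) is not even required for this step.
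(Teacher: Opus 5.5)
Your proposal is correct and follows the paper's proof. Both arguments negate the statement along a sequence $r_i\to 0$ and use the same three-piece path to get $d(y,\exp(tV)y)\le 2r_i+|t|\,r_i^{-(\alpha-\epsilon)}$. Both then choose the time scale $r_i^{1+\alpha-\epsilon}$, so that orbit pieces of that length lie in balls of radius $3r_i$. Your explicit covering by $L$-translates at scales $3r_i$ (bounding lower box dimension, then using countable stability) is what the paper compresses into ``it follows from the definition of Hausdorff dimension.''
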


\begin{proof}
  We argue by contradiction and suppose the contrary. Then there are some small $\epsilon>0$ and a sequence $r_i\to 0$ such that
$$\lambda_k(r_i)\le (r_i^{-\alpha+\epsilon})^2.$$
We shall show that any $\R$-orbit at $y$ has Hausdorff dimension at most $1+\alpha-\epsilon$, which is a desired contradiction to the assumption.

For any vector $V=\sum x_jX_j\in \mathfrak{h}$, where $\sum x_j^2=1$, we have
$$g_{r_i}(V,V)^{1/2}\le \lambda_k(r_i)^{1/2} \le r_i^{-\alpha+\epsilon}.$$
For convenience, we denote $h=\exp(V)$ and $th=\exp(tV)$. For any $t\in(0,1)$, we estimate the distance between $d(y,(th)y)$ using the same method as in the proof of Lemma \ref{lem:eigenvalue-lower-bound}:
$$d(y,(th)y)\le 2r_i + t\cdot  r_i^{-\alpha+\epsilon}.$$
Let $l_i = r_i^{1+\alpha-\epsilon}$, then we obtain
$$\sup_{t\in [0,l_i]} d(y,(th)y)\le 3r_i = 3 l_i^{\frac{1}{1+\alpha-\epsilon}}. $$
Then it follows from the definition of Hausdorff dimension that the corresponding $\R$-orbit at $y$ 
$\{ (th)\cdot y\mid t\in \R \}$ has Hausdorff dimension at most $1+\alpha-\epsilon$; a contradiction to the assumption.
\end{proof}

\subsection{Proof of Theorem B}\label{sec:prove_N_bdd}

We are now in a position to finish the proof of Theorem \ref{mainthm:RCD_N_bdd} by using Lemma \ref{lem:Nbound} and the eigenvalue estimates in Section \ref{sec:eigenvalues_by_Haus_dim}.

\begin{proof}[Proof of Theorem \ref{mainthm:RCD_N_bdd}]
Recall that in Lemma \ref{lem:Nbound} we obtained the inequality 
\begin{equation}\label{eq:intermediate-Nbound}
       \sum_j (b_j-a)^2+\dfrac{1}{N-k-1}\left(b-ka\right)^2\le -(N-1)\dfrac{u''}{u},
   \end{equation}
where 
\[
a=\frac{u'}{u}, \quad u=\rho^{\frac{1}{N-1}},\quad  b_j\coloneqq\dfrac{\lambda'_j}{2\lambda_j}=\dfrac{1}{2}(\ln \lambda_j(r))', \quad b=\sum_{j=1}^k b_j.
\]
To simplify the notations further, we denote
$$ h(r)= \lambda_k^{1/2}(r) r^\alpha,\quad  p_j(r)=rb_j(r), j=1,...,k-1,\quad p_k(r)=r\frac{h'(r)}{h(r)}=rb_k+\alpha, \quad q(r)=r\dfrac{u'(r)}{u(r)}.$$
Notice that $u$ is concave and non-negative, we have
$$u'(r)\le \dfrac{u(r)-u(0+)}{r}\le \dfrac{u(r)}{r}.$$
Hence $0\le q(r)\le 1$ for all $r>0$. We can use the new notations to rewrite the right-hand side of \eqref{eq:intermediate-Nbound}, with a change of variable $r\defeq e^{-t}$, as follows 
$$-(N-1)\dfrac{u''}{u}=\dfrac{N-1}{r^2}\left(q(1-q)-rq'(r)\right)=\dfrac{N-1}{r^2}\left(q(1-q)+\dfrac{\di q}{\di t}\right),$$ 
{where $\dfrac{\di q}{\di t}$ denotes the $\mathrm{BV}$-derivative of $t\mapsto q(e^{-t})$ and is a signed Radon measure.} Next, we handle the left-hand side of \eqref{eq:intermediate-Nbound}. For $j=1,...,k-1$,
$$b_j-a=\dfrac{1}{r}(p_j-q);$$
for $j=k$,
$$b_k-a =\frac{h'}{h}-\frac{\alpha}{r}-\frac{u'}{u}=\dfrac{1}{r}(p_k-\alpha-q).$$
Also,
$$b-ka=\sum_{j=1}^k (b_j-a)=\dfrac{1}{r}\left(\sum_{j=1}^k p_j -\alpha -kq \right).$$
Plugging each term into \eqref{eq:intermediate-Nbound} and canceling $1/r^2$, we see that \eqref{eq:intermediate-Nbound} now takes the form
\begin{equation}\label{eq:Ric_to_pq}
\sum_{j=1}^{k-1} (q-p_j)^2 + (q+\alpha-p_k)^2 +\dfrac{\left(kq+\alpha-\sum_{j=1}^k p_j\right)^2}{N-k-1} \le (N-1)\left(q(1-q)+\dfrac{\di q}{\di t}\right).
\end{equation}
Given $T>0$, we write $[f]_T\coloneqq\fint_0^Tf\di t$ for the average of a function $f\in L^1([0,T])$. We now estimate the integral of the right-hand side of \eqref{eq:Ric_to_pq}. Since $\phi(q)\defeq q(1-q)$ is concave on $[0,1]$, Jensen's inequality yields
\begin{align*}
{\left(q(1-q)\di r + \frac{\di q}{\di t}\right)\left([0,T]\right)}&={\left(\int_0^T q(1-q)\di r\right) + \frac{\di q}{\di t}\left([0,T]\right)}\\ 
&=  T\fint_0^T \phi(q) \di t  +q(e^{-T})-q(1) \\
&\le T \phi\left( \fint_0^T q\di t \right)+1\\
&= T [q]_T \left(1-[q]_T\right) +1.
\end{align*}
Next, we handle the left-hand side of \eqref{eq:Ric_to_pq}. By Cauchy--Schwarz inequality,
$$\int_0^T (q+\alpha-p_k)^2 \di t \ge \dfrac{1}{T}\left(\int_0^T (q+\alpha-p_k) \di t \right)^2=T\left(\alpha+[q]_T -[p_k]_T\right)^2.$$
Similarly,
\begin{align*}
\int_0^T \left(q- p_j\right)^2\di t  &\ge T\left( [q]_T -[p_j]_T \right)^2  \\
\int_0^T \left(kq+\alpha-\sum_{j=1}^k p_j\right)^2\di t&\ge T\left( \alpha +k[q]_T -\sum_{j=1}^k [p_j]_T \right)^2.
\end{align*}
For any $\epsilon\in (0,\alpha/10)$, by Lemmas \ref{lem:eigenvalue-lower-bound} and \ref{lem:largest_eignvalue_bound}
$$\lambda_j(r)\ge\eta >0\ \text{ for }  j=1,...,k,\quad h(r)= \lambda_k^{1/2}(r) r^\alpha\ge r^\epsilon$$ 
for some $\eta>0$ and all $r$ small enough. We note that
$$p_k(r)=r\dfrac{h'(r)}{h(r)}=-\dfrac{\di}{\di t}\ln (h(e^{-t})),$$
hence for $T$ large
$$-[p_k]_T=-\fint_0^T p_k \di t =\dfrac{1}{T}\left[\ln(h(e^{-T}))-\ln(h(1))\right]\ge -\epsilon+\dfrac{c_k}{T},$$
where $c_k=-\ln (h(1))$. Similarly, for $j=1,...,k-1$, 
$$-[p_j]_T =\dfrac{1}{2T}\left[\ln(\lambda_j(e^{-T}))-\ln(\lambda_j(1))\right]\ge \dfrac{c_j}{2T}\to 0$$
as $T\to\infty$, where $c_j=\ln \eta-\ln(\lambda_j(1))$.
We put these estimates into the integrated version of \eqref{eq:Ric_to_pq}, divide by $T$ on both sides, then let $T\to\infty$. It follows that
\begin{equation}\label{eq:L}
    (k-1)L^2+(\alpha+L-\epsilon)^2+\dfrac{1}{N-k-1}(\alpha+kL-\epsilon)^2\le (N-1)L(1-L),
\end{equation}
where $L\in [0,1]$ is any accumulation point of $[q]_T$ as $T\to\infty$. 

Lastly, we obtain the desired estimate of $N$ from \eqref{eq:L}. We rearrange \eqref{eq:L} to be a quadratic polynomial of $L$, then we have
\[
(N-1)^2L^2+(N-1)(2(\alpha-\epsilon)+k+1-N)L+(\alpha-\epsilon)^2(N-k)\le 0
\]
for some value $L\in [0,1]$. The leading coefficient is positive, so the discriminant must be nonnegative. Hence
\[
\Delta
=
(N-1)^2(2(\alpha-\epsilon)+k+1-N)^2
-
4(N-1)^2(\alpha-\epsilon)^2(N-k)
\ge 0.
\]
Canceling $(N-1)^2$ gives 
\[
(2(\alpha-\epsilon)+k+1-N)^2
-4(\alpha-\epsilon)^2-4(\alpha-\epsilon)^2(N-k-1) \ge0.
\]
 Writing the difference of squares as a product, we have
\[
(N-4(\alpha-\epsilon)-k-1)(N-k-1)-4(\alpha-\epsilon)^2(N-k-1)\ge0.
\]
Since $N-k-1>0$ we cancel this term to get
\begin{equation*}
    N\ge 4(\alpha-\epsilon)^2+4(\alpha-\epsilon)+1+k.
\end{equation*}
Recall that $s=\alpha+1$. Letting $\epsilon\to 0$, we obtain the desired inequality.
\end{proof}

\section{Application to fundamental groups}\label{sec:pi1}
In this section, we use Theorem \ref{mainthm:RCD_N_bdd} and the results from \cite{Pan26}, which we collected in Section \ref{sec:equiv_asym_cones} as Theorems \ref{thm:topol_dim} and \ref{thm:nil_dim}, to prove Theorem \ref{mainthm:nil_dim}. We start with an algebraic lemma about nilpotent Lie groups.

\begin{lem}\label{lem:nil_torsion_subgp}
   Let $G$ be a nilpotent Lie group with finitely many components. The closure of $\mathrm{Tor}(G)$ is a normal and compact subgroup of $G$.
\end{lem}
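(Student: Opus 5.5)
The plan is to reduce to the identity component $G_0$ and use the structure of connected nilpotent Lie groups. A connected nilpotent Lie group $G_0$ has a unique maximal compact subgroup $K$, which is a torus, is central in $G_0$, and satisfies $G_0\cong K\times \R^m$ topologically. Moreover, $G_0/K$ is a simply connected nilpotent Lie group and hence torsion-free. So $\mathrm{Tor}(G_0)\subseteq K$, and in fact $\mathrm{Tor}(G_0)=\mathrm{Tor}(K)$, which is dense in the torus $K$. Thus $\overline{\mathrm{Tor}(G_0)}=K$ is compact and characteristic in $G_0$, hence normal in $G$, because $G_0$ is normal in $G$.

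Next I handle the finitely many components. I would show that $\mathrm{Tor}(G)$ is contained in a compact normal subgroup of $G$. Consider $\bar G=G/K$, a nilpotent Lie group whose identity component $\bar G_0=G_0/K$ is simply connected nilpotent and $\bar G/\bar G_0$ is finite. The key claim is that the torsion elements of $\bar G$ form a finite normal subgroup $F$, with $F\cap \bar G_0=\{\mathrm{id}\}$.

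For a nilpotent group, the set of torsion elements is always a subgroup. This is a standard fact, valid for any nilpotent group, so $\mathrm{Tor}(\bar G)$ is a characteristic, hence normal, subgroup. Since $\bar G_0$ is torsion-free, $\mathrm{Tor}(\bar G)$ injects into the finite group $\bar G/\bar G_0$, so it is finite. Its preimage $\widehat K$ in $G$ is then a finite union of cosets of $K$. So $\widehat K$ is compact and normal in $G$, and it contains $\mathrm{Tor}(G)$.

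Finally I identify $\overline{\mathrm{Tor}(G)}$. It is the closure of a subgroup (again because torsion elements of the nilpotent group $G$ form a subgroup), so it is a closed subgroup. It is invariant under conjugation because $\mathrm{Tor}(G)$ is, so it is normal. Being contained in the compact $\widehat K$, it is compact.

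The main obstacle is justifying the structural facts about connected nilpotent Lie groups: that the maximal compact subgroup is a central torus and that the quotient by it is simply connected, hence torsion-free. I would cite the standard result that a connected nilpotent Lie group is $K\times N$ with $K$ a central torus, obtained via the universal cover $\widetilde G_0$ and the discrete central kernel. The fact that torsion elements of a nilpotent group form a subgroup is classical and can also simply be cited.
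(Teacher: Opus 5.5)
Your proof is correct, and it differs from the paper's mainly in how the finitely many components are handled.

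\emph{Identity component.} Both arguments show that $\mathrm{Tor}(G_0)$ lies in the maximal torus $K$ of $G_0$. The paper proves this by hand: it lifts a torsion element $g=\pi(\exp X)$ to the universal cover $\widetilde G_0$, and uses $\ker\pi\subseteq\exp(\mathfrak z)$ to force $X\in V=\mathrm{span}_\R\exp^{-1}(\ker\pi)$. You instead cite the structure theorem $G_0\cong K\times\R^m$, with $G_0/K$ simply connected and hence torsion-free.

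\emph{Components.} The paper picks torsion representatives $t_1,\dots,t_k$ of the components of $G$ that meet $T=\mathrm{Tor}(G)$. Since $T$ is a subgroup, $T\cap t_iG_0=t_i(T\cap G_0)$, so $\overline{T}=\bigcup_i t_i\,\overline{T\cap G_0}$. This describes $\overline T$ explicitly as finitely many translates of the torus. You instead pass to $G/K$, which needs the extra fact that $K$ is normal in $G$; you correctly get this from uniqueness of the maximal compact subgroup of $G_0$. You then observe that the torsion subgroup of $G/K$ meets the torsion-free identity component trivially, so it embeds in the finite component group. Its preimage $\widehat K$ is therefore a compact normal subgroup containing $T$, obtained without choosing any representatives. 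The trade-off is that you get only the containment $\overline T\subseteq\widehat K$ rather than an explicit description, but that is all the lemma requires.

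Both routes rest on the same two inputs: torsion elements of a nilpotent group form a subgroup, and torsion in a connected nilpotent Lie group lies in its maximal torus.
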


\begin{proof}
   The proof is standard in group theory. We include the proof here for readers' convenience.
   
   We first recall that, as a nilpotent group, the torsion elements in $G$ form a normal subgroup $T\coloneqq\mathrm{Tor}(G)$ of $G$. Let $G_0$ be the component subgroup of $G$ containing the identity element and let $\pi:\widetilde{G}_0\to G_0$ be the universal covering morphism of $G_0$. Then $\Gamma\coloneqq\ker(\pi)$ is a discrete and central subgroup of $\widetilde{G}_0$. Hence $\Gamma$ is contained in $\exp(\mathfrak{z})$, where $\exp:\mathfrak{g}\to \widetilde{G}_0$ is a diffeomorphism and $\mathfrak{z}$ is the center of the Lie algebra $\mathfrak{g}$ (see \cite{HN_book}*{Corollary 11.2.7 and Corollary 11.2.9}). Let $V\coloneqq\mathrm{span}_{\R} \{\exp^{-1}(\Gamma)\}$. Then $V$ is a subspace of $\mathfrak{z}$ and $\exp(V)/\Gamma=\pi(\exp V)$  provides the unique maximal torus in $G_0$. Now, let $g\in G_0$ be a torsion element with $g^m=\mathrm{id}$. Let $\tilde{g}=\exp(X)\in \widetilde{G}_0$ be a lift of $g$, where $X\in \mathfrak{g}$. Then
   $$\tilde{g}^m=\exp(mX)\in \ker(\pi)=\Gamma \subseteq \exp(V).$$
   Hence $X\in V$. As a consequence, $g=\pi(\exp(X))$ lies in the maximal torus of $G_0$. This verifies that the closure $\overline{T\cap G_0}$ coincides with the maximal torus of $G_0$; in particular, it is compact.

   Because $G$ has finitely many components, we can choose $t_1,...,t_k\in T$ in distinct components of $G$ that intersect $T$. Then
   $$\overline{T}=\bigcup_{i=1}^k t_i\cdot \overline{T\cap G_0}$$
   and thus $\overline{T}$ is compact.
\end{proof}

\begin{lem}\label{lem:dim_rank_ineq}
   Let $M$ be an open manifold with $\Ric\ge 0$ and let $\Lambda$ be a finitely generated torsion-free nilpotent group that acts on $M$ by covering group action. Then for any equivariant asymptotic cone $(Y,y,G)$ of $(M,\Lambda)$, the limit orbit $Gy$ has topological dimension at least $\mathrm{rank}(\Lambda)$.
\end{lem}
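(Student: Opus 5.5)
The idea is to transfer the algebraic rank of $\Lambda$ to the limit group $G$ and then to the orbit $Gy$, dividing out compact (torsion) phenomena. We use the Malcev completion: $\Lambda$ is a lattice in a connected, simply connected nilpotent Lie group $\widehat{\Lambda}$ with $\dim\widehat\Lambda=\mathrm{rank}(\Lambda)$. Along the blow-down sequence $(r_i^{-1}M,\tilde p,\Lambda)\to(Y,y,G)$, the group $G$ is a closed subgroup of $\mathrm{Isom}(Y)$. It is nilpotent of step at most $\mathrm{step}(\Lambda)$, since it is a limit of $\Lambda$ with commutator identities preserved. It is also a Lie group by Cheeger--Colding/Colding--Naber, since $Y$ is a Ricci limit space.

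\textbf{Reducing to a simply connected nilpotent group.} I would first show that $G$ has finitely many components. If that fails in this generality, I would pass to the closed subgroup generated by the identity component together with the limits of finitely many generators. Lemma \ref{lem:nil_torsion_subgp} then gives a compact normal subgroup $K=\overline{\mathrm{Tor}(G)}$. The quotient $G/K$ acts on $Y/K$, and it suffices to bound the topological dimension of $(G/K)\bar y$ from below, because $Gy\to (G/K)\bar y$ is a quotient by a compact group and so does not increase dimension. After removing the torsion, the identity component of $G/K$ is a connected nilpotent Lie group with no compact torus, hence simply connected. The orbit map $G/G_y\to Gy$ is a homeomorphism, and the isotropy $G_y$ is compact, so $\dim Gy=\dim G-\dim G_y$.

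\textbf{Counting dimensions.} The core step is the inequality $\dim G-\dim G_y\ge\mathrm{rank}(\Lambda)$. Take the lower central series $\Lambda=\zeta_0\triangleright\zeta_1\triangleright\cdots$. Each $\Lambda$-orbit is a $1$-dense net at scale $r_i$ times the diameter of a fundamental domain, which blows down at infinity only if the geometry is controlled. Instead of relying on that, I would count growth: by Bass--Guivarc'h, $\#\{\gamma\in\Lambda: d(\gamma\tilde p,\tilde p)\le R\}$ grows polynomially of homogeneous degree $\sum_j j\cdot\mathrm{rank}(\zeta_{j-1}/\zeta_j)\ge\mathrm{rank}(\Lambda)$. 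The more robust route is induction on $\mathrm{rank}(\Lambda)$ using a central $\Z$-subgroup $Z\le\zeta_{s-1}$ with torsion-free quotient. Pass to the equivariant limit of $(M/Z,\Lambda/Z)$ and compare it with the limit of $(M,Z)$ via Proposition \ref{prop:eGH_quotient}. The limit of $(r_i^{-1}M,Z)$ is a closed abelian group $Z_\infty$, and it is noncompact modulo its compact part because $Z$ acts properly with unbounded displacement. Hence $Z_\infty$ contributes at least one noncompact dimension to the orbit. The quotient $(Y/Z_\infty,G/Z_\infty)$ is the limit of $(r_i^{-1}M/Z,\Lambda/Z)$, which has rank one less, so induction gives $\dim (G/Z_\infty)\bar y\ge\mathrm{rank}(\Lambda)-1$. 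Adding the two contributions gives the claim.

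\textbf{Main obstacle.} The hard step is showing that each rank-one piece really contributes a full dimension in the limit orbit and does not collapse into the compact isotropy or the torsion closure. Concretely, we need that the limit of $Z$ is not compact. I would try to prove this by contradiction. If $Z_\infty$ were compact, then at scale $r_i$ all $Z$-displacements of $\tilde p$ within a bounded window would be $o(r_i)$. Combined with the polynomial displacement growth of central elements, $d(z^m\tilde p,\tilde p)\gtrsim m^{1/s}$, this contradicts the fact that the subgroup converges to a group rather than to a set that escapes to infinity. The topological-dimension additivity for the fibration $Z_\infty$-orbit $\to Gy\to (G/Z_\infty)\bar y$ would follow from the Lie group structure, since both are orbits of Lie groups acting properly.
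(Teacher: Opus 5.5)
Your architecture matches the paper's, which climbs a normal series with $\Z$ quotients: split off a central $\Z$, identify the blow-down of $(M/Z,\Lambda/Z)$ with $(Y/Z_\infty,\bar y,G/Z_\infty)$ via Proposition \ref{prop:eGH_quotient}, induct on rank, and add dimensions along $Z_\infty y\to Gy\to (G/Z_\infty)\bar y$. The gap is in the one step that carries content: showing that each $\Z$-layer adds a dimension.

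First, noncompactness of $Z_\infty$ (even modulo its compact part) is the wrong criterion. A closed noncompact abelian group of isometries can be discrete, e.g.\ $\Z$ translating a line, and then its orbit is $0$-dimensional. What you need is that the \emph{identity component} of $Z_\infty$ is noncompact, i.e.\ $Z_\infty$ contains a closed $\R$-subgroup. Such a subgroup meets the compact isotropy $G_y$ trivially, so its orbit through $y$ is a line.

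Second, your argument for noncompactness rests on $d(z^m\tilde p,\tilde p)\gtrsim m^{1/s}$, which fails here. The orbit map is only Lipschitz for the word metric, and when the quotient is noncompact there is no reverse bound. Take the universal cover in Example \ref{exmp:dim13}, where $s=2$. Going out radially to distance $r$, moving along the central direction, and coming back gives $d(z^m\tilde p,\tilde p)\le 2r+0.1\,r^{-1.06}m$. Choosing $r=m^{1/2.06}$ yields $d(z^m\tilde p,\tilde p)\lesssim m^{1/2.06}$. Your closing ``contradiction'' with converging to a group is also not an argument.

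The missing idea is the chain argument behind \cite{PanYe}*{Lemma 3.1}, which the paper invokes for each layer. You need only two facts: properness, $d(z^m\tilde p,\tilde p)\to\infty$ with no rate, and that consecutive orbit points are the fixed distance $d(z\tilde p,\tilde p)=o(r_i)$ apart.

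\begin{enumerate}
\item For fixed $R$, follow $\tilde p,z\tilde p,z^2\tilde p,\ldots$ until the first exit from $B_{Rr_i}(\tilde p)$. After rescaling these are $o(1)$-chains.
\item A Hausdorff sublimit of these chains is a continuum inside $Z_\infty y$ joining $y$ to a point at distance $R$.
\item Since $Z_\infty y\cong Z_\infty/(Z_\infty)_y$, the component of this orbit through $y$ is $(Z_\infty)_0y$. As step 2 holds for every $R$, this component is unbounded.
\item Hence $(Z_\infty)_0\cong\R^a\times T^b$ with $a\ge1$. This gives the closed $\R$-subgroup and $\dim Z_\infty y\ge1$.
\end{enumerate}

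The same argument applies at every stage of your induction, since $\Lambda/Z$ acts on $M/Z$ as a covering group. Additivity then follows from $\dim Gy=\dim G-\dim G_y$ together with $(G/Z_\infty)_{\bar y}=Z_\infty G_y/Z_\infty$.

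Finally, drop your preliminary reduction. Finiteness of $\pi_0(G)$ is not available in this generality; the paper gets it only from Theorem \ref{thm:topol_dim} under $E(M,p)\neq 1/2$. The dimension count above never uses $K$ anyway.
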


\begin{proof}
   The proof is almost identical to \cite{PanYe}*{Lemma 3.1}, where abelian groups were considered. The torsion-free nilpotent group $\Lambda$ admits a series of normal subgroups
   $$ \{\mathrm{id}\}=\Lambda^0 \triangleleft \Lambda^1 \triangleleft \dots \triangleleft \Lambda^k=\Lambda$$
   such that $k=\mathrm{rank}(\Lambda)$ and each $\Lambda^{j+1}/\Lambda^j$ is isomorphic to $\Z$ for $j=0,\dots,k-1$. For any sequence $r_i\to\infty$, we consider the equivariant asymptotic cone
   $$(r_i^{-1}M,p,\Lambda^1 \triangleleft \dots \triangleleft \Lambda^k)\overset{GH}\longrightarrow (Y,y,G^1\triangleleft \dots \triangleleft G^k).$$
   By the same argument in \cite{PanYe}*{Lemma 3.1}, $G^{j+1}/G^j$ contains a closed $\R$-subgroup for each $j=0,\dots,k-1$. Hence $Gy=G^{k}y$ has topological dimensional at least $k$.
\end{proof}

We prove Theorem \ref{mainthm:nil_dim} below.

\begin{proof}[Proof of Theorem \ref{mainthm:nil_dim}]
   We denote $\Gamma\coloneqq\pi_1(M)$. By a result of Sormani \cite{Sor00b}, nonnegative Ricci curvature and sublinear diameter growth together imply that $\Gamma$ is finitely generated. By the work of Milnor \cite{Milnor68} and Gromov \cite{Gromov81}, $\Gamma$ is almost nilpotent. Hence we can choose a torsion-free nilpotent normal subgroup $\Lambda$ of $\Gamma$ with finite index. We denote the rank of $\Lambda$ by $k_0$ and the step of $\Lambda$ by $s_0$. We remark that $k_0$ and $s_0$ are independent of the choice of $\Lambda$. By assumption, we have $k_0\ge k$ and $s_0\ge s\ge 2.$

   We first show that $n\ge k+2$. In fact, the torsion-free nilpotent group $\Lambda$ of rank $\ge k$ and step $\ge 2$ has polynomial growth of degree at least $k+1$; see \cites{Bass72,Guiv73}. Together with a volume argument by Anderson \cite{Anderson90}*{Theorem 1.1} and the fact that $M$ has at least linear volume growth \cites{Calabi75,Yau76}, it follows that the universal cover $\widetilde{M}$ has at least polynomial volume growth of degree $\ge k+2$. Hence $n\ge k+2$ by volume comparison.

   Below, we assume $n\ge k+2$. Let $\overline{M}=\widetilde{M}/\Lambda$ be an intermediate normal covering space over $M$. Because $\Lambda$ has finite index in $\Gamma$ and $M$ has sublinear diameter growth, $\overline{M}$ has a unique asymptotic cone as a ray $([0,\infty),0)$. By Proposition \ref{prop:E_not_1/2}, Theorem \ref{thm:topol_dim}, and Theorem \ref{thm:nil_dim}, there is an asymptotic cone of $(\widetilde{M},\Lambda)$,
   \begin{equation*}
\begin{CD}
(r_i^{-1} \widetilde{M},\tilde{p},\Lambda) @>GH>> (Y,y,G) \\
	@VV\pi V @VV \pi V\\
	(r_i^{-1} \overline{M},\bar{p}) @>GH>> ([0,\infty),0)=(Y/G,\bar{y})
\end{CD}
\end{equation*}
such that the following hold.\\
(1) $(Y,y)$, equipped with a limit renormalized measure, is $\RCD(0,n)$ with $n\ge k+2$ (see Remark \ref{rem:RCD_pmGH_stability});\\
(2) the limit group $G$ is nilpotent and acts on $Y$ by measure-preserving isometries;\\
(3) the orbit $Gy$ is connected and simply connected with $\dim(Gy) \ge k_0 \ge k$ (Theorem \ref{thm:topol_dim}(1) and Lemma \ref{lem:dim_rank_ineq});\\
(4) any compact subgroup of $G$ fixes $y$ (Theorem \ref{thm:topol_dim}(2));\\
(5) $G$ has at most finitely many components (Theorem \ref{thm:topol_dim}(3));\\
(6) there is a closed $\R$-subgroup $L\leq G$ such that the orbit $Ly$ has Hausdorff dimension at least $s_0\ge s\ge 2$ (Theorem \ref{thm:nil_dim}).\\
We note that the $G$-action on $Y$ may have a non-trivial isotropy subgroup at $y$ so we cannot directly apply Theorem \ref{mainthm:RCD_N_bdd} yet.

Let $K$ be the closure of the torsion subgroup of $G$. We claim that $K$ coincides with the isotropy subgroup of $G$ at $y$. By properties (2,5) above and Lemma \ref{lem:nil_torsion_subgp}, $K$ is compact. Thus property (4) implies that $K$-action fixes $y$. Conversely, let $g\in G$ fix $y$, and let us show that $g\in K$. If $g$ has finite order, then it is clear that $g\in K$. In general, as $g$ fixes $y$, the subgroup $\overline{\langle g \rangle}$ also fixes $y$ and thus is compact. Because $G$ has at most finitely many components by property (5), there is some $m\in \mathbb{N}$ such that $g^m \in G_0$. Now that $g^m$ is contained in the unique maximal torus $\mathbb{T}$ of $G_0$, there is some $t\in \mathbb{T}$ such that $t^m=g^m$. Because $\mathbb{T}$ commutes with $g$ (see, for example, \cite{Pan26}*{Lemma 2.13}), we have $(gt^{-1})^m=\mathrm{id}$, that is, $gt^{-1}$ is a torsion element. It is clear that $t\in \mathbb{T}$ can be approximated by torsion elements in $\mathbb{T}$. It follows that $g=(gt^{-1})\cdot t \in K$. This verifies the claim.

Next, we quotient out this $K$ action from $(Y,y,G)$, where $K$ is compact and normal. We consider the quotient metric measure space with the quotient group action
$$(Z,z,H)\coloneqq(Y/K,\hat{y},G/K).$$ 
Thanks to \cite{Galaz-Garcia_18}, the quotient metric measure space $Z$ is $\RCD(0,n)$. From the above claim, $H=G/K$ is homeomorphic to the orbit $Gy$ in $Y$. Because $H$ itself is a nilpotent Lie group, it follows from property (3) that $H$ is a connected and simply connected nilpotent Lie group with $\dim(H)\ge k$. Moreover, by construction and property (6), the orbit $\hat{L}z$ has Hausdorff dimension at least $s\ge 2$, where $\hat{L}\leq H$ is the image of $L$ under the quotient $G\to G/K=H$. Now we have checked that the space $(Z,z,H)$ satisfies all the assumptions in Theorem \ref{mainthm:RCD_N_bdd}. Therefore, we conclude by Theorem \ref{mainthm:RCD_N_bdd} that $n\ge 4s(s-1)+k+1$. This completes the proof.
\end{proof}

\appendix

\section{Convergence of quotient group actions}\label{appx:conv_quotients}
For the sake of completeness, we give a proof of Proposition \ref{prop:eGH_quotient} in this appendix.

We write $(Y,y,G)$ a pointed proper length metric space $(Y,y)$ with an isometric $G$-action, where $G$ is closed in the isometry group of $Y$. Let $R>0$, we denote
$$G(R)=\{ g\in G \mid d(gy,y)\le R \}.$$
For a closed and normal subgroup $H$ of $G$, we can consider the quotient metric space with the quotient isometric action, that is,
$$(Y/H,\bar{y},G/H).$$

\begin{lem}\label{lem:group_proj}
On $(Y,y,H\triangleleft G)$, where $H$ is a closed and normal subgroup of $G$, it holds that
	$$\dfrac{G}{H}(R)=\overline{G(R)}$$
    for all $R>0$, where $\overline{G(R)}$ denotes the image of $G(R)$ under the projection map $G\to G/H$.
\end{lem}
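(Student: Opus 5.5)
The plan is to prove the two inclusions separately. In $Y/H$ the quotient metric is $\bar d(\bar a,\bar b)=\inf_{h\in H} d(a,hb)$, and $G/H$ acts by $(gH)\cdot \bar a=\overline{ga}$. Normality of $H$ makes this action well defined, and it is isometric because $g$ maps $H$-orbits to $H$-orbits. Throughout, $\overline{g}$ denotes the image of $g\in G$ in $G/H$.

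\textbf{Inclusion $\overline{G(R)}\subseteq \frac{G}{H}(R)$.} Take $g\in G(R)$. Then
$$\bar d(\overline{g}\,\bar y,\bar y)=\inf_{h\in H}d(gy,hy)\le d(gy,y)\le R,$$
so $\overline{g}\in \frac{G}{H}(R)$.

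\textbf{Inclusion $\frac{G}{H}(R)\subseteq\overline{G(R)}$.} Take $\overline{g}\in \frac{G}{H}(R)$, so that $\inf_{h\in H}d(gy,hy)\le R$. Since $d(gy,hy)=d(h^{-1}gy,y)$, it suffices to find some $h\in H$ at which this infimum is attained. Then $h^{-1}g\in G(R)$ and $\overline{h^{-1}g}=\overline{g}$, because $H$ is normal.

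To see that the infimum is attained, pick $h_i\in H$ with $d(h_i^{-1}gy,y)\to \bar d(\overline{g}\,\bar y,\bar y)\le R$. The elements $h_i^{-1}g$ then move $y$ a bounded distance. Since $Y$ is proper, the Arzelà–Ascoli theorem shows that $\{\phi\in\mathrm{Isom}(Y): d(\phi y,y)\le R+1\}$ is compact in the compact-open topology. Passing to a subsequence, $h_i^{-1}g\to \phi$. The limit $\phi$ lies in $G$ because $G$ is closed, and $h_i=g\phi_i^{-1}\to g\phi^{-1}$ lies in $H$ because $H$ is closed. Setting $h=g\phi^{-1}$ gives $d(h^{-1}gy,y)=d(\phi y,y)=\lim d(h_i^{-1}gy,y)\le R$.

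The only step that is not purely formal is this attainment of the infimum, i.e.\ the compactness argument, which is where properness of $Y$ and closedness of $H$ and $G$ enter. I expect no further obstacles. The lemma is then used to transfer equivariant Gromov--Hausdorff approximations of $G_i(R)$ to approximations of $(G_i/H_i)(R)$ in the proof of Proposition~\ref{prop:eGH_quotient}.
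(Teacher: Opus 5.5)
Your proof is correct and follows the paper's argument: the easy inclusion comes from $d(\overline{g}\,\bar y,\bar y)\le d(gy,y)$, and the other from multiplying $g$ by an element of $H$ that realizes the orbit distance. The paper uses $gh'$ with $h'=g^{-1}h_2^{-1}h_1g$, while you use $h^{-1}g$, which lies in the same coset. Your Arzel\`a--Ascoli compactness argument for why the infimum is attained fills in a step the paper takes for granted when it chooses $h_1,h_2\in H$ with $d(h_1gy,h_2y)=d(Hgy,Hy)$.
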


\begin{proof}
   Let $\bar{g}\in \frac{G}{H}(R)$. By the definition of $\frac{G}{H}(R)$, there is some $g\in G$ such that $g$ projects to $\bar{g}$ and $d(\bar{g}\bar{y},\bar{y})\le R$ in $Y/H$. We choose $h_1,h_2\in H$ such that
   $$d(h_1gy,h_2y)=d(Hgy,Hy)=d(\bar{g}\bar{y},\bar{y})\le R.$$
   For $h'=g^{-1}h_2^{-1}h_1g\in H$, we have
   $$R \ge d(h_1gy,h_2y)=d(h_2^{-1}h_1gy,y)=d(gh'y,y).$$
   Then $gh'$ is the desired element in $G(R)$ that projects to $\bar{g}\in G/H$. This proves $\frac{G}{H}(R)\subseteq \overline{G(R)}$.

   For the other direction, if $g\in G(R)$, then in the quotient space $Y/H$, we have
   $$d(\bar{g}\bar{y},\bar{y})=d(Hgy,Hy)\le d(gy,y)\le R.$$
   Hence $\bar{g}\in \frac{G}{H}(R)$.
\end{proof}

\begin{proof}[Proof of Proposition \ref{prop:eGH_quotient}]    
	From the given equivariant Gromov--Hausdorff convergence, we have $\epsilon_i$-approximations
	$$f_i:B_{1/\epsilon_i}(y_i)\to Y,\quad  \psi_i: G_i(1/\epsilon_i) \to G(1/\epsilon_i), \quad \phi_i:G(1/\epsilon_i) \to  G_i(1/\epsilon_i),$$
	where $\epsilon_i\to 0$ (see \cite{FY92}*{Definition 3.3}).  We may also assume that $\psi_i(H_i(1/\epsilon_i))\subseteq H$ and $\phi_i(H(1/\epsilon_i))\subseteq H_i$. According to \cite{Fukaya86}*{Theorem 2.1}, we have the convergence of quotient metric spaces
	$$(Y_i/H_i,\bar{y_i})\overset{GH}\longrightarrow (Y/H,\bar{y});$$
	moreover, we can choose the approximation maps $\bar{f}_i$ from $B_{1/(5\epsilon_i)}(\bar{y}_i)\subseteq Y_i/H_i$ to $Y/H$ as follows: for each $\bar{x}\in B_{1/(5\epsilon_i)}(\bar{y}_i)$, we define
	$$\bar{f}_i(\bar{x})\coloneqq\overline{f_i(x)}\in Y/H,$$
	where $x\in B_{1/(5\epsilon_i)}(y_i)$ is a point projecting to $\bar{x}\in Y_i/H_i$.
	
    We define 
    $$ \bar{\psi}_i: \dfrac{G_i}{H_i}\left(\frac{1}{5\epsilon_i}\right) \to \dfrac{G}{H},\quad \bar{g} \mapsto \overline{\psi_i(g)},$$
	where $g\in G_i(\frac{1}{5\epsilon_i})$ such that $g$ projects to $\bar{g}$; the existence of such an element $g$ is guaranteed by Lemma \ref{lem:group_proj}. With this choice of $g$, $\psi_i(g)$ makes sense. We estimate
	$$d(\overline{\psi_i(g)}\bar{y},\bar{y})=d(H\psi_i(g)y,Hy)\\
		\le d(\psi_i(g)y,y) \le \frac{1}{5\epsilon_i} $$
	Thus $\mathrm{im}(\bar{\psi}_i)\subseteq \frac{G}{H}(\frac{1}{5\epsilon_i})$. For any $\bar{g}\in \frac{G_i}{H_i}(\frac{1}{5\epsilon_i})$ and any $\bar{x}\in B_{1/(5\epsilon_i)}(\bar{y}_i) $ with $\bar{g}\bar{x}\in B_{1/(5\epsilon_i)}(\bar{y}_i)$, 
	\begin{align*}
    d(\bar{f}_i(\bar{g}\bar{x}),\bar{\psi}_i(\bar{g})\bar{f}_i(\bar{x}))& = d(\overline{f_i(gx)},\overline{\psi_i(g)}\cdot \overline{f_i(x)}) \\
		& =d(H\cdot f_i(gx),H\cdot \psi_i(g)\cdot f_i(x))\\
		& \le d(f_i(gx),\psi_i(g)\cdot f_i(x))\\
		& \le \epsilon_i.
	\end{align*}
	Similarly, we can construct 
	$$\bar{\phi}_i: \frac{G}{H}\left(\dfrac{1}{5\epsilon_i}\right) \to \dfrac{G_i}{H_i},\quad \bar{h} \mapsto \overline{\phi_i(h)},$$
	where $h\in G(\frac{1}{5\epsilon_i})$ projects to $\bar{h}$, and show the desired estimates. Therefore, $(\bar{f}_i,\bar{\psi}_i,\bar{\phi}_i)$ gives $(5\epsilon_i)$-approximation maps between the quotient spaces $(Y_i/H_i,\bar{y}_i,G_i/H_i)$ and $(Y/H,\bar{y},G/H)$.
\end{proof}

\section{Perturbation of symmetric matrices}\label{appx:perturbation_matrices}
In this appendix, we prove a trace inequality below. This inequality is used in Section \ref{sec:curv_ineq} to derive the Bakry--Emery Ricci curvature inequality (Proposition \ref{prop:Ric_rr}).

\begin{prop}\label{prop:Trace-inequality}
   Let $I$ be an open interval and let $\{G(r)\}_{r\in I}$ be a continuous and $W^{1,2}_{loc}$ family of $k\times k$ positive definite symmetric matrices. We define
$$S(r)=\dfrac{1}{2} G(r)^{-1} G'(r).$$
   Then
   \begin{equation}\label{eq:trace-inequality}
        \tr(S(r)^2) \ge \dfrac{1}{4}\sum_{j=1}^k \left(\dfrac{\lambda'_j(r)}{\lambda_j(r)}\right)^2
   \end{equation}
   holds for almost every $r\in I$, where $\lambda_1(r)\le \dots \le \lambda_k(r)$ denote the ordered eigenvalues of $G(r)$.
\end{prop}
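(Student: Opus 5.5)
The plan is to prove \eqref{eq:trace-inequality} pointwise at every $r_0\in I$ where both $G$ and all the $\lambda_j$ are differentiable, and then show that such $r_0$ form a set of full measure. For the full-measure claim: since $G\in W^{1,2}_{loc}(I)$ is one-dimensional, it is locally absolutely continuous and hence differentiable a.e. By Weyl's perturbation theorem (Theorem \ref{thm:Weyl}), $|\lambda_j(A)-\lambda_j(B)|\le\|A-B\|$ for symmetric $A,B$. So each $\lambda_j(r)$ inherits local absolute continuity and the $W^{1,2}_{loc}$ regularity from $G$, and is differentiable a.e.

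Fix such an $r_0$. Choose an orthogonal $U$ with $U^TG(r_0)U=\mathrm{diag}(\lambda_1,\dots,\lambda_k)$ and set $M\coloneqq U^TG'(r_0)U$, which is symmetric. By cyclicity of the trace,
\[
\tr(S^2)=\tfrac14\tr\bigl(G^{-1}G'G^{-1}G'\bigr)=\tfrac14\sum_{i,l=1}^k\frac{M_{il}^2}{\lambda_i\lambda_l}\ \ge\ \tfrac14\sum_{j=1}^k\frac{M_{jj}^2}{\lambda_j^2}.
\]
Here we simply drop the nonnegative off-diagonal terms. It therefore suffices to choose $U$ so that $\{M_{jj}\}$ coincides with $\{\lambda_j'(r_0)\}$ inside each eigenvalue cluster. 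Since eigenvalues are constant on a cluster, the weights $1/\lambda_j^2$ agree there, and matching the multisets within each cluster is enough.

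To arrange this, group the indices into clusters $B$ on which $\lambda_j(r_0)=\mu_B$. The orthonormal basis of each eigenspace can be rotated freely, and we use this freedom to make the block $M_B$ (the compression of $G'(r_0)$ to that eigenspace) diagonal. The remaining step, which I expect to be the main obstacle, is the first-order perturbation claim: for $j\in B$, the numbers $\lambda_j'(r_0)$ are exactly the eigenvalues of $M_B$, listed in increasing order.

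I would prove this claim in two steps.
\begin{enumerate}
\item Reduce to a linear pencil. Since $G(r_0+t)=G(r_0)+tG'(r_0)+o(t)$, Weyl's inequality gives $\lambda_j(G(r_0+t))=\lambda_j(G(r_0)+tG'(r_0))+o(t)$.
\item Apply Rellich--Kato analytic perturbation theory to the linear symmetric pencil $G(r_0)+tG'(r_0)$. Its eigenvalues near $\mu_B$ are $\mu_B+t\nu_i+o(t)$, where the $\nu_i$ are the eigenvalues of $M_B$. For small $t>0$ the clusters do not interlace, so the ordered eigenvalues with indices in $B$ are $\mu_B+t\nu_{(i)}+o(t)$, where $\nu_{(1)}\le\nu_{(2)}\le\cdots$ denotes the increasing rearrangement of the $\nu_i$.
\end{enumerate}
This identifies the right derivative of $\lambda_j$ at $r_0$ with the ordered $\nu$'s. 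Since $\lambda_j$ is differentiable at $r_0$, the full derivative equals the right derivative, so $\lambda_j'(r_0)=\nu_{(i)}$ as claimed.

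A fallback that avoids analytic perturbation theory is to apply the Courant--Fischer min-max characterization restricted to the eigenspace. This yields the same one-sided expansion up to $o(t)$.

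With $M_B$ diagonal, $\sum_{j\in B}M_{jj}^2=\sum_{j\in B}\lambda_j'(r_0)^2$ for every cluster $B$. Summing over clusters with the common weight $1/(4\mu_B^2)$ gives \eqref{eq:trace-inequality} at $r_0$, and hence almost everywhere on $I$.
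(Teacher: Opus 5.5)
Your proposal is correct and follows essentially the same route as the paper. You work at points where $G$ and all $\lambda_j$ are differentiable, expand $\tr(S^2)$ in an orthonormal eigenbasis of $G(r_0)$, drop the off-diagonal terms, and use first-order (Kato) perturbation theory on each eigenvalue cluster to identify the diagonal entries of the compressed $G'(r_0)$ with the $\lambda_j'(r_0)$. The only difference is minor: the paper notes (Proposition \ref{prop:unordered-derivative=ordered-derivative}) that differentiability of the ordered eigenvalues forces all $\lambda_j'(r_0)$ within a cluster to coincide, so each compression is already a scalar matrix and your extra rotation within eigenspaces is harmless but unnecessary.
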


We review several results about perturbations of matrices with a focus on eigenvalues. Given an open interval $I$ and a family of $k\times k$ matrices $T(r)$ depending on the parameter $r\in I$, assume that the matrices $T(r)$ have eigenvalues $\lambda_1(r),\ldots, \lambda_k(r)$ as complex-valued functions. When $T(r)$ is symmetric for any $r\in I$, then the eigenvalues are real and can be arranged in ascending order as follows
$$\lambda_1(r)\le \ldots \le \lambda_k(r).$$
The perturbation of ordered eigenvalues can be bounded by the perturbation of matrices. The following theorem in fact holds for arbitrary perturbations, but we only state here the version for the one-parameter perturbations.
\begin{thm}[\cite{MatrixAnalysis}*{Corollary 3.2.6}, Weyl's perturbation theorem]\label{thm:Weyl}
For each $1\le j\le k$, the $j$-th ordered eigenvalue is Lipschitz continuous over the space of $k\times k$ symmetric matrices
   $$|\lambda_j(r)-\lambda_j(s)|\le \| T(r)-T(s) \|,$$
   where $\|\cdot\|$ denotes the operator norm.
\end{thm}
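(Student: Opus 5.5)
The plan is to derive the inequality from the Courant--Fischer min-max characterization of the ordered eigenvalues of a real symmetric matrix. Fix $r,s\in I$ and write $A=T(r)$, $B=T(s)$, both symmetric $k\times k$ matrices, with $E\defeq A-B$ and $\varepsilon\defeq\|E\|$ the operator norm. With the eigenvalues arranged in ascending order, Courant--Fischer gives, for each $1\le j\le k$,
\[
\lambda_j(A)=\min_{\substack{V\subset\R^k\\ \dim V=j}}\ \max_{\substack{x\in V\\ |x|=1}} x^{T}Ax ,
\]
and similarly for $B$. I will take this characterization as standard, since it follows from diagonalizing $A$ in an orthonormal eigenbasis and counting dimensions.

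The key pointwise estimate is on the Rayleigh quotient. For every unit vector $x$, Cauchy--Schwarz and the definition of the operator norm give
\[
|x^{T}Ax-x^{T}Bx|=|x^{T}Ex|\le |x|\,|Ex|\le \varepsilon .
\]
Hence $x^{T}Ax\le x^{T}Bx+\varepsilon$. First I take the maximum over unit vectors in a fixed $j$-dimensional subspace $V$. Then I take the minimum over all such $V$. Both operations are monotone and commute with adding the constant $\varepsilon$, so this yields $\lambda_j(A)\le\lambda_j(B)+\varepsilon$. Exchanging the roles of $A$ and $B$, and noting $\|B-A\|=\|A-B\|$, gives $\lambda_j(B)\le\lambda_j(A)+\varepsilon$. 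Together these give $|\lambda_j(r)-\lambda_j(s)|\le\|T(r)-T(s)\|$.

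There is no serious obstacle; the argument is entirely finite-dimensional linear algebra. The only point needing care is matching the ascending ordering used in the paper with the correct (min over $j$-dimensional subspaces of max) form of Courant--Fischer, so that the same index $j$ is compared on both sides. One might instead try to compare eigenvectors directly, but eigenvectors can jump when eigenvalues cross. That approach would fail exactly at the crossings that the ordered eigenvalues $\lambda_1\le\dots\le\lambda_k$ are designed to absorb. The variational characterization sidesteps this, because it never refers to eigenvectors.

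As a remark for the later use in Section~\ref{sec:curv_ineq}, applying the result to $T=G$ shows that each $\lambda_j$ is continuous. It also gives $|\lambda_j(r)-\lambda_j(s)|\le\|G(r)-G(s)\|$, so each $\lambda_j$ inherits the local $W^{1,2}$ regularity of $G$, since a function dominated in this way by a $W^{1,2}_{loc}$ map into matrices is itself in $W^{1,2}_{loc}$.
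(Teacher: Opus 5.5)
Your proof is correct: the ascending-order form $\lambda_j(A)=\min_{\dim V=j}\max_{x\in V,\,|x|=1}x^{T}Ax$ is the right version of Courant--Fischer, and the bound $|x^{T}(A-B)x|\le\|A-B\|$ carries through the max and the min exactly as you describe. The paper does not reprove this theorem but cites Bhatia, where it is derived in essentially the same way (from the minimax principle via Weyl's inequalities), so yours is the standard argument; your closing remark that, in one variable, $\lambda_j$ inherits local absolute continuity with $|\lambda_j'|\le\|G'\|$ a.e., and hence $W^{1,2}_{loc}$ regularity, is also sound.
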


 Now we review the differentiation of eigenvalues from the book of Kato \cite{KatoPerturbation}. In general even if $T(r)$ is differentiable at some $r_0\in I$, the eigenvalues of $T(r)$ need not be differentiable at $r_0$. There are two levels of issues.

The first level is that if an eigenvalue $\lambda(r_0)$ of $T(r_0)$ is not semisimple, meaning its geometric multiplicity is not the algebraic multiplicity, then $\lambda(r)$ need not be differentiable. Since we will work with symmetric matrices, this will not be of our concern. 

The next issue arises from ordering. The following elementary example illustrates the point.
\begin{exmp}
    Consider the positive definite matrices 
\[
T(r)=
\begin{pmatrix}
 1&r\\
r&1
\end{pmatrix},
\qquad r\in (-1,1).
\]
The solutions to the characteristic equation are $1-r$, $1+r$. If one arranges the eigenvalues in an ascending order $\lambda_1(r)\le \lambda_2(r)$, then $\lambda_1(r)=1-|r|$, $\lambda_2(r)=1+|r|$, which are not differentiable at $r=0$, even though $T(r)$ is differentiable there. The unordered eigenvalues $\{1-r,1+r\}$ are differentiable at $r=0$. Their derivatives at $0$, which are $\pm 1$, are eigenvalues of $T'(0)$.
\end{exmp}

When discussing the differentiability of eigenvalues, it is more convenient to consider a group of eigenvalues together since the multiplicity of a single eigenvalue may vary as the parameter $r$ varies. We draw the following definition from Kato \cite{KatoPerturbation}. 
\begin{defn}
    Let $\lambda\in \C$, $r_0\in I$ , we say an eigenvalue $\mu_i(r)$ is a $\lambda$-group eigenvalue at $r_0$ if $\mu_i(r)\to \lambda$ as $r\to r_0$. The total projection $P_\lambda(r)$ for each $\lambda$-group at $r_0$ is defined by the contour integral
    \[
    P_{\lambda}(r)\defeq -\frac{1}{2\pi i}\int_{\Gamma}(T(r)-\zeta I)^{-1}\, d\zeta,
    \]
    where $\Gamma$ is a contour that encloses and only encloses the eigenvalues in the chosen $\lambda$-group. In particular, if $T(r_0)$ is symmetric, then $P(r_0)$ is the orthogonal projection onto the eigenspace $E_\lambda\defeq\ker(\lambda I_k-T(r_0))$, where $I_k$ is the $k\times k$ identity matrix.
\end{defn}

The existing literature on the derivatives of eigenvalues is about the unordered ones. We first record the results on the unordered $\lambda$-group eigenvalues. 

\begin{thm}[\cite{KatoPerturbation}*{Chapter Two, Theorem 5.4}]\label{thm:Kato}
    Let $T(r)$ be differentiable at $r_0$, $\lambda\in \C$ a semisimple eigenvalue of $T(r_0)$ with multiplicity $m\in\N$. Then the $\lambda$-group eigenvalues are differentiable at $r_0$ in the following sense. There exist functions $\mu_j(r)$, as unordered $\lambda$-group eigenvalues of $T(r)$ and numbers $\mu_j^{(1)}$, $j=1,\ldots,m$, such that the expansion 
    \begin{equation}\label{eq:eigenvalue-expansion}
        \mu_j(r)=\lambda+\mu_j^{(1)}(r-r_0)+o(|r-r_0|)
    \end{equation}
    holds for $j=1,\ldots, m$, when $r$ is sufficiently close to $r_0$. Moreover, $\mu_1^{(1)},\ldots, \mu_m^{(1)}$ are all the eigenvalues of $P_\lambda(r_0) T'(r_0)P_\lambda(r_0)$ restricted to $E_{\lambda}=\ker(\lambda I_k-T(r_0))$. If in addition $T(r)$ is symmetric for every $r$, then all the unordered eigenvalues are differentiable at $r_0$. 
\end{thm}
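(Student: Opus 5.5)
The plan is a reduction to an $m\times m$ problem via the total projection, followed by a first-order expansion of that reduced matrix.

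\textbf{Setup.} Write $R(r,\zeta)=(T(r)-\zeta I)^{-1}$ and $P=P_\lambda(r_0)$. Fix a small circle $\Gamma$ around $\lambda$ that separates $\lambda$ from the other eigenvalues of $T(r_0)$.
\begin{enumerate}[label=(\arabic*)]
\item Since $T$ is differentiable at $r_0$, the resolvent identity $R(r,\zeta)-R(r_0,\zeta)=-R(r,\zeta)(T(r)-T(r_0))R(r_0,\zeta)$ gives, uniformly for $\zeta\in\Gamma$,
$$R(r,\zeta)=R(r_0,\zeta)-(r-r_0)R(r_0,\zeta)T'(r_0)R(r_0,\zeta)+o(|r-r_0|).$$
\item For $r$ near $r_0$, no eigenvalue of $T(r)$ lies on $\Gamma$. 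Then $P_\lambda(r)=-\frac{1}{2\pi i}\int_\Gamma R(r,\zeta)\,d\zeta$ is the projection onto the sum of generalized eigenspaces of the $\lambda$-group. It has rank $m$ and satisfies $P_\lambda(r)=P+O(|r-r_0|)$.
\end{enumerate}

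\textbf{Reduction to $E_\lambda$.} I transport $\mathrm{Ran}\,P_\lambda(r)$ to $E_\lambda$ using
$$U(r)=P_\lambda(r)P+(I-P_\lambda(r))(I-P).$$
This is invertible near $r_0$ and maps $E_\lambda$ onto $\mathrm{Ran}\,P_\lambda(r)$. Set $\tilde T(r)=U(r)^{-1}T(r)U(r)|_{E_\lambda}$, an $m\times m$ matrix whose eigenvalues are exactly the $\lambda$-group eigenvalues of $T(r)$.

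\textbf{Key computation.} Consider
$$(T(r)-\lambda)P_\lambda(r)=-\frac{1}{2\pi i}\int_\Gamma(\zeta-\lambda)R(r,\zeta)\,d\zeta.$$
Semisimplicity gives the Laurent expansion $R(r_0,\zeta)=\frac{P}{\lambda-\zeta}+\mathcal S(\zeta)$ with $\mathcal S$ holomorphic inside $\Gamma$; there is no nilpotent part.
\begin{itemize}
\item The zeroth-order term is $(T(r_0)-\lambda)P=0$.
\item Insert the expansion into the first-order term $(r-r_0)\frac{1}{2\pi i}\int_\Gamma(\zeta-\lambda)R T' R\,d\zeta$. The cross terms containing one factor $\frac{P}{\lambda-\zeta}$ have integrand of the form $-P T'\mathcal S$, which is holomorphic. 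The term with two holomorphic factors is also holomorphic. All of these vanish by Cauchy's theorem.
\item The only surviving piece is $-\frac{1}{2\pi i}\int_\Gamma\frac{PT'P}{\lambda-\zeta}\,d\zeta=PT'(r_0)P$.
\end{itemize}
Hence $(T(r)-\lambda)P_\lambda(r)=(r-r_0)PT'(r_0)P+o(|r-r_0|)$. Conjugating by $U(r)=I+O(|r-r_0|)$, and noting that the $O$-corrections multiply something already $O(|r-r_0|)$, this yields
$$\tilde T(r)=\lambda I_m+(r-r_0)A+o(|r-r_0|),\qquad A=PT'(r_0)P|_{E_\lambda}.$$

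\textbf{Labeling the eigenvalues.} For $r\ne r_0$, the eigenvalues of $\tilde T(r)$ are $\lambda+(r-r_0)\nu$, where $\nu$ runs over the eigenvalues of $A+o(1)$. Continuity of polynomial roots as an unordered $m$-tuple, measured in the optimal matching distance, gives a labeling $\nu_j(r)\to\mu^{(1)}_j$. Here the $\mu_j^{(1)}$ are the eigenvalues of $A$, and the labeling is chosen separately at each $r$. Then $\mu_j(r)\defeq\lambda+(r-r_0)\nu_j(r)$ gives \eqref{eq:eigenvalue-expansion}.

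\textbf{Symmetric case.} If each $T(r)$ is symmetric, every eigenvalue of $T(r_0)$ is semisimple. Applying the above to each distinct eigenvalue, the groups together exhaust all $k$ eigenvalues, so all unordered eigenvalues are differentiable at $r_0$.

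\textbf{Main obstacle.} The delicate step is the key computation: verifying that only $PT'P$ survives at first order, which is exactly where semisimplicity is used, and checking that the conjugation by $U(r)$ does not contaminate the first-order term.
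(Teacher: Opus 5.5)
Your proof is correct and follows the standard argument behind the cited result, namely Kato's reduction process. The resolvent expansion gives $(T(r)-\lambda)P_\lambda(r)=(r-r_0)P_\lambda(r_0)T'(r_0)P_\lambda(r_0)+o(|r-r_0|)$ because semisimplicity removes the eigennilpotent terms; you then transport to the fixed space $E_\lambda$ and use continuity of eigenvalues. The paper gives no proof of its own and simply quotes Kato (Chapter Two, Theorem 5.4), so there is no different route to compare against.
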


However, in Weyl's perturbation theorem and in the trace inequality below we must consider ordered eigenvalues. As can be seen from the previous example, if we only assume the matrix-valued function $T(r)$ is differentiable, then there is no conclusion about the derivatives of ordered eigenvalues. However, if the ordered eigenvalues happen to be also differentiable, we can make an observation that links the derivatives of ordered eigenvalues to that of the unordered ones.

\begin{prop}\label{prop:unordered-derivative=ordered-derivative}
    Let $T(r)$ be a family of symmetric matrices, and $\lambda_1(r)\le\cdots\le \lambda_k(r)$ the ascending ordered eigenvalues of $T(r)$. Assume that $T(r)$ and every $\lambda_i(r)$ are differentiable at $r_0$ as a function of $r$ for $i=1,\ldots, k$, then the derivatives $\lambda_i'(r)$ coincide, counted with multiplicity, with the derivatives of unordered eigenvalues at $r_0$ in the sense of Kato. 
\end{prop}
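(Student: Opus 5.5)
The plan is to fix $r_0$ and group the ordered eigenvalues of $T(r_0)$ by their distinct values $\nu_1<\dots<\nu_p$. Let $\nu$ be one of these values, with multiplicity $m$, occupying the ordered indices $i_0+1,\dots,i_0+m$. We compare the ordered functions $\lambda_{i_0+1},\dots,\lambda_{i_0+m}$ with the unordered $\nu$-group eigenvalues $\mu_1,\dots,\mu_m$ supplied by Theorem \ref{thm:Kato}. Since all the multisets match at the level of eigenvalues, it suffices to prove the claim group by group.

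The first step is localization. Weyl's perturbation theorem (Theorem \ref{thm:Weyl}) and continuity of $T$ at $r_0$ (it is differentiable there) show that for $r$ near $r_0$, the eigenvalues of $T(r)$ in a small disc around $\nu$ are exactly $\lambda_{i_0+1}(r),\dots,\lambda_{i_0+m}(r)$. This holds because they stay separated from the other clusters. Hence, as multisets, $\{\lambda_{i_0+1}(r),\dots,\lambda_{i_0+m}(r)\}=\{\mu_1(r),\dots,\mu_m(r)\}$ near $r_0$. Since the $\lambda$'s are ordered, $\lambda_{i_0+\ell}(r)$ is the $\ell$-th smallest element of $\{\mu_j(r)\}$.

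The second step compares one-sided derivatives using the expansion \eqref{eq:eigenvalue-expansion}. Write $c_1\le\dots\le c_m$ for the sorted values of $\mu^{(1)}_j$. For $r>r_0$ small, sorting $\nu+\mu_j^{(1)}(r-r_0)+o(r-r_0)$ shows that the $\ell$-th smallest value equals $\nu+c_\ell(r-r_0)+o(r-r_0)$. This uses the elementary fact that the $\ell$-th order statistic is $1$-Lipschitz in the sup norm, so the $o$-errors pass through the sorting. Thus the right derivative of $\lambda_{i_0+\ell}$ is $c_\ell$. For $r<r_0$ the factor $r-r_0$ is negative, so the ordering reverses, and the $\ell$-th smallest value is $\nu+c_{m+1-\ell}(r-r_0)+o(|r-r_0|)$. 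The left derivative is therefore $c_{m+1-\ell}$.

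The third step uses differentiability of each $\lambda_{i_0+\ell}$ at $r_0$, which forces $c_\ell=c_{m+1-\ell}=\lambda_{i_0+\ell}'(r_0)$. Consequently the multiset $\{\lambda'_{i_0+1}(r_0),\dots,\lambda'_{i_0+m}(r_0)\}$ equals $\{c_1,\dots,c_m\}=\{\mu_1^{(1)},\dots,\mu_m^{(1)}\}$. These are the derivatives of the unordered Kato eigenvalues, counted with multiplicity. Taking the union over all groups finishes the proof. In fact this shows that all derivatives in a group coincide.

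The main point requiring care is the first step: we must verify that the local set of eigenvalues near $\nu$ is exactly the block of consecutive ordered indices, with the Kato functions $\mu_j$ defined on the same neighborhood. This follows from the Weyl bound $|\lambda_j(r)-\lambda_j(r_0)|\le\|T(r)-T(r_0)\|\to0$ together with the gap $\min|\nu_a-\nu_b|>0$.
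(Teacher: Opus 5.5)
Your proof is correct and rests on the same mechanism as the paper's. For $r<r_0$ the factor $r-r_0$ is negative and reverses the order of the first-order terms, so two-sided differentiability of the ordered branches forces the Kato derivatives in each group to collapse.

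The execution differs, though. The paper never computes one-sided derivatives of individual ordered branches. It first uses only the ordering inside the group and one-sided difference quotients to show that all $\lambda_j'(r_0)$ in the group coincide. It then sandwiches: from the right, $\lambda_m\ge\max\{\mu_1,\ldots,\mu_m\}$ gives $\lambda_m'(r_0)\ge\mu_m^{(1)}$, and from the left, $\lambda_m\ge\mu_1$ divided by $-h$ gives $\lambda_m'(r_0)\le\mu_1^{(1)}$, so everything is equal.

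You instead obtain the full one-sided asymptotics of every ordered branch from the $1$-Lipschitz property of order statistics: right derivative $c_\ell$, left derivative $c_{m+1-\ell}$. This is a little more work than the max/min comparison, but it gives more. It shows that the ordered eigenvalues of a symmetric family differentiable at $r_0$ are always one-sidedly differentiable there, with differentiability of the $\lambda_i$ entering only in the final step.

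Your Step 1 also makes the localization explicit: near $r_0$ the Kato group is exactly the block of consecutive ordered eigenvalues. The paper uses this tacitly, both in its ``without loss of generality'' and in the inequality $\lambda_m(r)\ge\max\{\mu_1(r),\ldots,\mu_m(r)\}$. That inequality is stated there for all $r\in I$ but is really valid only for $r$ near $r_0$.
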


\begin{proof}
    Let $\mu$ be an eigenvalue of $T(r_0)$ of multiplicity $m$. It suffices to prove the statement for all $\mu$-group eigenvalues. Without loss of generality, we assume $\lambda_{1}(r_0)=\cdots=\lambda_m(r_0)=\mu$. Take $h>0$, for every $j=1,\ldots, m$ the monotonicity gives
    \[
    \frac{\lambda_m(r_0+h)-\mu}{h}\ge \frac{\lambda_j(r_0+h)-\mu}{h}, \quad\frac{\lambda_1(r_0-h)-\mu}{-h}\ge \frac{\lambda_j(r_0-h)-\mu}{-h}.
    \]
  Let $h\to 0^+$, it follows that 
  \[
  \lambda_m'(r_0)=\cdots=\lambda_1'(r_0).
  \]
  Let $\mu_1(r),\ldots,\mu_m(r)$ be a choice of 
$\mu$-group eigenvalues that are differentiable with derivatives $\mu_1^{(1)}\le\ldots\le\mu_m^{(1)} $ at $r_0$. From the ordering we have that for all $r\in I$,
\[
\lambda_m(r)\ge \max\{\mu_1(r),\ldots,\mu_m(r)\}.
\]
Then take $h>0$, we see that  
\[
\frac{\lambda_m(r_0+h)-\mu}{h}\ge\frac{\mu_m(r_0+h)-\mu}{h},\quad \frac{\lambda_m(r_0-h)-\mu}{-h}\le \frac{\mu_1(r_0-h)-\mu}{-h}.
\]
Let $h\to 0^+$, it follows that 
\[
\lambda_m'(r_0)\ge \mu_m^{(1)}\ge \mu_1^{(1)}\ge \lambda_m'(r_0),
\]
which in turn gives  \[
\mu_1^{(1)}=\cdots=\mu_m^{(1)}=\lambda_m'(r_0)=\cdots=\lambda_1'(r_0). \] 
This completes the proof.
\end{proof}

\begin{proof}[Proof of Proposition \ref{prop:Trace-inequality}]
    As a result of Weyl's perturbation Theorem \ref{thm:Weyl}, we infer that $\lambda_j$ lies in $C^0\cap W^{1,2}_{loc}(I)$, and $\lambda_j(r)\ge \delta>0$ is bounded away from $0$ on compact sub-intervals of $I$. Then  $\lambda'_j\in L^2_{loc}$ and $\lambda_j^{-1}\in L^\infty_{loc}$, so the right hand side of \eqref{eq:trace-inequality} is an $L^1_{loc}$ function. Similarly, $S(r)$ is an $L^2_{loc}$ family of matrices, so the left hand side of \eqref{eq:trace-inequality} is also an $L^1_{loc}$ function. So both sides of \eqref{eq:trace-inequality} are well defined.

    We also observe that, being locally $W^{1,2}$ on $I$, $G(r)$ and the ordered eigenvalues are in particular locally absolutely continuous on $I$; thus, they are differentiable almost everywhere on $I$.
    
    Consider the points $r\in I$ where $G(r)$ and $\lambda_j(r)$, $j=1,\ldots,k$ are all differentiable. Fix such an $r_0\in I$. Assume that $G(r_0)$ has $p$ distinct eigenvalues, $\{\bar \lambda_i\}_{i=1}^p$ and each $\bar \lambda_i$ has multiplicity $m_{i}-m_{i-1}$ with $0=m_0<m_1<\ldots<m_p=k$. The ordered eigenvalues can be grouped as
    \[
\underbrace{\lambda_1(r_0)=\cdots=\lambda_{m_1}(r_0)}_{=\bar{\lambda}_1}<\underbrace{\lambda_{m_1+1}(r_0)=\cdots=\lambda_{m_2}(r_0)}_{=\bar{\lambda}_2}<\cdots<\underbrace{\lambda_{m_{p-1}+1}(r_0)=\cdots=\lambda_{m_p}(r_0)}_{=\bar{\lambda}_p}.
    \]
    By Proposition \ref{prop:unordered-derivative=ordered-derivative} it follows every $\bar \lambda_i$-group has derivative 
    \begin{equation}\label{eq:equal_derivatives_in_lambda_group}
        \bar\lambda_i^{(1)}\defeq \lambda_{m_{i-1}+1}'(r_0)=\cdots=\lambda_{m_i}'(r_0).
    \end{equation}
    Let $E_i\coloneqq \ker(\bar{\lambda}_{i}I_k-G(r_0))$ and let $P_i$ be the matrix of the orthogonal projection onto $E_i$ in the canonical basis $\mathcal{B}_{can}$ of $\R^k$. We fix an orthonormal basis $\mathcal{B}_i$ of $E_i$ and let $Q$ be the matrix of change of basis from $\mathcal{B}_{can}$ to $\mathcal{B}=(\mathcal{B}_1,\ldots,\mathcal{B}_p)$. In particular, $Q^TG(r_0)Q$ is diagonal with diagonal blocks $\bar{\lambda}_{1}I_{m_1},\ldots, \bar{\lambda}_{p}I_{m_p-m_{p-1}}$. Note that $Q^TG'(r_0)Q$ is the matrix of $G'(r_0)$ (seen as an endomorphism of $\R^k$) in the basis $\mathcal{B}$. Thus, its diagonal blocks coincide with the matrices $M_1,\ldots, M_p$, where $M_i$ is the matrix of $P_iG'(r_0)_{\lvert E_i}$ (seen as an endomorphism of $E_i$) in the basis $\mathcal{B}_i$. However, thanks to Theorem \ref{thm:Kato} and \eqref{eq:equal_derivatives_in_lambda_group}, $P_iG'(r_0)_{\lvert E_i}=P_iG'(r_0){P_i}_{\lvert E_i}=\bar\lambda_i^{(1)}\mathrm{id}_{E_i}$. Therefore, while $Q^T G'(r_0)Q$ is not necessarily diagonal, its diagonal blocks are $\bar{\lambda}^{(1)}_{1}I_{m_1},\ldots, \bar{\lambda}^{(1)}_{p}I_{m_p-m_{p-1}}$. In particular, the diagonal coefficients of
    \[
    H\defeq Q^T G(r_0)^{-1/2}G'(r_0)G^{-1/2}(r_0) Q,
    \]
    satisfy $H_{jj}=\lambda_j'(r_0)/\lambda_j(r_0)$. Since $H$ is similar to $2S(r_0)=G(r_0)^{-1}G'(r_0)$, we have that
    \[
    \tr(S(r_0)^2)=\frac{1}{4}\tr(H(r_0)^2)=\frac{1}{4}\sum_{1\le i,j\le k} H_{ij}^2\ge \frac{1}{4}\sum_{j=1}^k H_{jj}^2=\frac{1}{4}\sum_{j=1}^k\left(\dfrac{\lambda'_j(r_0)}{\lambda_j(r_0)}\right)^2,
    \]
    as desired.
\end{proof}

\bibliographystyle{plain}
\bibliography{ref.bib}

\end{document}